\crefname{hypothesis}{Hypothesis}{Hypotheses}
\title{An efficient unconditionally stable method for Dirichlet partitions in arbitrary domains\thanks{Submitted to the editors DATE.
\funding{This work is supported by the Guangdong Provincial Key Laboratory of Big Data Computing, The Chinese University of Hong Kong, Shenzhen. Dong Wang acknowledges the support from National Natural Science Foundation of China (Grant No. 12101524) and the University Development Fund from The Chinese University of Hong Kong, Shenzhen (UDF01001803).  }}}
\author{Dong Wang\thanks{School of Science and Engineering, The Chinese University of Hong Kong, Shenzhen, Guangdong 518172, China (\email{wangdong@cuhk.edu.cn}).}
}
\begin{document}

\maketitle

\begin{abstract}
A Dirichlet $k$-partition of a domain is a collection of $k$ pairwise disjoint open subsets such that the sum of their first Laplace--Dirichlet eigenvalues is minimal. In this paper, we propose a new relaxation of the problem by introducing auxiliary indicator functions of domains and develop a simple and efficient diffusion generated method to compute Dirichlet $k$-partitions for arbitrary domains. The method only alternates three steps: 1. convolution, 2. thresholding, and 3. projection. The method is simple, easy to implement, insensitive to initial guesses and can be effectively applied to arbitrary domains without any special discretization. At each iteration, the computational complexity is linear in the discretization of the computational domain. Moreover, we theoretically prove the energy decaying property of the method. Experiments are performed to show the accuracy of approximation, efficiency and unconditional stability of the algorithm. We apply the proposed algorithms on both 2- and 3-dimensional flat tori, triangle, square, pentagon, hexagon, disk, three-fold star, five-fold star, cube, ball, and tetrahedron domains to compute Dirichlet $k$-partitions for different $k$ to show the effectiveness of the proposed method. Compared to previous work with reported computational time, the proposed method achieves hundreds of times acceleration.
\end{abstract}

\begin{keywords}
Dirichlet partition, energy stability, thresholding
\end{keywords}

\begin{AMS}
49Q10, 
49R05,
05B45 
\end{AMS}

\section{Introduction}
For $d\geq 2$, let $\Omega$ be either an open bounded domain in $\mathbb R^d$ with Lipschitz boundary or a closed, smooth, $d$-dimensional manifold. For $k\geq 2$ fixed, the Dirichlet $k$-partition problem for
$\Omega$ is to choose a $k$-partition, {\it i.e.}, $k$ disjoint quasi-open sets
$\Omega_1, \Omega_2, \ldots, \Omega_k \subseteq \Omega$, that attains
\begin{equation} \label{eq:ContDirPart}
\min_{\Omega = \cup_{\ell \in [k]} \Omega_\ell} \ \sum_{\ell \in [k]} \lambda_1(\Omega_\ell), 
\qquad \textrm{where} \quad
\lambda_1(D) := \min_{\substack{u \in H^1_0(D)\\ \|u\|_2=1}} E(u).
\end{equation} 
Here, $E(u) =  \int_D |\nabla u|^2 \  dx$ is the Dirichlet energy and $\lambda_1(D)$ is the first
Dirichlet eigenvalue of the  Laplace operator, $-\Delta$, on $D$ with Dirichlet boundary conditions imposed on $\partial D$.  

The existence of optimal partitions in the class
of quasi-open sets was proved in \cite{bucur1998existence}. The properties of
optimal partitions including the regularity of the partition interfaces and the asymptotic behavior  as
$k \to \infty$ have been investigated in \cite{caffarelli2007optimal,Helffer2010b,bourdin2010optimal}. The consistency of Dirichlet partitions has been rigorously studied in \cite{Osting_2017}. Dirichlet partitions have been applied into the study of Bose--Einstein condensates \cite{bao2004ground,bao2004computing,chang2004segregated} and 
models for interacting agents \cite{conti2002nehari,conti2003optimal,chang2004segregated,cybulski2005,cybulski2008}. 

%

The development of efficient numerical methods for finding such partitions attracts much attention in recent years, especially when the dimension is high or number of partitions is large. Essentially, this is an interface related optimization problem subject to global constraints, numerical considerations usually start with the representation of interfaces. Corresponding numerical methods are mainly developed along the directions of phase field based approaches \cite{Du_2008},  level set based approaches \cite{Chu_2021}, and other optimization based approaches \cite{bourdin2010optimal}. 

Along the direction of phase field based approaches, following \cite{caffarelli2007optimal}, for fixed $\varepsilon > 0$, problem \eqref{eq:ContDirPart} can be relaxed to minimizing a relaxed energy,  
\begin{equation}
E_\varepsilon(u) = \sum_{\ell \in [k]} \frac{1}{2}\int_\Omega |\nabla u_\ell|^2 \  dx + \frac{1}{4\varepsilon^2} \int_\Omega  F_k(u)  \ dx.  \label{eq:relax1}
\end{equation}
over fields that take values in $\mathbb R^k$ where $u = (u_1,u_2,\cdots, u_k)$, $[k]$ denotes the set $\{1,2,\cdots,k\}$,
and \[F_k(u) = \sum_{i\neq j \in [k]}   \  u_i^2u_j^2. \]
Then the minimization reads
\begin{align}
\min_{u \in H^1 (\Omega, \mathbb R^k) } & \ E_\varepsilon(u)  \label{min1}\\ 
\textrm{s.t.} & \ \int_\Omega u_\ell^2 \ dx= 1, \qquad  \forall \ell \in [k].  \nonumber
\end{align}

Note that the penalty term in the objective functional tries to penalize that the support of each function $u_\ell$ has no overlap with others. Based on this, in \cite{Du_2008}, Du and Lin proposed an efficient normalized gradient descent method to approximately find the minimizer of \eqref{eq:relax1}. The method is initialized with an initial condition $u^0 \in H^1(\Omega,\mathbb{R}^k)$ and alternates the following three steps until convergence. 
In the first step, the Cauchy problem for the gradient flow of the first term in $E_\varepsilon$, {\it i.e.},
\begin{equation}
\partial_t u(x,t) = \Delta u(x,t), 
\end{equation}
is computed until time $\tau > 0$, with initial condition, $u(x,t=0) = u^0(x)$. Let $\tilde u_\ell(x) = u_\ell(x,\tau)$ for $\ell \in [k]$.
In the second step, for each $x \in \Omega$, the following system of ordinary differential equations is solved until time $\tau$,
\begin{equation} \label{Dulinproj}
\frac{d}{dt} {u}_\ell = \frac{1}{\varepsilon^2} \left( \sum_{j \neq \ell} {u}^2_j \right) {u}_\ell, 
\qquad \qquad   
\ell \in [k], 
\end{equation}
with initial condition given by $u_\ell(x,t=0) = \tilde u_\ell(x)$. 
This is precisely the gradient flow of the second term of the relaxed energy. 
Numerically, this system is solved using the  Gauss-Seidel method. Let $\hat u_\ell(x) =  u_\ell(x,\tau)$ for $\ell = [k]$. Finally, in the third step, each component of $u$ is normalized to satisfy the $L^2(\Omega)$ norm constraint, {\it i.e.}, 
\begin{equation}
u_\ell = \frac{\hat u_\ell} {\| \hat u_\ell \|_2}. 
\end{equation}
In this method, the small parameter $\varepsilon$ thickens the interface between any two partitions, restricting the mesh size and making the convergence relatively slow. Recently, a scalar auxiliary variable (SAV) approach \cite{Shen_2018,Shen_2019} shows its great advantage on solving systems of gradient flow and a SAV based method for preserving global constraints is proposed in \cite{Cheng_2020} to solve \eqref{min1}. However, in such a specific class of problems, the solution of interest is the minimizer instead of the dynamics from an initial guess to the minimizer. The method takes a lot of iterations to converge to the minimizer and seems to be difficult to find regular solutions, especially when $k$ is large.

To accelerate the convergence, Wang and Osting \cite{Wang_2019} proposed a diffusion generated method to compute Dirichlet partitions approximately. The main novelty in the method is replacing the second step of Du and Lin's method ({\it i.e.}, \eqref{Dulinproj}) by direct projection to make $F_k(u) =0$. This is based on the observation that any solution satisfies that $ \cup_\ell \ {\rm supp} (u_\ell) = \Omega $ by the monotonicity of Dirichlet eigenvalues (and also the relaxed energy). To be more precise, for each $x$, the projection is simply done by comparing the values among $(u_1, u_2, \cdots, u_k)$, keeping the largest one and projecting other values to $0$. This approach dramatically accelerates the speed of convergence from random initial guesses based on the numerical observations. It can simply and quickly find Dirichlet $k$-partitions in 4-dimensional space with different $k$ even on a laptop. However, it alternates a diffusion step, a projection step, and a normalization step. No theoretical guarantee could be provided on the convergence or energy decaying properties. In addition, the results presented in \cite{Wang_2019} are limited to periodic cases or closed surfaces, it is not obvious on how to effectively extend to arbitrary domains with Dirichlet boundary conditions.


Another approach, developed first in \cite{bourdin2010optimal}, is based on a Schr\"odinger operator relaxation of \eqref{eq:ContDirPart} and was further used in \cite{bogosel2017efficient,Bogosel_2016}. 
The idea here is to replace the shape optimization problem for a partition $\Omega = \cup_\ell \Omega_\ell$ in \eqref{eq:ContDirPart}  with the following relaxed optimization problem for a collection of functions $\{\varphi_\ell\}_{\ell \in [k]}$:
\begin{equation} \label{eq:Schro}
\min_{\{ \varphi_\ell\} \in  K} \ \sum_{\ell \in [k]} \lambda^\alpha_1(\varphi_\ell). 
\end{equation} 
Here, the constraint set is given by 
\begin{equation}
K = \left\{ \{\varphi_\ell\}_{\ell \in [k]} \colon \sum_\ell \varphi_\ell(x) = 1 \ \ \textrm{and} \ \ \varphi_\ell(x) \in [0,1]  \ \ a.e. \ x \in \Omega \right\}. \label{eq:defnK}
\end{equation}
For $\alpha >0$,   $\lambda^\alpha_1(\phi)$ is defined as the first eigenvalue for the Schr\"odinger  operator $- \Delta + \frac{1}{\alpha} (1-\phi)$ by
\begin{equation}\label{energy2}
  \lambda^\alpha_1(\phi) := \min_{\substack{u \in H^1(\Omega)\\ \|u\|_2=1}} \frac{1}{2}\int_\Omega |\nabla u|^2 + \frac{1}{2 \alpha} (1-\phi) u^2 \ dx.
\end{equation}
It was shown that if $\phi = \chi_D$ where $\chi_D$ denotes the indicator function of domain $D$, then $  \lambda^\alpha_1(\phi) \to \lambda_1(D) $ as $\alpha \rightarrow 0$  \cite{bourdin2010optimal} (see also in \cite{osting2013minimal}).  Furthermore, the objective functional in \eqref{eq:Schro} is concave with respect to $\varphi$, so the minimum in \eqref{eq:Schro} is attained at extreme points of $K$, which are exactly indicator functions of domain, giving partition solutions. One could interpret the second term in \eqref{energy2} as a penalty term to penalize the support of $u$ to be the region where $\phi = 1$ if $\phi$ is an indicator function of a domain. 


In this paper, we propose a novel relaxation to the Dirichlet partition problem. Instead of considering the relaxation using the Schr\"odinger  operator, we propose to approximate $\lambda_1(\phi)$ using a small $\tau$ as follows
\[\lambda_1^\tau(\phi) = \min_{\substack{v\in L^2(\Omega) \\  \|v\|_2 = 1}}  \frac{1}{\tau}-\frac{1}{\tau}\int_\Omega \phi |e^{\frac{\tau}{2} \Delta} v|^2 \ dx  \]
where $e^{\frac{\tau}{2} \Delta} v $ denotes the solution at $t = \tau/2$ of the following free space heat diffusion equation:
\begin{equation}
\begin{cases}
\partial_t u = \Delta u,\\
u(x,t=0)= v(x)
\end{cases}
\end{equation}
which can also explicitly be written by 
\[u(x,\tau/2) = G_{\tau/2}*v, \quad G_\tau(x) = \frac{1}{(4\pi\tau)^{d/2}} \exp(-\frac{|x|^2}{4\tau}).\]

Based on the new relaxed problem, we derive a novel and simple iterative method for finding the approximate solution. The method only alternates three steps: 1. convolution, 2. thresholding, and 3. projection. Because of the use of auxiliary functions, it can be applied into Dirichlet partition problems in arbitrary domains. The convolution is between a free-space heat kernel and a function with finite support, it can be efficiently computed using the fast Fourier transform (FFT) even for the cases of arbitrary domains by simply extending to a relatively larger square domain. Furthermore, we rigorously prove the unconditional stability of the proposed method. In other words, each iteration in the proposed algorithms enjoys the energy decaying property.

The paper is organized as follows. In Section~\ref{sec:relaxedproblem}, we describe the new relaxation of the Dirichlet $k$-partition problem and some basic properties of the relaxed objective functional. We derive the algorithm in Section~\ref{sec:derivation} and describe the detail of implementation in Section~\ref{sec:implement}. We show the performance of the proposed algorithm via extensive numerical experiments in Section~\ref{sec:num} and draw some conclusions and future discussions in Section~\ref{sec:diss}.

\section{New relaxation of the problem}\label{sec:relaxedproblem}

Let $d\geq 2$ and $ \Omega \in \mathbb R^d$ be a bounded open connected set. 
For every open (or quasi-open) subset $A\subset  \Omega$ we denote by $\lambda_1^A$ the first Dirichlet eigenvalue of the Laplace operator:
\begin{equation}
\begin{cases}
-\Delta u = \lambda_1^A u \ \ {\rm in} \ \ A, \\
u = 0 \ \ {\rm on} \ \ \partial A.
\end{cases}
\end{equation}
This can be understood in a weak sense to find $u \in H_0^1(A)$ such that, 
\begin{equation}
\forall v\in H_0^1(A), \ \ \int_A \nabla u \cdot \nabla v \ dx = \lambda_1^A \int_A u v\ dx.
\end{equation}
The eigenvalue can now be given by the minimum principle:
\begin{equation}\label{originalmin}
\lambda_1^A = \min_{u\in H_0^1(A)} \frac{\int_A |\nabla u|^2\ dx}{\int_A |u|^2 \ dx}
\end{equation}

Using the fact $$\int_A |\nabla u|^2 \ dx = \int_A -u \Delta u \ dx,$$
a simple expansion
\begin{equation}
\begin{cases}
e^{\frac{\tau}{2} \Delta} u = u +\frac{\tau}{2} \Delta u+o(\tau) &\quad\tau \rightarrow 0,  \\
|e^{\frac{\tau}{2} \Delta} u|^2 = |u|^2 +\tau u \Delta u+o(\tau) & \quad  \tau \rightarrow 0,
\end{cases}
\end{equation}
and $\int_A |u|^2 \ dx=1$,
one can obtain
%
%
%
%
an approximation to $\lambda^{\tau,A}_1$ through
\begin{equation}\label{originalmin2}
\lambda_1^{\tau,A} =  \min_{\substack{ u\in H_0^1(A) \\  \|u\|_2=1}}  \int_A |\nabla u|^2\ dx =  \min_{\substack{u\in H_0^1(A) \\  \|u\|_2=1}} \frac{1}{\tau}-\frac{1}{\tau}\int_A |e^{\frac{\tau}{2} \Delta} u|^2 \ dx +o(1)  \quad  \tau \rightarrow 0.
\end{equation}

%

Let $\phi: \Omega\rightarrow \{0,1\} $ be a bounded variation function, we further consider a relaxed problem:

\begin{equation}\label{relaxedmin}
\lambda_1^\tau(\phi) =  \inf_{\substack{v\in L^2(\Omega) \\  \|v\|_2= 1}} \frac{1}{\tau} -\frac{1}{\tau}\int_\Omega \phi |e^{\frac{\tau}{2} \Delta} v|^2 \ dx .
\end{equation}

\begin{remark}
1. In the relaxation, we also relax the space for $v$ from $H_0^1(\Omega)$ to $L^2(\Omega)$. \\
2. Furthermore, we note that when $\phi=\chi_A $, the direct relaxation using $\phi$ in the form $$ \inf_{\substack{v\in H_0^1(\Omega) \\  \|v\|_2= 1}} \int_\Omega \phi |\nabla v|^2\ dx \ \ \ 
\textrm{or} \ \ \  \inf_{\substack{v\in L^2(\Omega) \\  \|v\|_2= 1}} \int_\Omega \phi |\nabla v|^2\ dx $$
would obviously attain the global minimum value $0$ at many choices of $v\in H_0^1(\Omega)$ satisfying $\|v\|_2= 1$. For example, one can simply take $\tilde v \in H_0^1(\Omega)$ satisfying $\|\tilde v\|_2= 1$ and ${\rm supp}(\tilde v) \in \Omega\setminus \bar A$  where $\bar A$ is the closure of $A$.
\end{remark}

Denote
\[B = \{u =(u_1,u_2,\cdots,u_k)| u \in L^2(\Omega,\mathbb R^k), \|u_\ell\|_2= 1\},\]
we propose a new relaxation to problem \eqref{eq:ContDirPart} by

\begin{equation}
 \inf_{\varphi\in K} \inf_{u \in B}   E^\tau(\varphi,u) : =\frac{k}{\tau} - \sum_{\ell \in [k]} \frac{1}{\tau}\int_\Omega \varphi_\ell |e^{\frac{\tau}{2} \Delta} u_\ell|^2 \ dx  .  \label{relax2}
 \end{equation}

We first list some properties of $E^\tau(\varphi,u)$ in the following lemma.
\begin{lemma} \label{lem1}
Assume $\tau>0$, then the following properties hold for the functional $E^\tau(\varphi,u)$ defined in \eqref{relaxedmin}.
\begin{itemize}
\item[(i)] $E^\tau(\varphi,u)$ is nonnegative for any $(\varphi, u) \in K\times B$. 
\item[(ii)] Given $\varphi$, $E^\tau(\varphi,u)$ is continuous with respect to $u$ on $L^2(\Omega; \mathbb R^k)$.
\item[(iii)] $E^\tau(\varphi,u)$  is concave with respect to $u$ on $L^2(\Omega; \mathbb R^k)$.
\item[(iv)] The Fr\'echet derivative of $E^\tau(\varphi,u)$ with respect to $u_\ell \in L^2(\Omega; \mathbb R)$ is 
\[\frac{\delta E^\tau(\varphi,u)}{\delta u_\ell} = -\frac{2}{\tau} e^{\frac{\tau}{2}\Delta} (\varphi_\ell e^{\frac{\tau}{2}\Delta} u_\ell) .\]
\end{itemize}
\end{lemma}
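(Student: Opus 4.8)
The plan is to treat the four items in order, since each reduces to an elementary computation once the functional is written in the convenient form
\[
E^\tau(\varphi,u) = \frac{k}{\tau} - \sum_{\ell\in[k]} \frac{1}{\tau}\int_\Omega \varphi_\ell\,\bigl|e^{\frac{\tau}{2}\Delta}u_\ell\bigr|^2\,dx .
\]
For (i), the main point is that $e^{\frac{\tau}{2}\Delta}$ is a contraction on $L^2(\Omega)$ when we use the free-space heat kernel: since $\widehat{G_{\tau/2}}(\xi)=e^{-\tau|\xi|^2/2}\in(0,1]$, Plancherel gives $\|e^{\frac{\tau}{2}\Delta}u_\ell\|_2\le\|u_\ell\|_2=1$ for each $\ell$. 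Because $0\le\varphi_\ell\le 1$ pointwise, $\int_\Omega \varphi_\ell|e^{\frac{\tau}{2}\Delta}u_\ell|^2\,dx \le \|e^{\frac{\tau}{2}\Delta}u_\ell\|_2^2 \le 1$, hence each summand $\frac1\tau\int_\Omega\varphi_\ell|e^{\frac{\tau}{2}\Delta}u_\ell|^2\,dx\le\frac1\tau$, and summing over $\ell\in[k]$ shows $E^\tau(\varphi,u)\ge \frac k\tau-\frac k\tau=0$.

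For (ii), fix $\varphi$ and regard $u\mapsto E^\tau(\varphi,u)$. Writing $T:=e^{\frac{\tau}{2}\Delta}$, each term is $u_\ell\mapsto \langle \varphi_\ell Tu_\ell, Tu_\ell\rangle = \langle T(\varphi_\ell Tu_\ell),u_\ell\rangle$, a bounded quadratic form (the operator $u_\ell\mapsto T(\varphi_\ell Tu_\ell)$ is bounded on $L^2$ since $T$ is bounded and multiplication by $\varphi_\ell\in L^\infty$ is bounded), so continuity — indeed local Lipschitz continuity on bounded sets — follows from the standard estimate $|q(u)-q(v)|\le C(\|u\|_2+\|v\|_2)\|u-v\|_2$ for a bounded bilinear form. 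For (iii), note that $u_\ell\mapsto \int_\Omega\varphi_\ell|Tu_\ell|^2\,dx$ is a nonnegative quadratic form, hence convex; therefore its negative is concave, and $E^\tau$ is a constant plus $\frac1\tau$ times a sum of such concave terms (in disjoint groups of variables $u_\ell$), so $E^\tau(\varphi,\cdot)$ is concave on $L^2(\Omega;\mathbb R^k)$. For (iv), expand $E^\tau(\varphi,u+s\,h)$ in $s$ for a test function $h\in L^2(\Omega;\mathbb R)$ in the $\ell$-th slot: the quadratic term gives $-\frac2\tau s\int_\Omega \varphi_\ell (Tu_\ell)(Th)\,dx + O(s^2)$; using self-adjointness of $T$ on $L^2$ (again clear from the Fourier multiplier being real and even) rewrite $\int_\Omega\varphi_\ell(Tu_\ell)(Th)\,dx = \int_\Omega T(\varphi_\ell Tu_\ell)\,h\,dx$, which identifies the Fréchet derivative as $-\frac2\tau T(\varphi_\ell Tu_\ell) = -\frac2\tau e^{\frac\tau2\Delta}(\varphi_\ell e^{\frac\tau2\Delta}u_\ell)$.

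I do not anticipate a serious obstacle; the only point requiring a little care is a consistent choice of the functional-analytic setting so that $T=e^{\frac\tau2\Delta}$ is simultaneously bounded, self-adjoint, positivity-preserving, and a contraction on $L^2(\Omega)$ — this is immediate for the free-space semigroup via the Fourier transform (after extending functions by zero to $\mathbb R^d$, consistent with the convolution definition $G_{\tau/2}*v$ used in the paper), and it is exactly this contraction property that makes (i) work and the self-adjointness that makes (iv) clean. Everything else is bookkeeping with bounded quadratic forms.
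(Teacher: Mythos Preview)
Your proposal is correct and follows essentially the same approach as the paper: item (i) via the $L^2$-contraction property of the heat semigroup together with $0\le\varphi_\ell\le1$, item (ii) via a Lipschitz estimate for the bounded quadratic form (the paper does this by the explicit factorization $|Tu_\ell|^2-|Tv_\ell|^2=T(u_\ell+v_\ell)\cdot T(u_\ell-v_\ell)$, which is just a concrete version of your bounded-bilinear-form bound), item (iii) by noting the negative of a nonnegative quadratic form is concave, and item (iv) by expanding the quadratic and using self-adjointness of $e^{\frac{\tau}{2}\Delta}$ to move it onto the other factor. The only cosmetic difference is that you justify contraction and self-adjointness via the Fourier multiplier picture, while the paper simply cites $\|e^{\frac{\tau}{2}\Delta}\|\le1$ and the semigroup property.
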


\begin{proof}

\begin{itemize}
\item[(i)] For any $(\varphi, u) \in K\times B$ and $\ell\in [k]$, $\varphi_\ell(x) \in [0,1]$ and $\|u_\ell\| = 1$, we then have 
\begin{align*}
-\frac{1}{\tau}\int_\Omega \varphi_\ell |e^{\frac{\tau}{2} \Delta} u_\ell|^2 \ dx \geq & -\frac{1}{\tau}\int_\Omega |e^{\frac{\tau}{2} \Delta} u_\ell|^2 \ dx \\
 \geq & -\frac{1}{\tau}\|e^{\frac{\tau}{2} \Delta}\|^2 \|u_\ell\|_2^2 \geq -\frac{1}{\tau} \|u_\ell\|_2^2 =-\frac{1}{\tau}
\end{align*}
and thus we have $E^\tau(\varphi,u) \geq 0 $. 

\item[(ii)] Let $u, v \in L^2(\Omega, \mathbb R^k)$, direct calculation using the fact that $\|e^{\frac{\tau}{2} \Delta}u\|_2\leq \|u\|_2$ yields
\begin{align*}|E^\tau(\varphi,u) - E^\tau(\varphi,v)| =& \sum_{\ell \in [k]} \frac{1}{\tau}\int_\Omega \varphi_\ell \left||e^{\frac{\tau}{2} \Delta} v_\ell|^2-|e^{\frac{\tau}{2} \Delta} u_\ell|^2\right| \ dx \\
=& \sum_{\ell \in [k]} \frac{1}{\tau}\int_\Omega \varphi_\ell \left|\left(e^{\frac{\tau}{2} \Delta} (u_\ell+v_\ell)\right)\left(e^{\frac{\tau}{2} \Delta} (u_\ell-v_\ell)\right) \right|\ dx \\
\leq & \frac{1}{\tau}\sum_{\ell \in [k]} \|u_\ell+v_\ell\|_2 \|u_\ell-v_\ell\|_2 \\
\leq &  \frac{2\sqrt{k}}{\tau} \|u-v\|_2,
\end{align*}
implying the continuity in the $L^2$ topology.

\item[(iii)] This can be proved by a direct computation.

\item[(iv)] $\forall v \in L^2(\Omega, \mathbb R^k)$ with $v_i = 0 \ (i\neq \ell)$,  direct computation yields
\begin{align*}
\left\langle\frac{\delta E^\tau(\varphi,u)}{\delta u_\ell} , v_\ell  \right\rangle & = \lim_{\epsilon \rightarrow 0} \frac{E^\tau(\varphi,u+\epsilon v) -E^\tau(\varphi,u)}{\epsilon} \\ 
& = -\frac{2}{\tau}\int_\Omega (\varphi_\ell  e^{\frac{\tau}{2} \Delta} u_\ell) e^{\frac{\tau}{2} \Delta}v_\ell \ dx \\
& =  -\frac{2}{\tau}\int_\Omega (e^{\frac{\tau}{2} \Delta} (\varphi_\ell e^{\frac{\tau}{2} \Delta} u_\ell)) v_\ell \ dx \\
& = \left\langle -\frac{2}{\tau} e^{\frac{\tau}{2}\Delta} (\varphi_\ell e^{\frac{\tau}{2}\Delta} u_\ell) , v_\ell  \right\rangle .
\end{align*}
Here, the second to the last equality is by the fact that the operator  $e^{\frac{\tau}{2} \Delta}$ forms a semi-group, that is,
\[\left\langle e^{\frac{\tau}{2}\Delta} f, g \right\rangle =\left\langle  f, e^{\frac{\tau}{2}\Delta} g \right\rangle. \]
\end{itemize}
\end{proof}

In the follows, we first discuss the existence of the solution of the relaxed problem \eqref{relax2} for given $\varphi$.

\begin{theorem}[Existence of $u$]
For a given $\varphi \in K$ and $\tau>0$, the problem \eqref{relax2} admits at least one solution $u \in B$.
\end{theorem}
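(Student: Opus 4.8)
The plan is to run the direct method of the calculus of variations, but with one twist forced by the structure of $E^\tau(\varphi,\cdot)$: the constraint set $B$ is a product of unit spheres in $L^2(\Omega)$, which is not weakly closed, and by Lemma~\ref{lem1}(iii) the functional is \emph{concave} in $u$, so a weak limit of a minimizing sequence need only satisfy $\|u_\ell\|_2\le 1$. I would therefore first relax the constraint to the weakly compact set $\bar B:=\{u\in L^2(\Omega,\mathbb R^k): \|u_\ell\|_2\le 1 \text{ for all } \ell\in[k]\}$, produce a minimizer there, and then use the algebraic form of $E^\tau$ to show the minimizer can be taken on the sphere $B$.

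The crucial analytic ingredient is compactness of the heat semigroup on the bounded domain. Since $e^{\frac{\tau}{2}\Delta}$ acts as convolution with the fixed, bounded, smooth kernel $G_{\tau/2}$, the operator $u_\ell\mapsto (G_{\tau/2}*u_\ell)|_\Omega$ has integral kernel $(x,y)\mapsto G_{\tau/2}(x-y)$ belonging to $L^2(\Omega\times\Omega)$ (because $\Omega$ is bounded), hence is Hilbert--Schmidt and in particular compact on $L^2(\Omega)$. Consequently, if $u^{(n)}\rightharpoonup u$ weakly in $L^2(\Omega,\mathbb R^k)$, then $e^{\frac{\tau}{2}\Delta}u^{(n)}_\ell\to e^{\frac{\tau}{2}\Delta}u_\ell$ strongly in $L^2(\Omega)$, so $|e^{\frac{\tau}{2}\Delta}u^{(n)}_\ell|^2\to|e^{\frac{\tau}{2}\Delta}u_\ell|^2$ in $L^1(\Omega)$; since $0\le\varphi_\ell\le1$ this yields $\int_\Omega\varphi_\ell|e^{\frac{\tau}{2}\Delta}u^{(n)}_\ell|^2\,dx\to\int_\Omega\varphi_\ell|e^{\frac{\tau}{2}\Delta}u_\ell|^2\,dx$, i.e.\ $E^\tau(\varphi,\cdot)$ is weakly sequentially continuous. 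As $\bar B$ is bounded, convex and closed in the Hilbert space $L^2(\Omega,\mathbb R^k)$, it is weakly sequentially compact; taking a minimizing sequence for $E^\tau(\varphi,\cdot)$ over $\bar B$ and passing to a weakly convergent subsequence gives $\bar u\in\bar B$ with $E^\tau(\varphi,\bar u)=\inf_{\bar B}E^\tau(\varphi,\cdot)$.

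It remains to promote $\bar u$ to an element of $B$. Here I would use the homogeneity of the quadratic term: if $\bar u_\ell\ne 0$, replacing $\bar u_\ell$ by $\bar u_\ell/\|\bar u_\ell\|_2$ multiplies $\int_\Omega\varphi_\ell|e^{\frac{\tau}{2}\Delta}\bar u_\ell|^2\,dx$ by $\|\bar u_\ell\|_2^{-2}\ge 1$, hence does not increase $E^\tau$; if $\bar u_\ell=0$, replacing it by an arbitrary unit function only subtracts a nonnegative quantity, so again does not increase $E^\tau$. Doing this for every $\ell$ produces $u^\ast\in B$ with $E^\tau(\varphi,u^\ast)\le E^\tau(\varphi,\bar u)=\inf_{\bar B}E^\tau(\varphi,\cdot)\le\inf_B E^\tau(\varphi,\cdot)$, so $u^\ast$ solves \eqref{relax2}. (Equivalently, one can invoke Lemma~\ref{lem1}(iii): a concave function on a ball attains its minimum on the bounding sphere, applied one component at a time.)

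The main obstacle — and the only genuinely non-trivial point — is exactly the combination that makes the naive argument fail: because $E^\tau(\varphi,\cdot)$ is concave it is \emph{not} weakly lower semicontinuous, so one cannot simply take a weak limit of a minimizing sequence on the sphere. The resolution is that $E^\tau$ is in fact weakly \emph{continuous}, which hinges entirely on $e^{\frac{\tau}{2}\Delta}$ being a compact operator on $L^2(\Omega)$; this is where boundedness of $\Omega$ enters, and one should be slightly careful that the ``$e^{\frac{\tau}{2}\Delta}$'' in \eqref{relax2} is free-space heat convolution, so that its kernel is genuinely square-integrable over $\Omega\times\Omega$. Everything else is routine bookkeeping.
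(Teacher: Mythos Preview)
Your argument is correct and arguably cleaner than the paper's. The key compactness step is the same in spirit---both exploit that $e^{\frac{\tau}{2}\Delta}$ upgrades weak to strong convergence---but the packaging differs. You argue directly that the free-space heat convolution, having bounded kernel $G_{\tau/2}(x-y)$ on the bounded set $\Omega\times\Omega$, is Hilbert--Schmidt and hence compact on $L^2(\Omega)$; this gives weak sequential continuity of $E^\tau(\varphi,\cdot)$, and the direct method on the weakly compact ball $\bar B$ finishes. The paper instead introduces the set
\[
\tilde B=\Bigl\{\tilde u:\ \tilde u_\ell=\frac{e^{\frac{\tau}{2}\Delta}(\varphi_\ell e^{\frac{\tau}{2}\Delta}u_\ell)}{\|e^{\frac{\tau}{2}\Delta}(\varphi_\ell e^{\frac{\tau}{2}\Delta}u_\ell)\|_2},\ \|u_\ell\|_2\le1\Bigr\},
\]
observes that its elements are smooth with uniformly bounded $H^1$ norm, takes its $H^1$-closure, and invokes the compact embedding $H^1\hookrightarrow L^2$ together with the $L^2$-continuity of $E^\tau$ from Lemma~\ref{lem1}(ii) to obtain a minimizer there; a concavity/linearization inequality (the same one later labeled \eqref{eq:concave1}--\eqref{eq:concave2}) then shows any $v\in B$ is beaten by its image $\hat v\in\tilde B$, so the minimum over $B$ is attained. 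The virtue of the paper's route is that $\tilde B$ is exactly the image of one step of the eventual algorithm, so the existence proof already rehearses the energy-decrease mechanism; the virtue of yours is that it is the textbook direct method with no auxiliary constructions. Both handle the promotion from $\bar B$ to $B$ by the same homogeneity/concavity observation.
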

\begin{proof}
Denote 
\begin{align*} 
\tilde B =  \{\tilde u =(\tilde u_1,\tilde u_2,\cdots,\tilde u_k)| &  u = ( u_1,u_2,\cdots, u_k) \in L^2(\Omega,\mathbb R^k), \\
 &\tilde u_\ell   = \dfrac{e^{\frac{\tau}{2} \Delta} (\phi e^{\frac{\tau}{2} \Delta} u_\ell)}{ \|e^{\frac{\tau}{2} \Delta} (\phi e^{\frac{\tau}{2} \Delta} u_\ell)\|_2},  \|u_\ell\|_2 \leq 1\}
 \end{align*}
and $cl(\tilde B)_{H^1(\Omega, \mathbb R^k)}$ as the closure of $\tilde B$ in $H^1(\Omega, \mathbb R^k)$. Here, we use the fact that $\tilde u_\ell$ is a smooth function and its $H^1$ norm is bounded. Because $H^1(\Omega, \mathbb R^k)$ is compactly embedded  in $L^2(\Omega, \mathbb R^k)$ and $cl(\tilde B)_{H^1(\Omega, \mathbb R^k)}$ is a closed and bounded subset in $H^1(\Omega, \mathbb R^k)$, $cl(\tilde B)_{H^1(\Omega, \mathbb R^k)}$ is then compact in $L^2(\Omega, \mathbb R^k)$. In addition, because $E^\tau(\varphi,u)$ is continuous on $L^2(\Omega, \mathbb R^k)$, there exists at least one $u^\star \in cl(\tilde B)_{H^1(\Omega, \mathbb R^k)}$ such that 
$$E^\tau(\varphi,u^\star) = \inf_{u\in cl(\tilde B)_{H^1(\Omega, \mathbb R^k)}} E^\tau(\varphi,u). $$
Furthermore, it is straightforward to see $\|u^\star\|_2 = 1$ because the concavity of  $E^\tau(\varphi,u)$ and convexity of $cl(\tilde B)_{H^1(\Omega, \mathbb R^k)}$. In other words, the minimum value occurs at the extreme points of the set.

For any $v = (v_1,v_2,\cdots,v_k)\in B$ but not in $cl(\tilde B)_{H^1(\Omega, \mathbb R^k)}$, denote 
$$\hat v_\ell   = \dfrac{e^{\frac{\tau}{2} \Delta} (\phi e^{\frac{\tau}{2} \Delta} v_\ell)}{ \|e^{\frac{\tau}{2} \Delta} (\phi e^{\frac{\tau}{2} \Delta} v_\ell)\|_2},$$ we have 
\begin{align}
& - \left\langle e^{\frac{\tau}{2} \Delta} (\phi e^{\frac{\tau}{2} \Delta} v_\ell) , v_\ell\right\rangle 
\geq - \left\langle e^{\frac{\tau}{2} \Delta} (\phi e^{\frac{\tau}{2} \Delta} v_\ell) , \hat v_\ell \right\rangle  \label{eq:concave1}
\end{align}
and thus
\begin{align}
 - \left\langle e^{\frac{\tau}{2} \Delta} (\phi e^{\frac{\tau}{2} \Delta} v_\ell) , v_\ell \right\rangle 
&\geq  - \left\langle e^{\frac{\tau}{2} \Delta} (\phi e^{\frac{\tau}{2} \Delta} v_\ell) , v_\ell \right\rangle  - 2\left\langle e^{\frac{\tau}{2} \Delta} (\phi e^{\frac{\tau}{2} \Delta} v_\ell) , \hat v_\ell-v_\ell \right\rangle   \nonumber \\
&\geq   - \left\langle e^{\frac{\tau}{2} \Delta} (\phi e^{\frac{\tau}{2} \Delta} \hat v_\ell) , \hat v_\ell \right\rangle. \label{eq:concave2}
\end{align}
The last inequality comes from the fact that the graph of the linearization of a concave functional locates above its graph.
We then have $E^\tau(\varphi,v)\geq E^\tau(\varphi,\hat v)$. This implies that the minimum value can always be attained in $cl(\tilde B)_{H^1(\Omega, \mathbb R^k)}$.
\end{proof}

The existence of $\varphi$ can be argued as follows.

\begin{theorem}[Existence of $\varphi$] Problem \eqref{relax2} admits at least one solution $\varphi \in K$.
\end{theorem}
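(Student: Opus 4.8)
The plan is to run the direct method of the calculus of variations on the joint problem $m:=\inf_{\varphi\in K}\inf_{u\in B}E^\tau(\varphi,u)$, exploiting that the coupling $\int_\Omega\varphi_\ell|e^{\frac{\tau}{2}\Delta}u_\ell|^2\ dx$ becomes continuous along weakly convergent sequences because $e^{\frac{\tau}{2}\Delta}$ is smoothing. First I would record that $m$ is finite: $E^\tau\ge 0$ by Lemma~\ref{lem1}(i), while evaluating $E^\tau$ at the explicit admissible pair $\varphi_\ell\equiv 1/k$, $u_\ell\equiv|\Omega|^{-1/2}$ gives a finite upper bound, so a minimizing sequence $(\varphi^n,u^n)\in K\times B$ exists. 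The set $K$ is nonempty, convex, bounded and strongly closed in $L^2(\Omega,\mathbb R^k)$, hence weakly closed (Mazur) and weakly sequentially compact; along a subsequence $\varphi^n\rightharpoonup\bar\varphi\in K$ weakly in $L^2$. Since $\|u^n_\ell\|_2=1$, the $u^n$ are bounded in $L^2$, so along a further subsequence $u^n\rightharpoonup\bar u$ weakly in $L^2$, with $\|\bar u_\ell\|_2\le 1$ by weak lower semicontinuity of the norm.

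The crucial step is passing to the limit in the energy. Writing $e^{\frac{\tau}{2}\Delta}v(x)=\int_\Omega G_{\tau/2}(x-y)v(y)\ dy=\langle G_{\tau/2}(x-\cdot),v\rangle$ with $\|G_{\tau/2}(x-\cdot)\|_2=\|G_{\tau/2}\|_2$ independent of $x$, weak convergence $u^n_\ell\rightharpoonup\bar u_\ell$ gives pointwise convergence $e^{\frac{\tau}{2}\Delta}u^n_\ell(x)\to e^{\frac{\tau}{2}\Delta}\bar u_\ell(x)$ together with the uniform bound $|e^{\frac{\tau}{2}\Delta}u^n_\ell(x)|\le\|G_{\tau/2}\|_2$; dominated convergence over the bounded set $\Omega$ then upgrades this to strong $L^2(\Omega)$ convergence (equivalently, $e^{\frac{\tau}{2}\Delta}$ is compact on $L^2(\Omega)$). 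Hence $|e^{\frac{\tau}{2}\Delta}u^n_\ell|^2\to|e^{\frac{\tau}{2}\Delta}\bar u_\ell|^2$ in $L^1(\Omega)$, and I would split
\begin{align*}
\int_\Omega\varphi^n_\ell|e^{\frac{\tau}{2}\Delta}u^n_\ell|^2\ dx-\int_\Omega\bar\varphi_\ell|e^{\frac{\tau}{2}\Delta}\bar u_\ell|^2\ dx
&=\int_\Omega\varphi^n_\ell\bigl(|e^{\frac{\tau}{2}\Delta}u^n_\ell|^2-|e^{\frac{\tau}{2}\Delta}\bar u_\ell|^2\bigr)\ dx\\
&\quad+\int_\Omega(\varphi^n_\ell-\bar\varphi_\ell)\,|e^{\frac{\tau}{2}\Delta}\bar u_\ell|^2\ dx ;
\end{align*}
the first term vanishes because $0\le\varphi^n_\ell\le 1$ and the difference tends to $0$ in $L^1$, and the second vanishes because $|e^{\frac{\tau}{2}\Delta}\bar u_\ell|^2\in L^\infty(\Omega)\subset L^2(\Omega)$ and $\varphi^n_\ell\rightharpoonup\bar\varphi_\ell$ weakly in $L^2$. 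Therefore $E^\tau(\varphi^n,u^n)\to E^\tau(\bar\varphi,\bar u)$, so $E^\tau(\bar\varphi,\bar u)=m$.

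Finally I would repair the possible loss of mass in the weak limit: set $u^\star_\ell=\bar u_\ell/\|\bar u_\ell\|_2$ when $\bar u_\ell\ne 0$ and $u^\star_\ell$ equal to any fixed unit element of $L^2(\Omega)$ otherwise, so that $u^\star\in B$. Each $u$-summand of $E^\tau$ is $-\frac{1}{\tau}\int_\Omega\bar\varphi_\ell|e^{\frac{\tau}{2}\Delta}u_\ell|^2\ dx\le 0$, and since $\|\bar u_\ell\|_2^{-2}\ge 1$ this summand can only decrease under the rescaling (in the case $\bar u_\ell=0$ it was $0$ and stays $\le 0$); hence $E^\tau(\bar\varphi,u^\star)\le E^\tau(\bar\varphi,\bar u)=m$, which forces equality. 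Thus $\inf_{u\in B}E^\tau(\bar\varphi,u)=m$, i.e. $\bar\varphi$ solves \eqref{relax2}.

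The main obstacle is conceptual rather than computational: $E^\tau$ is \emph{concave} in $u$ (Lemma~\ref{lem1}(iii)) and, bounded crudely, only jointly weakly upper semicontinuous, so the usual lower-semicontinuity route of the direct method is unavailable. The resolution is precisely the compactness/smoothing of the heat semigroup $e^{\frac{\tau}{2}\Delta}$, which turns the weak convergence of $u^n$ into \emph{strong} convergence of $e^{\frac{\tau}{2}\Delta}u^n$ and hence makes the offending quadratic coupling continuous along the minimizing sequence; once that is in hand, only the elementary normalization step remains, legitimate because each $u$-term of $E^\tau$ is nonpositive and scales monotonically under $u_\ell\mapsto u_\ell/\|u_\ell\|_2$.
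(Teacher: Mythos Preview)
Your argument is correct. You run the direct method on the \emph{joint} problem in $(\varphi,u)$, and the key ingredient---that $e^{\frac{\tau}{2}\Delta}$ maps weakly convergent $L^2$ sequences to strongly (even uniformly) convergent ones on the bounded domain $\Omega$---is exactly what is needed to make the bilinear coupling $\int_\Omega\varphi_\ell|e^{\frac{\tau}{2}\Delta}u_\ell|^2\,dx$ continuous along $(\varphi^n,u^n)$. The splitting into a term controlled by $\|\varphi^n_\ell\|_\infty\le 1$ times an $L^1$-small difference, plus a term handled by weak convergence of $\varphi^n$ against a fixed $L^2$ (indeed $L^\infty$) function, is clean, and the final renormalization step is legitimate for the reason you give.

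This is a genuinely different route from the paper's. The paper's proof works only with a minimizing sequence $\{\varphi^n\}\subset K$, invokes weak$^\ast$-$L^\infty$ sequential compactness, and passes to the limit in $\int_\Omega\varphi^n_\ell|e^{\frac{\tau}{2}\Delta}u_\ell|^2\,dx$ by pairing against the fixed $L^1$ function $|e^{\frac{\tau}{2}\Delta}u_\ell|^2$. As written, that argument treats $u$ as fixed and therefore really establishes only that $\varphi\mapsto E^\tau(\varphi,u)$ attains its infimum on $K$; the dependence of the inner infimum over $u\in B$ on $\varphi^n$ is not addressed. Your approach resolves this by carrying the joint sequence and exploiting the smoothing of the heat semigroup, at the cost of a slightly longer argument and an extra normalization step. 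The paper's approach is shorter and uses only the linearity of $E^\tau$ in $\varphi$, but your version is the one that actually closes the joint existence statement as formulated.
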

\begin{proof}
From Lemma~\ref{lem1}(i), we have 
\[\inf_{\varphi \in K} E^\tau(\varphi,u) \in [0,\frac{1}{\tau}].\]
Let $\{\varphi^n\}_{n=1}^\infty \in K$ be a minimizing sequence, from the weak$^\ast L^\infty(\Omega)$ sequential compactness of $K$, we have that there exists a subsequence, which we continue to denote by $\{\varphi^n\}_{n=1}^\infty$ and $\varphi^\star \in K$, such that $\varphi^n \overset{w^\ast- L^\infty}{\rightharpoonup}  \varphi^\star$. Then, because of the fact that $|e^{\frac{\tau}{2} \Delta} u_\ell|^2 \in L^1(\Omega,\mathbb R)$, we have \[\sum_{\ell \in [k]} \int_\Omega \varphi_\ell^n |e^{\frac{\tau}{2} \Delta} u_\ell|^2 \ dx \rightarrow \sum_{\ell \in [k]} \int_\Omega \varphi_\ell^\star |e^{\frac{\tau}{2} \Delta} u_\ell|^2 \ dx \]
implying the infimum attains at $\varphi^\star$.
\end{proof}

Furthermore, because of the form of the objective functional, one could arrive that at least one solution of $\varphi$ gives partition functions. In other words, there exists at least one solution of $\varphi$ whose entries are indicator functions of domains.

\begin{theorem}
Denote \[\tilde K = \{\varphi = (\varphi_1, \varphi_2, \cdots, \varphi_k)| \varphi_\ell: \Omega \rightarrow \{0,1\} \ {\rm measurable \  and} \ \sum_{\ell \in [k]} \varphi_\ell = 1 \ a.e. \ \Omega \} .\] 
\eqref{relax2}  admits at least one solution in $\tilde K$.
\end{theorem}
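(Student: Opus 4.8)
The plan is to start from an arbitrary minimizer $\varphi^\star \in K$, whose existence was just established, together with an associated minimizer $u^\star \in B$ of $E^\tau(\varphi^\star, \cdot)$, and to show that we can replace $\varphi^\star$ by an element of $\tilde K$ without increasing the energy. The key structural fact I would exploit is that, for \emph{fixed} $u$, the functional $\varphi \mapsto E^\tau(\varphi, u)$ is \emph{linear} (affine) in $\varphi$: indeed
\[
E^\tau(\varphi, u) = \frac{k}{\tau} - \sum_{\ell \in [k]} \frac{1}{\tau} \int_\Omega \varphi_\ell(x)\, f_\ell(x)\, dx,
\qquad f_\ell(x) := |e^{\frac{\tau}{2}\Delta} u_\ell(x)|^2 \ge 0.
\]
Thus minimizing over $\varphi \in K$ with $u$ frozen is a pointwise linear program: at a.e.\ $x \in \Omega$ we must, subject to $\varphi_\ell(x) \in [0,1]$ and $\sum_\ell \varphi_\ell(x) = 1$, put as much mass as possible on the indices $\ell$ maximizing $f_\ell(x)$. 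So I would \emph{define} a new $\hat\varphi$ by a measurable tie-breaking selection: for a.e.\ $x$, let $\ell^\star(x) := \min\{\ell : f_\ell(x) = \max_j f_j(x)\}$ and set $\hat\varphi_{\ell^\star(x)}(x) = 1$, all other components $0$. Measurability of $\ell^\star(\cdot)$ follows because each $f_\ell$ is continuous (in fact smooth, being $|e^{\frac{\tau}{2}\Delta}u_\ell|^2$ with $e^{\frac{\tau}{2}\Delta}u_\ell$ smooth), so the level sets defining $\ell^\star$ are Borel. By construction $\hat\varphi \in \tilde K \subseteq K$.

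Next I would verify the energy inequality. Since $\hat\varphi_\ell(x) \in \{0,1\}$ concentrates on the argmax, we have the pointwise bound $\sum_\ell \hat\varphi_\ell(x) f_\ell(x) = \max_j f_j(x) \ge \sum_\ell \varphi^\star_\ell(x) f_\ell(x)$ for a.e.\ $x$, using $\sum_\ell \varphi^\star_\ell(x) = 1$ and $\varphi^\star_\ell(x) \ge 0$. Integrating over $\Omega$ and using the definition of $E^\tau$ gives $E^\tau(\hat\varphi, u^\star) \le E^\tau(\varphi^\star, u^\star)$. Now combine this with the outer infimum: because $\hat\varphi \in K$ and $u^\star \in B$,
\[
\inf_{\varphi \in K} \inf_{u \in B} E^\tau(\varphi, u) \;\le\; \inf_{u \in B} E^\tau(\hat\varphi, u) \;\le\; E^\tau(\hat\varphi, u^\star) \;\le\; E^\tau(\varphi^\star, u^\star) \;=\; \inf_{\varphi \in K}\inf_{u \in B} E^\tau(\varphi, u),
\]
so equality holds throughout and $\hat\varphi \in \tilde K$ attains the infimum. (Equivalently: $\hat\varphi$ is a minimizer, and any minimizer of $E^\tau(\hat\varphi, \cdot)$ over $B$ — which exists by the Existence-of-$u$ theorem — yields a minimizing pair with $\varphi$-component in $\tilde K$.) This is essentially the standard ``concave functional on a convex set attains its minimum at an extreme point, and the extreme points of $K$ are exactly the $\{0,1\}$-valued configurations'' argument, made constructive.

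The main obstacle, and the one point deserving care, is the \emph{measurable selection / tie-breaking} on the set where the maximum of the $f_\ell$ is attained by more than one index and, more subtly, ensuring the resulting $\hat\varphi$ lies in $\tilde K$ with $\sum_\ell \hat\varphi_\ell = 1$ exactly a.e.\ — this is why I break ties deterministically (take the least index achieving the max) rather than splitting mass, which would leave $\hat\varphi$ non-extreme on the tie set. One should also note that on the ``tie set'' the energy is unchanged regardless of how ties are resolved, so the choice is harmless; the smoothness of $e^{\frac{\tau}{2}\Delta}u_\ell$ (a property already used in the Existence-of-$u$ proof) guarantees the $f_\ell$ are continuous, which both gives measurability of $\ell^\star$ and, incidentally, shows each $f_\ell \in L^1(\Omega)$ so all integrals above are finite. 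No compactness or weak-$\ast$ argument is needed here since we already have a minimizer $\varphi^\star$ in hand from the previous theorem.
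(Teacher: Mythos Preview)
Your proof is correct. Both you and the paper exploit the same structural fact---that $E^\tau(\varphi,u)$ is affine in $\varphi$ for fixed $u$---but you take a different, more direct route. The paper argues by contradiction via a first-order perturbation: assuming an optimal $\varphi$ has two components $\varphi_i,\varphi_j$ strictly between $0$ and $1$ on a set $A$ of positive measure, it perturbs mass between $\varphi_i$ and $\varphi_j$ on subsets $\tilde A\subset A$ and uses the vanishing of the first variation to conclude that $f_i=f_j$ a.e.\ on $A$, whence replacing the fractional values by $0$/$1$ leaves the energy unchanged. Your argument is constructive: you simply \emph{build} the $\{0,1\}$-valued competitor $\hat\varphi$ by the pointwise argmax selection (with a deterministic tie-break) and observe the elementary inequality $\sum_\ell \varphi^\star_\ell f_\ell \le \max_\ell f_\ell = \sum_\ell \hat\varphi_\ell f_\ell$. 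This is exactly the thresholding rule that appears later in the paper as \eqref{updatephi} and Lemma~\ref{phidecay}; in effect you are invoking that lemma one section early. Your approach is shorter and avoids the perturbation machinery, while the paper's first-order-condition argument makes more explicit \emph{why} non-extreme minimizers can only occur on tie sets where the energy is insensitive to the choice.
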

\begin{proof}
Assume $\varphi$ is an optimal solution and not in $\tilde K$. We assume there exists an $\epsilon>0$, a measurable set $A \subset \Omega$, and $i\neq j \in [k]$, such that $0<|A|<|\Omega|$ and  
\[\varphi_i(x), \varphi_j(x)  \in (\epsilon, 1-\epsilon) \ \ \ \forall x \in A.\]
Considering an arbitrary $\tilde A \subset A$ with a positive measure,
\[\psi_m(x,t) = \varphi_m(x)+ t(\delta_{m,i}-\delta_{m,j}) \chi_{\tilde A}(x) \]
for $m = 1, 2, \cdots, k$. Then we have 
\[\sum_m \psi_m(x,t) = 1\ \ {\rm and} \ \ \psi_m(x,t)\geq 0\]
for $t \in (-\epsilon, \epsilon)$ so that $\psi_m(\cdot,t) \in K$.
Then, we have 
\begin{align*}
\frac{dE^\tau(\varphi,u)}{dt} & =   \frac{1}{\tau} \int_\Omega \chi_{\tilde A} (|e^{\frac{\tau}{2} \Delta} u_j|^2 -  |e^{\frac{\tau}{2} \Delta} u_i|^2) \ dx.
\end{align*}

Because $\varphi$ is an optimal solution, the first order necessary condition gives that \[\int_\Omega \chi_{\tilde A} (|e^{\frac{\tau}{2} \Delta} u_j|^2 -  |e^{\frac{\tau}{2} \Delta} u_i|^2) \ dx = 0, \ \forall \tilde A \subset A {\rm \  with \ a \ positive \ measure}\]
implying that 
\[|e^{\frac{\tau}{2} \Delta} u_j(x)|^2 = |e^{\frac{\tau}{2} \Delta} u_j(x)|^2, \ a.e. {\rm \ in \ } A. \]
This immediately leads us that 
$\varphi_i(x) = 1 \ \forall x\in A$ or $\varphi_j(x) = 1 \ \forall x\in A$ can give the same value of $E^\tau(\varphi,u)$.

If there exists some $\tilde A \subset A$ with positive measure that 
\[\int_\Omega \chi_{\tilde A} (|e^{\frac{\tau}{2} \Delta} u_j|^2 -  |e^{\frac{\tau}{2} \Delta} u_i|^2) \ dx \neq 0.\]
It then quickly implies that $\psi_m(x,0)$ can not be an optimal solution which contradicts with the assumption. 
\end{proof}

Now, we focus on the situation on when $\varphi \in \tilde K$. For any entry in $\varphi$ denoted by $\phi$, when $\phi = \chi_A$ is the indicator function of a set $A \subset \Omega$ , one can intuitively treat $\lambda_1^\tau(\phi)$ as an approximation to $\lambda_1(A)$. 
The following two lemmas show that as $\tau$ becomes small the minimizer $u$ corresponding to $\lambda_1^\tau(\phi)$ becomes strongly localized on $A$ and the approximation is exact in the limit that $\tau \rightarrow 0$.

\begin{lemma}\label{lem:support}
Given $\phi = \chi_A$ where $A$ is a measurable set with positive measure and finite perimeter. Define
\[u^\tau : = \arg\min_{\substack{v \in L^2(\Omega, \mathbb R) \\ \|v\|_2=1}} - \int_\Omega \phi |e^{\frac{\tau}{2} \Delta} v|^2 \ dx.\]
Then, $u^\tau\geq 0, \ \forall x\in \Omega$ (or $u^\tau\leq 0, \ \forall x\in \Omega$)  and $\lim_{\tau\rightarrow 0}\int_\Omega (1-\phi) u^\tau \ dx= 0$.
\end{lemma}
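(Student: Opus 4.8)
The plan is to establish the two claims separately, exploiting the structure of the heat semigroup. For the sign claim, I would argue that $u^\tau$ can be taken nonnegative because replacing $u^\tau$ by $|u^\tau|$ does not decrease the objective. Precisely, since $e^{\frac{\tau}{2}\Delta}$ has a strictly positive kernel $G_{\tau/2}>0$, we have the pointwise bound $|e^{\frac{\tau}{2}\Delta} v|(x) = |G_{\tau/2}*v|(x) \le G_{\tau/2}*|v|(x) = e^{\frac{\tau}{2}\Delta}|v|(x)$. Hence $\int_\Omega \phi\, |e^{\frac{\tau}{2}\Delta} v|^2\,dx \le \int_\Omega \phi\, |e^{\frac{\tau}{2}\Delta}|v||^2\,dx$, and $\||v|\|_2 = \|v\|_2$, so $|v|$ is also admissible and is at least as good. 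Thus some minimizer is nonnegative. For uniqueness of sign (that the minimizer is either everywhere $\ge 0$ or everywhere $\le 0$), I would invoke strict positivity of the kernel together with the Euler--Lagrange equation $u^\tau = \mu\, e^{\frac{\tau}{2}\Delta}(\phi\, e^{\frac{\tau}{2}\Delta} u^\tau)$ for the Lagrange multiplier $\mu>0$: if $u^\tau \ge 0$ is a minimizer then the right-hand side, being a convolution of a nonnegative, not-identically-zero function against the strictly positive kernel, is strictly positive a.e., forcing $u^\tau>0$ a.e.; since the functional is even in $v$, the only other minimizers are $-u^\tau$.

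For the localization claim $\lim_{\tau\to 0}\int_\Omega (1-\phi)u^\tau\,dx = 0$, the idea is to exhibit a good competitor and then compare. Fix a ball $B_r(x_0)\subset A$ (possible since $|A|>0$ and, after adjusting $A$ up to a null set, $A$ has a point of density one; finite perimeter is used to make this clean). Take $v_\tau$ to be an appropriately $L^2$-normalized bump supported in $B_r(x_0)$ — for concreteness the first Dirichlet eigenfunction of $B_r(x_0)$ extended by zero — so that $e^{\frac{\tau}{2}\Delta} v_\tau$ concentrates its mass inside $A$ as $\tau\to 0$ and $\int_\Omega \phi\,|e^{\frac{\tau}{2}\Delta} v_\tau|^2\,dx \to 1$. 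This gives $-\int_\Omega \phi\,|e^{\frac{\tau}{2}\Delta} u^\tau|^2\,dx \le -\int_\Omega \phi\,|e^{\frac{\tau}{2}\Delta} v_\tau|^2\,dx \to -1$, so in fact $\int_\Omega \phi\,|e^{\frac{\tau}{2}\Delta} u^\tau|^2\,dx \to 1$. Combined with $\|u^\tau\|_2 = 1$ and $\|e^{\frac{\tau}{2}\Delta} u^\tau\|_2 \le 1$, this says that asymptotically all of the heat-diffused mass of $u^\tau$ sits on $A$, i.e. $\int_\Omega (1-\phi)\,|e^{\frac{\tau}{2}\Delta} u^\tau|^2\,dx \to 0$. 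The remaining task is to transfer this statement on $e^{\frac{\tau}{2}\Delta} u^\tau$ back to a statement on $u^\tau$ itself: from the Euler--Lagrange relation $u^\tau = \mu\, e^{\frac{\tau}{2}\Delta}(\phi\, e^{\frac{\tau}{2}\Delta} u^\tau)$ with $\mu = 1/\lambda_1^\tau$-type normalization, one writes $\int_\Omega (1-\phi) u^\tau\,dx = \mu \int_\Omega (1-\phi)\, e^{\frac{\tau}{2}\Delta}(\phi\, e^{\frac{\tau}{2}\Delta} u^\tau)\,dx = \mu\int_\Omega \phi\,(e^{\frac{\tau}{2}\Delta} u^\tau)\, e^{\frac{\tau}{2}\Delta}(1-\phi)\,dx$ after moving one semigroup across, and then estimates the last integral using Cauchy--Schwarz together with the fact that $e^{\frac{\tau}{2}\Delta}(1-\phi)$ is small in $L^2$ on $A$ (only a $O(\sqrt\tau\,\mathrm{Per}(A))$ boundary layer contributes, which is where the finite-perimeter hypothesis enters).

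The main obstacle I anticipate is the last transfer step: controlling $\mu$ (equivalently, keeping $\lambda_1^\tau(\phi)$ bounded, which follows from the competitor argument, but one must be careful that $\mu = \tau\lambda_1^\tau$ stays bounded above, or argue directly with the normalized eigenrelation so no dangerous factor of $1/\tau$ survives) and making rigorous the claim that $e^{\frac{\tau}{2}\Delta}\chi_{A^c}$ is $o(1)$ in $L^2(A)$ — this is precisely a heat-content estimate and is exactly what finite perimeter buys us, via $\|\chi_A\,e^{\frac{\tau}{2}\Delta}\chi_{A^c}\|_{L^1(\Omega)} = O(\sqrt\tau\,\mathrm{Per}(A))$. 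Everything else is soft functional analysis (positivity of the kernel, evenness and concavity of the functional from Lemma~\ref{lem1}, Cauchy--Schwarz). If a fully quantitative rate is not needed, one can alternatively run a compactness/contradiction argument: if $\int_\Omega(1-\phi)u^\tau\,dx \not\to 0$ along a subsequence, extract a weak limit and contradict the energy convergence $\int_\Omega\phi\,|e^{\frac{\tau}{2}\Delta}u^\tau|^2\,dx\to 1$ established above.
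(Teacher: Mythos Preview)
Your proposal is correct and shares the two essential ingredients with the paper's proof: the Euler--Lagrange eigenrelation $e^{\frac{\tau}{2}\Delta}(\phi\, e^{\frac{\tau}{2}\Delta} u^\tau) = \lambda\, u^\tau$ and the heat-content estimate for finite-perimeter sets. The non-negativity argument is identical (replace $v$ by $|v|$ and use positivity of the heat kernel).

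Where you differ is in the localization part. The paper derives the eigenrelation via a concavity argument (comparing $u^\tau$ to its one-step iterate $\hat u^\tau$ and showing equality must hold in the linearization inequality), then cites the Miranda et al.\ result $\sqrt{2\pi/\tau}\int (1-\phi)\,e^{\frac{\tau}{2}\Delta}(\phi\psi)\,dx \to \int_\Gamma \psi\,d\Gamma$ directly; it does not explicitly control the normalization constant $\nu^{-1}$, nor does it spell out the passage from $\hat u^\tau$ back to $u^\tau$. Your route is more self-contained on both counts: the competitor argument (any $L^2$-normalized $v$ supported in $A$ gives $\int_A |e^{\frac{\tau}{2}\Delta}v|^2\to 1$ by strong continuity, so $\lambda\to 1$ and $\mu=\lambda^{-1}$ stays bounded) handles the multiplier, and the self-adjointness + Cauchy--Schwarz step $\int(1-\phi)u^\tau\,dx = \mu\int_\Omega \phi\,(e^{\frac{\tau}{2}\Delta}u^\tau)\,e^{\frac{\tau}{2}\Delta}(1-\phi)\,dx$ is a clean way to invoke the heat-content bound $\|\chi_A\,e^{\frac{\tau}{2}\Delta}\chi_{A^c}\|_{L^1}=O(\sqrt\tau)$ (interpolating with the trivial $L^\infty$ bound to pass to $L^2$). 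One small caveat: a measurable $A$ of positive measure need not contain a ball, so instead of the first Dirichlet eigenfunction on $B_r(x_0)$ you should simply take $v=\chi_A/\|\chi_A\|_2$ (or any normalized $v$ supported in $A$); the competitor argument goes through unchanged.
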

\begin{proof}

Without loss of generality, we prove the non-negativity of $u^\tau$.  If not, we write $u^\tau = u^\tau_{+}-u^\tau_{-}$ where 

\[u^\tau_{+}(x) = \max\{0,u^\tau(x)\},  \ \ u^\tau_{-} =u^\tau_{+} - u^\tau. \]

Writing $\tilde u^\tau = u^\tau_{+}+u^\tau_{-}$, we have $\|\tilde u^\tau\|_2 = 1$, $e^{\frac{\tau}{2} \Delta} u^\tau_{+} \geq 0$,  and $e^{\frac{\tau}{2} \Delta} u^\tau_{-} \geq 0$. Thus, 
\[|e^{\frac{\tau}{2} \Delta} u^\tau (x)| \leq |e^{\frac{\tau}{2} \Delta} \tilde u^\tau (x)|  \]
and 
\[- \int_\Omega \phi |e^{\frac{\tau}{2} \Delta} u^\tau|^2 \ dx  \geq - \int_\Omega \phi |e^{\frac{\tau}{2} \Delta} \tilde u^\tau|^2 \ dx. \]
This implies the positivity of $u^\tau$.

We then prove $\lim_{\tau\rightarrow 0}\int_\Omega (1-\phi) u^\tau \ dx= 0$.

Because $\|u^\tau\|_2= 1$, we have 
\[ \|\phi e^{\frac{\tau}{2} \Delta}  u^\tau\|_2\leq 1  \ \ \ 
\textrm{and}\ \ \  \|e^{\frac{\tau}{2} \Delta} (\phi e^{\frac{\tau}{2} \Delta} u^\tau)\|_2\leq 1\]
using the fact that $\|e^{\frac{\tau}{2} \Delta} \| \leq 1$.

Then, similar to \eqref{eq:concave1} and \eqref{eq:concave2}, we have 
%
\begin{align*}
 - \left\langle e^{\frac{\tau}{2} \Delta} (\phi e^{\frac{\tau}{2} \Delta} u^\tau) , u^\tau \right\rangle 
&\geq   - \left\langle e^{\frac{\tau}{2} \Delta} (\phi e^{\frac{\tau}{2} \Delta} \hat u^\tau) , \hat u^\tau \right\rangle
\end{align*}
where 
\[\hat u^\tau = \dfrac{e^{\frac{\tau}{2} \Delta} (\phi e^{\frac{\tau}{2} \Delta} u^\tau)}{ \|e^{\frac{\tau}{2} \Delta} (\phi e^{\frac{\tau}{2} \Delta} u^\tau)\|_2}.\]
Because $u^\tau$ is the optimal solution, the following equality holds 
\begin{align*}
 - \left\langle e^{\frac{\tau}{2} \Delta} (\phi e^{\frac{\tau}{2} \Delta} u^\tau) , u^\tau \right\rangle  - 2\left\langle e^{\frac{\tau}{2} \Delta} (\phi e^{\frac{\tau}{2} \Delta} u^\tau) , \hat u^\tau-u^\tau \right\rangle  =   - \left\langle e^{\frac{\tau}{2} \Delta} (\phi e^{\frac{\tau}{2} \Delta} \hat u^\tau) , \hat u^\tau \right\rangle,
\end{align*}
which gives 
\[\left\langle \phi e^{\frac{\tau}{2} \Delta} (\hat u^\tau -u^\tau), e^{\frac{\tau}{2} \Delta} (\hat u^\tau -u^\tau) \right\rangle = 0.\]
That is, 
\[ e^{\frac{\tau}{2} \Delta}\hat u^\tau = e^{\frac{\tau}{2} \Delta}u^\tau  \ \  a.e. \ \ \textrm{in} \ \ A  \]
and  
\[ e^{\frac{\tau}{2} \Delta} (\phi e^{\frac{\tau}{2} \Delta} u^\tau) = e^{\frac{\tau}{2} \Delta} (\phi e^{\frac{\tau}{2} \Delta} \hat u^\tau) = \nu \hat u^\tau  \ \  \textrm{in} \ \ \Omega \]
where \[\nu = \|e^{\frac{\tau}{2} \Delta} (\phi e^{\frac{\tau}{2} \Delta} u^\tau )\|_2. \]
In \cite{miranda2007short}, when the perimeter or surface area of the set $A$ is finite and $\psi: \mathbb R^d \rightarrow \mathbb R$ is continuous with compact support, it was rigorously proved that 
\[\lim_{\tau\rightarrow 0} \sqrt{\frac{2\pi}{\tau}} \int_{\mathbb R^d} (1-\phi) e^{\frac{\tau}{2} \Delta}(\phi \psi)  \ dx = \int_\Gamma \psi  \ d\Gamma\]
where $\Gamma$ is the boundary of the set $A$.
It follows that
\[\lim_{\tau\rightarrow 0}\int_\Omega (1-\phi) e^{\frac{\tau}{2} \Delta}(\phi \hat \psi) \ dx = \lim_{\tau\rightarrow 0}\int_\Omega (1-\phi) \hat u^\tau  \ dx =0 \]
by denoting $\hat \psi = e^{\frac{\tau}{2} \Delta} \hat u^\tau$ and thus 
\[\lim_{\tau\rightarrow 0}\int_\Omega (1-\phi) u^\tau \ dx= 0.\]
\end{proof}

\begin{lemma}\label{lem:eigenvalue}
Given $A\subset \Omega$ an open set with positive measure, $\lambda_1^\tau(\chi_A) \leq \lambda_1(A)$ and $\lim_{\tau \rightarrow 0} \lambda_1^\tau(\chi_A)  = \lambda_1(A)$.
\end{lemma}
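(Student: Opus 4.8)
My plan is to prove the two assertions separately. For the upper bound $\lambda_1^\tau(\chi_A)\le\lambda_1(A)$, the idea is to use the minimizer of the original Rayleigh quotient as a competitor in the relaxed problem \eqref{relaxedmin}. Let $u\in H_0^1(A)$ with $\|u\|_2=1$ achieve $\lambda_1(A)=\int_A|\nabla u|^2\,dx$, extended by zero to all of $\Omega$. Since $\chi_A\le 1$, we have
\begin{equation*}
\lambda_1^\tau(\chi_A)\le \frac{1}{\tau}-\frac{1}{\tau}\int_\Omega \chi_A|e^{\frac{\tau}{2}\Delta}u|^2\,dx\le \frac{1}{\tau}-\frac{1}{\tau}\int_\Omega|e^{\frac{\tau}{2}\Delta}u|^2\,dx+\frac{1}{\tau}\int_\Omega(1-\chi_A)|e^{\frac{\tau}{2}\Delta}u|^2\,dx.
\end{equation*}
The first two terms combine, via the spectral representation of $e^{\frac{\tau}{2}\Delta}$ (or equivalently the elementary inequality $\tfrac{1-e^{-\tau s}}{\tau}\le s$), into a quantity bounded by $\int_\Omega|\nabla u|^2\,dx=\lambda_1(A)$; the trouble is the extra nonnegative term $\tfrac{1}{\tau}\int_\Omega(1-\chi_A)|e^{\frac{\tau}{2}\Delta}u|^2\,dx$, which is positive because heat flow leaks mass out of $A$. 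Here I would instead work on a free-space heat semigroup acting on the extension-by-zero of $u$ and use that $\int_{\mathbb R^d}|e^{\frac{\tau}{2}\Delta}u|^2\,dx=\sum_j e^{-\tau\mu_j}c_j^2$ where $\mu_j$ are the \emph{Dirichlet} eigenvalues of $A$ only if we use the Dirichlet heat semigroup on $A$; since the Dirichlet heat kernel on $A$ is pointwise dominated by the free-space one, $\int_A\chi_A|e^{\frac{\tau}{2}\Delta}u|^2$ (free-space) $\ge \int_A|e^{\frac{\tau}{2}\Delta}_A u|^2$ (Dirichlet), so $\lambda_1^\tau(\chi_A)\le \tfrac1\tau-\tfrac1\tau e^{-\tau\lambda_1(A)}\le\lambda_1(A)$ using $1-e^{-x}\le x$. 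This gives the clean inequality.

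For the limit, combine the upper bound just proved with a matching lower bound via $\liminf$. Write $v^\tau := u^\tau$ for the relaxed minimizer from Lemma~\ref{lem:support}; we know $v^\tau\ge 0$, $\|v^\tau\|_2=1$, and $\int_\Omega(1-\chi_A)v^\tau\,dx\to 0$. Set $w^\tau := e^{\frac{\tau}{2}\Delta}v^\tau$, so that $\lambda_1^\tau(\chi_A)=\tfrac1\tau(1-\int_\Omega\chi_A|w^\tau|^2\,dx)$. Using $\|w^\tau\|_2\le 1$ and $\int_\Omega|w^\tau|^2 = \|v^\tau\|_2^2 - (\|v^\tau\|_2^2-\|w^\tau\|_2^2)$, together with the identity
\begin{equation*}
\frac{1}{\tau}\Big(\|v^\tau\|_2^2-\|e^{\frac{\tau}{2}\Delta}v^\tau\|_2^2\Big)=\frac{1}{\tau}\int_0^{\tau}\Big\|\nabla e^{\frac{s}{2}\Delta}v^\tau\Big\|_2^2\cdot(\text{const})\,ds,
\end{equation*}
one sees $\lambda_1^\tau(\chi_A)$ controls a time-averaged Dirichlet energy of the heat-regularized $v^\tau$. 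Since $\lambda_1^\tau(\chi_A)$ is bounded (by the upper bound), extract a weak-$L^2$ limit $w^\tau\rightharpoonup w$ along a subsequence; the mass-escape estimate forces $w$ to be supported (in the $H_0^1$ sense) on $\bar A$, and lower semicontinuity of the Dirichlet energy gives $\liminf_{\tau\to 0}\lambda_1^\tau(\chi_A)\ge \int_A|\nabla w|^2\,dx\ge \lambda_1(A)\|w\|_2^2$. The final step is to argue $\|w\|_2=1$, i.e. no mass is lost in the weak limit, which follows because $\|v^\tau-e^{\frac{\tau}{2}\Delta}v^\tau\|_2\to 0$ would otherwise force $\lambda_1^\tau\to\infty$.

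The main obstacle is the lower bound: making rigorous the claim that the weak limit $w$ lies in $H_0^1(A)$ rather than merely in $H^1(\Omega)$ with $w=0$ a.e.\ outside $A$. The subtlety is that $L^2$-mass concentrating on $A$ does not by itself give the Dirichlet boundary condition; one needs the time-averaged gradient bound from finiteness of $\lambda_1^\tau(\chi_A)$ to promote weak $L^2$ convergence to weak $H^1$ convergence, and then to use the characterization of $H_0^1(A)$ for sets $A$ of finite perimeter. I expect to lean on the Miranda--Pallara--Paronetto--Preunkert estimate already cited in the proof of Lemma~\ref{lem:support} to quantify the boundary layer and conclude that the limiting function vanishes on $\partial A$ in the trace sense, so that it is an admissible competitor in \eqref{originalmin}. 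Once $w\in H_0^1(A)$ with $\|w\|_2=1$ is established, $\liminf\lambda_1^\tau(\chi_A)\ge\lambda_1(A)$ is immediate, and combined with the upper bound this yields $\lim_{\tau\to 0}\lambda_1^\tau(\chi_A)=\lambda_1(A)$.
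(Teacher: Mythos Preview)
Your route differs from the paper's on both halves. For the upper bound, the paper plugs the Dirichlet ground state $\tilde u$ of $A$ (extended by zero) into the eigenvalue reformulation $e^{\frac{\tau}{2}\Delta}(\chi_A e^{\frac{\tau}{2}\Delta}u)=\gamma u$ and asserts $e^{\frac{\tau}{2}\Delta}\tilde u=e^{-\lambda_1(A)\tau/2}\tilde u$; for the \emph{free-space} semigroup that identity fails, and your kernel-domination argument (Dirichlet heat kernel on $A$ $\le$ free-space kernel pointwise, applied to the nonnegative ground state) is the right way to reach $\int_\Omega\chi_A|e^{\frac{\tau}{2}\Delta}\tilde u|^2\ge e^{-\tau\lambda_1(A)}$ and hence $\lambda_1^\tau(\chi_A)\le\tfrac{1}{\tau}(1-e^{-\tau\lambda_1(A)})\le\lambda_1(A)$. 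The paper additionally proves $\tfrac{d}{d\tau}\lambda_1^\tau<0$, which secures existence of the limit as a byproduct; you bypass this by arguing with $\limsup/\liminf$, which is fine but loses that monotonicity statement. For the lower bound, the paper passes to a limit of $u^\tau$ via Lemma~\ref{lem:support} and compares with an auxiliary constrained problem, whereas you extract an $H^1$ bound on $w^\tau=e^{\frac{\tau}{2}\Delta}u^\tau$ from the energy (using that $t\mapsto\|\nabla e^{t\Delta}v\|_2^2$ is nonincreasing), invoke Rellich to get strong $L^2$ convergence and $\|w\|_2=1$, and finish by weak lower semicontinuity of the Dirichlet integral. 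Your argument is more standard and makes each step explicit.

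You correctly isolate the genuine obstacle, and it is one the paper also glosses over: from $w\in H^1(\Omega)$ with $w=0$ a.e.\ on $\Omega\setminus A$ one cannot, for an \emph{arbitrary} open set $A$, conclude $w\in H_0^1(A)$; these spaces can differ when $\partial A$ is irregular. Some boundary regularity (Lipschitz, segment condition, or stability in the Keldysh/Wiener sense) is needed for that identification. Lemma~\ref{lem:support}'s finite-perimeter hypothesis and the Miranda--Pallara--Paronetto--Preunkert estimate you plan to invoke do not by themselves yield the trace statement; they control heat leakage in $L^1$, not $H^{1/2}$ boundary behaviour. So strictly speaking both the paper's proof and your plan need an additional hypothesis on $A$ to close the lower bound. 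Flagging this is a strength of your write-up; just be aware that the perimeter estimate alone will not resolve it.
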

\begin{proof}
We first prove the boundedness of $\lambda_1^\tau(\chi_A)$. As shown in the proof of Lemma~\ref{lem:support}, $\lambda_1^\tau(\chi_A)$
can be written by 
\begin{align*}
\lambda_1^\tau(\chi_A) = &\min_\gamma \frac{1-\gamma}{\tau} \\
& \textrm{s.t.} \ \ e^{\frac{\tau}{2} \Delta} (\phi e^{\frac{\tau}{2} \Delta}  u) = \gamma u \ \ \textrm{in} \ \  \Omega, \\
&   \ \ \ \ \ \ \|u\|_2=1. 
\end{align*}

Denote by $\tilde u$ the unit eigenfunction corresponding to the first eigenvalue $\lambda_1(A)$ of $-\Delta$ in $A$ with Dirichlet boundary condition and extended by $0$ in $\Omega\setminus A$;
\begin{align*} 
-\Delta \tilde u = \lambda_1(A) \tilde u  \ \ \textrm{in} \ \ A,  \\
 \tilde u= 0 \ \  \textrm{in} \ \ \Omega \setminus A.
\end{align*}
Direct calculation yields $e^{\frac{\tau}{2} \Delta}  \tilde u = e^{-\lambda_1(A) \tau/2} \tilde u $ which is still supported in $A$. Hence, 
\[e^{\frac{\tau}{2} \Delta} (\phi e^{\frac{\tau}{2} \Delta}  \tilde u) =e^{\frac{\tau}{2} \Delta}(e^{-\lambda_1(A) \tau/2} \tilde u ) = e^{-\lambda_1(A) \tau} \tilde u.\]
That is, $\tilde u$ satisfies the constraint with $\gamma = e^{-\lambda_1(A) \tau}$. It implies that 
\[\lambda_1^\tau(\chi_A)  \leq \frac{1-e^{-\lambda_1(A) \tau}}{\tau} \leq \lambda_1(A).\]
In addition, straightforward calculation yields,
\begin{align*}
\dfrac{d \lambda^\tau_1(\phi)}{d\tau} = & -\frac{1}{\tau^2} +\frac{1}{\tau^2}\langle \phi e^{\frac{\tau}{2}\Delta} u, e^{\frac{\tau}{2}\Delta} u \rangle - \frac{1}{\tau}\langle \phi e^{\frac{\tau}{2}\Delta} u, e^{\frac{\tau}{2}\Delta} \Delta u \rangle \\
\leq&  -\frac{1}{\tau^2} +\frac{1}{\tau^2}\langle \phi e^{\frac{\tau}{2}\Delta} u, e^{\frac{\tau}{2}\Delta} (I - \tau\Delta ) u \rangle \\
\leq &-\frac{1}{\tau^2} +\frac{1}{\tau^2}\langle e^{\frac{\tau}{2}\Delta} u, e^{\frac{\tau}{2}\Delta} (I -\tau \Delta ) u \rangle \\
= & -\frac{1}{\tau^2} +\frac{1}{\tau^2} \langle u, e^{\tau\Delta} (I -\tau \Delta ) u \rangle  \leq -\frac{1}{\tau^2} +\frac{1}{\tau^2} \|e^{\tau\Delta} (I -\tau \Delta )\| \leq 0. 
\end{align*}
It is easy to check that the equality holds only when ${\rm supp}(e^{\frac{\tau}{2}\Delta} u) \subset A$, $e^{\frac{\tau}{2}\Delta} (I - \tau\Delta ) u \subset A$ and $u$ is a constant function, which can not happen. Hence $\frac{d \lambda^\tau_1(\phi)}{d\tau} <0$.
Consequently, $\lim_{\tau\rightarrow 0} \lambda_1^\tau(\chi_A) $ exists and $\lim_{\tau\rightarrow 0} \lambda_1^\tau(\chi_A) \leq \lambda_1(A)$.

For the reverse inequality, for the sequence $$u^\tau = \arg\min_{\substack{v \in L^2(\Omega, \mathbb R) \\ \|v\|_2=1}} \frac{1}{\tau}- \frac{1}{\tau}\int_\Omega \phi |e^{\frac{\tau}{2} \Delta} v|^2 \ dx , $$ Lemma~\ref{lem:support} implies that there exists a $\tilde u$ such that $u^\tau \rightarrow \tilde u$ after possibly passing to a subsequence and $\int_\Omega (1-\phi) \tilde u \ dx= 0$. We then observe that $\tilde u$ is in the admissible set of the following minimization problem
\[\min_{\substack{v \in L^2(\Omega, \mathbb R) \\  \|v\|_2=1 \\ \int_\Omega (1-\phi) v  dx= 0}} \frac{1}{\tau}- \frac{1}{\tau}\int_\Omega |e^{\frac{\tau}{2} \Delta} v|^2 \ dx\] 
which indicates that 
\[\lim_{\tau\rightarrow 0} \lambda_1^\tau(\chi_A) \geq \lim_{\tau\rightarrow 0} \frac{1-e^{-\lambda_1(A)\tau}}{\tau} = \lambda_1(A).\]
\end{proof}

\begin{figure}[ht]
\centering
\includegraphics[width = 0.9\textwidth, clip, trim = 30cm 5cm 15cm 8cm]{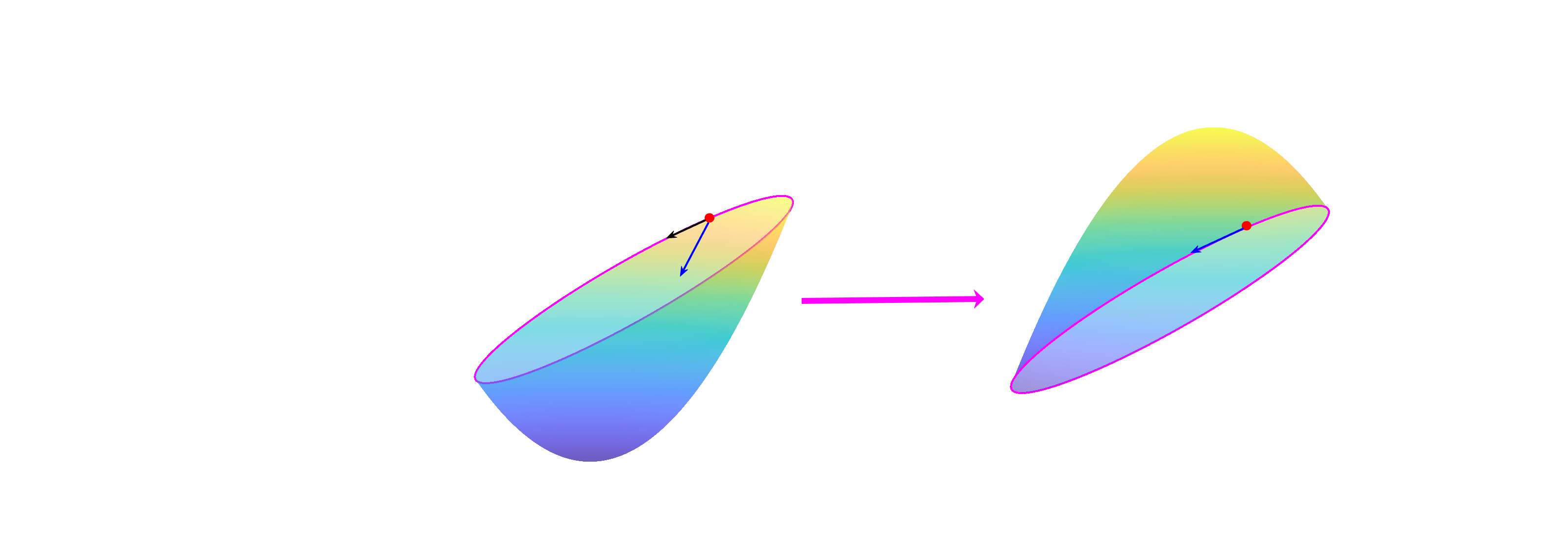} 
\caption{A diagram for the main idea on approximating a convex energy using a concave energy by keeping values on the constraint set. See Section~\ref{sec:relaxedproblem}.}\label{fig:diagconvex2concave}
\end{figure}

In Figure~\ref{fig:diagconvex2concave}, we give a simple demonstration of the main idea on approximating a convex energy using a concave energy by keeping values on the constraint set. Considering the constraint set being a circle, for a convex energy as shown in the left, one could calculate the gradient direction which makes the iteration moves inside the convex hull of the constraint set, then an artificial projection to the circle is necessary which may make the energy either increasing or decreasing slowly. However, the optimization is essentially only on the boundary. If one can find a relaxation to a concave approximation as shown in the right,  the problem can immediately be relaxed to minimizing a concave functional on a convex set, this can be done efficiently by linear sequential programming owing to the fact that the graph of a concave functional always locates under its linearization. Here, we assume the constraint set has a very important feature: the constraint set is exactly the extreme set of its convex hull. Many algorithms can be connected to this idea, for example, the threshold dynamics method \cite{esedoglu2015threshold} whose constraint set is $\{0,1\}^n$ which are the extreme points of its convex hull $[0,1]^n$. It has been applied into many interface related problems such as wetting dynamics \cite{WWX,Xu_2017}, image segmentation \cite{Ma_2021,Wang_2017,wang2019iterative},  foam bubbles \cite{Wang_2019b} and surface reconstruction \cite{Wang_2021}. Some other related work on diffusion generated methods for target-valued harmonic maps can be referred to \cite{Osting_2019,Osting_2020, WOW} where the constraint set is $n\times n$ orthogonal matrix group $O(n)$.

\section{Derivation of the algorithm}\label{sec:derivation}
In this section, for any fixed $k$, we focus on the numerical method for the following minimization problem 
\[\min_{\substack{\varphi\in K\\ u \in B}}  E^{\tau}(\varphi, u)  =\frac{k}{\tau}-\sum_{\ell \in [k]} \frac{1}{\tau}\int_\Omega \varphi_\ell |e^{\frac{\tau}{2} \Delta} u_\ell|^2 \ dx .
\]
We simply use an alternating direction method of minimization to minimize the energy functional with respect to $u = (u_1,u_2,\cdots, u_k)$ and $\varphi = (\varphi_1,\varphi_2, \cdots, \varphi_k)$. To be specific, start with an initial guess $u^0$, we compute the sequence 
\[\varphi^0, u^1, \varphi^1, u^2, \varphi^2, \cdots, \varphi^n, u^n, \cdots\]
by 
\begin{align}
\varphi^n = \min_{\varphi \in K}  E^{\tau}(\varphi, u^n)  \label{probphi}\\
u^{n+1} = \min_{ u \in B} E^{\tau}(\varphi^n, u) \label{probu}
\end{align}

To solve \eqref{probphi}, it's easy to see that the problem is linear in $\varphi_\ell$ and it can be solved in a pointwise manner by a simple comparison among $k$ values at any $x\in \Omega$. That is, for each $x$, 
\begin{equation} \label{updatephi}
\varphi_{\ell}^{n+1}(x) = \begin{cases} 1 & \ \ {\rm if}  \ \  \ell = \min\{\arg\max_{i \in [k]} |e^{\frac{\tau}{2} \Delta} u_\ell^n(x)|^2\}, \\
0 & \ \  {\rm otherwise}.
\end{cases}
\end{equation}

We then quickly have the following lemma by a direct calculation.

\begin{lemma} \label{phidecay}
The $\varphi^{n+1}$ computed using \eqref{updatephi} satisfies 
\[E_k^{\tau}(\varphi^{n+1}, u^n) \leq E_k^{\tau}(\varphi, u^n), \ \ \forall \varphi\in K.\]
\end{lemma}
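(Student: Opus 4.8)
The plan is to unwind the definition of $E^\tau$ and observe that the only dependence on $\varphi$ is through the single sum $-\frac{1}{\tau}\sum_{\ell\in[k]}\int_\Omega \varphi_\ell\,|e^{\frac{\tau}{2}\Delta}u_\ell^n|^2\,dx$, so minimizing $E^\tau(\varphi,u^n)$ over $\varphi\in K$ is equivalent to maximizing $\sum_{\ell\in[k]}\int_\Omega \varphi_\ell\,w_\ell^n\,dx$ over $\varphi\in K$, where I write $w_\ell^n(x) := |e^{\frac{\tau}{2}\Delta}u_\ell^n(x)|^2\ge 0$ for brevity. Since $K$ consists of functions with $\varphi_\ell(x)\in[0,1]$ and $\sum_\ell \varphi_\ell(x)=1$ a.e., the integral decouples pointwise: for each fixed $x$, the expression $\sum_{\ell}\varphi_\ell(x)w_\ell^n(x)$ is a convex combination of the numbers $\{w_1^n(x),\dots,w_k^n(x)\}$, hence is bounded above by $\max_{i\in[k]} w_i^n(x)$.

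Next I would check that the particular choice \eqref{updatephi} attains this pointwise upper bound. Indeed, with $\varphi^{n+1}$ defined by \eqref{updatephi}, for a.e.\ $x$ we have $\sum_\ell \varphi_\ell^{n+1}(x)w_\ell^n(x) = w_{\ell^\star(x)}^n(x) = \max_{i\in[k]} w_i^n(x)$, where $\ell^\star(x)$ is the (smallest) index achieving the maximum. The tie-breaking rule $\ell=\min\{\arg\max_i \dots\}$ guarantees $\varphi^{n+1}$ is single-valued and measurable (the set where $w_i^n = w_j^n$ is measurable, and $\varphi_\ell^{n+1}$ is a finite Boolean combination of the measurable sets $\{w_\ell^n > w_j^n\}$ and $\{w_\ell^n = w_j^n\}$), so $\varphi^{n+1}\in K$ — in fact $\varphi^{n+1}\in\tilde K$. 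Integrating the pointwise identity and the pointwise inequality over $\Omega$ then gives, for every $\varphi\in K$,
\[
\sum_{\ell\in[k]}\int_\Omega \varphi_\ell\, w_\ell^n\,dx \ \le\ \int_\Omega \max_{i\in[k]} w_i^n\,dx \ =\ \sum_{\ell\in[k]}\int_\Omega \varphi_\ell^{n+1}\, w_\ell^n\,dx.
\]
Multiplying by $-\frac{1}{\tau}<0$ and adding $\frac{k}{\tau}$ reverses the inequality and yields $E^\tau(\varphi^{n+1},u^n)\le E^\tau(\varphi,u^n)$ for all $\varphi\in K$, which is the claim.

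There is essentially no hard part here; this is the routine "minimization over a simplex is achieved by picking the best coordinate" argument. The only point requiring a small amount of care is the measurability of $\varphi^{n+1}$ produced by the $\arg\max$-with-tie-breaking rule, and the integrability of $\max_i w_i^n$ (which follows from each $w_\ell^n\in L^1(\Omega)$, itself a consequence of $\|e^{\frac{\tau}{2}\Delta}u_\ell^n\|_2\le\|u_\ell^n\|_2=1$, as used repeatedly in Lemma~\ref{lem1}); both are genuinely minor. I would present the proof in two or three lines: state the reduction to the pointwise problem, invoke $\sum_\ell\varphi_\ell(x)w_\ell^n(x)\le\max_i w_i^n(x)$ with equality for $\varphi^{n+1}$, and integrate.
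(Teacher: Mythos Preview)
Your proof is correct and matches the paper's approach: the paper simply states that the lemma follows ``by a direct calculation'' and gives no further detail, so your pointwise convex-combination argument is exactly the computation being gestured at. If anything, your write-up is more careful (measurability, integrability) than the paper requires.
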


To solve \eqref{probu}, we note that the functional is quadratic and strictly concave with respect to $u_\ell$. Furthermore, we note that $u_\ell$ are individual to each other, hence \eqref{probu} can be further relaxed to update $u_\ell$ independently,
\begin{align}
u_\ell^{n+1} = \min_{ \substack{u_\ell\in L^2(\Omega,\mathbb R)\\  \|u_\ell\|_2\leq 1}} \frac{1}{\tau}-\int_\Omega \frac{1}{\tau}\varphi_\ell^n |e^{\frac{\tau}{2} \Delta} u_\ell|^2 \ dx. \label{proburelaxed}
\end{align}

The following lemma shows the equivalence between problem \eqref{probu} and problem \eqref{proburelaxed}.
\begin{lemma} 
\[\min_{ u \in B} E_k^{\tau}(\varphi^n, u)  =\min_{ \substack{ u\in L^2(\Omega,\mathbb R^k)\\ \|u_\ell\|_2\leq 1}}E_k^{\tau}(\varphi^n, u) . \]
\end{lemma}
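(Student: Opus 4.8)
The plan is to show both inequalities between the two minimization problems. The inequality $\min_{u\in B} E_k^{\tau}(\varphi^n,u) \ge \min_{\|u_\ell\|_2\le 1} E_k^{\tau}(\varphi^n,u)$ is immediate, since $B\subset\{u\in L^2(\Omega,\mathbb R^k)\colon \|u_\ell\|_2\le 1\}$, so enlarging the feasible set can only lower the infimum. The content is therefore the reverse inequality: any minimizer over the relaxed ball must in fact satisfy $\|u_\ell\|_2=1$ for every $\ell$, so that it already lies in $B$ and achieves the same value.

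For the reverse direction, first observe that the relaxed problem decouples across $\ell$, as noted in \eqref{proburelaxed}: minimizing $E_k^{\tau}(\varphi^n,u)$ over $\|u_\ell\|_2\le 1$ is equivalent to independently solving, for each $\ell\in[k]$, the problem $\min_{\|u_\ell\|_2\le 1} \bigl(\tfrac1\tau - \tfrac1\tau\int_\Omega \varphi_\ell^n |e^{\frac{\tau}{2}\Delta} u_\ell|^2\,dx\bigr)$. Fix such an $\ell$ and let $u_\ell$ be a minimizer with $\|u_\ell\|_2 = c \le 1$. If $c = 0$ the objective equals $\tfrac1\tau$, which is clearly not optimal (e.g. the test function $\tilde u$ built from the first Dirichlet eigenfunction of any ball inside $\{\varphi_\ell^n=1\}$ gives a strictly smaller value, as in the proof of Lemma~\ref{lem:eigenvalue}), so $c>0$. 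If $c<1$, set $v_\ell = u_\ell/c$, so $\|v_\ell\|_2 = 1$; then
\[
-\frac1\tau\int_\Omega \varphi_\ell^n |e^{\frac{\tau}{2}\Delta} v_\ell|^2\,dx = -\frac{1}{\tau c^2}\int_\Omega \varphi_\ell^n |e^{\frac{\tau}{2}\Delta} u_\ell|^2\,dx < -\frac1\tau\int_\Omega \varphi_\ell^n |e^{\frac{\tau}{2}\Delta} u_\ell|^2\,dx,
\]
because $1/c^2 > 1$ and $\int_\Omega \varphi_\ell^n |e^{\frac{\tau}{2}\Delta} u_\ell|^2\,dx > 0$ (it is positive since $u_\ell\ne 0$ and, by Lemma~\ref{lem:support}, one may take $u_\ell\ge 0$ so that $e^{\frac{\tau}{2}\Delta}u_\ell > 0$ everywhere, while $\varphi_\ell^n = \chi_A$ with $|A|>0$). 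This strictly decreases the objective, contradicting optimality of $u_\ell$. Hence every minimizer has $\|u_\ell\|_2 = 1$ for all $\ell$, i.e. it lies in $B$, and therefore $\min_{\|u_\ell\|_2\le 1} E_k^{\tau}(\varphi^n,u) \ge \min_{u\in B} E_k^{\tau}(\varphi^n,u)$. Combining the two inequalities gives the claim.

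The only genuine obstacle is verifying that $\int_\Omega \varphi_\ell^n |e^{\frac{\tau}{2}\Delta} u_\ell|^2\,dx>0$ at a relaxed minimizer, which is what rules out the degenerate rescaling argument failing. This follows from the concavity/strict-concavity already recorded in Lemma~\ref{lem1}(iii): the objective is strictly concave in $u_\ell$, so its minimum over the ball $\{\|u_\ell\|_2\le 1\}$ is attained at an extreme point of the ball, namely on the sphere $\|u_\ell\|_2=1$, and it cannot be the trivial function since the value there, $1/\tau$, is dominated by the eigenfunction test function from Lemma~\ref{lem:eigenvalue}; alternatively one invokes the rescaling computation above directly. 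Either route closes the argument, and the remaining steps are the routine set-inclusion inequality and the decoupling already justified before the lemma.
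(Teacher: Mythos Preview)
Your proof is correct and follows essentially the same route as the paper: rescale a putative minimizer with $\|u_\ell\|_2=c<1$ to the unit sphere and observe that the energy strictly drops because $\int_\Omega \varphi_\ell^n |e^{\frac{\tau}{2}\Delta} u_\ell|^2\,dx>0$ at a minimizer. You supply more justification for that positivity (and mention the concavity/extreme-point alternative), whereas the paper simply asserts it is ``impossible for a minimizer,'' but the argument is the same.
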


\begin{proof}
If $c=  \|u_\ell\|_2 <1$ for some $\ell \in [k]$, write $\tilde u = (u_1,u_2, \cdots, u_\ell/c, \cdots, u_k)$, we have
\[E^\tau_k(\varphi^n,\tilde u) - E^\tau_k(\varphi^n,u) = -\frac{1-c^2}{c^2} \frac{1}{\tau}\int_\Omega \varphi_\ell |e^{\frac{\tau}{2} \Delta} u_\ell|^2 \ dx \leq 0\]
where the equality holds only when 
\[\int_\Omega \varphi_\ell |e^{\frac{\tau}{2} \Delta} u_\ell|^2\ dx = 0,\]
which is impossible for a minimizer.
Hence, the minimizer for  
\[\min_{ \substack{ u\in L^2(\Omega,\mathbb R^k) \\ \|u_\ell\|_2\leq 1}}E_k^{\tau}(\varphi^n, u)\]
can be attained in $B$.
\end{proof}

Then, problem \eqref{proburelaxed} can be solved by the sequential linear programming approach. That is, we consider
\begin{align}\label{linearprob}
u_\ell^{n+1} = \arg\min_{\substack{u\in L^2(\Omega,\mathbb R^k)\\ \|u_\ell\|_2\leq 1} } L_{\tau}^{u^n_\ell} (u)\end{align}
where \[L_{\tau}^{u^n_\ell} (u) = E_k^{\tau}(\varphi^n, u^n)+\sum_{\ell \in [k]}\left \langle u_\ell-u_\ell^n, -\frac{2}{\tau} e^{\frac{\tau}{2}\Delta} (\varphi_\ell^n e^{\frac{\tau}{2}\Delta} u_\ell^n)\right\rangle.\]

The following lemma shows that the minimization problem \eqref{linearprob} can then be done by a simple projection step.
\begin{lemma}\label{lem:linear}
The minimum value of problem \eqref{linearprob} attains at 
\[u_\ell^{n+1} = \dfrac{e^{\frac{\tau}{2}\Delta} (\varphi_\ell^n e^{\frac{\tau}{2}\Delta} u_\ell^n)}{\|e^{\frac{\tau}{2}\Delta} (\varphi_\ell^n e^{\frac{\tau}{2}\Delta} u_\ell^n)\|_2}.\]
\end{lemma}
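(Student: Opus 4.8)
The plan is to minimize the linear functional $L_{\tau}^{u^n_\ell}(u)$ over the constraint set $\{u : \|u_\ell\|_2 \leq 1\}$ directly, observing that the problem decouples across components $\ell$ and that only the $\ell$-th summand actually depends on $u_\ell$. Since the constant $E_k^\tau(\varphi^n,u^n)$ and the terms $-\langle u_\ell^n, -\tfrac{2}{\tau}e^{\frac{\tau}{2}\Delta}(\varphi_\ell^n e^{\frac{\tau}{2}\Delta}u_\ell^n)\rangle$ do not involve the optimization variable, minimizing $L_{\tau}^{u^n_\ell}$ amounts, for each $\ell$ independently, to minimizing the linear functional
\[
u_\ell \mapsto \left\langle u_\ell, -\frac{2}{\tau} e^{\frac{\tau}{2}\Delta}(\varphi_\ell^n e^{\frac{\tau}{2}\Delta} u_\ell^n) \right\rangle
\]
subject to $\|u_\ell\|_2 \leq 1$.

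First I would set $w_\ell := e^{\frac{\tau}{2}\Delta}(\varphi_\ell^n e^{\frac{\tau}{2}\Delta} u_\ell^n)$, so that the objective for each component is $-\tfrac{2}{\tau}\langle u_\ell, w_\ell\rangle$. The key step is then the Cauchy--Schwarz inequality: for any $u_\ell$ with $\|u_\ell\|_2 \leq 1$,
\[
-\frac{2}{\tau}\langle u_\ell, w_\ell\rangle \geq -\frac{2}{\tau}\|u_\ell\|_2 \|w_\ell\|_2 \geq -\frac{2}{\tau}\|w_\ell\|_2,
\]
with equality in the first inequality precisely when $u_\ell$ is a nonnegative multiple of $w_\ell$, and in the second when $\|u_\ell\|_2 = 1$. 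Hence the minimizer is $u_\ell^{n+1} = w_\ell / \|w_\ell\|_2$, which is exactly the claimed formula. I would note that $w_\ell \neq 0$ (otherwise the objective would vanish identically and $u^n_\ell$ could not have been a genuine minimizer in the previous step, or more simply: $e^{\frac{\tau}{2}\Delta}$ is injective on $L^2$ and $\varphi_\ell^n e^{\frac{\tau}{2}\Delta}u_\ell^n$ is not identically zero since $\varphi^n$ partitions $\Omega$), so the normalization is well-defined.

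I expect no serious obstacle here; the only point requiring a line of care is arguing that $w_\ell = e^{\frac{\tau}{2}\Delta}(\varphi_\ell^n e^{\frac{\tau}{2}\Delta} u_\ell^n)$ does not vanish, so that the denominator is nonzero and the argmin is attained. This follows because $e^{\frac{\tau}{2}\Delta}$ (convolution with a strictly positive Gaussian, or extension thereof) is injective, and $\varphi_\ell^n e^{\frac{\tau}{2}\Delta} u_\ell^n$ vanishes identically only if $e^{\frac{\tau}{2}\Delta} u_\ell^n$ vanishes on the support of $\varphi_\ell^n$; since $\sum_\ell \varphi_\ell^n = 1$ a.e.\ and $e^{\frac{\tau}{2}\Delta}u_\ell^n$ is real-analytic and not identically zero (as $\|u_\ell^n\|_2 = 1$), its zero set has measure zero, so $\varphi_\ell^n e^{\frac{\tau}{2}\Delta}u_\ell^n \not\equiv 0$. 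With that established, the Cauchy--Schwarz argument above gives the result immediately.
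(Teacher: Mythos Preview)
Your proposal is correct and follows essentially the same route as the paper: drop the constant terms so that the problem reduces, componentwise, to minimizing the linear form $\langle u_\ell,-\tfrac{2}{\tau} e^{\frac{\tau}{2}\Delta}(\varphi_\ell^n e^{\frac{\tau}{2}\Delta}u_\ell^n)\rangle$ over the unit ball, then solve by projection. The paper simply says ``a direct consequence of the constraint $\|u_\ell\|_2=1$ and a simple projection,'' whereas you spell out the Cauchy--Schwarz step and add a justification that the denominator is nonzero; this extra care is welcome but not a different argument.
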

\begin{proof}
By rewriting \eqref{linearprob} and dropping constant terms with respect to $u$, problem \eqref{linearprob} is equivalent to 
\[u_\ell^{n+1} =  \arg\min_{\substack{u_\ell\in L^2(\Omega,\mathbb R^k) \\ \|u_\ell\|_2\leq 1} }\left \langle u_\ell, -\frac{2}{\tau} e^{\frac{\tau}{2}\Delta} (\varphi_\ell^n e^{\frac{\tau}{2}\Delta} u_\ell^n)\right\rangle.\]
This is a direct consequence of the constraint $\|u_\ell\|_2=1$ and a simple projection.
\end{proof}

When updating $u$, one can simply iterate one step to find a solution giving a smaller value in problem \eqref{proburelaxed} or iterate to a stationary solution of $u$ for fixed $\varphi^n$ before updating $\varphi^{n+1}$. These are summarized into the following two algorithms ({\it i.e.}, Algorithm~\ref{alg1} and Algorithm~\ref{alg2}), respectively.

\begin{algorithm}[ht!]
\begin{algorithmic}
\State {\bf Input:} {Let $\Omega$ be a given domain, $\tau > 0$, $tol>0$, and $u^0 \in B$.} 
\State {\bf Output:} {$\varphi^n$: representing the partition}
\State Set $s=1$\;
 \While{$\|\varphi^{s+1} -\varphi^{s}\|_2\geq tol$ \ }
 
 {\bf 1.  Diffusion Step.} Compute $u^*_\ell = e^{\frac{\tau}{2}\Delta} u^s_\ell$. 

{\bf 2. Update $\varphi$.} Update $\varphi$ by:
\begin{align*}
\varphi^s_\ell(x) = \begin{cases} 1 & \ \ {\rm if}  \ \  \ell = \min\{\arg\max_{i \in [k]} |u^*_\ell(x)|^2\}, \\
0 & \ \  {\rm otherwise}.
\end{cases}
\end{align*}

{\bf 3. Update $u$.} Update $u$ by:
\begin{align*}
u^{s+1}_\ell = \dfrac{e^{\frac{\tau}{2}\Delta} (\varphi^s_\ell e^{\frac{\tau}{2}\Delta} u_\ell^s)}{\|e^{\frac{\tau}{2}\Delta} (\varphi^s_\ell e^{\frac{\tau}{2}\Delta} u_\ell^s)\|_2}
\end{align*}
Set $s = s+1$\;

 \EndWhile
 \end{algorithmic}
\caption{An iterative method for approximating the solution of problem \eqref{relax2}. } 
\label{alg1}
\end{algorithm}

\begin{algorithm}[ht!]
\begin{algorithmic}
 \State {\bf Input:}  {Let $\Omega$ be a given domain, $\tau > 0$, $tol>0$, and $u^0 \in B$.}
 \State {\bf Output:}  {$\varphi^n$: representing the partition}
 \State Set $s=0$\;
 \While{$\|\varphi^{s+1} -\varphi^{s}\|_2\geq tol$ \  }
 
 \State {\bf 1.  Diffusion Step.} Compute $u^*_\ell = e^{\frac{\tau}{2}\Delta} u^s_\ell$. 

\State {\bf 2. Update $\varphi$.} Update $\varphi$ by:
\begin{align*}
\varphi^s_\ell(x) = \begin{cases} 1 & \ \ {\rm if}  \ \  \ell = \min\{\arg\max_{i \in [k]} |u^*_\ell(x)|^2\}, \\
0 & \ \  {\rm otherwise}.
\end{cases}
\end{align*}

\State {\bf 3. Update $u$.}  Set $\hat u^{s,0} = u^{s}$ and $m = 0$, \While{$\|\hat u^{s,m+1} -\hat u^{s,m}\|\geq tol$}
\begin{align*}
\hat u^{s,m+1}_\ell = \dfrac{e^{\frac{\tau}{2}\Delta} (\varphi^s_\ell e^{\frac{\tau}{2}\Delta} \hat u_\ell^{s,m})}{\|e^{\frac{\tau}{2}\Delta} (\varphi^s_\ell e^{\frac{\tau}{2}\Delta} \hat u_\ell^{s,m})\|_2}.
\end{align*}

\State Set $m=m+1$.

\EndWhile

\State Set $u^{s+1} = \hat u^{s,m}$.\;

\State Set $s = s+1$.\;

  \EndWhile
 \end{algorithmic}
\caption{An iterative method for approximating the solution of problem \eqref{relax2}. } 
\label{alg2}
\end{algorithm}

Intuitively, for Algorithms~\ref{alg1} and \ref{alg2}, a relatively large $\tau$ may make the algorithm be insensitive to initial guesses and a relatively small $\tau$ could increase the accuracy of the method, especially in $u$. Based on these observations, we propose adaptive in time algorithms corresponding to Algorithms~\ref{alg1} and \ref{alg2}  in Algorithms~\ref{alg3} and \ref{alg4}.

\begin{algorithm}[ht!]
\begin{algorithmic}
 \State {\bf Input:}  {Let $\Omega$ be a given domain, $\tau > 0$, $tol >0$, and $u^0 \in B$.}
 \State {\bf Output:}  {$\varphi^n$: representing the partition}
  \State Set $r=0$\;
 \While{$\|\varphi^{r+1} -\varphi^{r}\|_2\geq tol$}
 
 \State Run Algorithm~\ref{alg1} and output $\varphi^r$ and $u^r$.\;

\State Set $r = r+1$ and $\tau \leftarrow \tau/2$.
 
   \EndWhile
 \end{algorithmic}
\caption{An adaptive in time algorithm for Algorithm~\ref{alg1}. } 
\label{alg3}
\end{algorithm}

\begin{algorithm}[ht!]
\begin{algorithmic}
 \State {\bf Input:} {Let $\Omega$ be a given domain, $\tau > 0$, $tol >0$, and $u^0 \in B$.}
  \State {\bf Output:}  {$\varphi^n$: representing the partition}
   \State Set $r=0$\;
 \While{$\|\varphi^{r+1} -\varphi^{r}\|_2\geq tol$}
  \State  Run Algorithm~\ref{alg2} and output $\varphi^r$ and $u^r$.\;
   \State Set $r = r+1$ and $\tau \leftarrow \tau/2$.
 \EndWhile
 \end{algorithmic}
\caption{An adaptive in time algorithm for Algorithm~\ref{alg2}. } 
\label{alg4}
\end{algorithm}

Based on the derivation of above algorithms, we have the following theorem to guarantee that the updating sequence decreases the energy functional ({\it i.e.}, unconditionally stable) and converges in finite steps.

\begin{theorem}
For any given $\tau>0$, considering the sequence $(\varphi^s,u^s)$ ($s =0,1,2,\cdots$) generated from Algorithms~\ref{alg1} or \ref{alg2}, we have 
\[E^\tau(\varphi^{s+1},u^{s+1}) \leq E^\tau(\varphi^{s},u^{s}).\]
\end{theorem}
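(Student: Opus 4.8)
The plan is to establish the energy decay by splitting each full iteration $(\varphi^s,u^s)\to(\varphi^{s+1},u^{s+1})$ into the two sub-steps dictated by the alternating minimization, and showing each sub-step does not increase $E^\tau$. Concretely, I would argue
\[
E^\tau(\varphi^{s+1},u^{s+1}) \;\leq\; E^\tau(\varphi^{s},u^{s+1}) \;\leq\; E^\tau(\varphi^{s},u^{s}),
\]
where the outer inequality handles the $\varphi$-update and the inner one handles the $u$-update. (One must be slightly careful about the indexing used in Algorithms~\ref{alg1}--\ref{alg2}: there $\varphi^s$ is computed from $u^s$ and then $u^{s+1}$ from $\varphi^s$, so the chain to verify is really $E^\tau(\varphi^{s},u^{s+1})\le E^\tau(\varphi^{s},u^{s})$ together with $E^\tau(\varphi^{s+1},u^{s+1})\le E^\tau(\varphi^{s},u^{s+1})$; I will phrase it consistently with the algorithm's notation.)

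For the $\varphi$-step, the point is exactly Lemma~\ref{phidecay}: since $\varphi\mapsto E^\tau(\varphi,u)$ is linear (affine) in $\varphi$ and the constraint set $K$ decouples pointwise, the pointwise rule \eqref{updatephi} returns a genuine minimizer over $K$, hence $E^\tau(\varphi^{s},u^{s+1})\le E^\tau(\varphi, u^{s+1})$ for all $\varphi\in K$, in particular for the previous iterate. For the $u$-step I would treat the two algorithms together: in both cases $u^{s+1}$ is obtained by one or more applications of the map $u_\ell\mapsto e^{\frac{\tau}{2}\Delta}(\varphi_\ell e^{\frac{\tau}{2}\Delta}u_\ell)/\|e^{\frac{\tau}{2}\Delta}(\varphi_\ell e^{\frac{\tau}{2}\Delta}u_\ell)\|_2$, and by Lemma~\ref{lem:linear} each such application is the minimizer of the linearization $L_\tau^{u_\ell}$ of the (concave) functional $u_\ell\mapsto E^\tau(\varphi,u)$ at the current point. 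Concavity gives $E^\tau(\varphi,u)\le L_\tau^{u^{\mathrm{old}}}(u)$ for every admissible $u$ (the graph of a concave function lies below any of its tangent linearizations — the same fact invoked in \eqref{eq:concave2}), so
\[
E^\tau(\varphi,u^{\mathrm{new}}) \;\leq\; L_\tau^{u^{\mathrm{old}}}(u^{\mathrm{new}}) \;\leq\; L_\tau^{u^{\mathrm{old}}}(u^{\mathrm{old}}) \;=\; E^\tau(\varphi,u^{\mathrm{old}}),
\]
where the middle inequality is the minimality from Lemma~\ref{lem:linear} and the final equality holds because the tangent linearization agrees with the function at the base point. Iterating this over the inner loop of Algorithm~\ref{alg2} shows the inner iteration is itself monotone, so the end value $u^{s+1}$ still satisfies $E^\tau(\varphi^s,u^{s+1})\le E^\tau(\varphi^s,u^s)$.

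Chaining the two monotone sub-steps yields $E^\tau(\varphi^{s+1},u^{s+1})\le E^\tau(\varphi^s,u^s)$, which is the claim; I would also remark that since $E^\tau\ge 0$ by Lemma~\ref{lem1}(i) the sequence of energies converges, and since $\varphi^s$ lives in the finite set of pointwise labelings on the (finite) discretization, strict decrease fails only when $\varphi$ stabilizes, giving the finite-step termination mentioned in the surrounding text. The only genuinely delicate point is the $u$-step inequality, and there the subtlety is not the algebra but making sure one invokes concavity in the right direction: the sequential-linear-programming step decreases the \emph{linearized} surrogate, and one needs the tangent-from-above property of concave functionals (already established and used in the proof of the existence theorem for $u$) to transfer that decrease back to the true energy $E^\tau$. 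Everything else is bookkeeping of the algorithm's update order and the nonnegativity/continuity facts from Lemma~\ref{lem1}.
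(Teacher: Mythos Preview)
Your proposal is correct and takes essentially the same approach as the paper: the $\varphi$-step via linearity (Lemma~\ref{phidecay}) and the $u$-step via concavity plus the tangent-from-above inequality applied to the linearization from Lemma~\ref{lem:linear}, then chained together. You are in fact more careful about the algorithm's update order than the paper's own proof; the one slip is in your second paragraph, where the displayed inequality should read $E^\tau(\varphi^{s+1},u^{s+1})\le E^\tau(\varphi,u^{s+1})$ rather than $\varphi^{s}$ on the left, consistent with the chain you correctly set up in the first paragraph.
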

\begin{proof}
From Lemma~\ref{phidecay}, we have 
\[E^\tau(\varphi^{s+1},u^{s}) \leq E^\tau(\varphi^{s},u^{s}).\]
Combining Lemma~\ref{lem:linear} and the concavity of $E^\tau(\varphi,u)$ with respect to $u$ yields 
\[E^\tau(\varphi^{s+1},u^{s}) \geq L_\tau^{u^n}(\varphi^{s+1},u^{s+1}) \geq E^\tau(\varphi^{s+1},u^{s+1}) \]
where the last inequality comes from the fact that the linearization of a concave functional always locates above the functional. 
Thus we have 
\[E^\tau(\varphi^{s+1},u^{s+1}) \leq E^\tau(\varphi^{s},u^{s}).\]
The equality holds only when $u^{s+1} = u^s$ and $\varphi^{s+1} = \varphi^s$. Because the energy functional has a low bound, the algorithm converges to a stationary solution in finite steps.
\end{proof}

\section{Implementation and discussion on boundary conditions}\label{sec:implement}
In this section, we discuss the implementation of the algorithm on flat tori and with Dirichlet boundary conditions on arbitrary domains.

Note that in the algorithm, the only thing we need to compute is $e^{\frac{\tau}{2}\Delta} u$ for a given $u$. In the follows, we discuss it into two cases.

\begin{itemize}

\item[1.] {\bf Periodic boundary conditions:} In the case where we consider $\Omega$ as flat tori ($[-\pi,\pi]^d$) , we write $e^{\frac{\tau}{2}\Delta }u = G_{\tau/2} *u$ where 
\[G_{\tau/2} = \frac{1}{(2\pi \tau)^{d/2}}\exp(-\frac{|x|^2}{2\tau}).\]

Because of the periodic boundary condition, we simply compute it by 
\[e^{\frac{\tau}{2}\Delta} u  = \mathcal F^{-1} (\exp(-|\xi|^2\tau/2) \mathcal F(u))\]
where $\mathcal F$ is the Fourier transform, $\xi$ is the spectral variable, and $\mathcal F^{-1}$ is the inverse Fourier transform. These can be done efficiently by using the fast Fourier transform (FFT). Note that $\exp(-|\xi|^2\tau/2)$ here is the Fourier transform of $\frac{1}{(\pi \tau)^{d/2}}\exp(-\frac{|x|^2}{2\tau})$.

\item[2.] {\bf Dirichlet boundary conditions:} In the case where we consider $\Omega$ as arbitrary domains with Dirichlet boundary conditions, we consider an extension of $\Omega$ to $\tilde \Omega$ with $\tilde \Omega$ being relatively large and square (See Figure~\ref{dia} for a diagram). The values of $u$ and $\varphi$ are extended from $\Omega$ to $\tilde \Omega$ simply by assigning $0$ in $\tilde \Omega \setminus \Omega$.
In each iteration, we only update $\varphi$ in the domain $\Omega$ which can be simply done by introducing an indicator function of $\Omega$ in the computational domain $\tilde \Omega$. 
To be more precise, we use $\psi$ to  denote the indicator function of the domain and in the update of $\varphi_\ell^n$, we simply set $\varphi^n_\ell = \varphi^n_\ell \psi$.

We note that this does not break the energy decaying property of the algorithm. The proposed algorithm does not need a special discretization for the specific domain, one could compute the Dirichlet $k$-partition in a fixed computational domain for arbitrary shapes by only introducing one auxiliary indicator function $\psi$.

\begin{figure}[ht!]
\centering
\includegraphics[width = 0.4\textwidth]{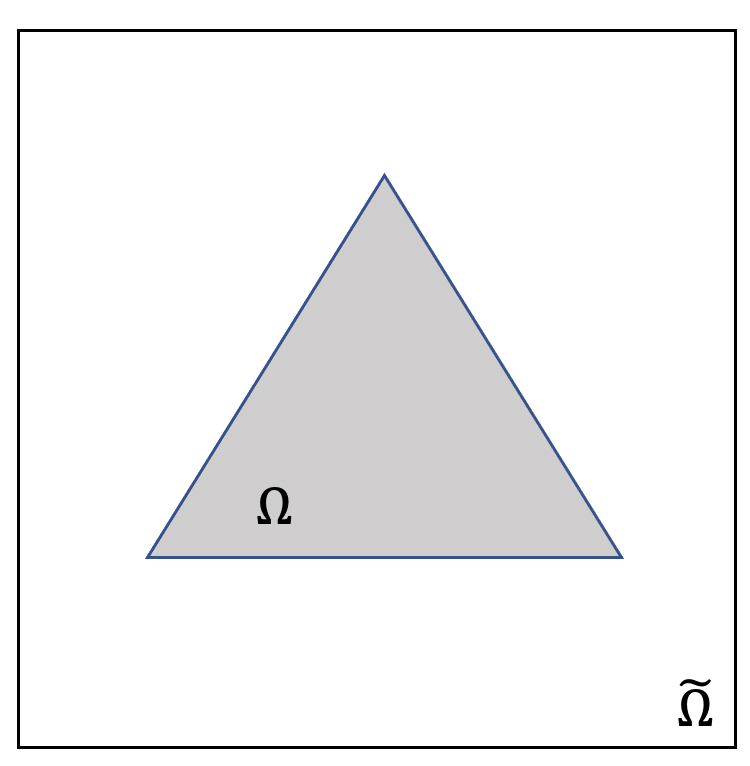}
\caption{A diagram for the extension of a computation domain from $\Omega$ to $\tilde \Omega$.}\label{dia}
\end{figure}

\end{itemize}

\section{Numerical experiments}\label{sec:num}
In this Section, we demonstrate the diffusion generated
method in Algorithms~\ref{alg1}-\ref{alg4} on flat tori and arbitrary domains.  All methods were implemented in MATLAB and results reported below were obtained on a laptop with a 2.7GHz Intel Core i5 processor and 8GB of RAM. 

If there is no other statement,  for all computational results, we set the computational domain as $[-\pi,\pi]^d$ and use a random initial guess for $k$-partition ({\it i.e.} $u^0$) as follows.
\begin{itemize}
\item[Step1.] Generate $k$ random points ({\it seeds}), $x_\ell$ ($\ell\in [k]$), in the computational domain.

\item[Step2.] Compute the Voronoi cell around each seed, $A_\ell$ ($\ell\in [k]$), and denote \[u_\ell^0 = \frac{\chi_{A_\ell}}{(\int_\Omega \chi_{A_\ell} \ dx)^{1/2}}.\]
\end{itemize}

\begin{remark}
If we consider the cases of Dirichlet boundary conditions in arbitrary domains, we restrict $k$ random seeds in the specific domain instead of the whole computational domain. 
\end{remark}

\subsection{Accuracy on the computation of the first eigenvalue and eigenfunction}\label{sec:num1}
In the first experiment, we check the computation accuracy of the proposed algorithm for computing the first eigenvalue and eigenfunction when $\varphi$ is fixed as 
$\varphi = \chi_A$ with $A = [-\pi/2,\pi/2]^2 \subset [-\pi,\pi]^2$.
We then solve \[ u = \arg\min_{v\in B} E^\tau(\chi_A,v) \]
by the Step 3 in Algorithm~\ref{alg2} associated with an adaptive in time technique. To make no confusion, we write the scheme in the follows.

\begin{algorithm}[ht]
\begin{algorithmic}
 \State {\bf Input:} {Let $A \subset \Omega$ be a given domain, $\tau > 0$, $tol >0$, and $u^0 \in B$.}
 \State {\bf Output:} {$u^n$: approximating the first eigenfunction, \\
 $\frac{1}{\tau} - \frac{1}{\tau} \int_\Omega \chi_A |e^{\frac{\tau}{2}\Delta} u^n|^2 \ dx $: approximating the first eigenvalue.}
  \State  Set $s=0$\;
 \While{$\tau \geq tol$}
  \State  Set $\hat u^{s,0} = u^s$ and $m=0$, \While{$\|u^{s,m+1}  - u^{s,m} \|_2 \geq tol$}
\begin{align*}
\hat u^{s,m+1} = \dfrac{e^{\frac{\tau}{2}\Delta} (\chi_A  e^{\frac{\tau}{2}\Delta} \hat u^{s,m})}{\|e^{\frac{\tau}{2}\Delta} (\chi_A e^{\frac{\tau}{2}\Delta} \hat u^{s,m})\|_2}.
\end{align*}
 \State  Set m = m+1.\;
  \EndWhile

 \State Set $\tau \leftarrow \frac{\tau}{2}$.\;

 \State  Set $u^{s+1} = \hat u^{s,m}$.\;

 \State  Set $s = s+1$.\;
 
  \EndWhile
 \end{algorithmic}
\caption{A scheme to approximate the first eigenvalue and eigenfunction of a fixed domain with Dirichlet boundary conditions.} 
\label{alg5}
\end{algorithm}

The exact solution of the first eigenvalue and the corresponding eigenfunction (with unit $L^2(\Omega)$ norm) in $A$ with the zero Dirichlet boundary condition is written by \begin{align}\lambda_1(A) = 2, \  \ u^A_1(x,y) = \begin{cases} \frac{2}{\pi}\cos x \cos y  \ & {\rm if} \  (x,y )\in [-\pi/2,\pi/2]^2,  \\
0 \ &  {\rm otherwise}.
\end{cases}\label{exact1}
\end{align}

We first check the accuracy of the new approximation $$\frac{1}{\tau} - \frac{1}{\tau} \int_\Omega \chi_A |e^{\frac{\tau }{2}\Delta}u|^2 \ dx$$ to the first eigenvalue when $u = u^A_1$. Table~\ref{tab:accuracycheck1} lists the approximate eigenvalue computed with different $\tau$. One can observe that the computed eigenvalue converges to the exact value $2$ as $\tau$ goes to $0$ from below. This verifies the motivation of the new proposed approximation and is consistent with Lemma~\ref{lem:eigenvalue}. We note that this fact is also similar to those in \cite{bourdin2010optimal,osting2013minimal}.
\begin{table}[ht]
\caption{Approximate eigenvalues computed on $\Omega$ with different $\tau = 2^{-4} $- $2^{-15}$. See Section~\ref{sec:num1}.}\label{tab:accuracycheck1} \centering
\begin{tabular}{c|c|c|c|c|c|c}
\hline
$\tau$ & $2^{-4}$ & $2^{-5}$& $2^{-6}$&$2^{-7}$ &$2^{-8}$&$2^{-9}$ \\
\hline
 Approximate $\lambda_1$& 1.8801&  1.9388&$1.9691$ &1.9845 &1.9922&  1.9961 \\
\hline
\hline
$\tau$ &$2^{-10}$ &$2^{-11}$ & $2^{-12}$& $2^{-13}$ & $2^{-14}$ & $2^{-15}$\\
\hline
Approximate $\lambda_1$& 1.9980 & $1.9990$ &  $1.9995$ &  $1.9998$ &   $1.9999$ &   $1.9999$ \\
\hline
\end{tabular}
\end{table}

We then apply Algorithm~\ref{alg5} onto the computational domain $[-\pi,\pi]^2$ discretized by $64\times 64$, $128\times 128$, $256\times 256$, $512\times 512$ and $1024\times 1024$ uniform grid points. Figure~\ref{fig:accuracycheck} shows the approximated solution of the eigenfunction and the difference between the approximate solution and the exact solution~\eqref{exact1}, computed on $512\times 512$ discretized mesh. We observe that the support of the approximate eigenfunction is almost in $[-\pi/2,\pi/2]^2$ and is consistent with the exact solution. Table~\ref{tab:eigenvalue} lists the eigenvalues computed on different discretization with different values of $tol$. It is clear that the eigenvalue converges to the exact value $2$ with a finer mesh and a smaller $\tau$.

\begin{figure}[ht!]
\centering
\includegraphics[width = 0.4\textwidth]{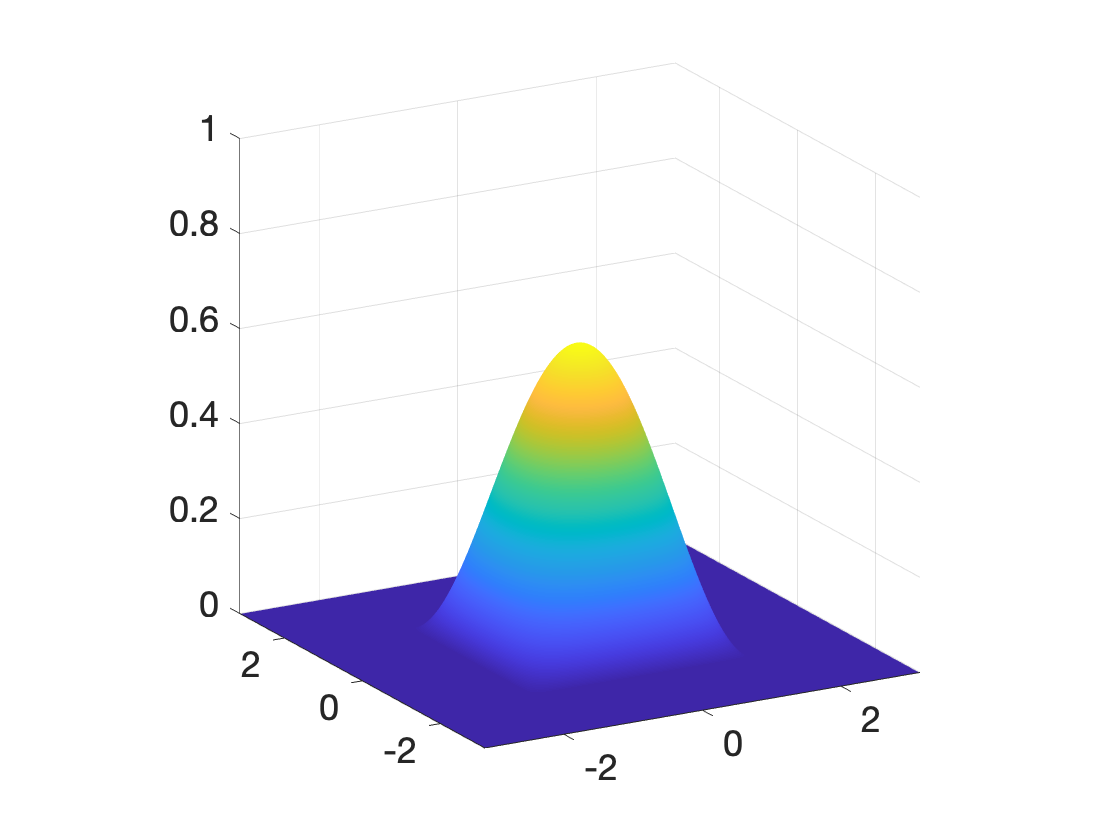}
\includegraphics[width = 0.4\textwidth]{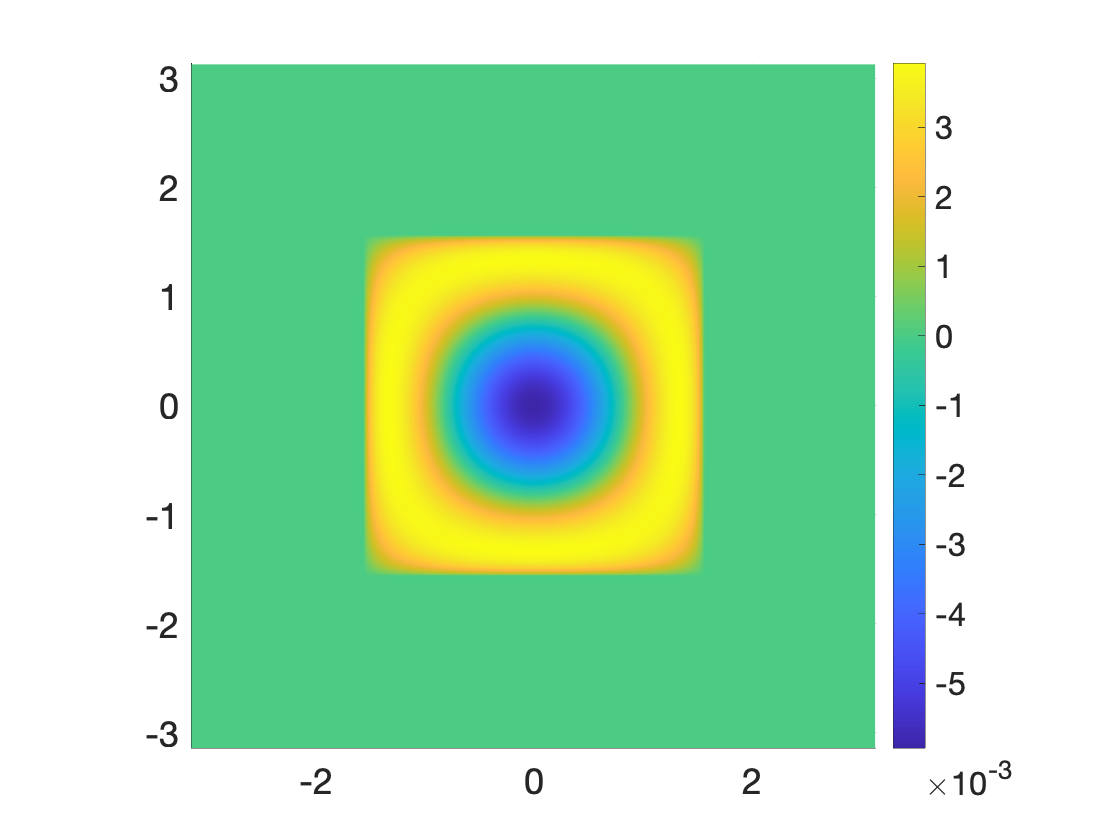}
\caption{Left: the approximate eigenfunction in $[-\pi/2,\pi/2]^2$. Right: The error compared to the exact solution \eqref{exact1}. See Section~\ref{sec:num1}.} \label{fig:accuracycheck}
\end{figure}
\begin{table}[ht!]
\caption{The approximate eigenvalue computed on $\Omega$ with different discretization and different $tol = 10^{-3}$-$10^{-7}$. See Section~\ref{sec:num1}.}\label{tab:eigenvalue}
\centering
\begin{tabular}{c|c|c|c|c|c}
\hline
\backslashbox{$tol$}{Resolution} & $64\times 64$ &$128\times 128$ & $256\times 256$ & $512\times 512$ & $1024 \times 1024$\\
\hline
$10^{-3}$  &  1.7725 & 1.8514 &  1.8863 &   1.9023 &   1.9100 \\
\hline
$10^{-4}$  & 1.7938&  1.8881&  1.9342 & 1.9543 &  1.9631\\
\hline
$10^{-5}$  & 1.7970 & 1.8945& 1.9463 &   1.9727&  1.9852 \\
\hline
$10^{-6}$  &  1.7972 & 1.8949 & 1.9472 &  1.9746 &  1.9890 \\
\hline
$10^{-7}$  & 1.7972 &  1.8950 &  1.9473 &1.9749 &1.9896 \\
\hline
\end{tabular}
\smallskip
\end{table}

\begin{figure}[ht!]
\includegraphics[width = 0.19\textwidth, clip, trim = 11cm 6cm 10cm 5cm]{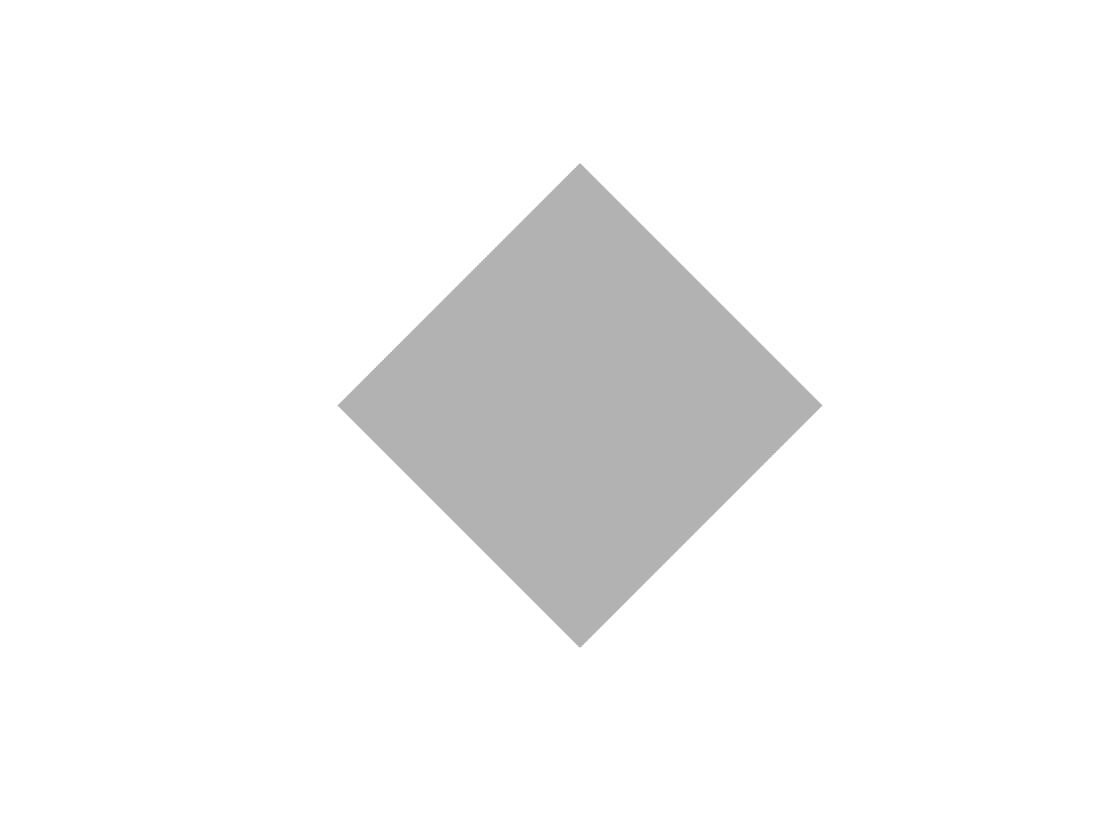}
\includegraphics[width = 0.19\textwidth, clip, trim = 11cm 6cm 10cm 5cm]{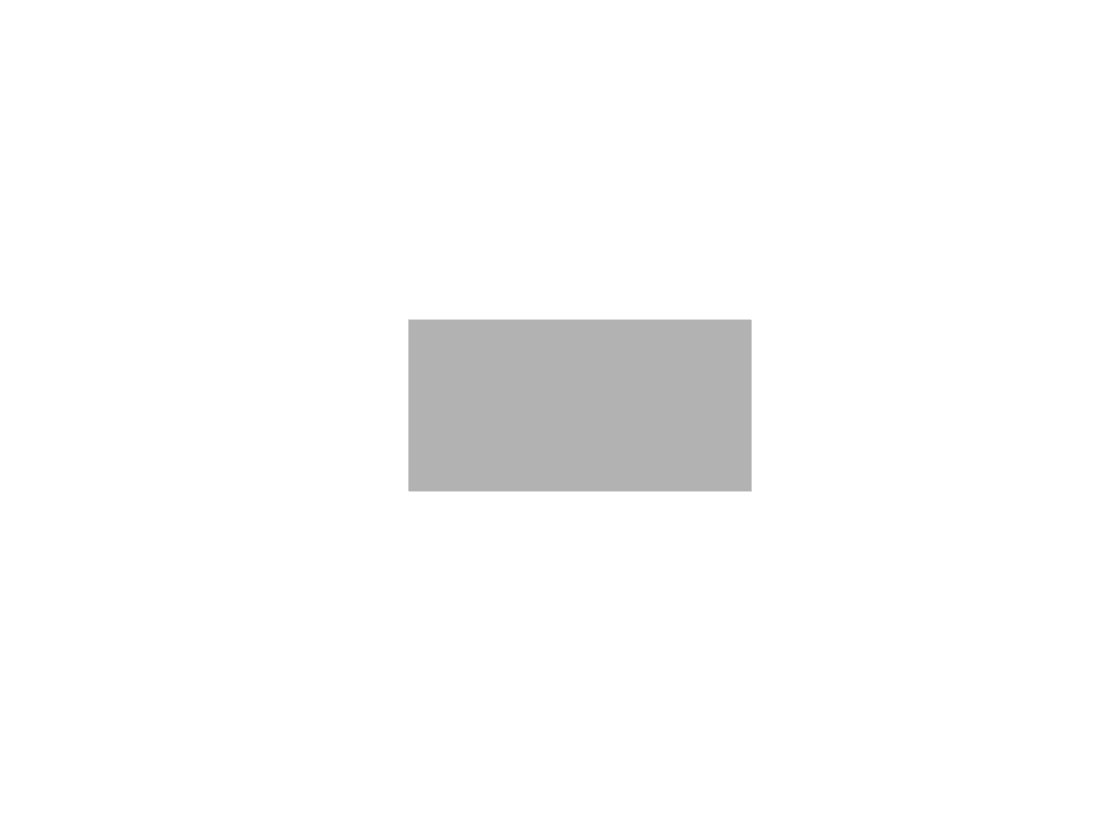}
\includegraphics[width = 0.19\textwidth, clip, trim = 11cm 6cm 10cm 5cm]{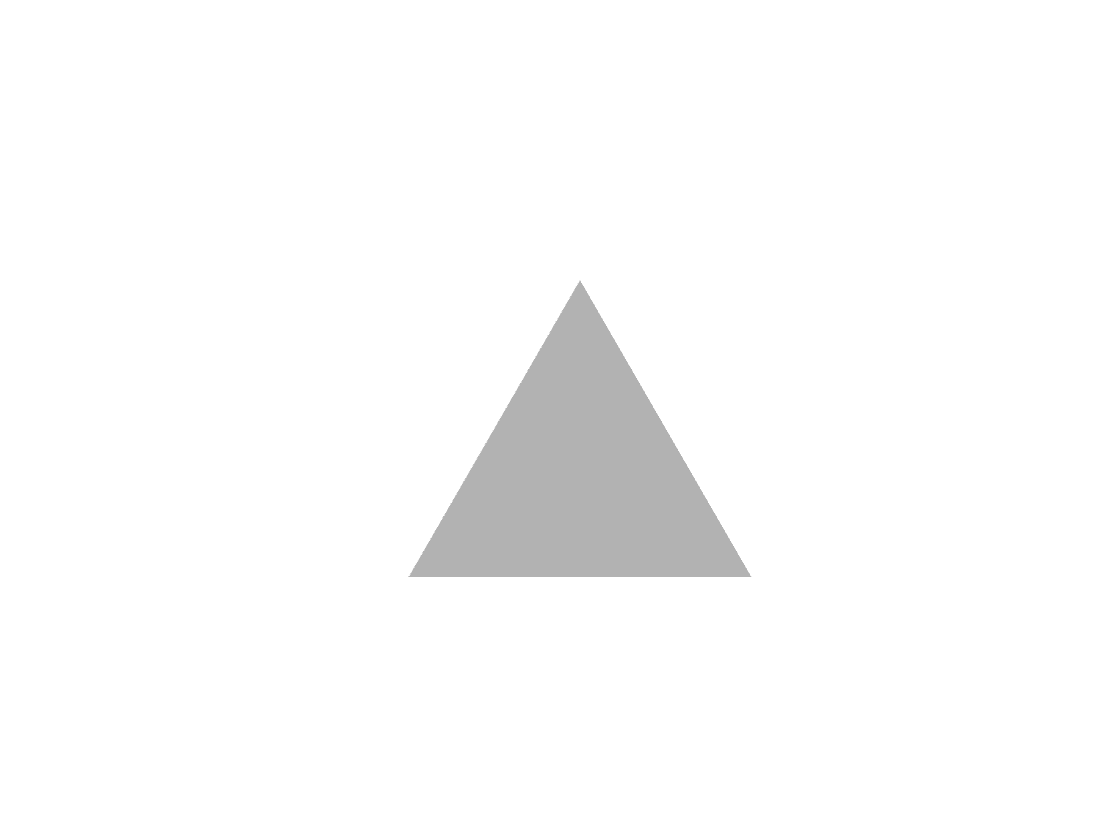}
\includegraphics[width = 0.19\textwidth, clip, trim = 11cm 6cm 10cm 5cm]{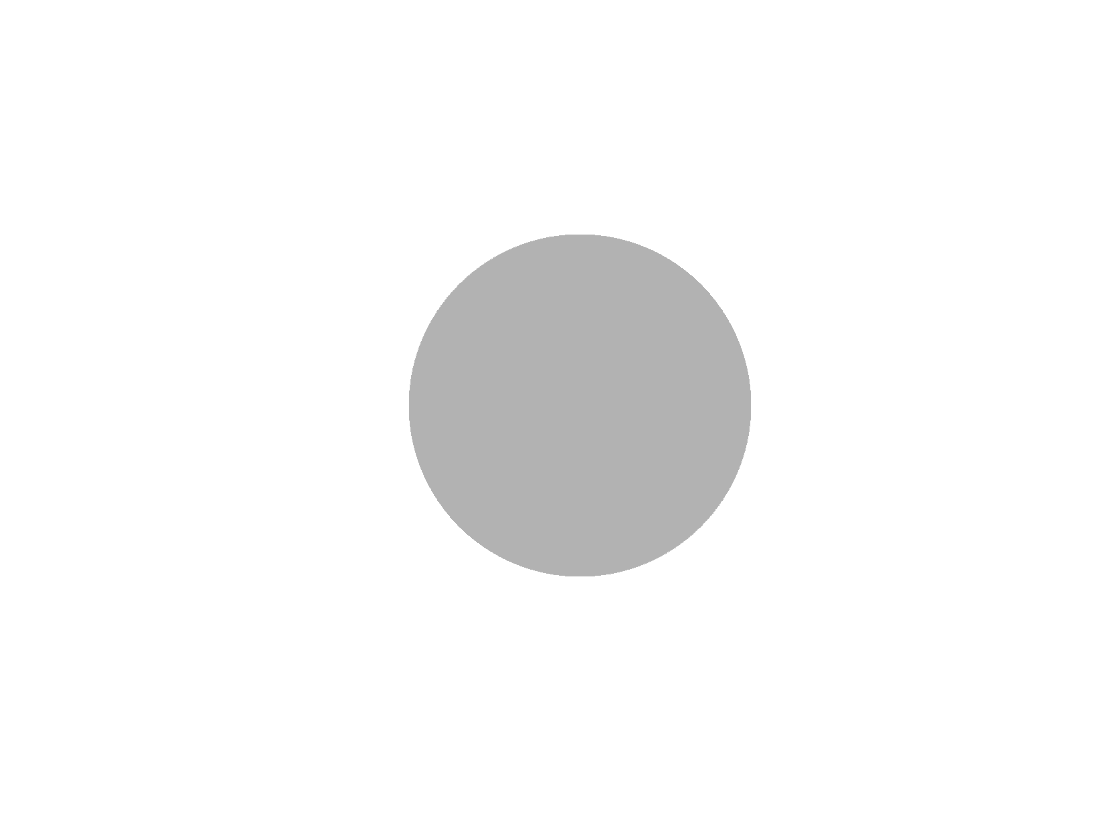}
\includegraphics[width = 0.19\textwidth, clip, trim = 11cm 6cm 10cm 5cm]{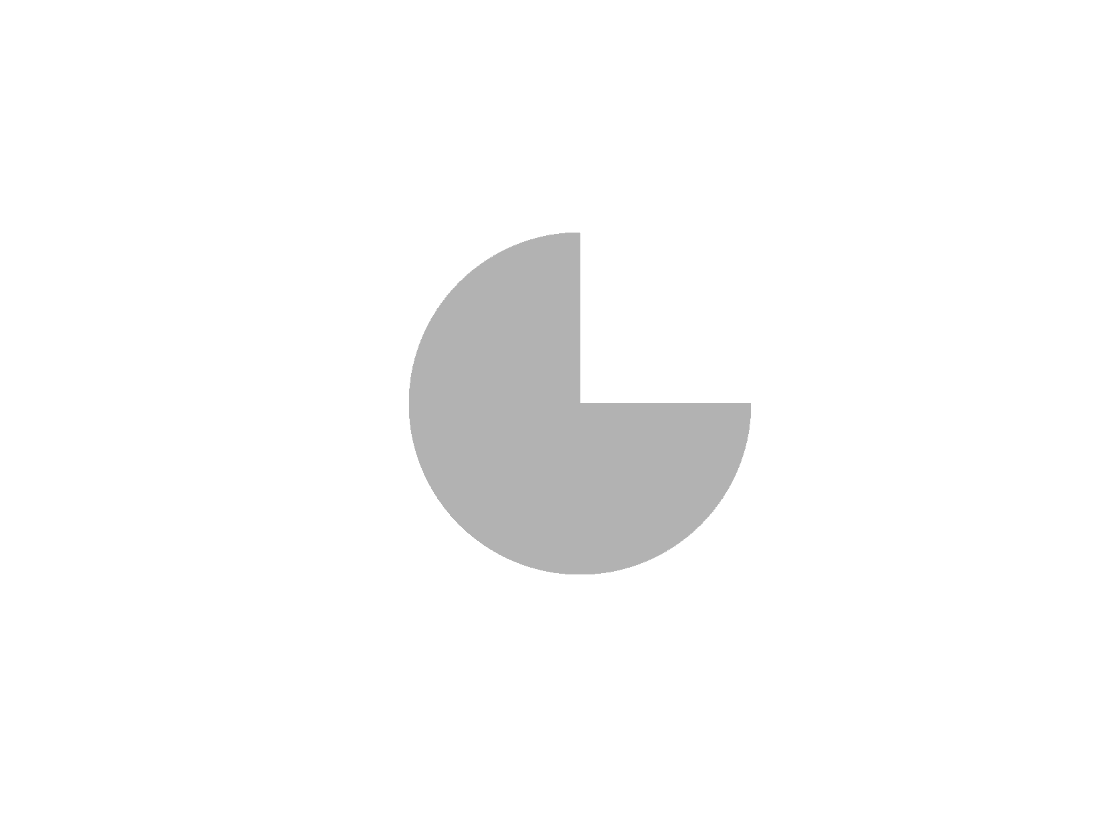}
\caption{Five general domains to check the convergence of approximate eigenvalue as $\tau \rightarrow 0$. See Section~\ref{sec:num1}.}\label{fig:5domains}
\end{figure}

\begin{table}[ht!]
\caption{The approximate eigenvalue for different domains. See Section~\ref{sec:num1}.}\label{tab:eigenvalue2}
\centering
\begin{tabular}{c|c|c|c|c|c}
\hline
\backslashbox{$\tau$}{Domain} & rotated square & rectangle  & triangle & disk & $3/4$ disk \\
\hline
$10^{-3}$  &  1.9285 & 4.6713& 5.0156 &  2.2657 & 4.3111   \\
\hline
$10^{-4}$  & 1.9737&   4.8693&  5.2237& 2.3195&  4.5068  \\
\hline
$10^{-5}$  & 1.9877 & 4.9274& 5.2872 &   2.3360&   4.5667 \\
\hline
$10^{-6}$  &  1.9910 &  4.9384& 5.3008&  2.3397 &  4.5791 \\
\hline
$10^{-7}$  & 1.9915 &  4.9397 &  5.3025 &2.3402 &  4.5806\\
\hline
Reference solution & 2 & 5& 16/3 &2.3438& 4.6182 \\ 
\hline
\end{tabular}
\smallskip
\end{table}

To check the convergence of the relaxation to the exact eigenvalue for arbitrary domains in a same computational domain ({\it i.e.}, $[-\pi,\pi]^2$), we apply Algorithm~\ref{alg5} to consider the approximate eigenvalues in the following several domains: 1. a rotated square domain, 2. a rectangle domain, 3. an equilateral triangle domain, 4. a disk domain, and 5. a three quarter disk domain as displayed in Figure~\ref{fig:5domains}. For the domain where the exact solution is not available, we set the reference solution by the solution computed from finite element method with very fine meshes. In all experiments, we use $1024\times 1024$ uniform grids to discretize the computational domain. In Table~\ref{tab:eigenvalue2}, we list the approximate value obtained by different $tol$ and observe that for general domains, the approximate values converge to the exact values as $\tau \rightarrow 0$ from below, which is consistent with the fact we proved in Lemma~\ref{lem:eigenvalue} and also shows that the relaxation and approximation is insensitive to the domain of consideration.

\subsection{Verification on energy decaying and comparisons among Algorithms~\ref{alg1}-\ref{alg4}} \label{sec:num2} 

In this section, we perform a careful study on the energy decaying properties for Algorithms~\ref{alg1}-\ref{alg4}. For this study, we simply choose $k=3$ with a random initial guess. Consider the computational domain discretized by $256\times 256$ grid points and use $\tau = 1/4$ for Algorithms~\ref{alg1}-\ref{alg2} with the random initial guess as shown in the middle of Figure~\ref{fig:energycheck}. One can observe that Algorithm~\ref{alg1} takes $66 $ iterations to converge while  Algorithm~\ref{alg2} only takes $29$ iterations. This is consistent with the fact that Algorithm~\ref{alg2} always finds the stationary solution for $u^{n+1}$ when $\varphi^n$ is given while Algorithm~\ref{alg1} only iterates one step along the descent direction. However, what is interesting, both algorithms converge to the same stationary solution and Algorithm~\ref{alg1} only takes $0.27$ {\it seconds} CPU time while Algorithm~\ref{alg2} takes $2.52$ {\it seconds} CPU time. One can understand this by that even Algorithm~\ref{alg2} only takes $29$ iterations, in each iteration, it takes many more steps for $u$ to converge.

\begin{figure}[ht!]
\centering
\includegraphics[width = 0.3\textwidth]{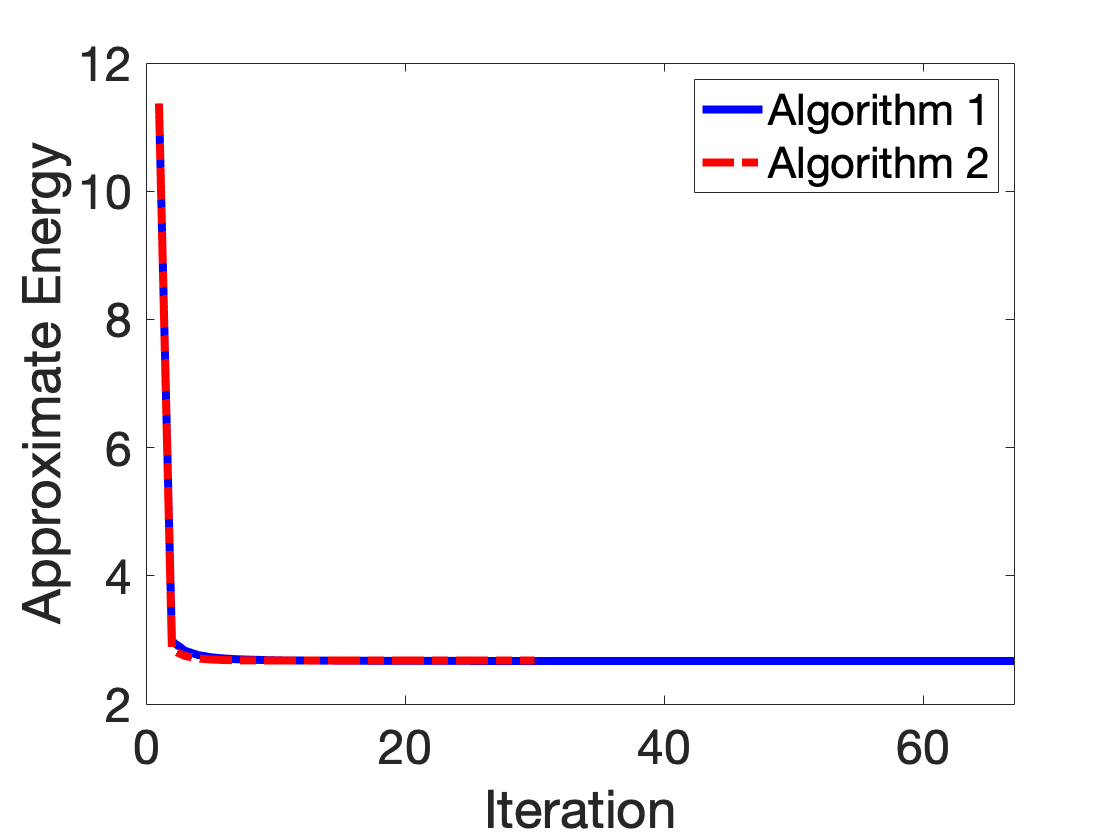}
\includegraphics[width = 0.3\textwidth]{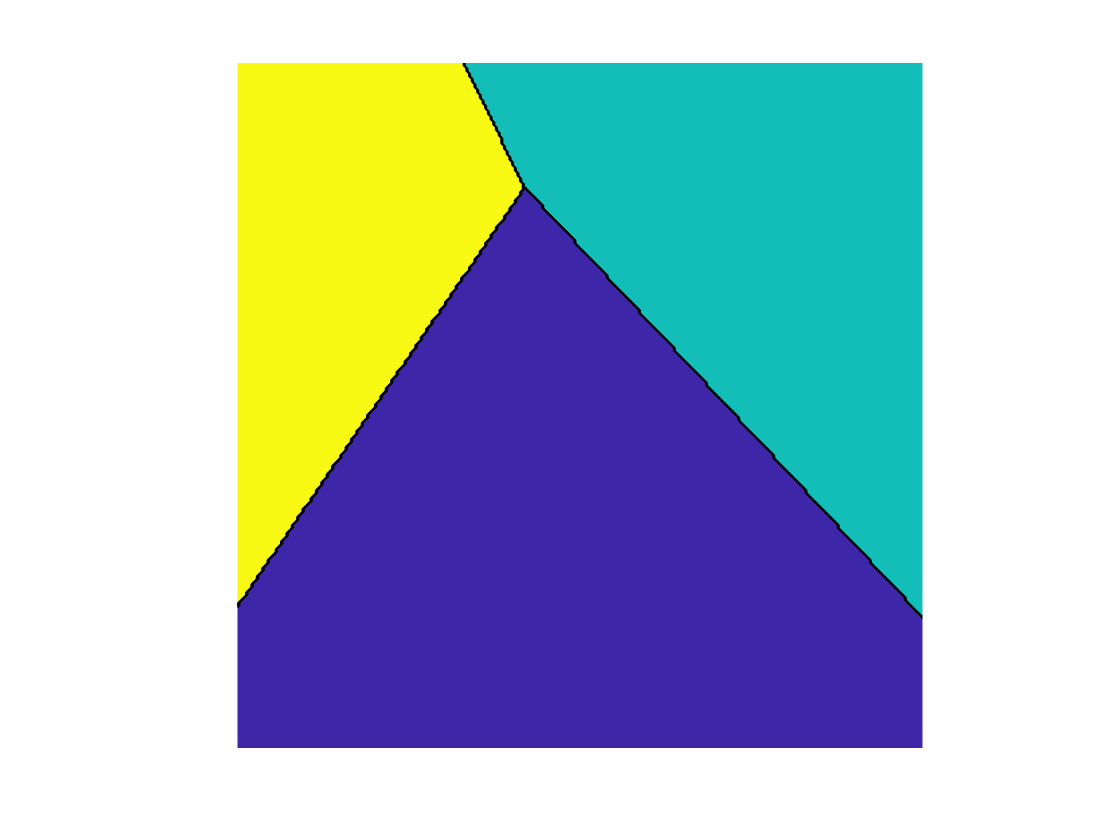}
\includegraphics[width = 0.3\textwidth]{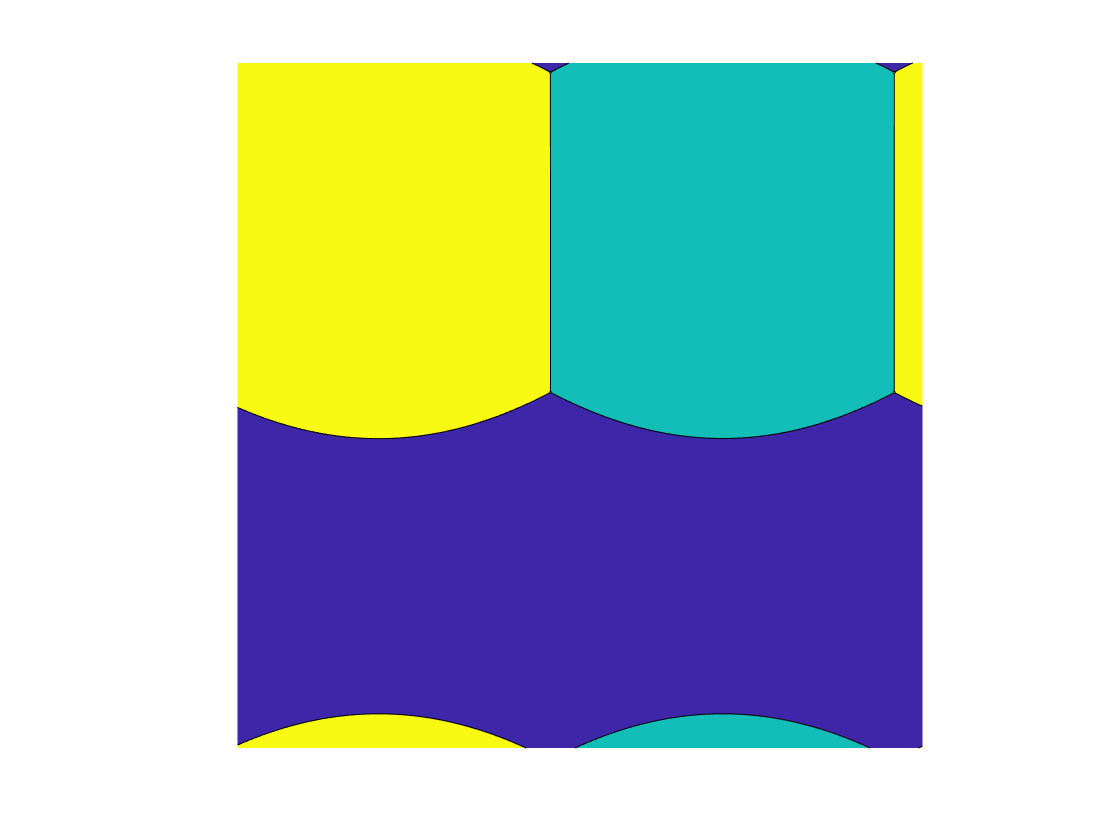}
\caption{Left: The energy curve with respect to the number of iterations for Algorithms~\ref{alg1}-\ref{alg2}. Middle: An initial guess for $3$-partition. Right: The converged solution (same for Algorithm~\ref{alg1} and Algorithm~\ref{alg2}). See Section~\ref{sec:num2}.}\label{fig:energycheck}
\end{figure}

Furthermore, we check the energy decaying properties of Algorithms~\ref{alg3}-\ref{alg4} with an initial $\tau = 1/4$ and $tol_\tau = 10^{-4}$. Because adaptive in time techniques are used, we compute the approximate energy in two ways: 1. using the adaptive $\tau$ and 2. using a fixed relatively small $\tau = 10^{-4}$. Figure~\ref{fig:energy2} list the energy decaying curve of Algorithms~\ref{alg3} and \ref{alg4}. If we use a fixed $\tau$ to calculate the energy, the approximate energy is monotonically decaying (See the left in Figure~\ref{fig:energy2}).  If we use varying $\tau$ to compute the approximate energy, one can observe that in the iteration of each $\tau$, the approximate energy is decaying but the energy jumps to a large value at the iteration when $\tau$ is halved. This is also consistent with the observation from Tables~\ref{tab:accuracycheck1} and \ref{tab:eigenvalue} that the energy converges to the exact value from below (See the middle in Figure~\ref{fig:energy2}).  In the right of Figure~\ref{fig:energy2}, we plot the change between two iterations of $\varphi$, one can see that as $\tau$ decreases, the partition  becomes stationary. In particular, we observe that an initial $\tau = 1/4$ can already efficiently find the stationary solution and decreasing $\tau$ only refines the solution a bit but computes the approximate energy more accurately.

\begin{figure}[ht]
\centering
\includegraphics[width = 0.3\textwidth]{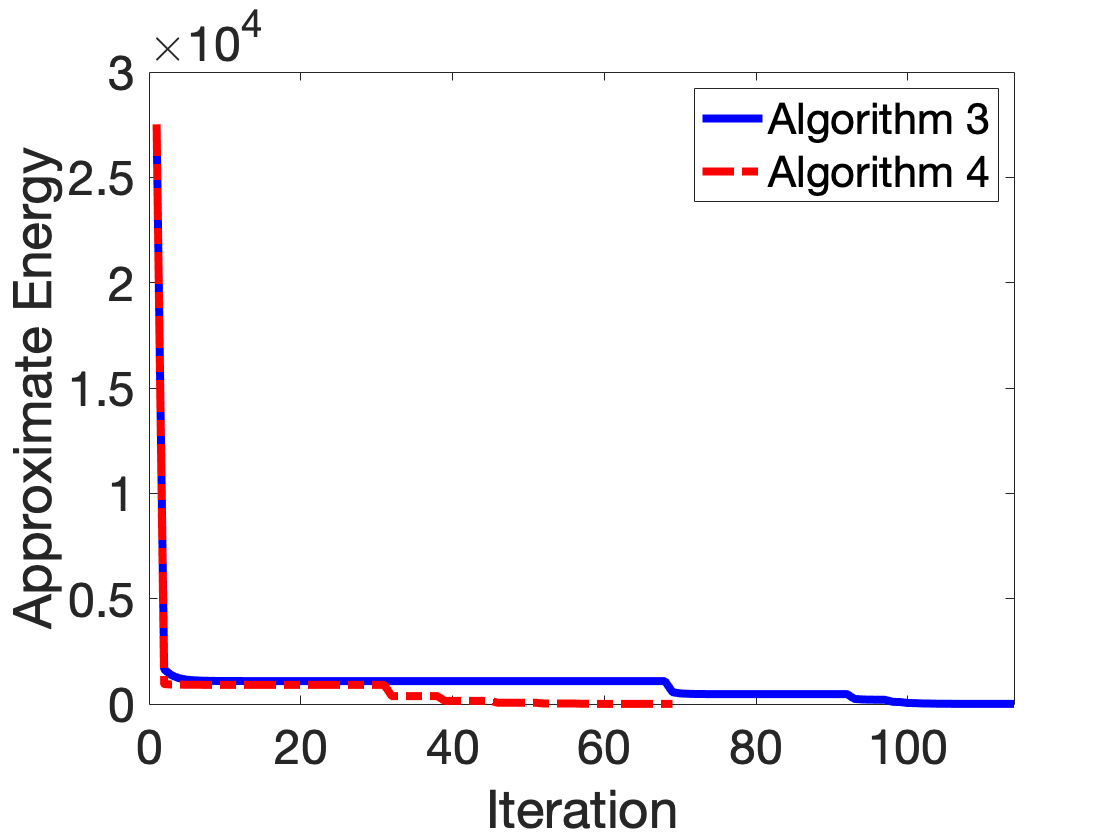}
\includegraphics[width = 0.3\textwidth]{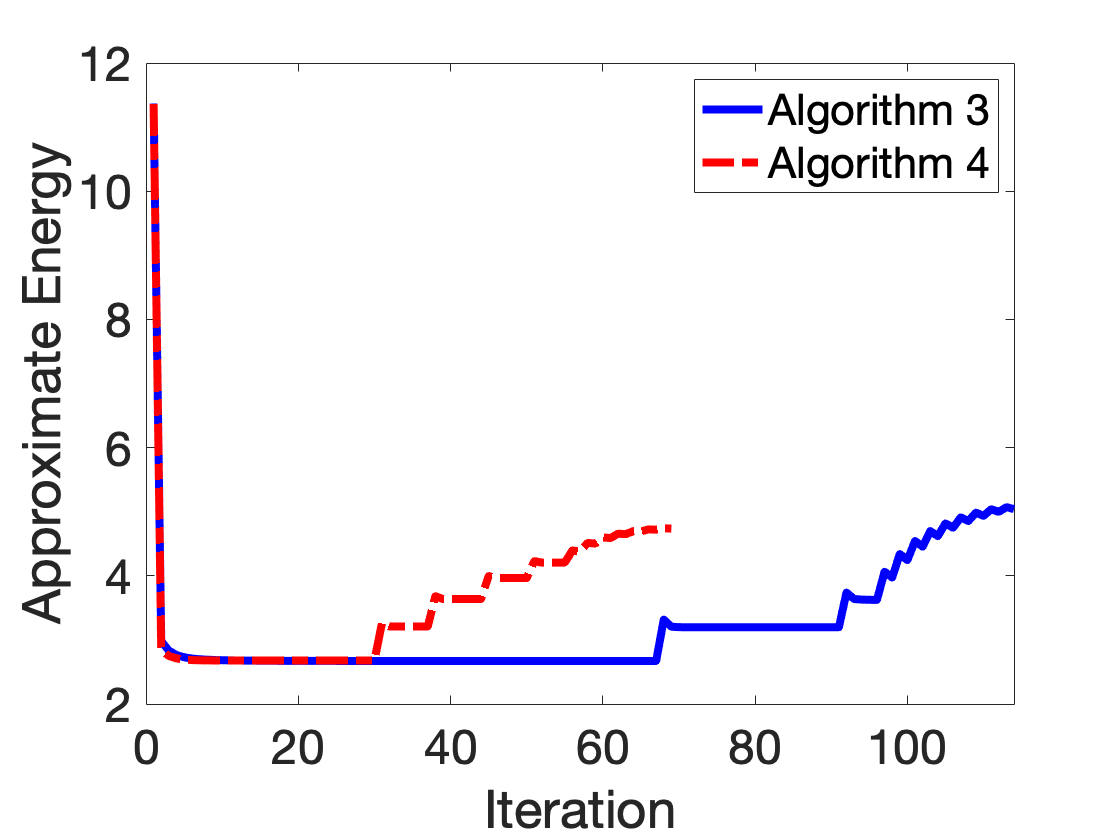}
\includegraphics[width = 0.3\textwidth]{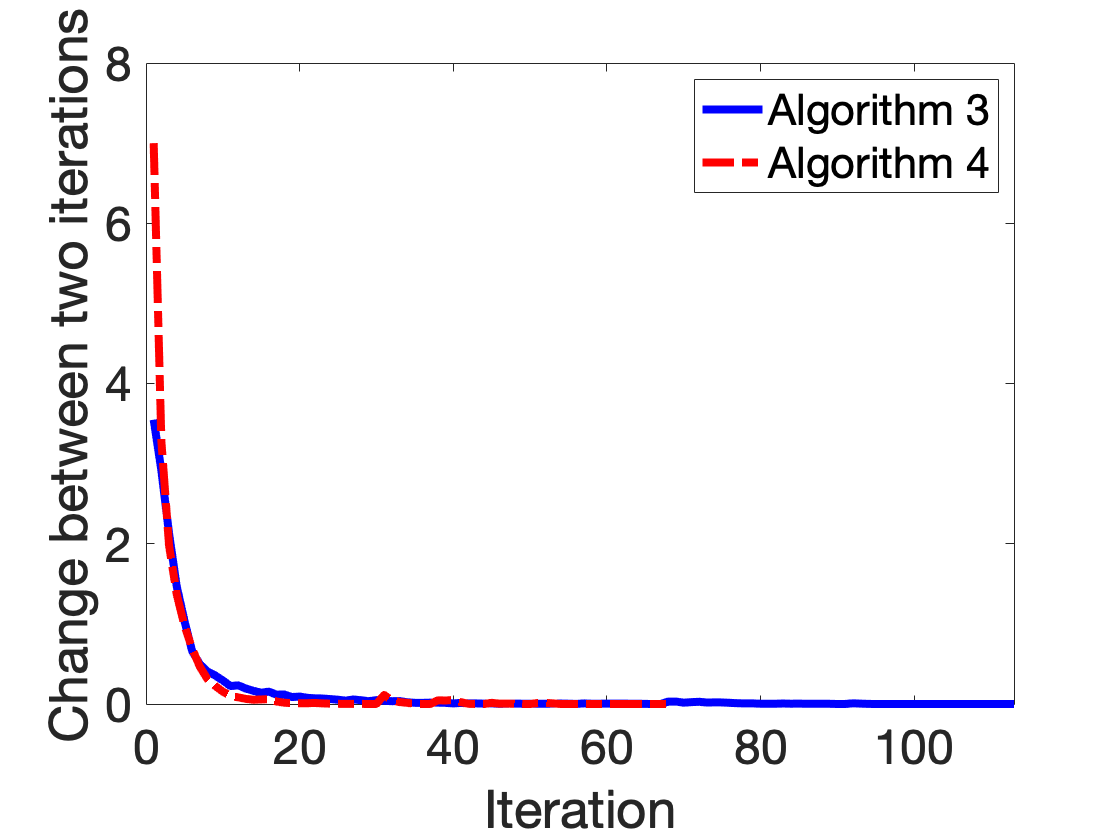}
\caption{Left: Energy decaying curves with respect to iterations when using $\tau = 10^{-4}$ to calculate the approximate energy, Middle: Energy decaying curves with respect to iterations when using the adaptive values of $\tau$ to calculate the approximate energy, Right: Change in $\varphi$ with respect to iterations. See Section~\ref{sec:num2}.} \label{fig:energy2}
\end{figure} 

\subsection{Acceleration by adaptive in time techniques}\label{sec:adptive}

In this section, we check the advantage of adaptive in time techniques through two experiments from a same initialization: 1.) adaptive in time for $\tau$ changes in  $[1/4,1/8,1/16,1/32,1/64]$, 2.) fix $\tau = 1/64$ without adaptive in time. In Figure~\ref{fig:adaptiveintime}, we list the snapshots at different iterations for both experiments and observe that both converge to the same solution. In the first row, snapshots at iteration $67, 77, 85, 88, 90$ correspond to stationary solutions at different $\tau = 1/4, 1/8, 1/16, 1/32,$ and $1/64$. One can observe that in the sense of partition, using large $\tau$ can achieve almost same result as that obtained from small $\tau$, but it accelerate the convergence dramatically. For instance, using a large time step $\tau =1/4$ gives a similar solution after 10 iterations while using $\tau = 1/64$ requires about 80 iterations. 

\begin{figure}[ht!]
\centering
\begin{tabular}{c|c|c|c|c|c|c|c}
\hline 
 1 & 10 & 20  & 67 & 77  & 85& 88  & 90 \\
\includegraphics[width = 0.09\textwidth, clip, trim = 4cm 1cm 3cm 1cm]{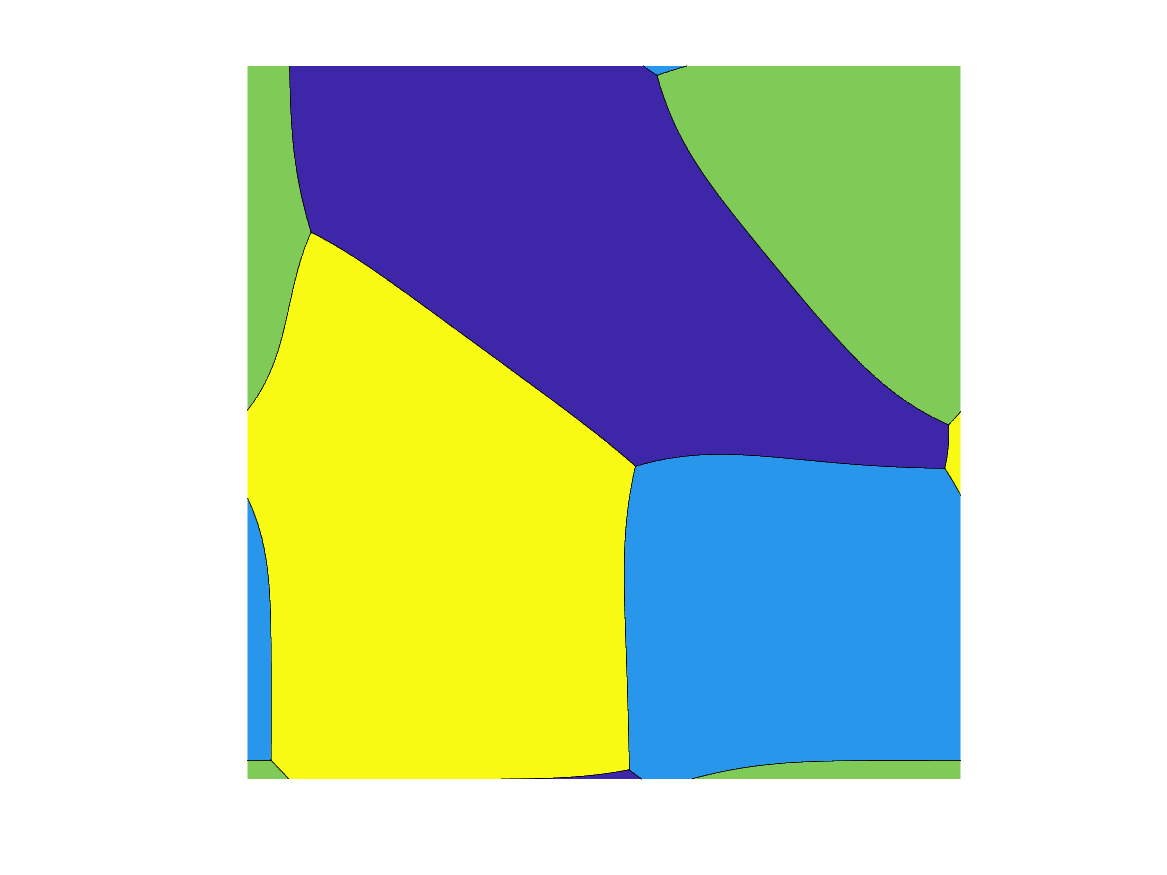}&  
\includegraphics[width = 0.09\textwidth, clip, trim = 4cm 1cm 3cm 1cm]{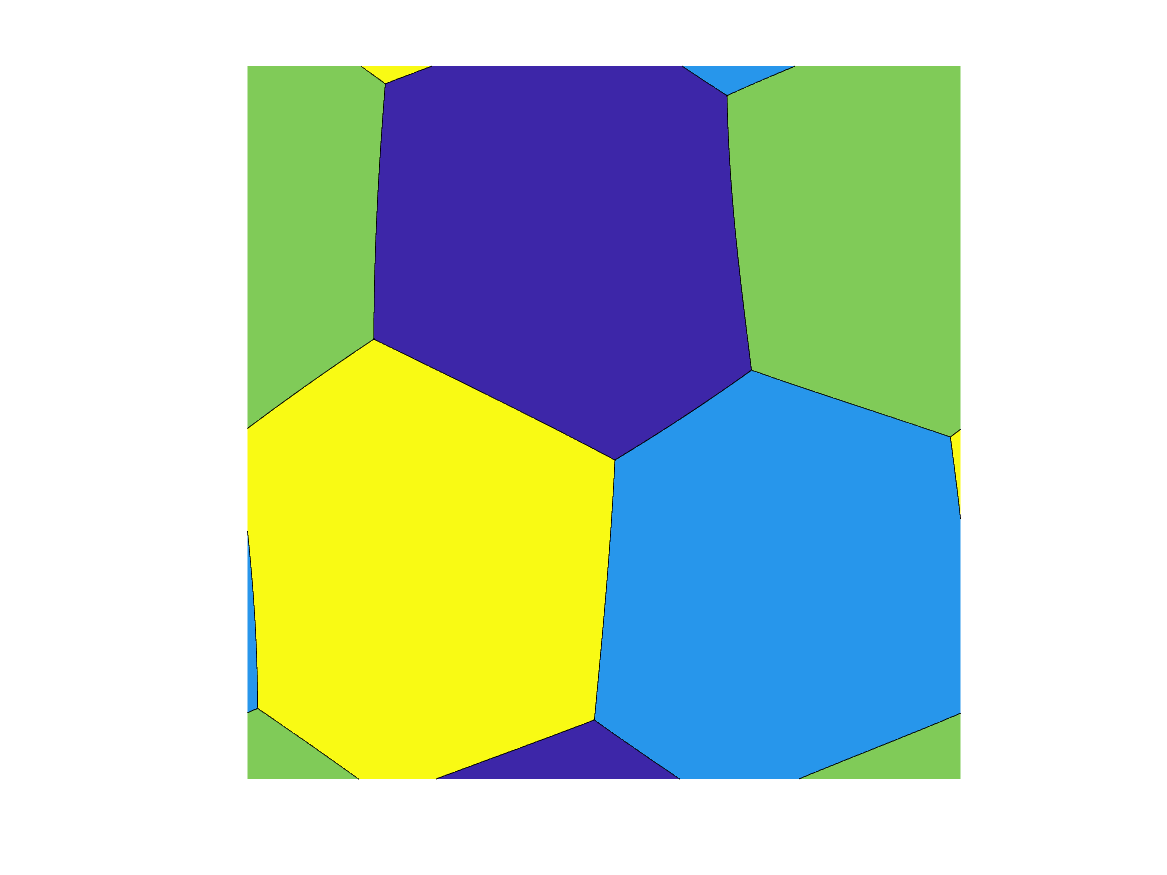}&
\includegraphics[width = 0.09\textwidth, clip, trim = 4cm 1cm 3cm 1cm]{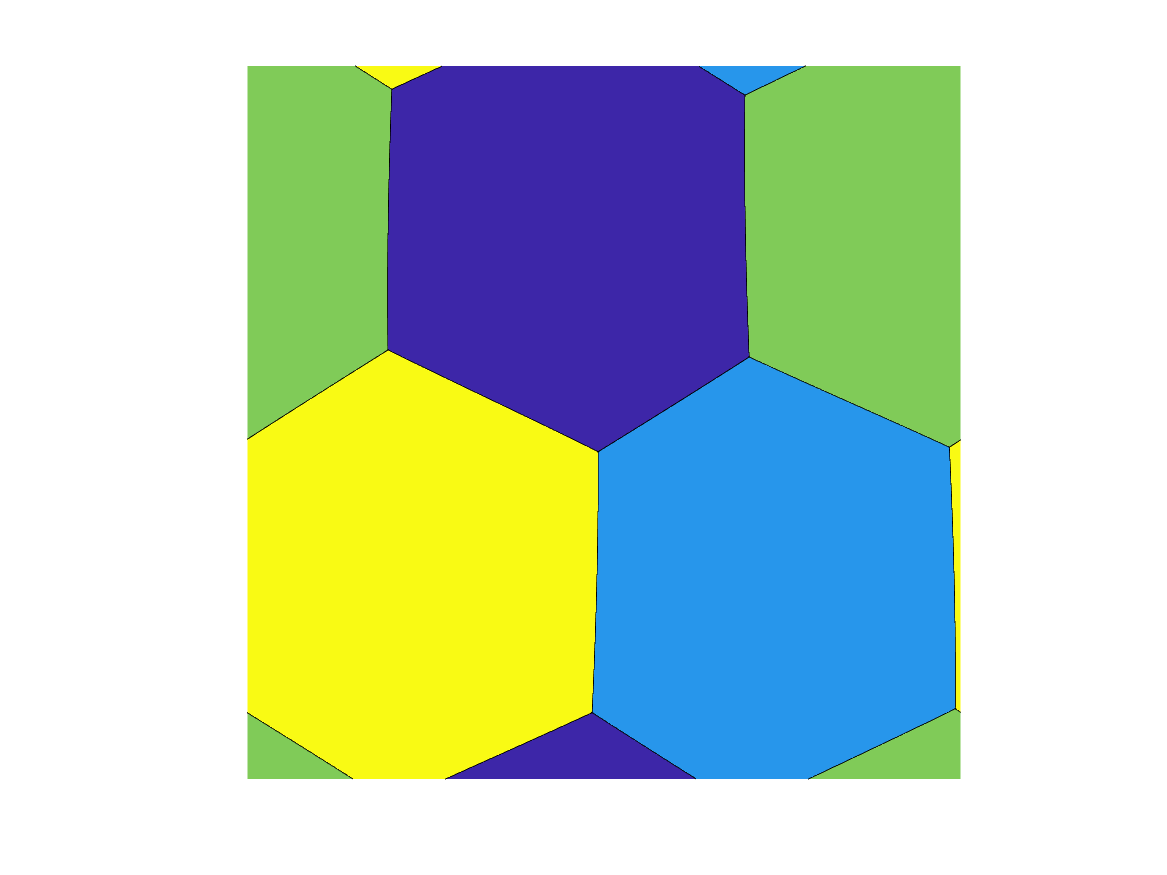}&
\includegraphics[width = 0.09\textwidth, clip, trim = 4cm 1cm 3cm 1cm]{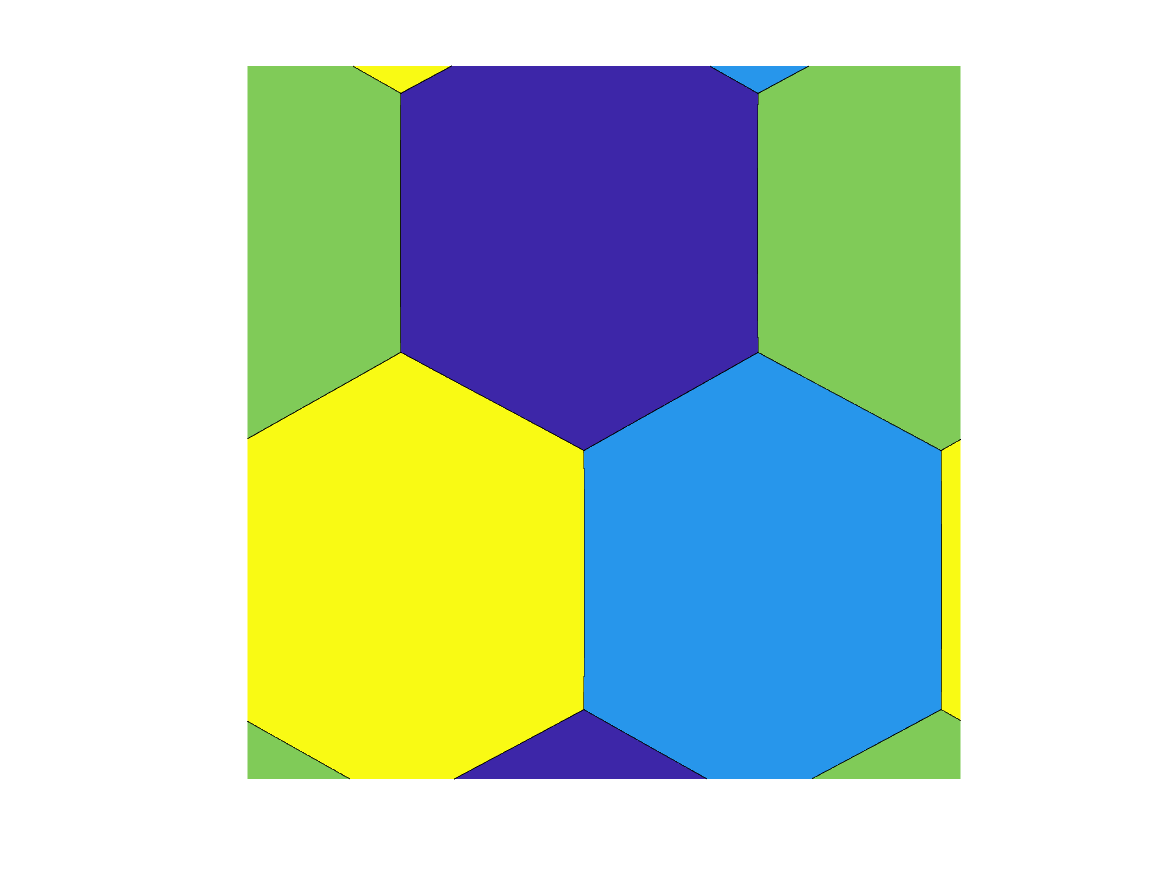}&
\includegraphics[width = 0.09\textwidth, clip, trim = 4cm 1cm 3cm 1cm]{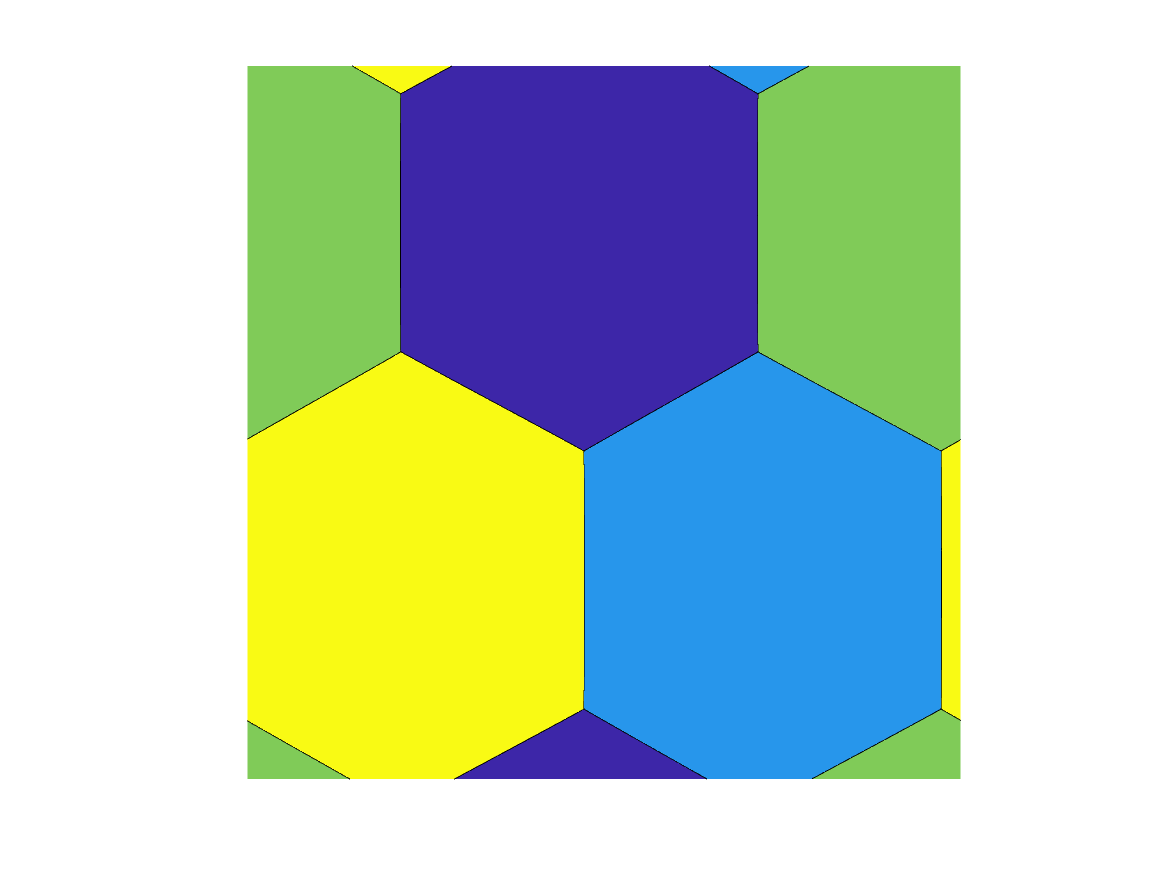}&
\includegraphics[width = 0.09\textwidth, clip, trim = 4cm 1cm 3cm 1cm]{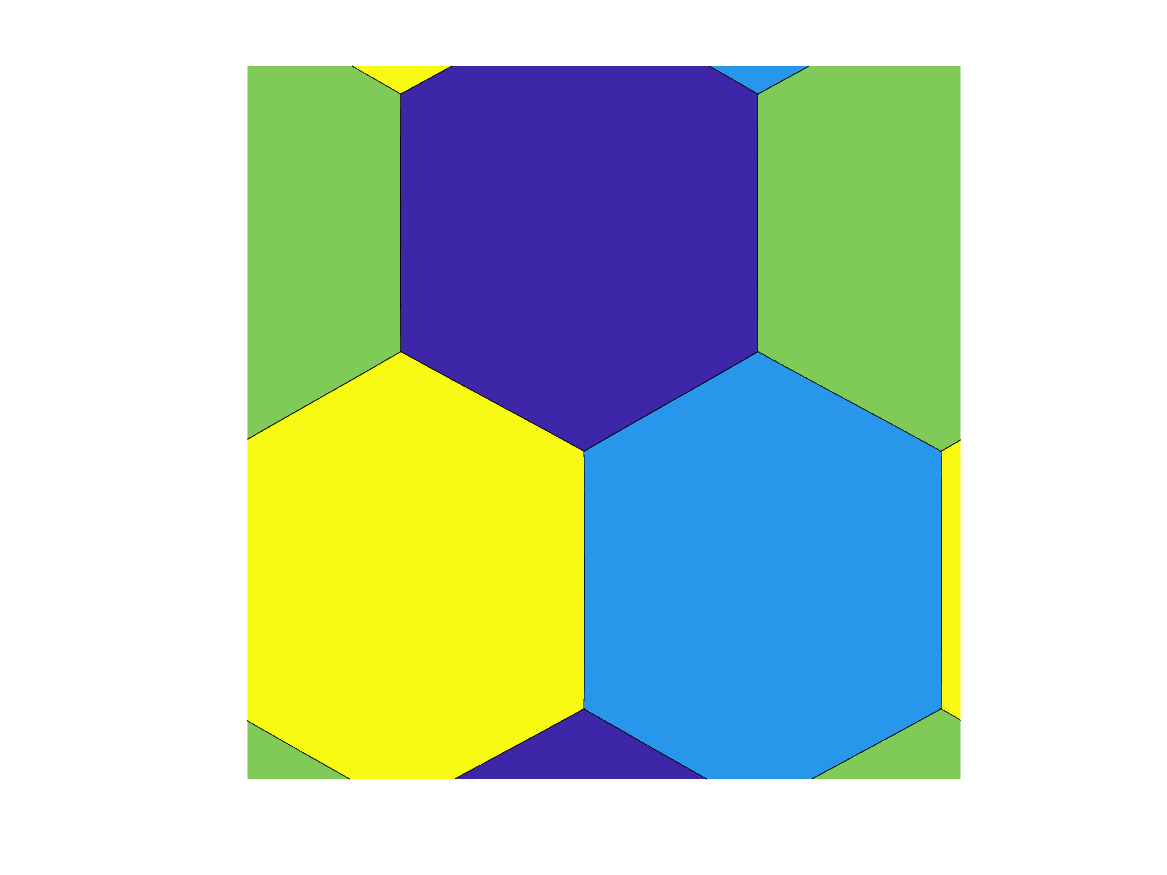}&
\includegraphics[width = 0.09\textwidth, clip, trim = 4cm 1cm 3cm 1cm]{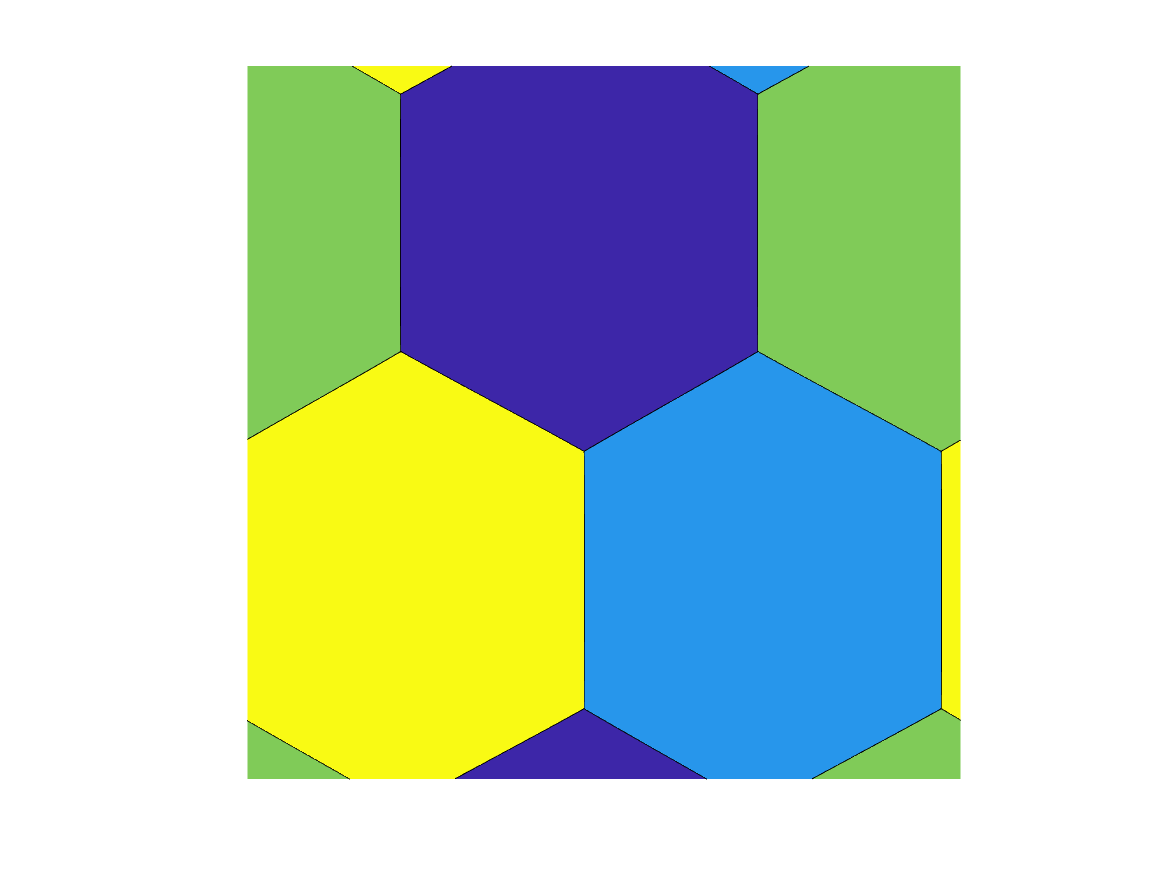}&
\includegraphics[width = 0.09\textwidth, clip, trim = 4cm 1cm 3cm 1cm]{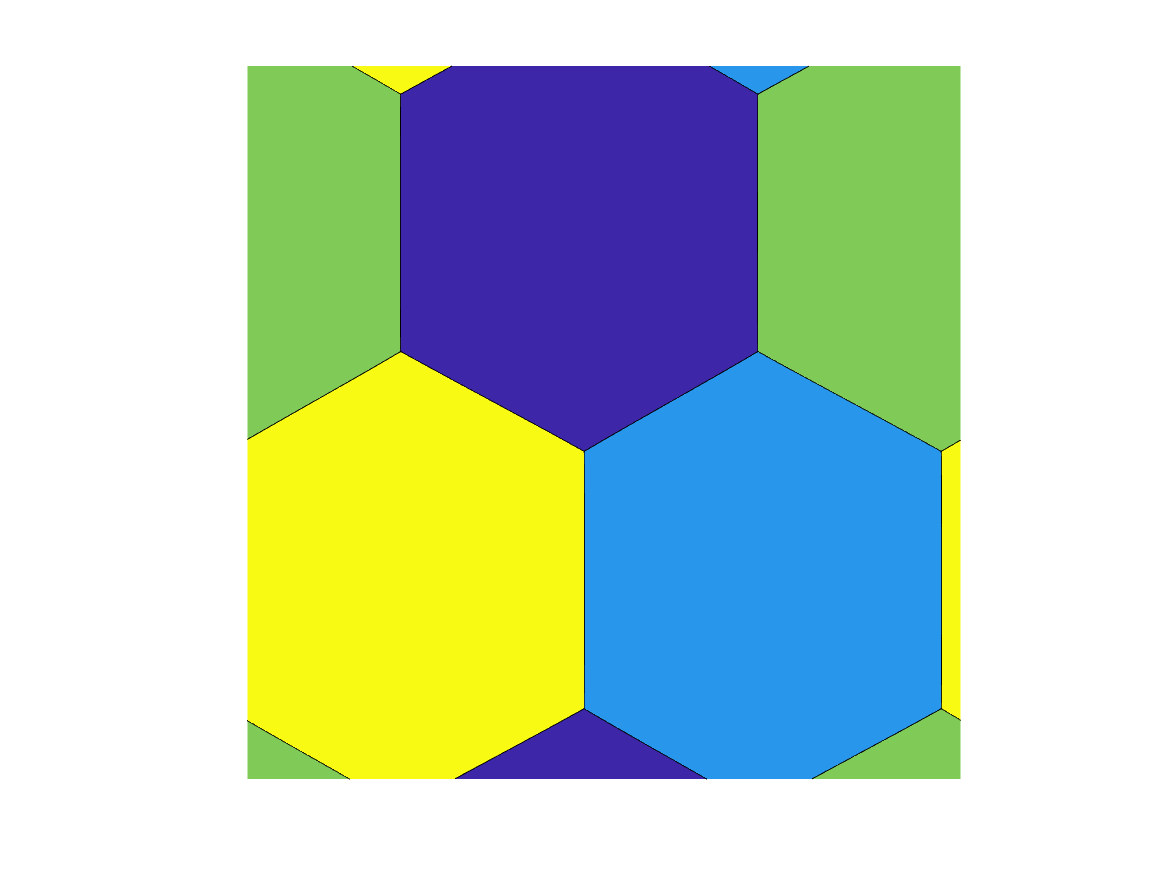} \\ 
\hline
\hline
 1 & 81 & 161  & 241 & 321  & 401& 481  &601 \\
\includegraphics[width = 0.09\textwidth, clip, trim = 4cm 1cm 3cm 1cm]{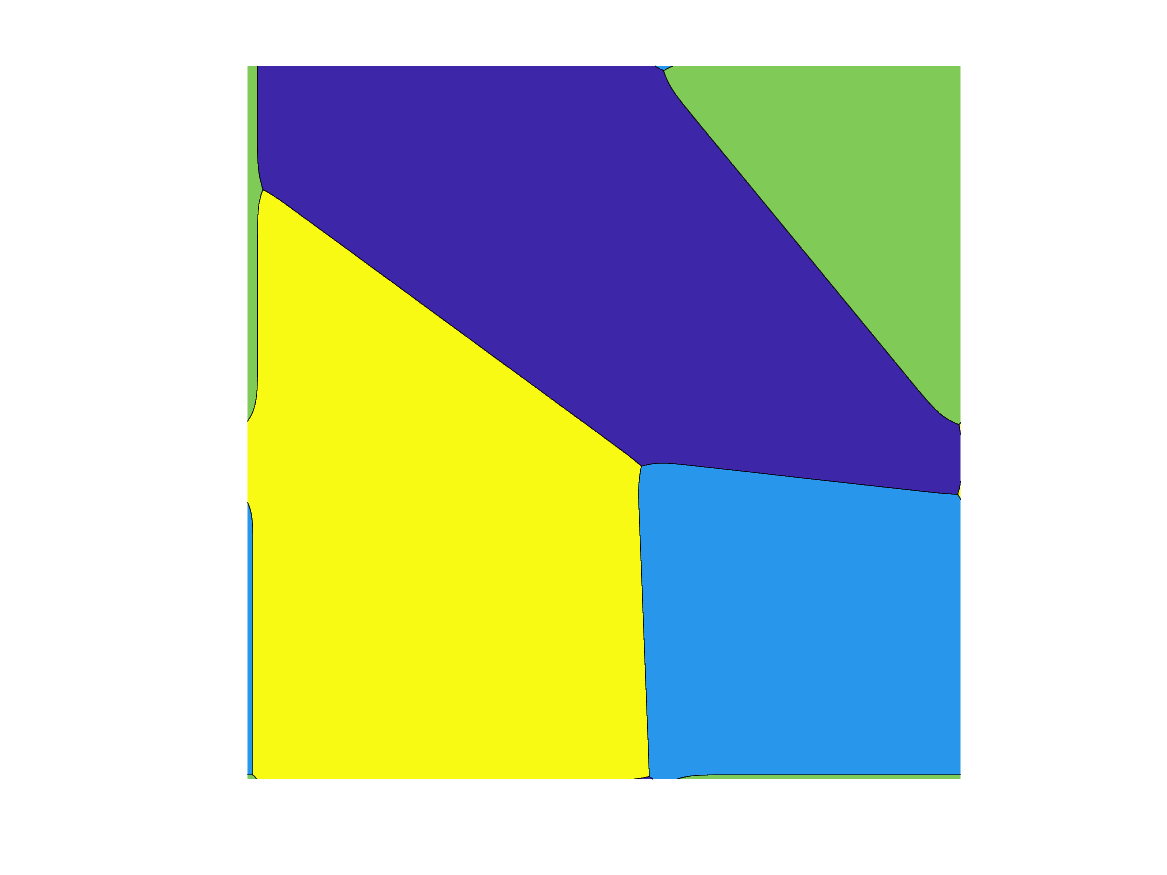}&  
\includegraphics[width = 0.09\textwidth, clip, trim = 4cm 1cm 3cm 1cm]{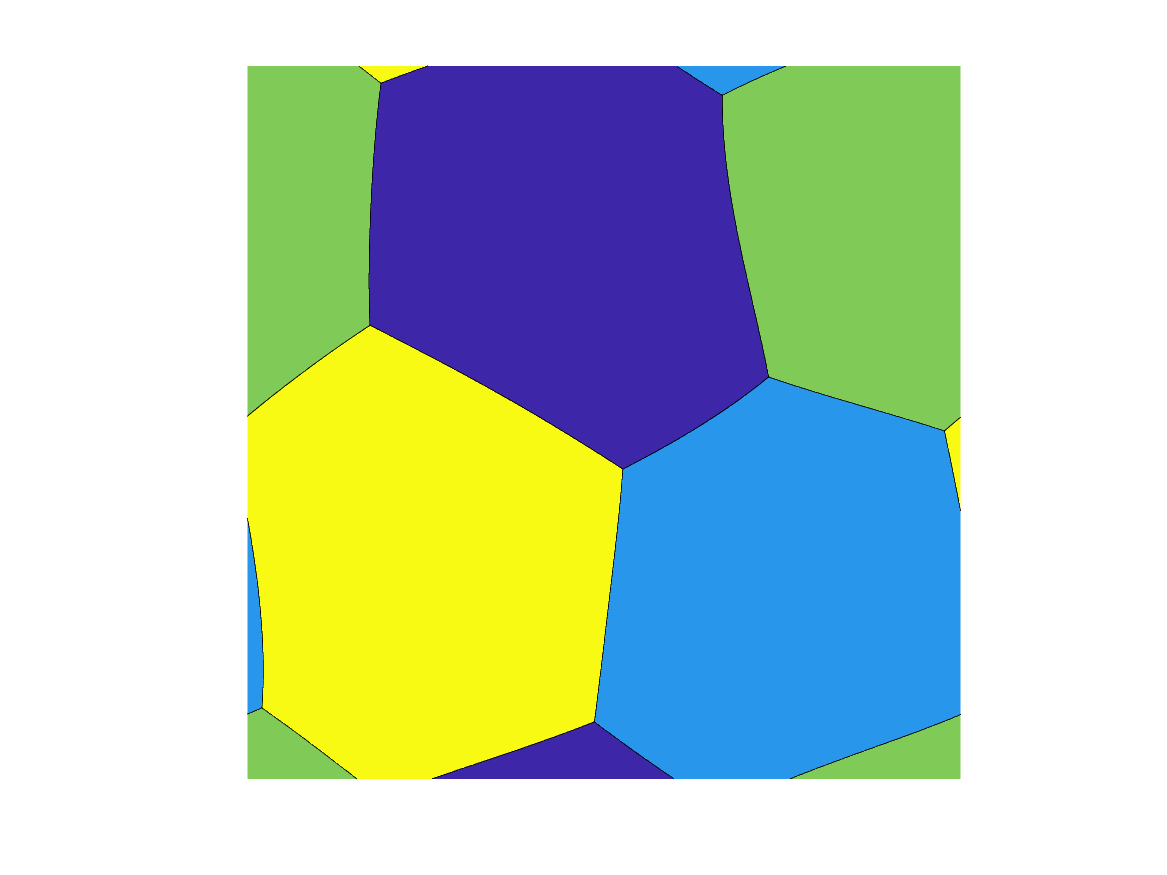}&
\includegraphics[width = 0.09\textwidth, clip, trim = 4cm 1cm 3cm 1cm]{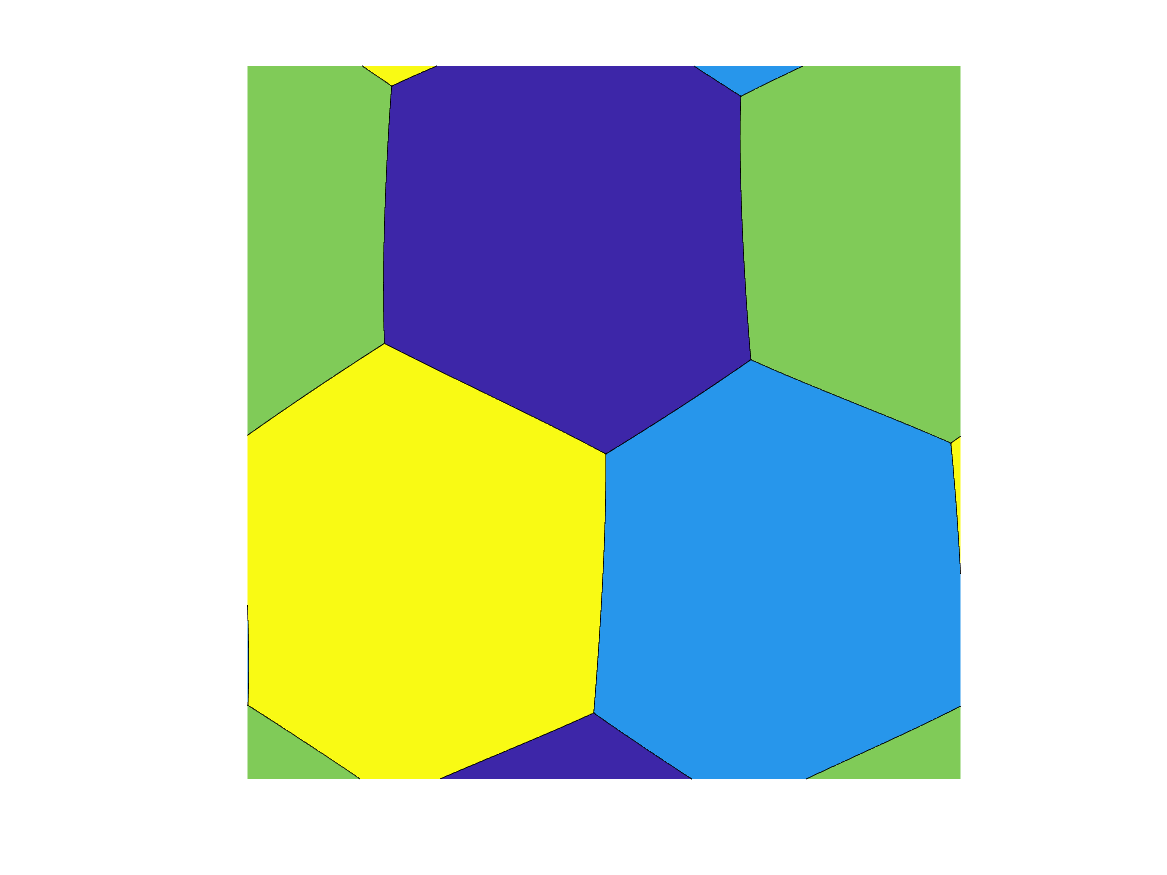}&
\includegraphics[width = 0.09\textwidth, clip, trim = 4cm 1cm 3cm 1cm]{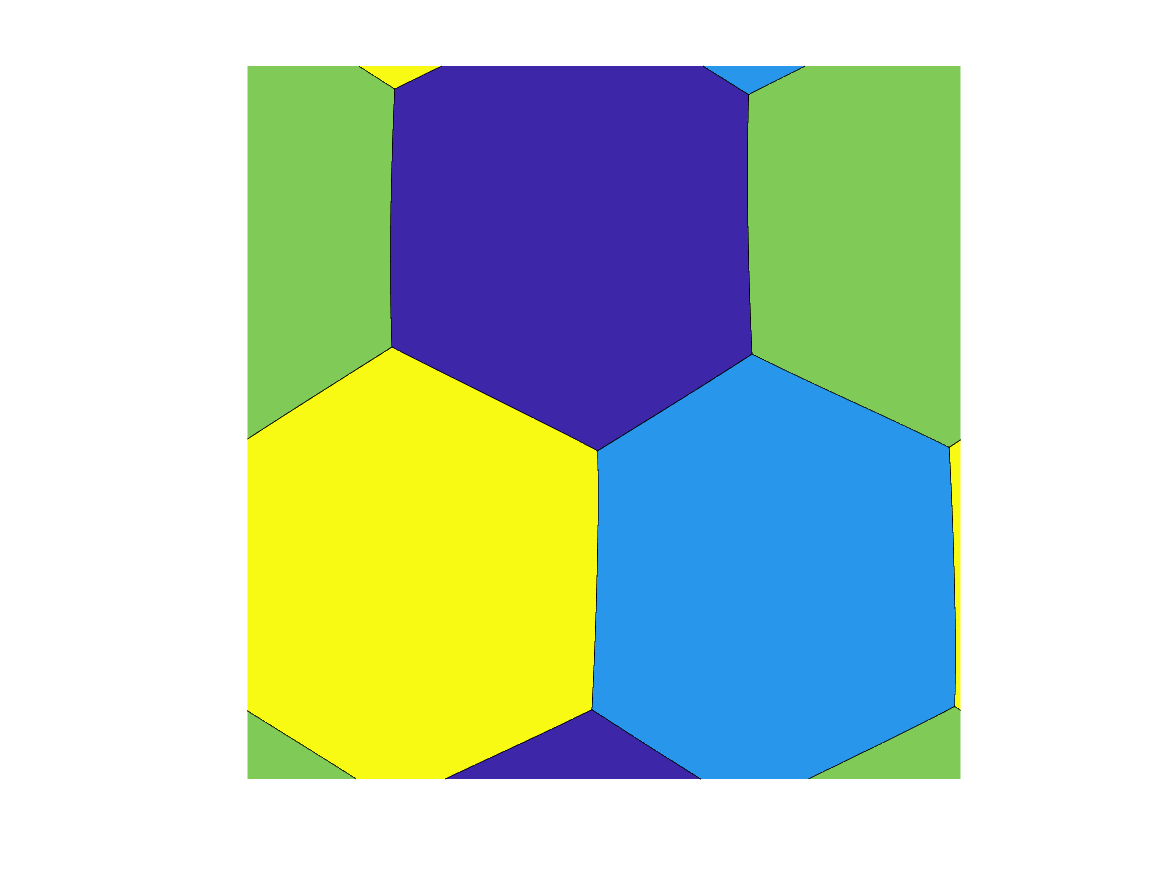}&
\includegraphics[width = 0.09\textwidth, clip, trim = 4cm 1cm 3cm 1cm]{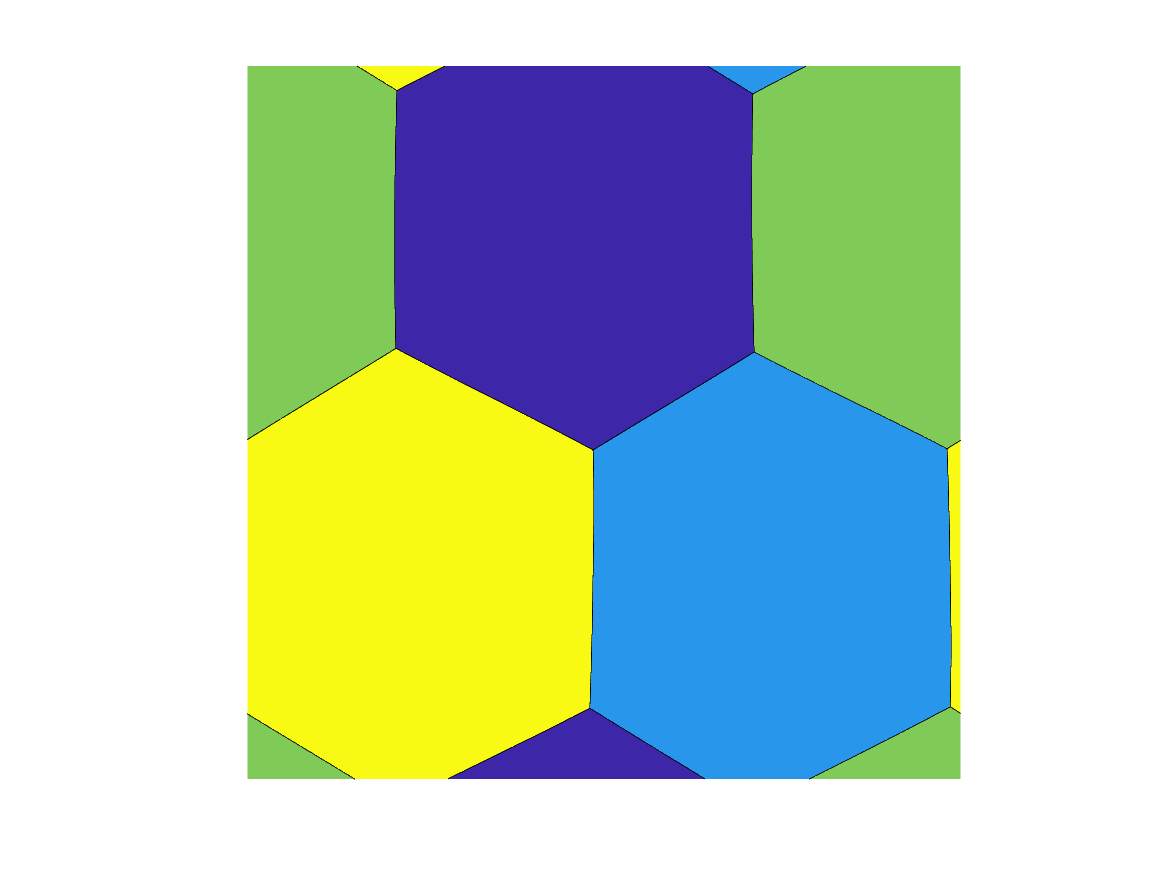}&
\includegraphics[width = 0.09\textwidth, clip, trim = 4cm 1cm 3cm 1cm]{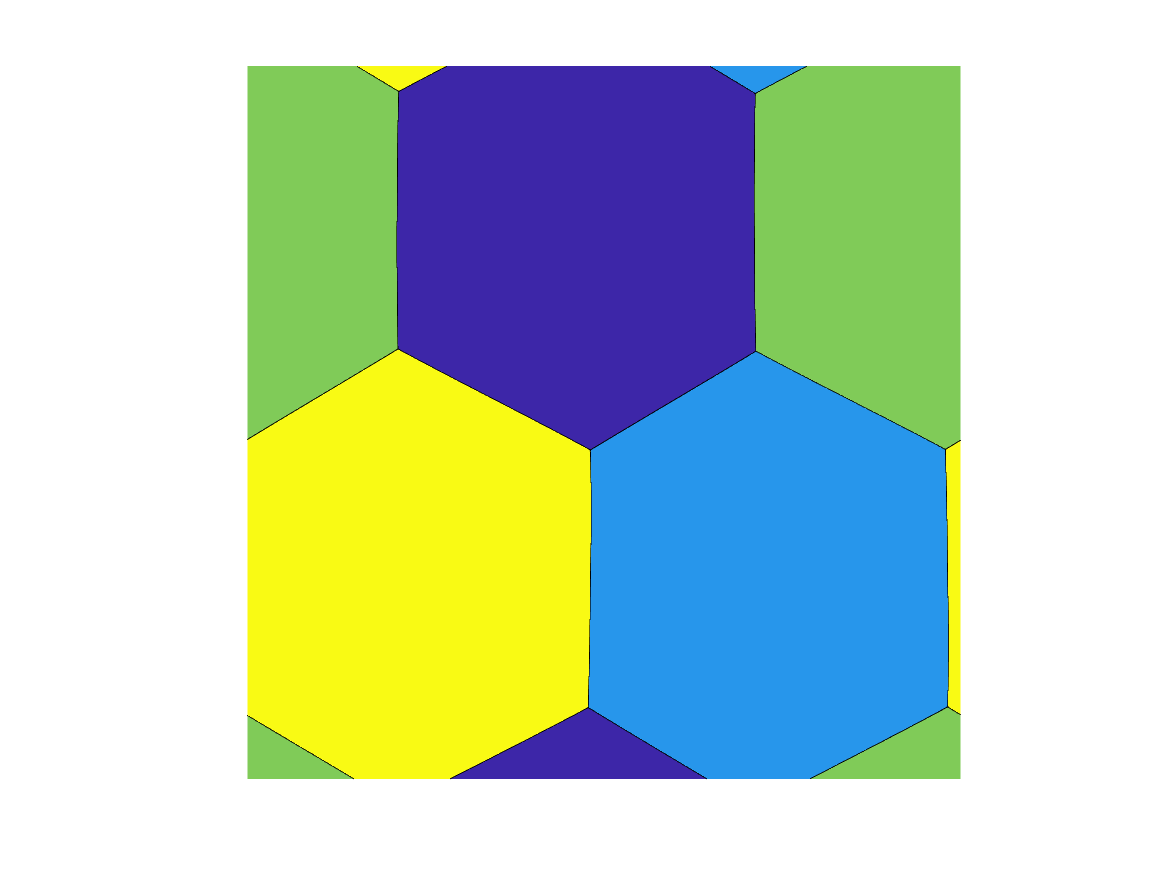}&
\includegraphics[width = 0.09\textwidth, clip, trim = 4cm 1cm 3cm 1cm]{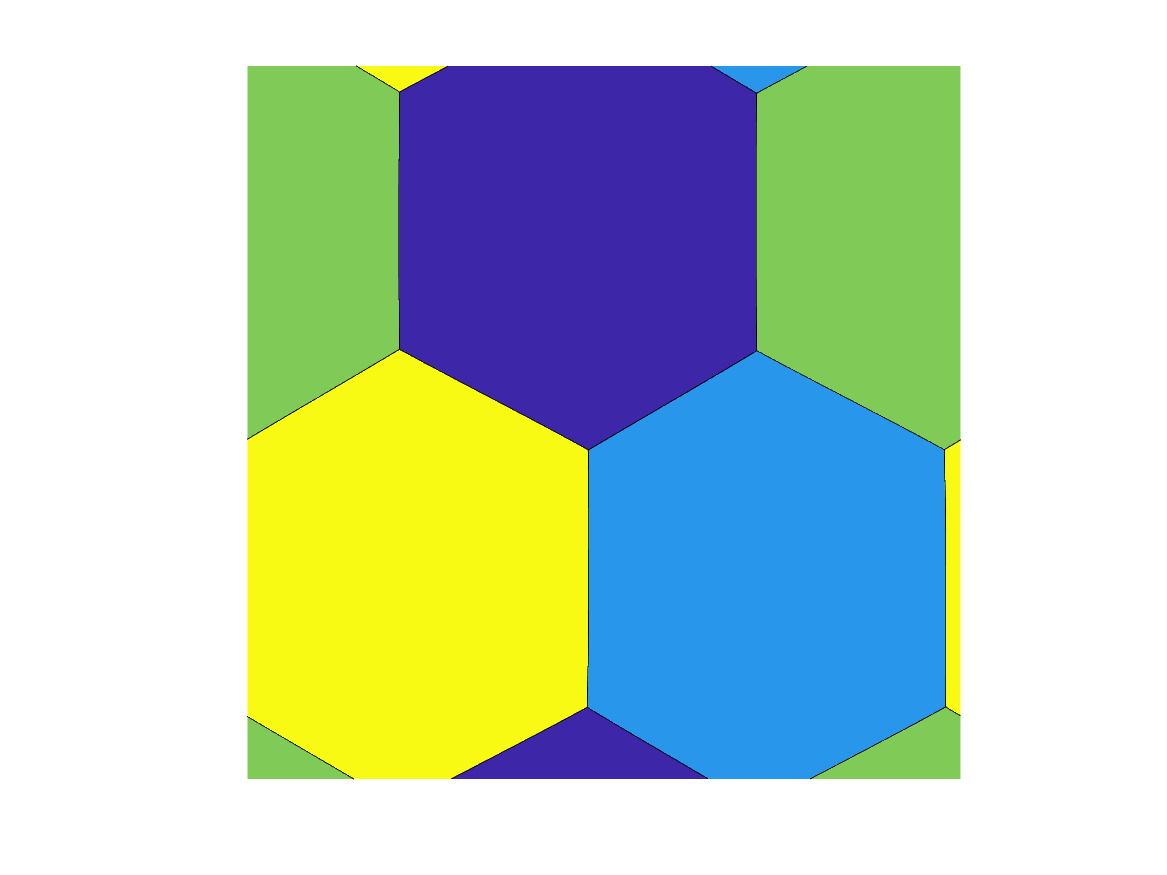}&
\includegraphics[width = 0.09\textwidth, clip, trim = 4cm 1cm 3cm 1cm]{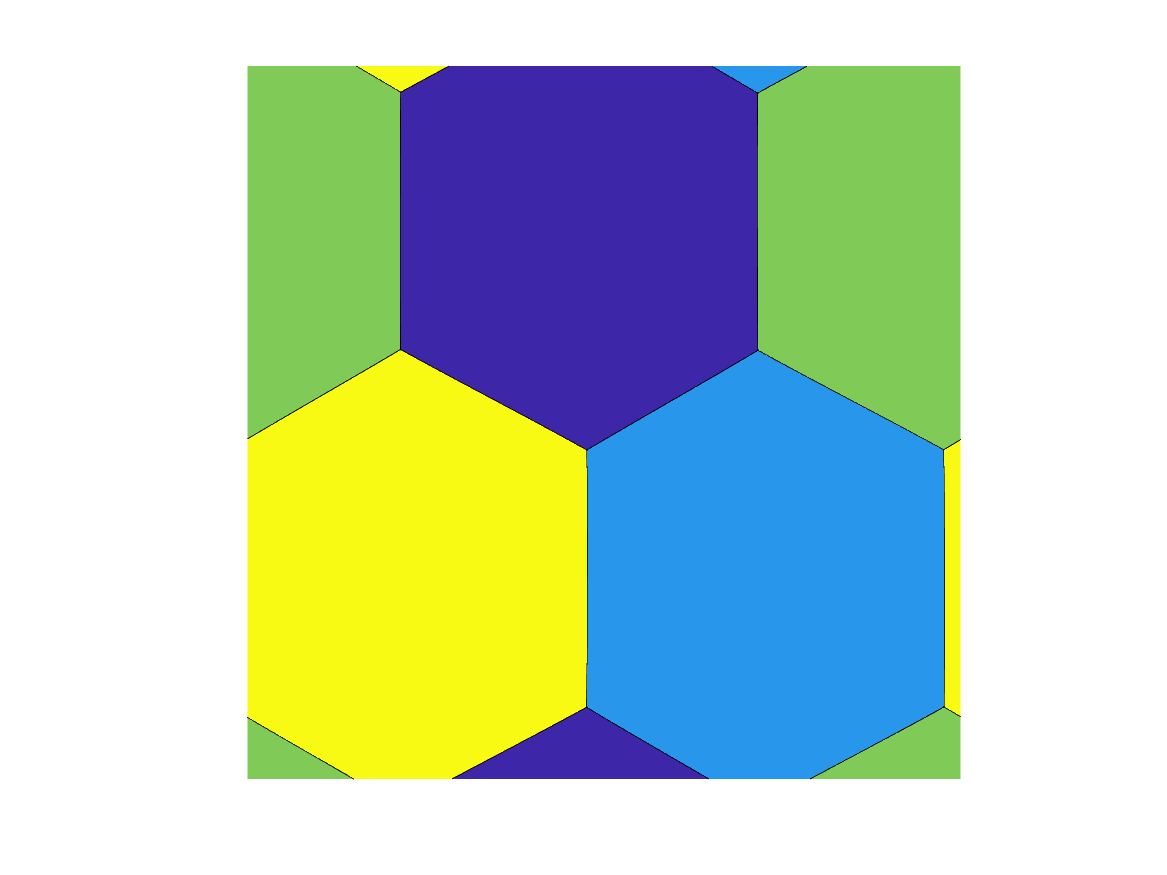} \\ 
\hline
\end{tabular}
\caption{First row: snapshots at different iterations in adaptive in time for $\tau$ changing in $[1/4,1/8,1/16,1/32,1/128]$, second row: snapshots at different iterations for a fixed $\tau=1/64$. See Section~\ref{sec:adptive}.} \label{fig:adaptiveintime}
\end{figure}

\subsection{$k$-partition for a 2-dimensional periodic domain}\label{sec:2dperiodic}
In this section, we simply apply Algorithm~\ref{alg3} on the calculation of $k$-partitions for periodic domains in 2-dimensional spaces. In Figure~\ref{fig:2dperiodic}, we list the solution of $k$-partition for $k = 4-12$, $14-16$, $18$, $20$, $23-25$, $28$, $30$ and $36$. In all results, we discretize the domain with $256^2$ grid points, set an initial $\tau = 1/4$ and $tol_\tau = 1/128$, and start with random initial guesses. From our experimental observation, all experiments converge in fewer than about 2-3 hundreds steps and take about $2$-$75$ {\it seconds} CPU time in average. Here, the average CPU time is the average CPU time of $10$ experiments with individually independent random initial guesses for a fixed $k$. More precisely, the $4$-partition case only takes $2$ seconds and even the $36$-partition computation only takes $75$ {\it seconds}. All reported results are consistent with the results in \cite{Wang_2019}. Besides, we observe that for most $k$, especially when $k$ is large, we get hexagon structures. This tessellation for Dirichlet partition is also consistent with the conjecture proposed in \cite{caffarelli2007optimal}. For k = 5,7,10, irregular structures are observed and periodic extensions are plotted in Figure~\ref{fig:extension}.

\begin{figure}[ht]
\centering
\includegraphics[width = 0.19\textwidth, clip, trim = 4cm 1cm 3cm 1cm]{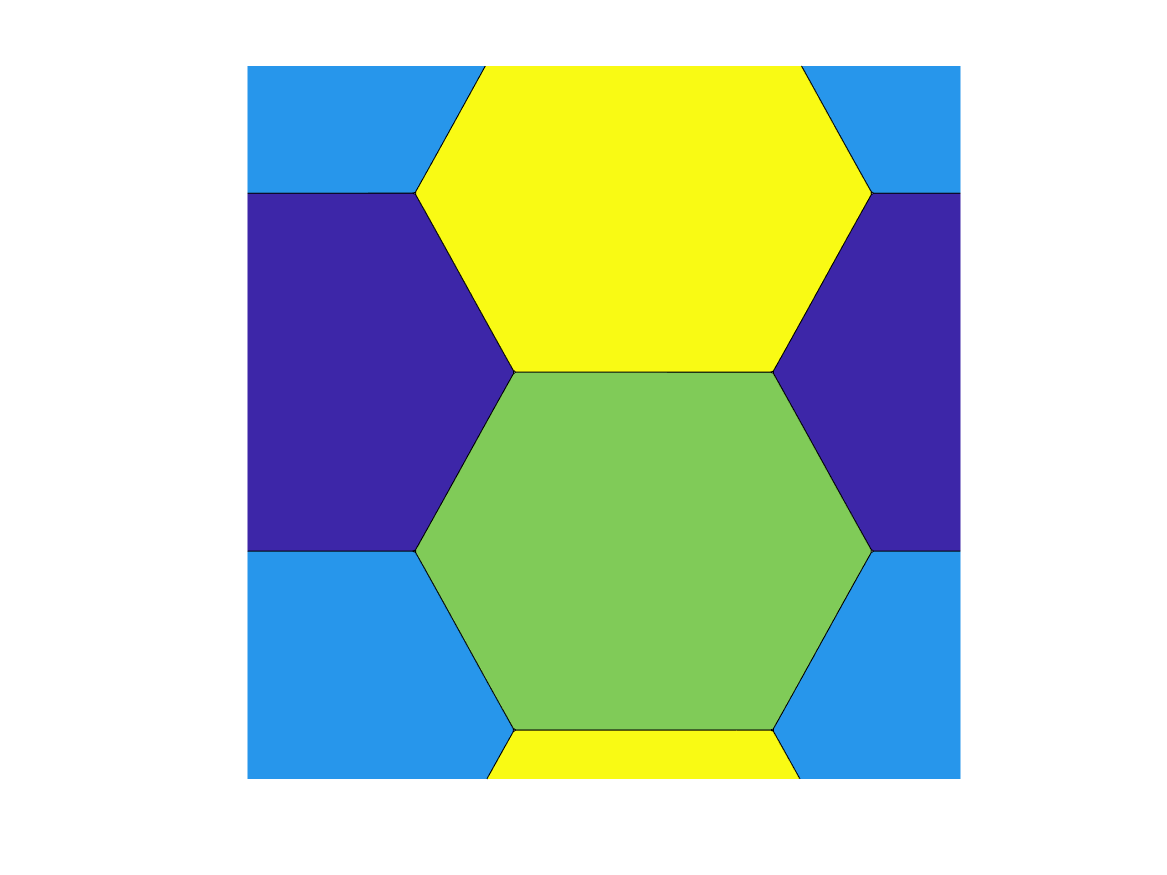}
\includegraphics[width = 0.19\textwidth, clip, trim = 4cm 1cm 3cm 1cm]{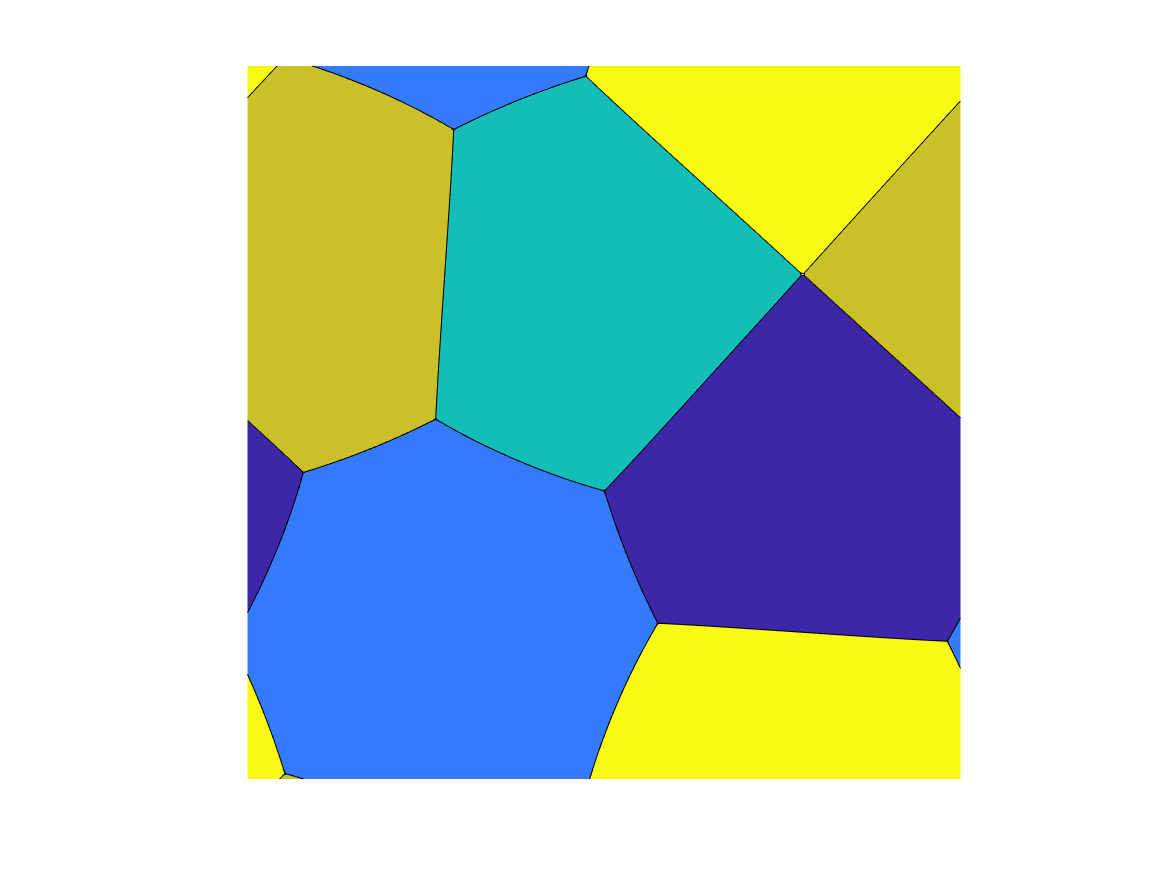}
\includegraphics[width = 0.19\textwidth, clip, trim = 4cm 1cm 3cm 1cm]{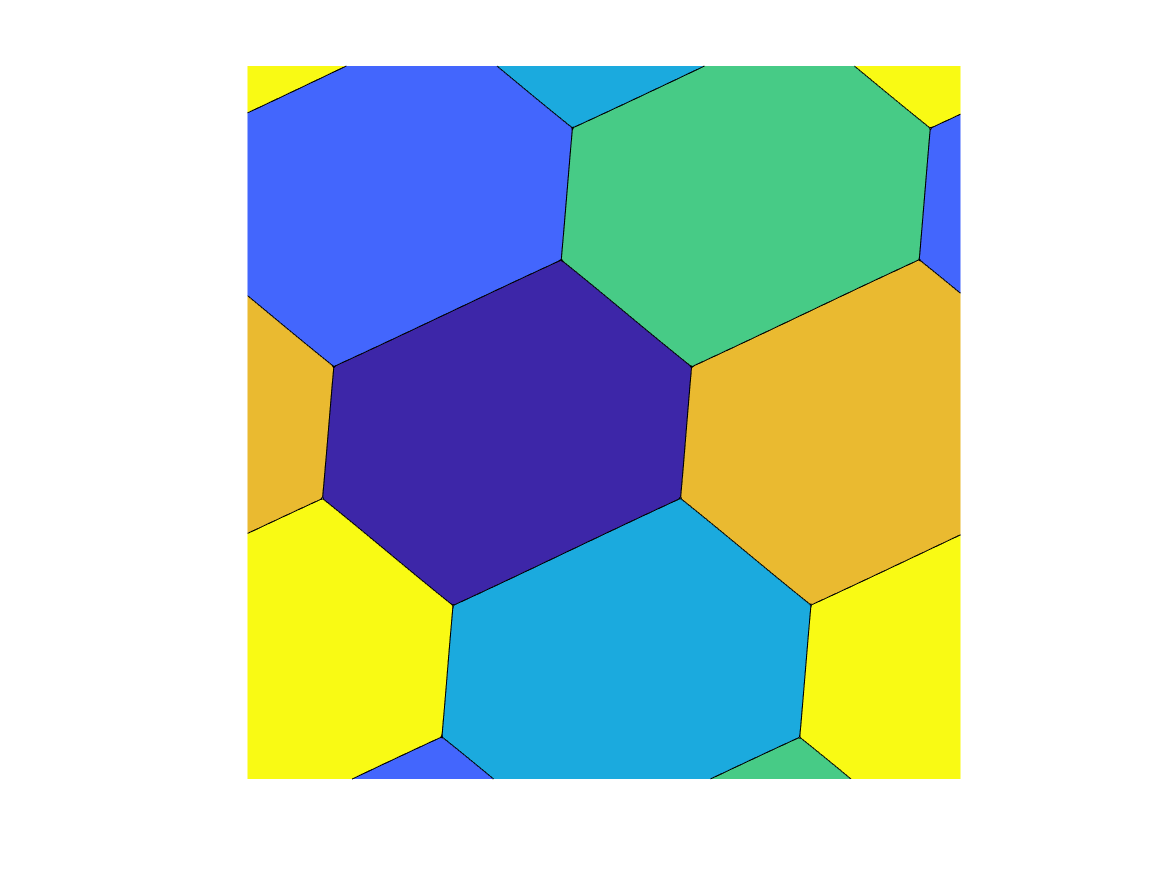}
\includegraphics[width = 0.19\textwidth, clip, trim = 4cm 1cm 3cm 1cm]{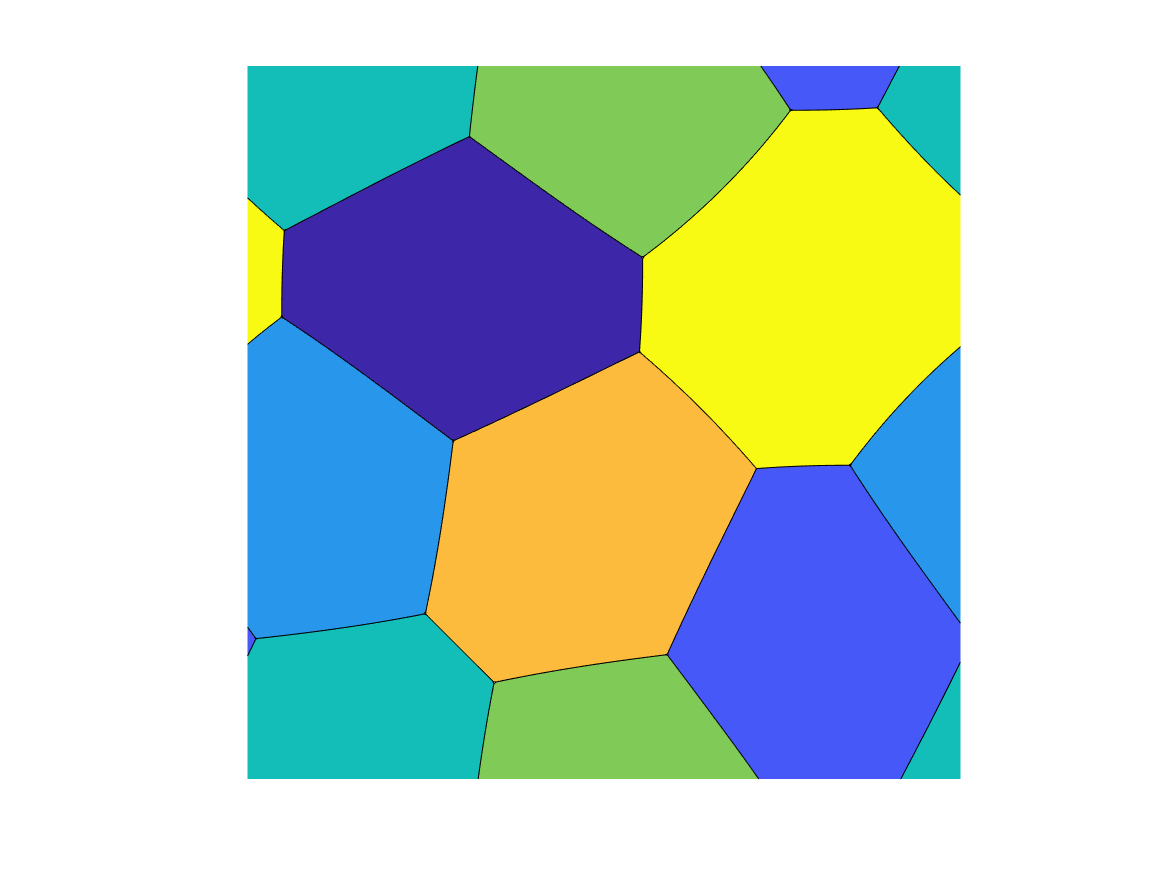}
\includegraphics[width = 0.19\textwidth, clip, trim = 4cm 1cm 3cm 1cm]{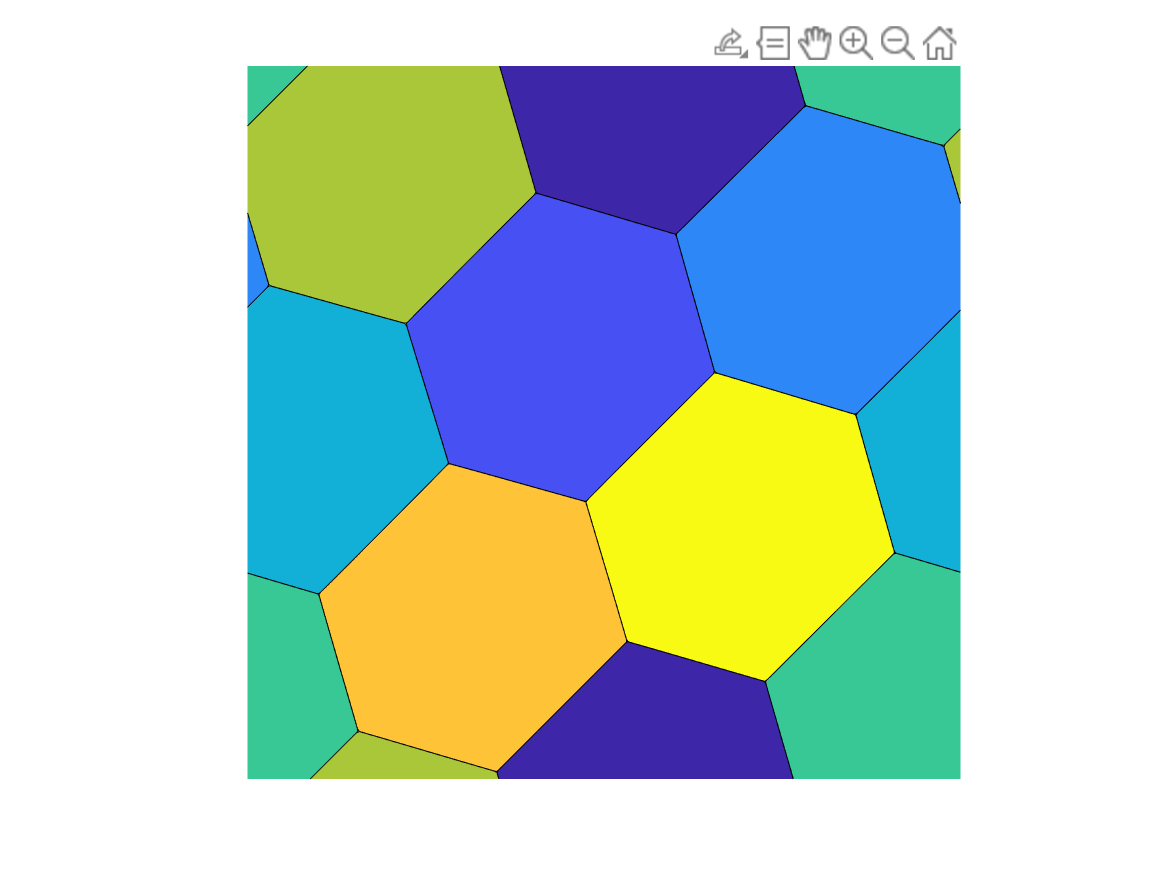}
\includegraphics[width = 0.19\textwidth, clip, trim = 4cm 1cm 3cm 1cm]{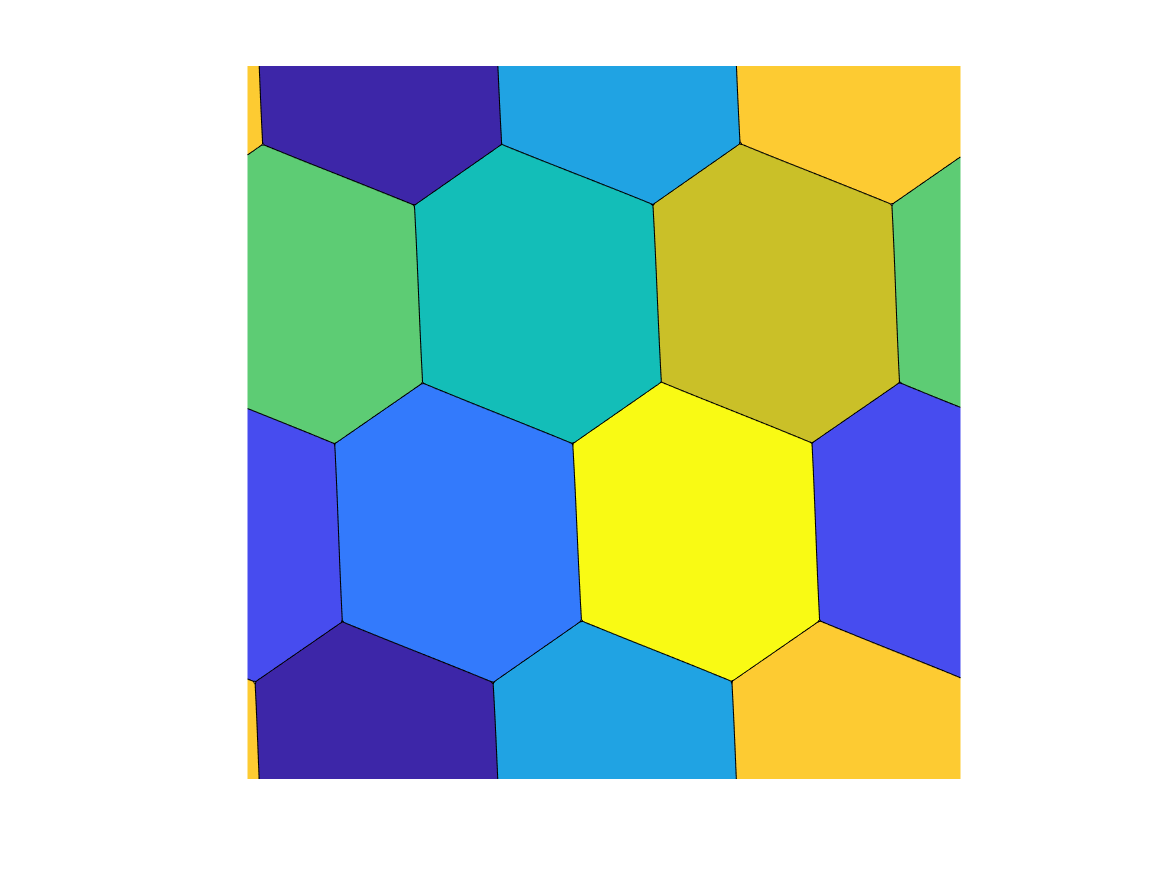}
\includegraphics[width = 0.19\textwidth, clip, trim = 4cm 1cm 3cm 1cm]{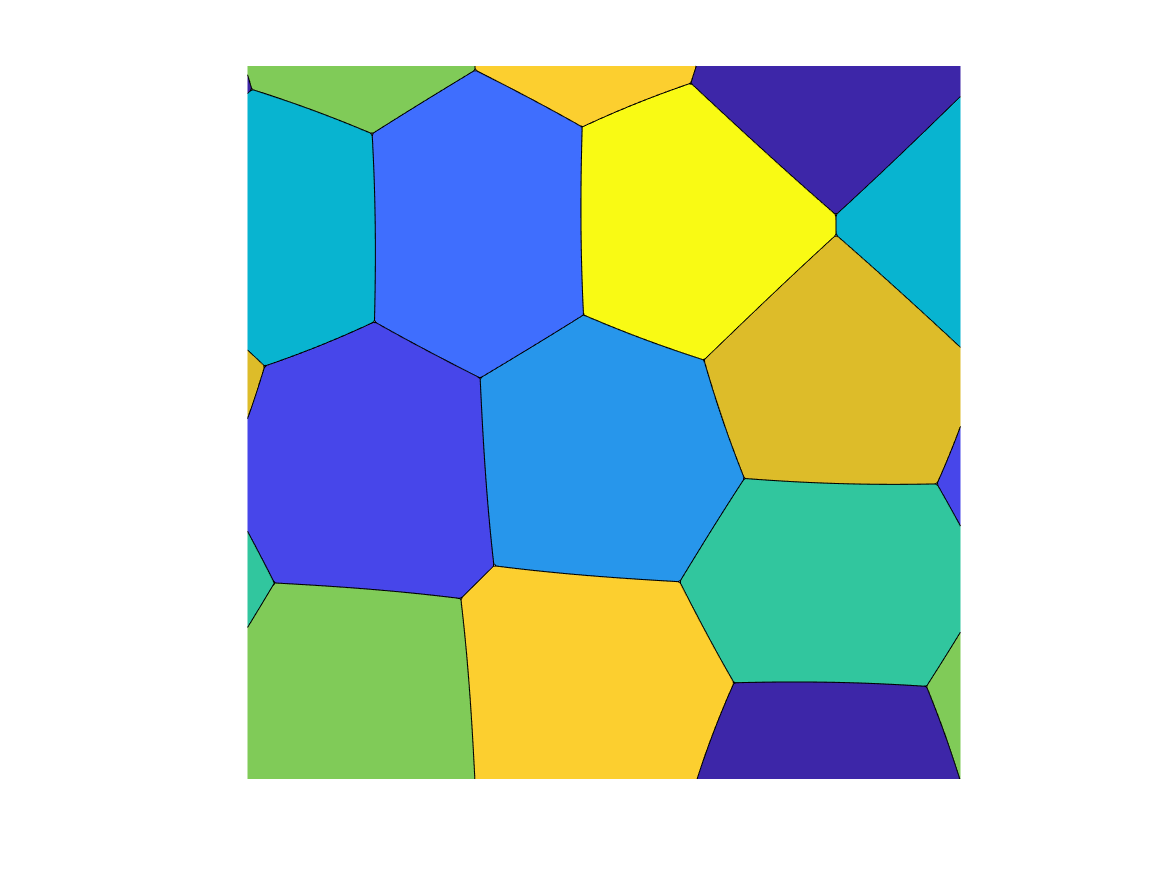}
\includegraphics[width = 0.19\textwidth, clip, trim = 4cm 1cm 3cm 1cm]{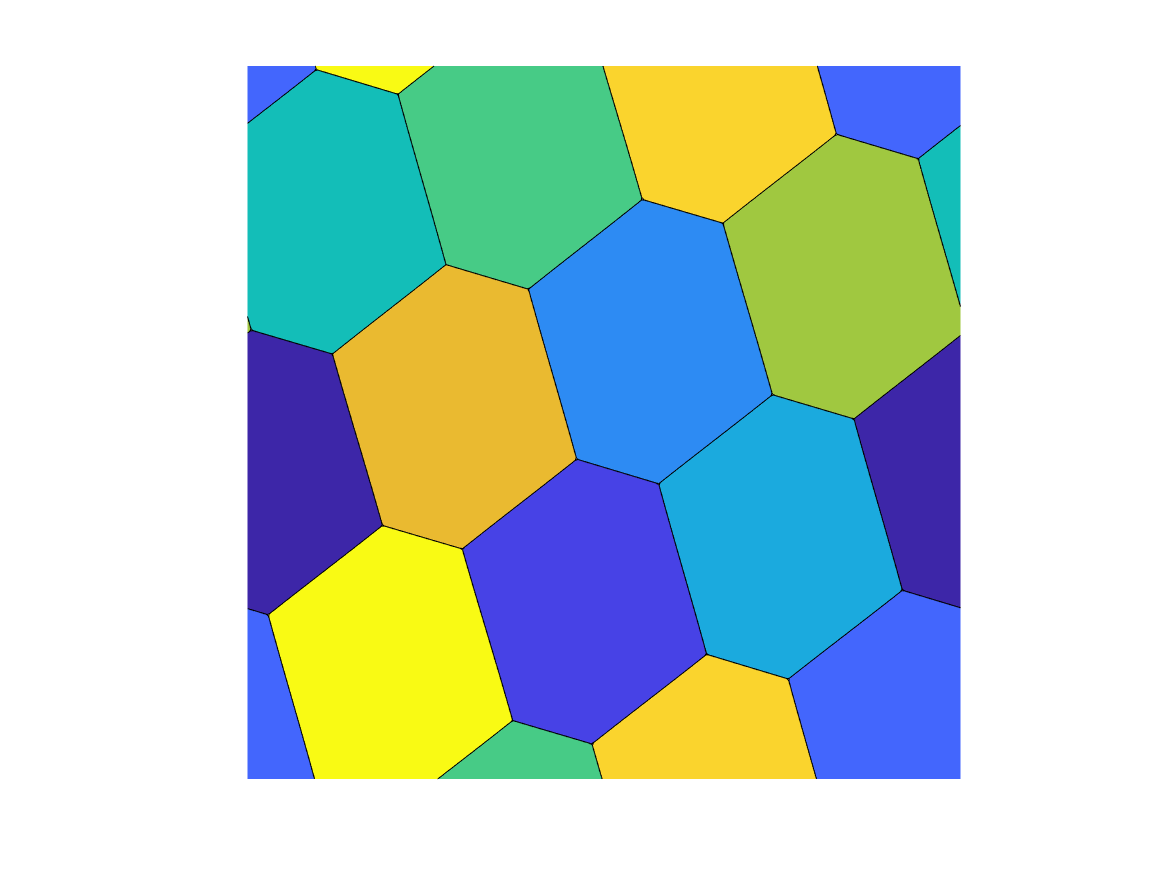}
\includegraphics[width = 0.19\textwidth, clip, trim = 4cm 1cm 3cm 1cm]{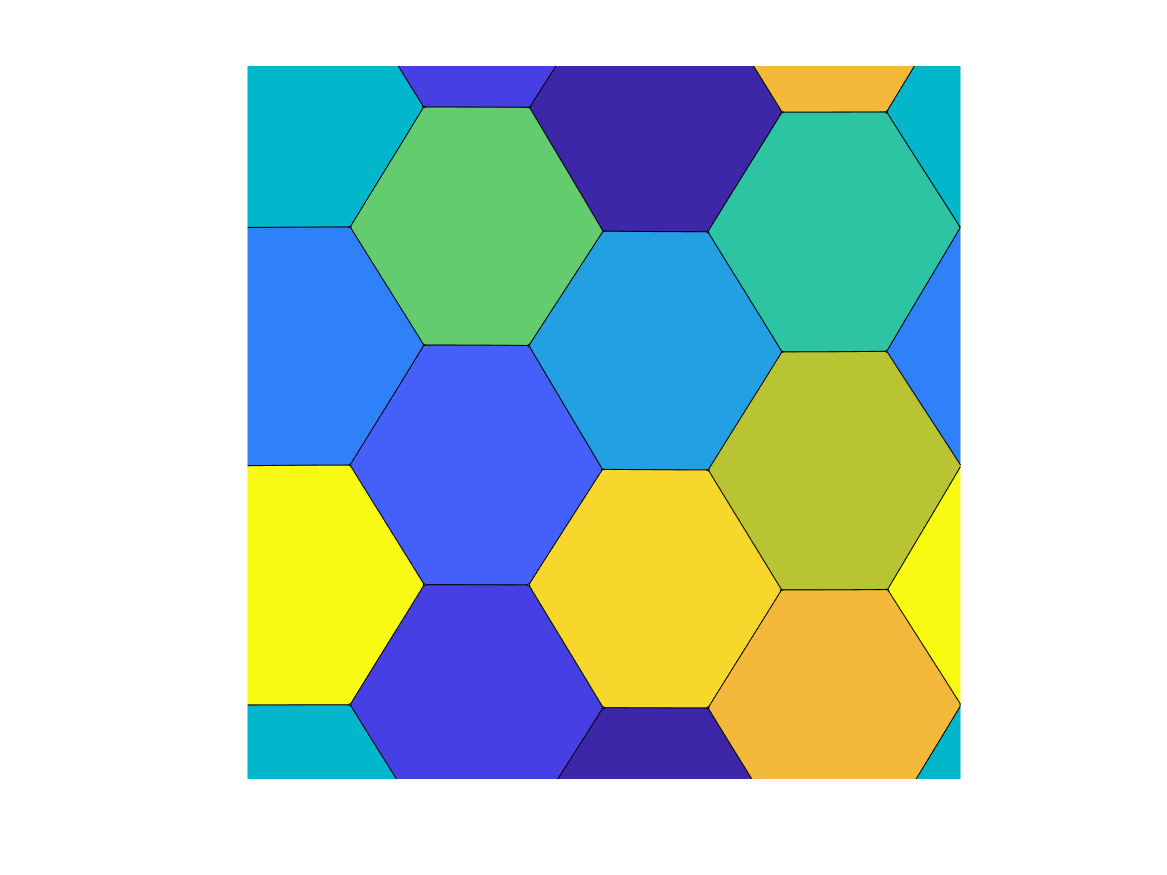}
\includegraphics[width = 0.19\textwidth, clip, trim = 4cm 1cm 3cm 1cm]{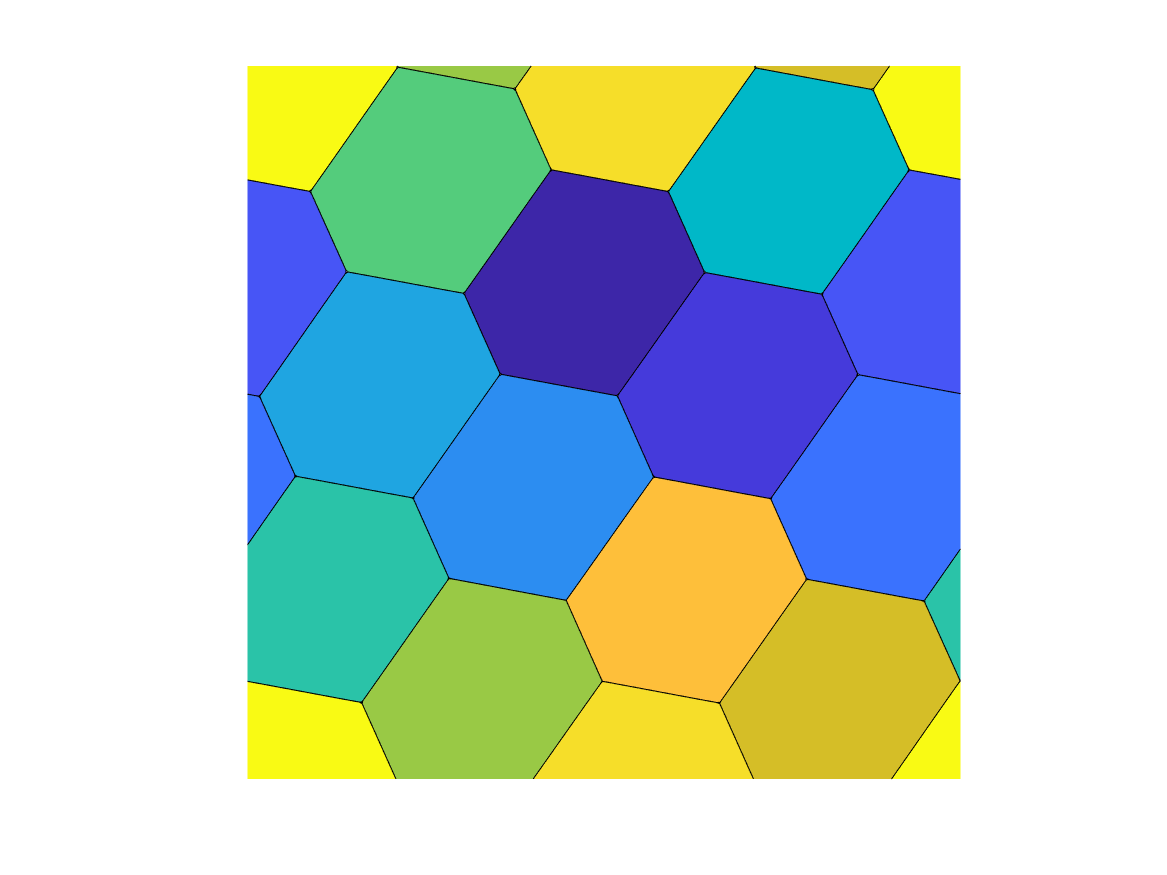}
\includegraphics[width = 0.19\textwidth, clip, trim = 4cm 1cm 3cm 1cm]{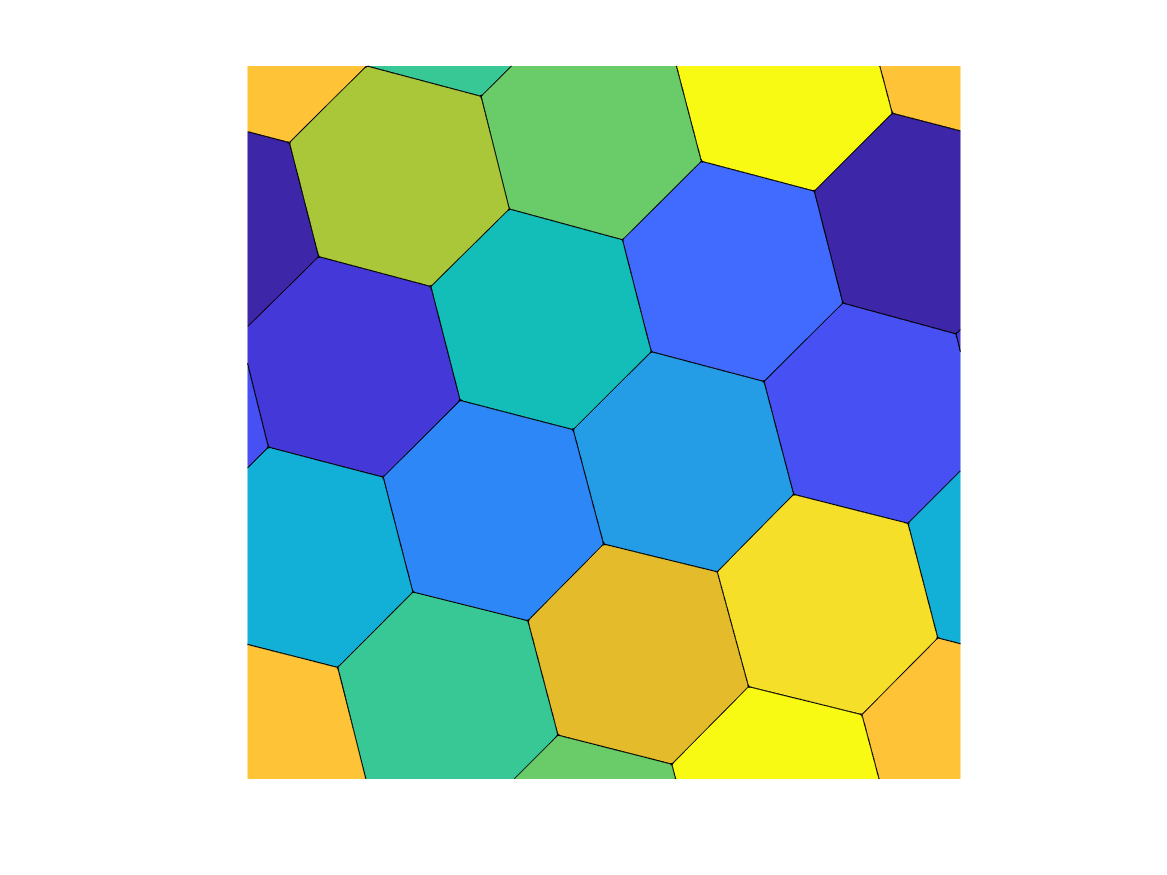}
\includegraphics[width = 0.19\textwidth, clip, trim = 4cm 1cm 3cm 1cm]{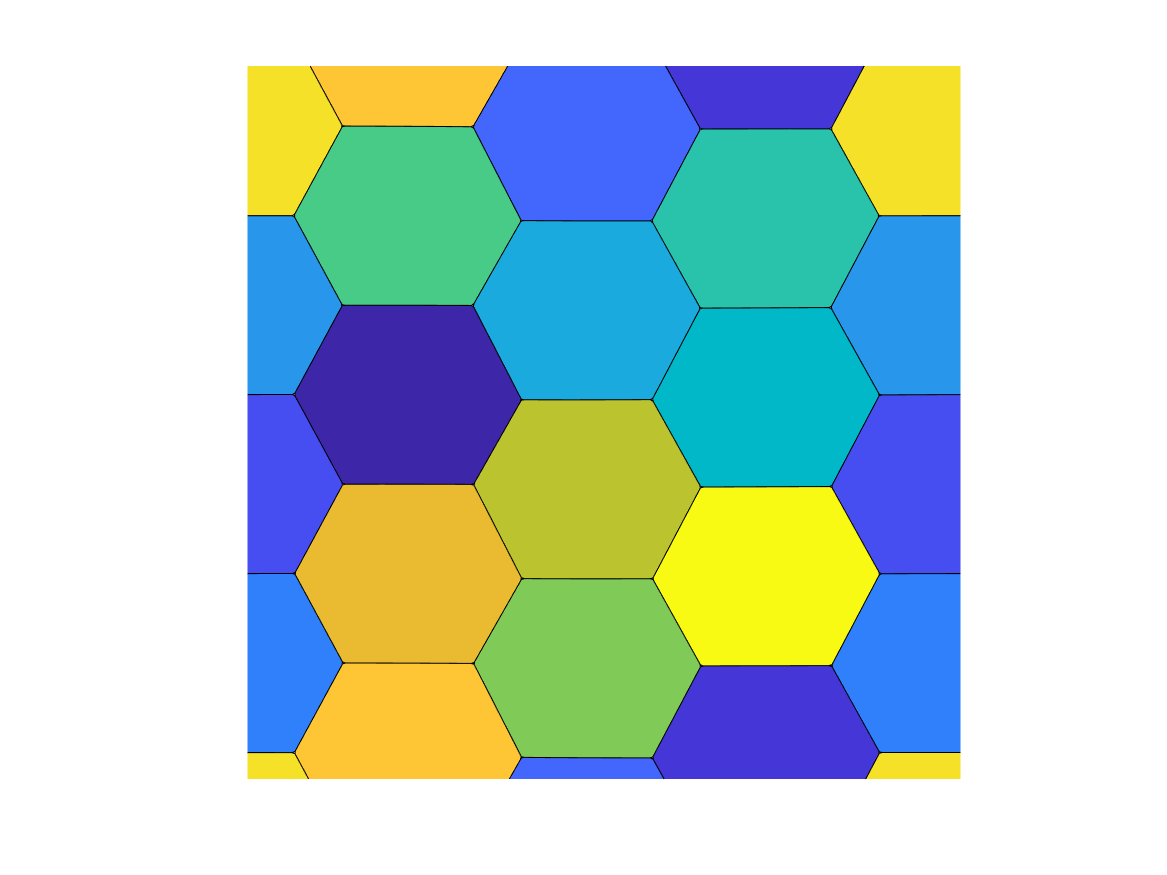}
\includegraphics[width = 0.19\textwidth, clip, trim = 4cm 1cm 3cm 1cm]{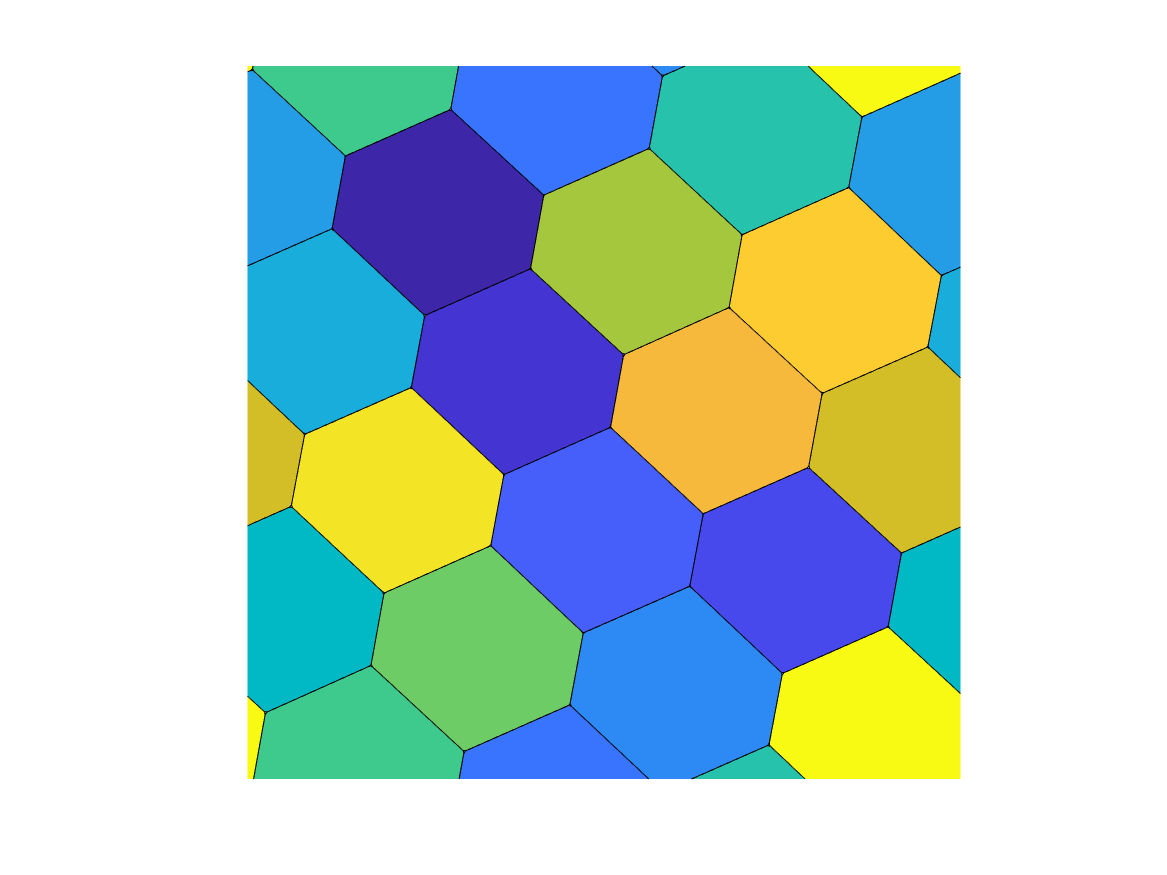}
\includegraphics[width = 0.19\textwidth, clip, trim = 4cm 1cm 3cm 1cm]{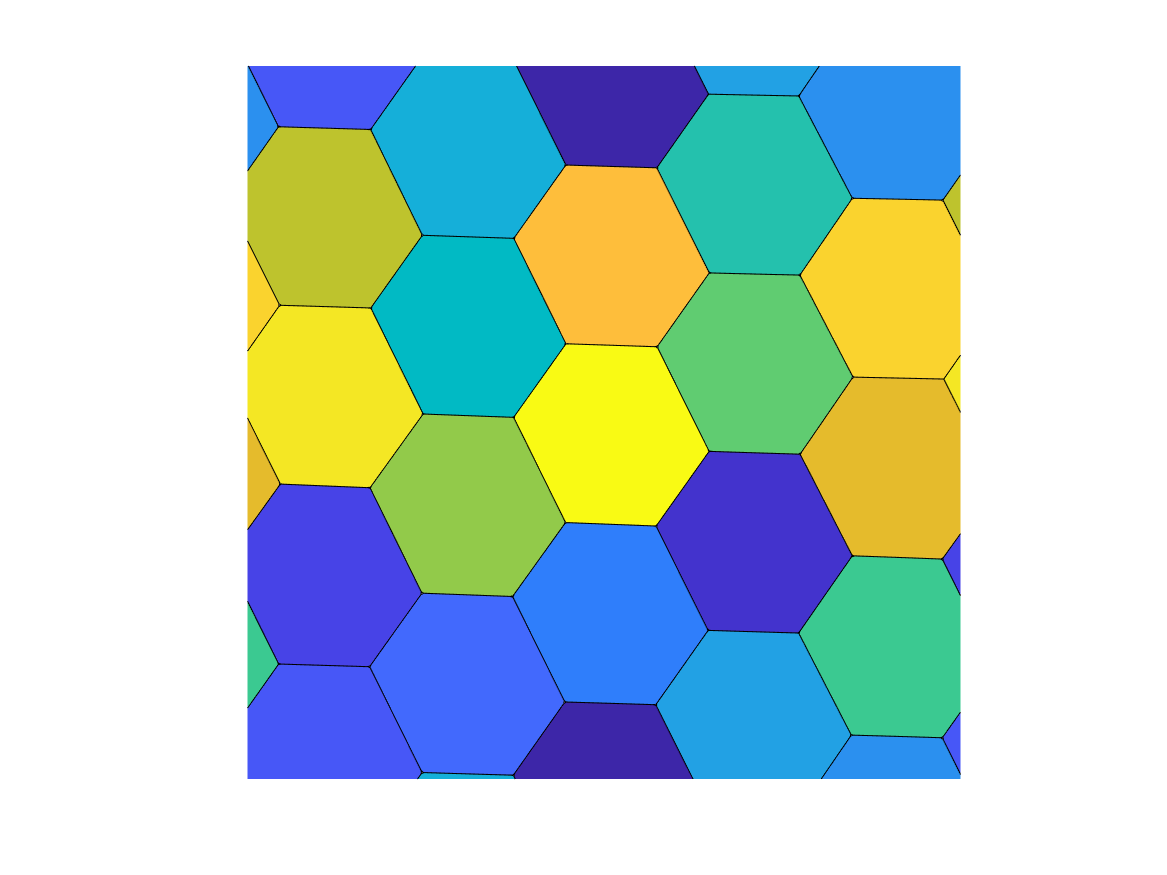}
\includegraphics[width = 0.19\textwidth, clip, trim = 4cm 1cm 3cm 1cm]{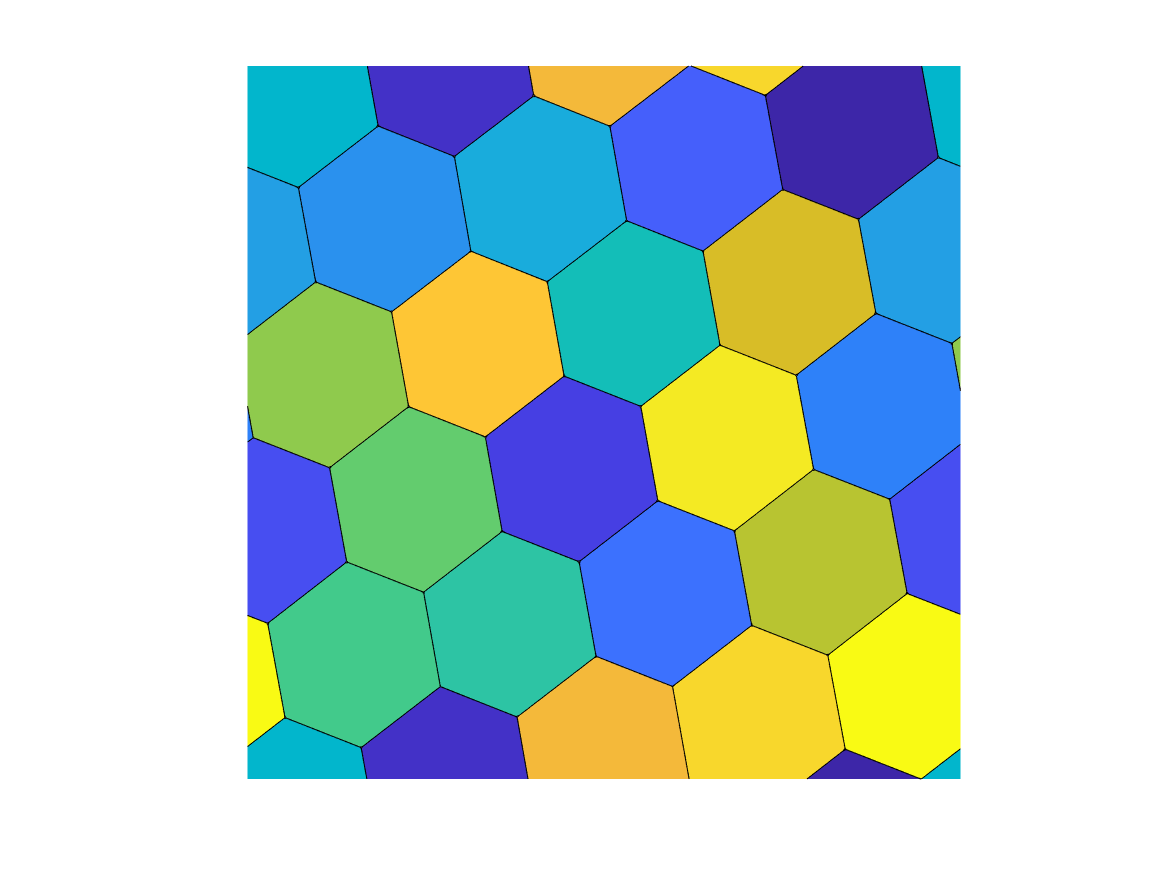}
\includegraphics[width = 0.19\textwidth, clip, trim = 4cm 1cm 3cm 1cm]{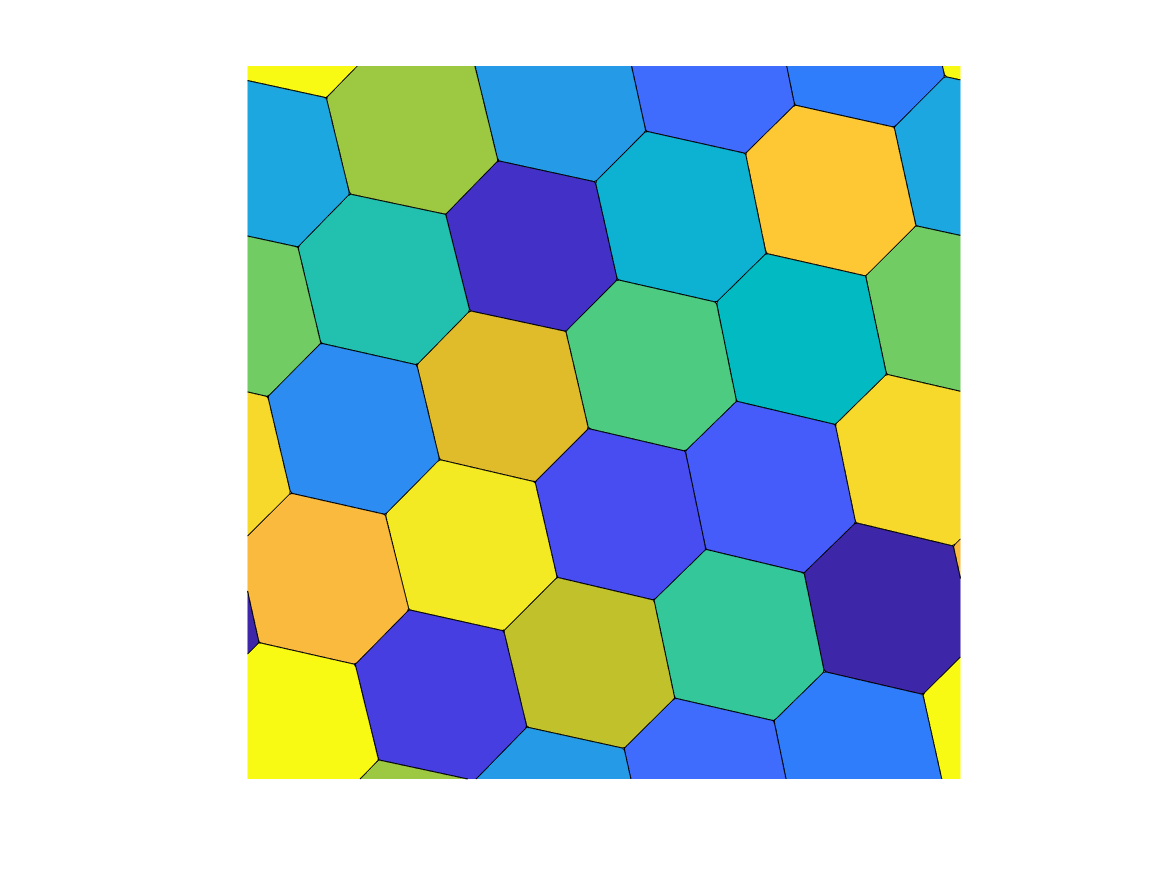}
\includegraphics[width = 0.19\textwidth, clip, trim = 4cm 1cm 3cm 1cm]{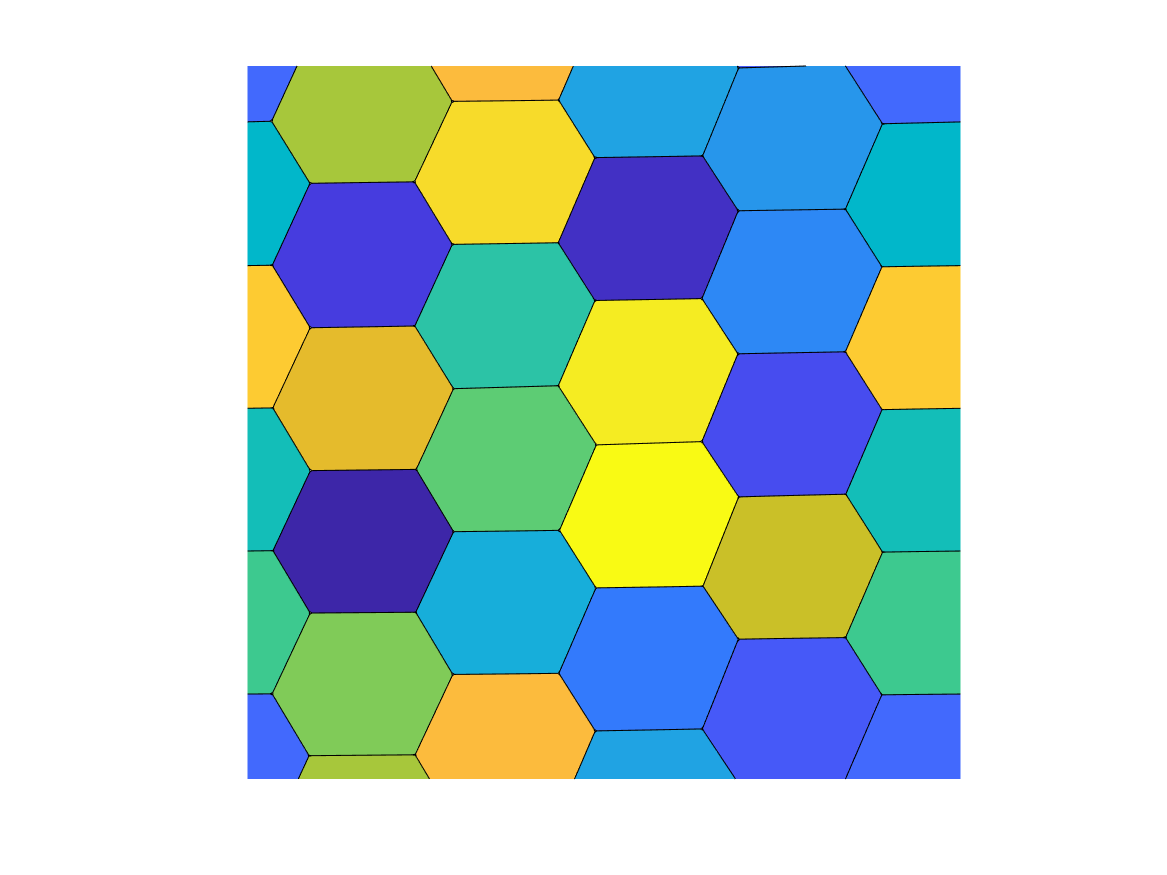}
\includegraphics[width = 0.19\textwidth, clip, trim = 4cm 1cm 3cm 1cm]{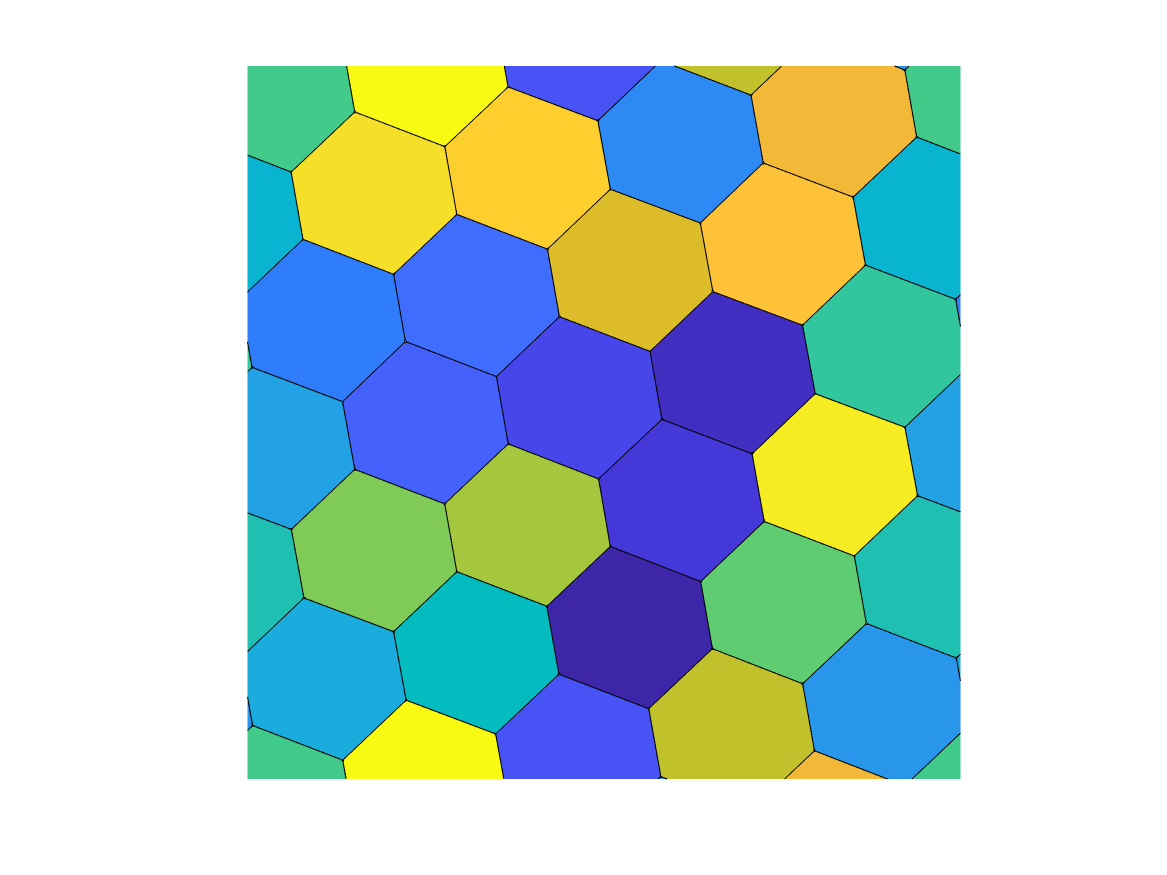}
\includegraphics[width = 0.19\textwidth, clip, trim = 4cm 1cm 3cm 1cm]{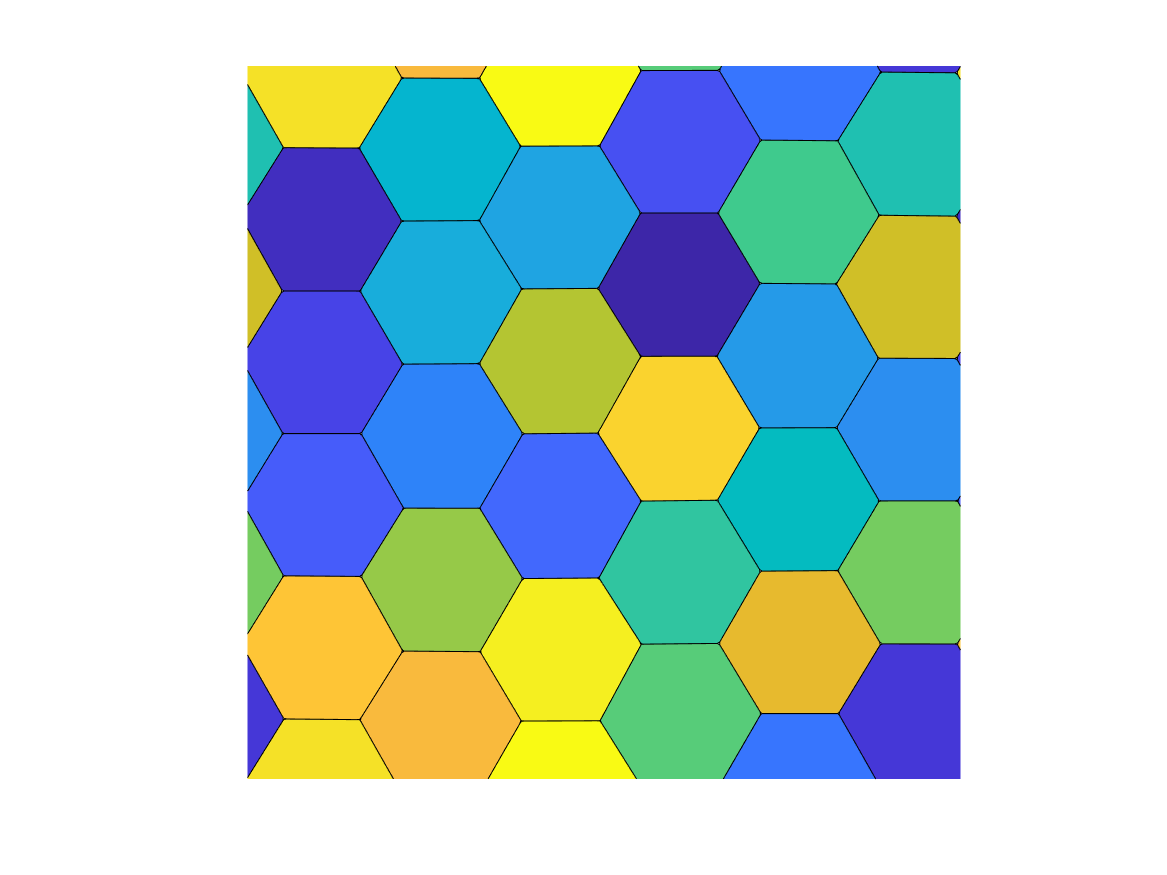}
\includegraphics[width = 0.19\textwidth, clip, trim = 4cm 1cm 3cm 1cm]{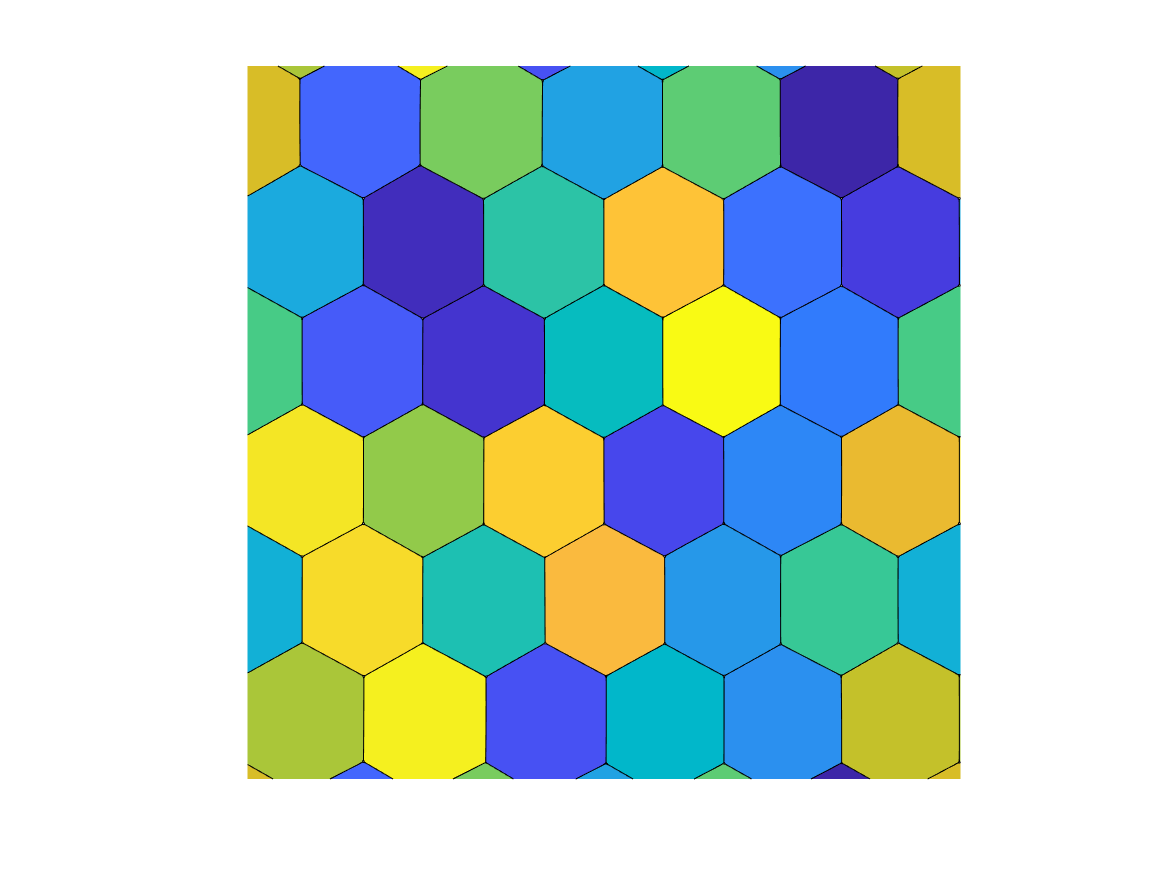}

\medskip 

{\footnotesize
\begin{tabular}{c|c|c|c|c|c|c|c|c|c}
\hline
k=4 & k=5  & k=6 & k=7& k=8 & k=9& k=10 & k=11 & k=12 & k =14  \\ 
\hline 
5.06 & 8.95 & 10.48 & 18.44 &16.81 & 20.56 & 24.67 & 28.77 & 32.81 & 42.20  \\
\hline
\hline
k =15& k = 16 & k=18 & k = 20 & k =23 & k = 24 & k =25  & k=28 & k = 30 & k=36  \\
\hline
46.72 & 51.81 & 62.33  &73.01 & 89.94 & 95.88 & 101.91 &120.10 & 132.47 & 327.67 \\
\hline
\end{tabular}
}

\caption{$k$-partitions in a periodic domain for $k = 4-12$, $14-16$, $18$, $20$, $23-25$, $28$, $30$ and $36$ together with corresponding approximate eigenvalues. See Section~\ref{sec:2dperiodic}.} \label{fig:2dperiodic}
\end{figure}

\begin{figure}[ht]
\centering
\includegraphics[width = 0.3\textwidth, clip, trim = 8cm 3cm 6cm 2cm]{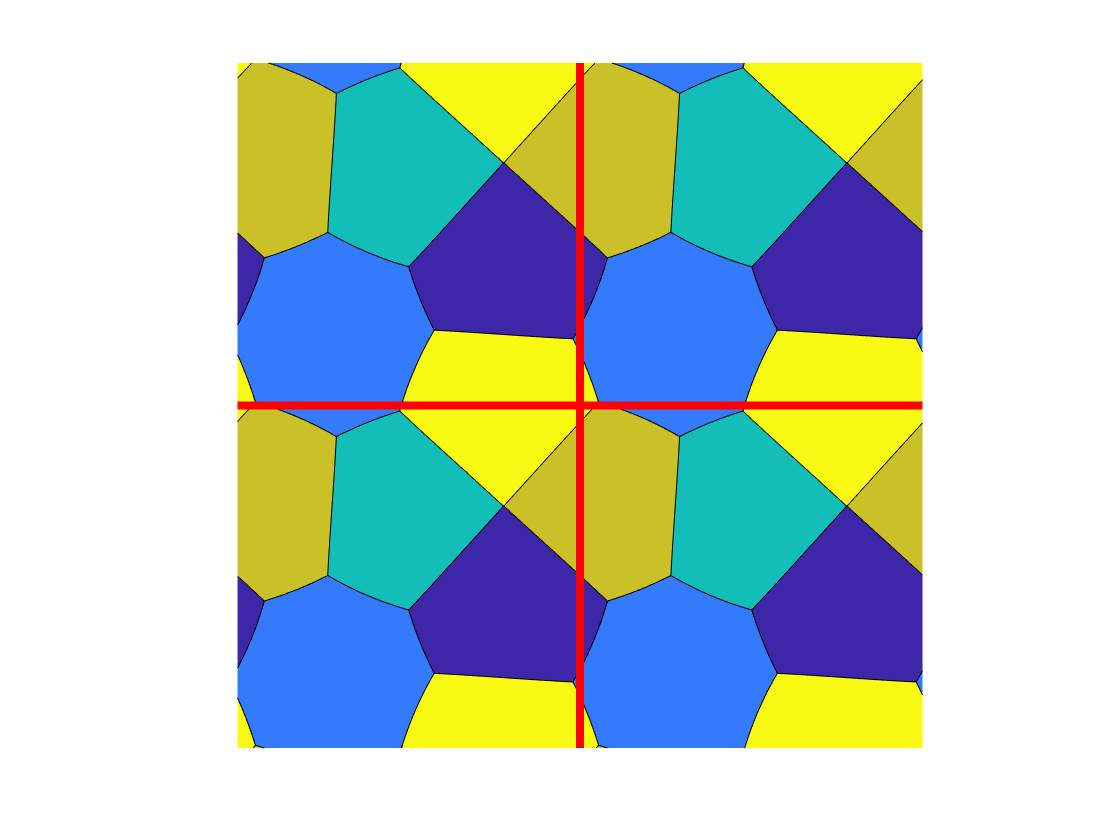}
\includegraphics[width = 0.3\textwidth, clip, trim = 8cm 3cm 6cm 2cm]{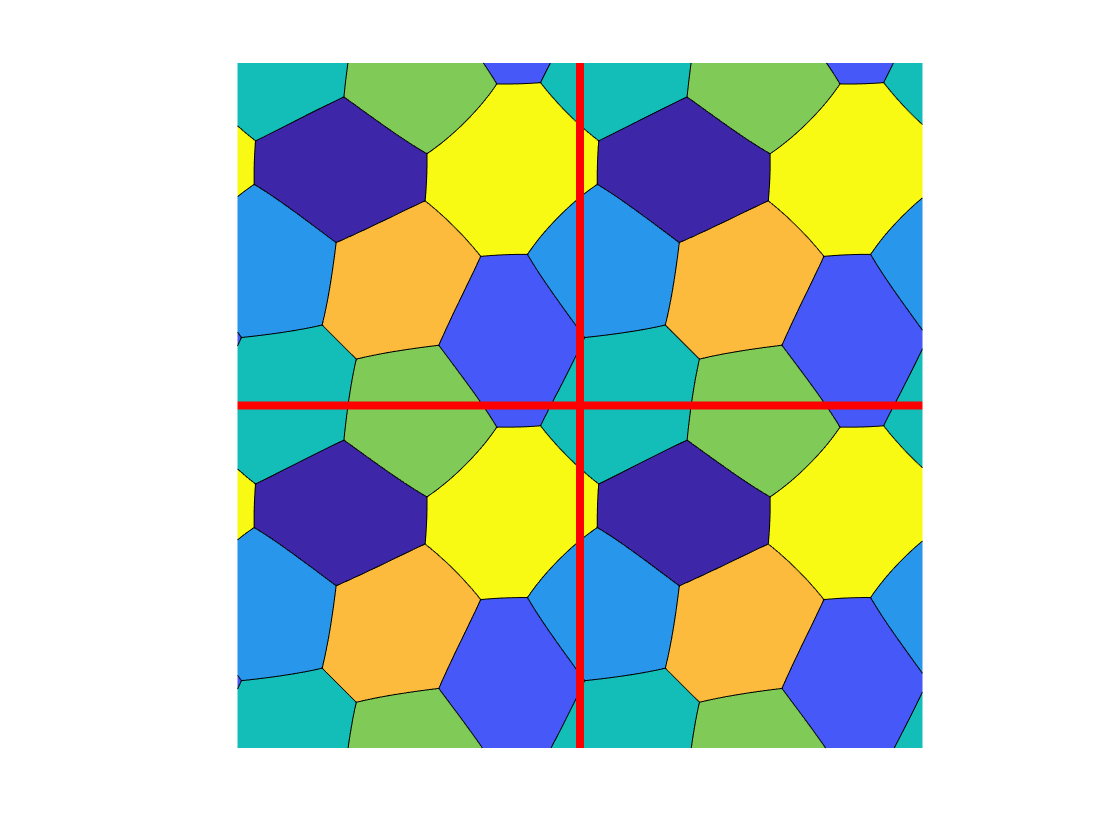}
\includegraphics[width = 0.3\textwidth, clip, trim = 8cm 3cm 6cm 2cm]{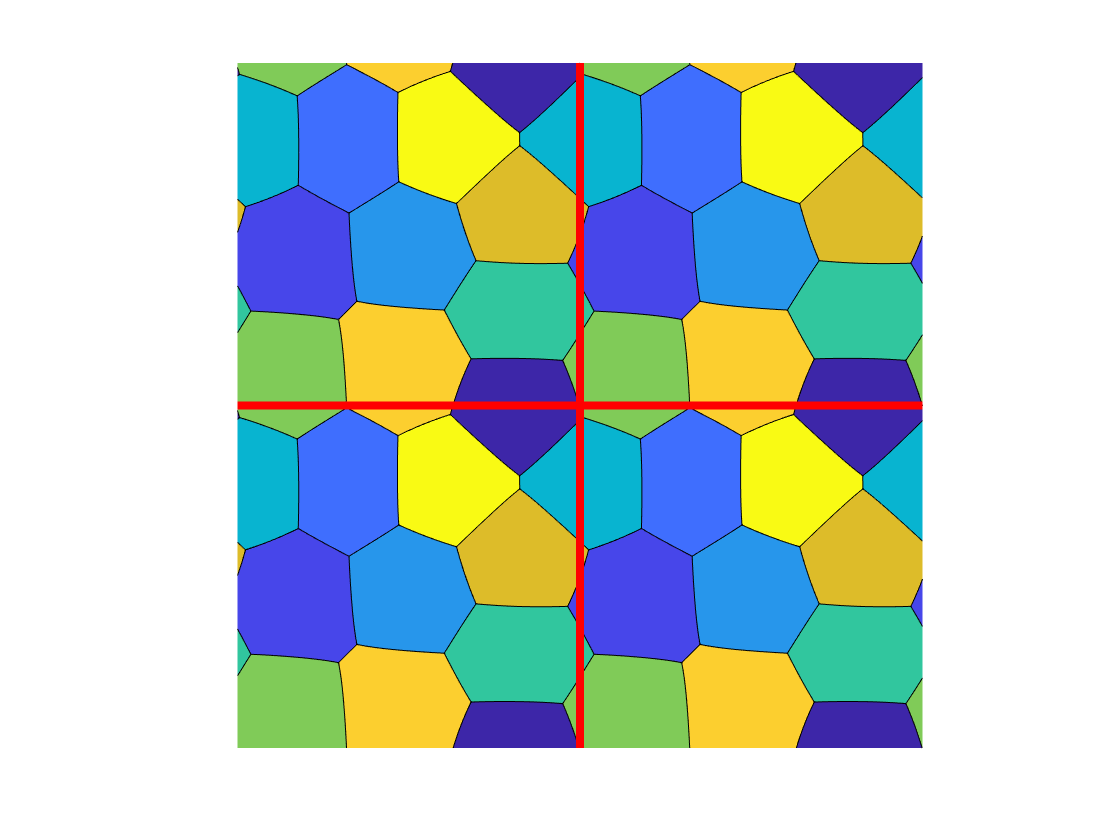}

\caption{Periodic extensions of $k$-partitions with $k = 5,7,10$. See Section~\ref{sec:2dperiodic}.} \label{fig:extension}
\end{figure}

\subsection{$k$-partition for 2-dimensional arbitrary domains.}\label{sec:2darbitrary}
In this section, we compute the Dirichlet $k$-partition in arbitrary domains. We treat the domain as a subset of the computational domain $[-\pi,\pi]^2$ and discretize the computational domain by $512^2$ uniform grid points.

In Figure~\ref{fig:2darbitrary1}, we list $k$-partitions in an equilateral triangle domain for $k = 2-10$, $12$, $13$, $15$, $21$, $28$, $36$, and $45$. For $k = 2-10$, the results are consistent with the results presented in \cite{Bogosel_2016,Chu_2021}. For $k>10$, we select regular structures we observe to list in  Figure~\ref{fig:2darbitrary1}. In particular, when $k = \frac{n(n+1)}{2}, n = 1,2,\cdots$, one can see very regular structures with hexagon tessellations in the interior layer of the partition (for example, $k = 10,15,21,28,36,$ and $45$). For all reported $k$ in Figure~\ref{fig:2darbitrary1}, the average CPU time for each computation is less than hundred {\it seconds} starting with random initial guesses. For small $k$ ({\it e.g.} $k<10$), the computation only takes about $2$ {\it seconds}.

\begin{figure}[ht]
\centering
\includegraphics[width = 0.15\textwidth, clip, trim = 6cm 4cm 5.5cm 3.5cm]{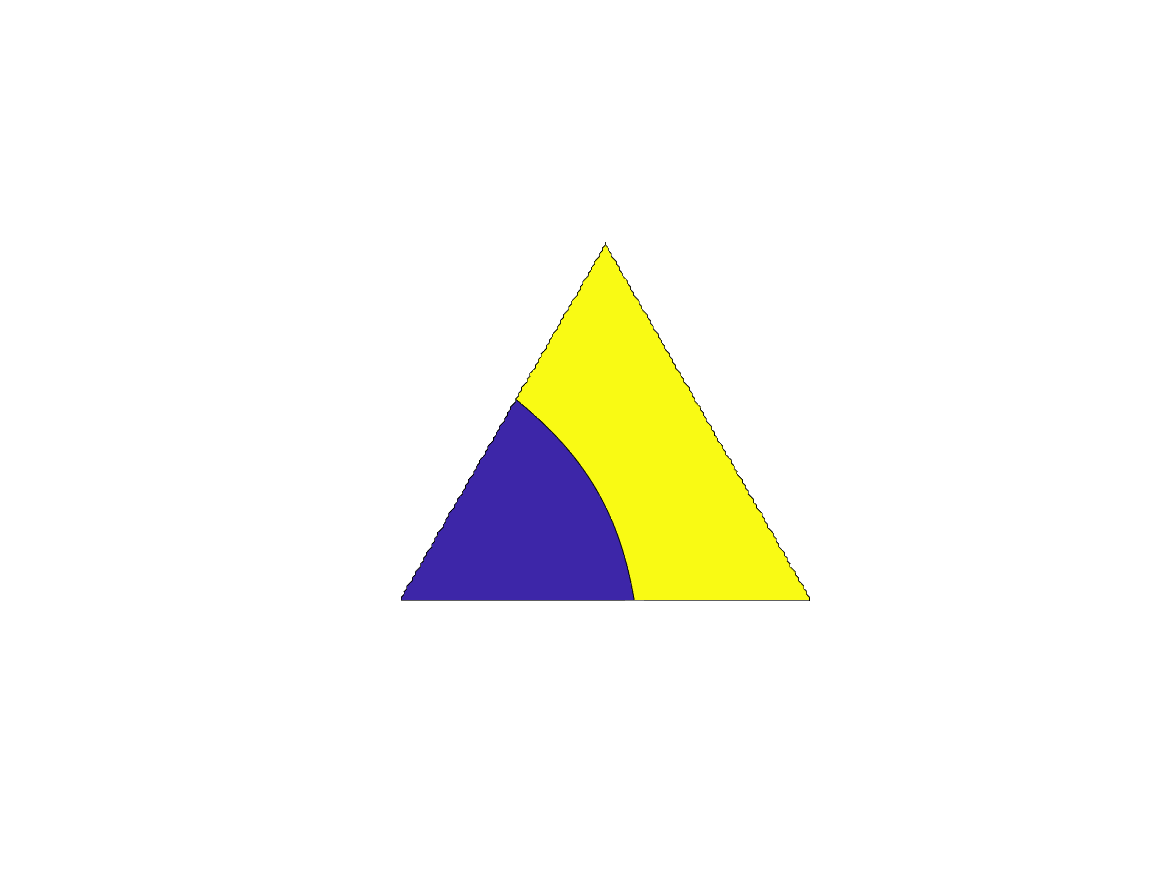}
\includegraphics[width = 0.15\textwidth, clip, trim = 6cm 4cm 5.5cm 3.5cm]{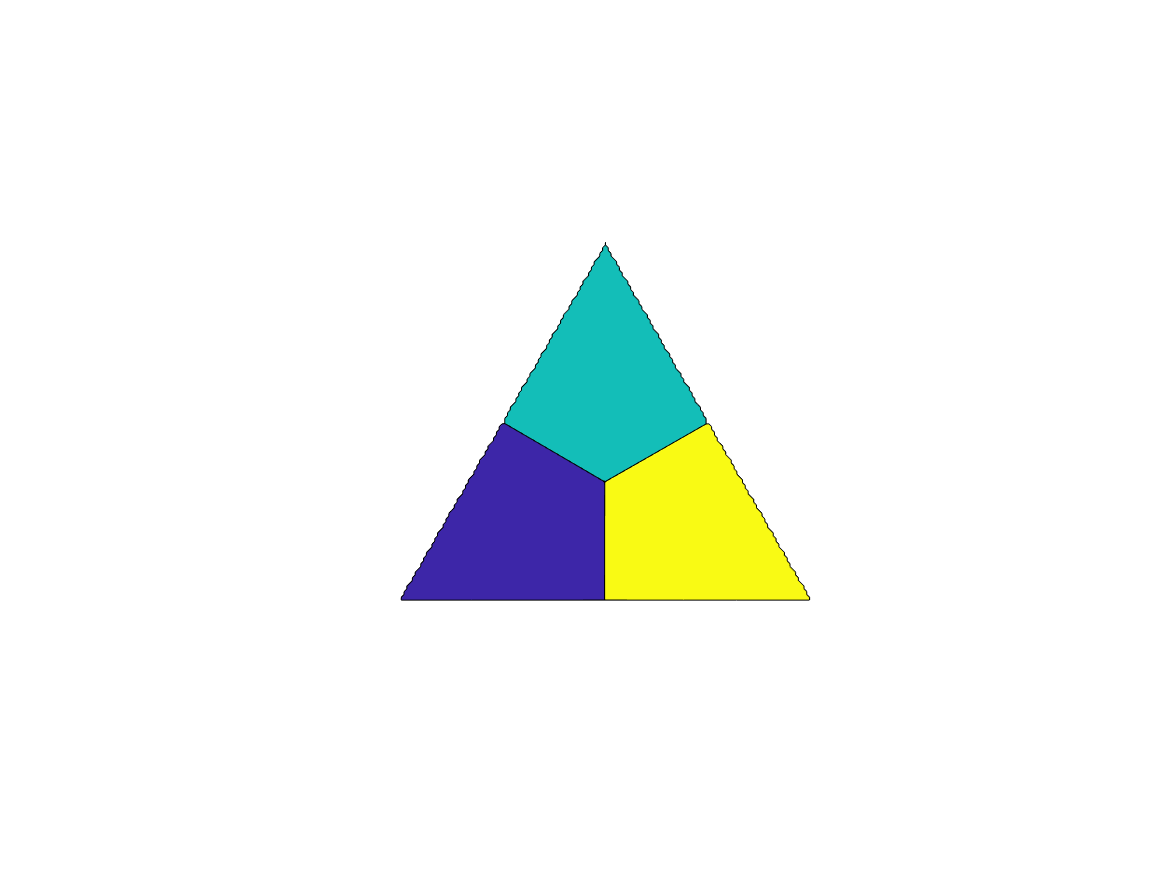}
\includegraphics[width = 0.15\textwidth, clip, trim = 6cm 4cm 5.5cm 3.5cm]{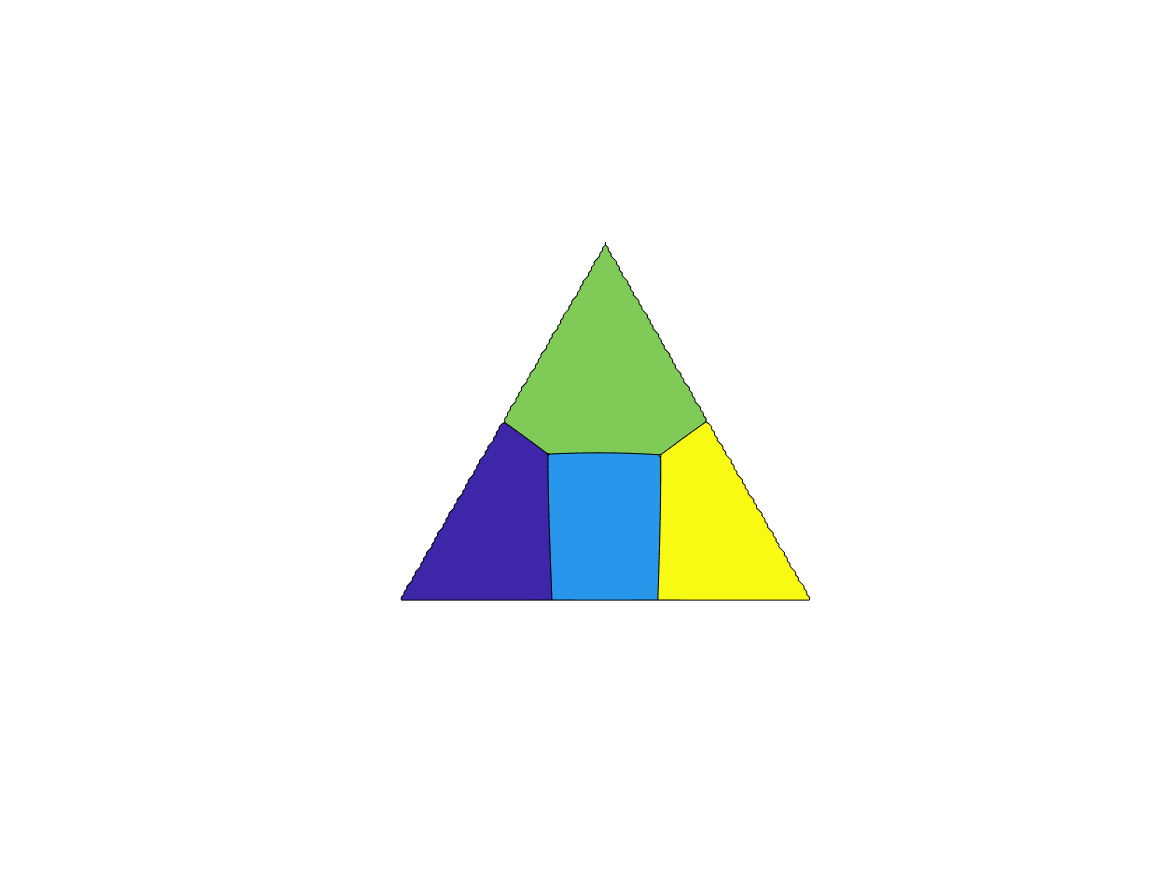}
\includegraphics[width = 0.15\textwidth, clip, trim = 6cm 4cm 5.5cm 3.5cm]{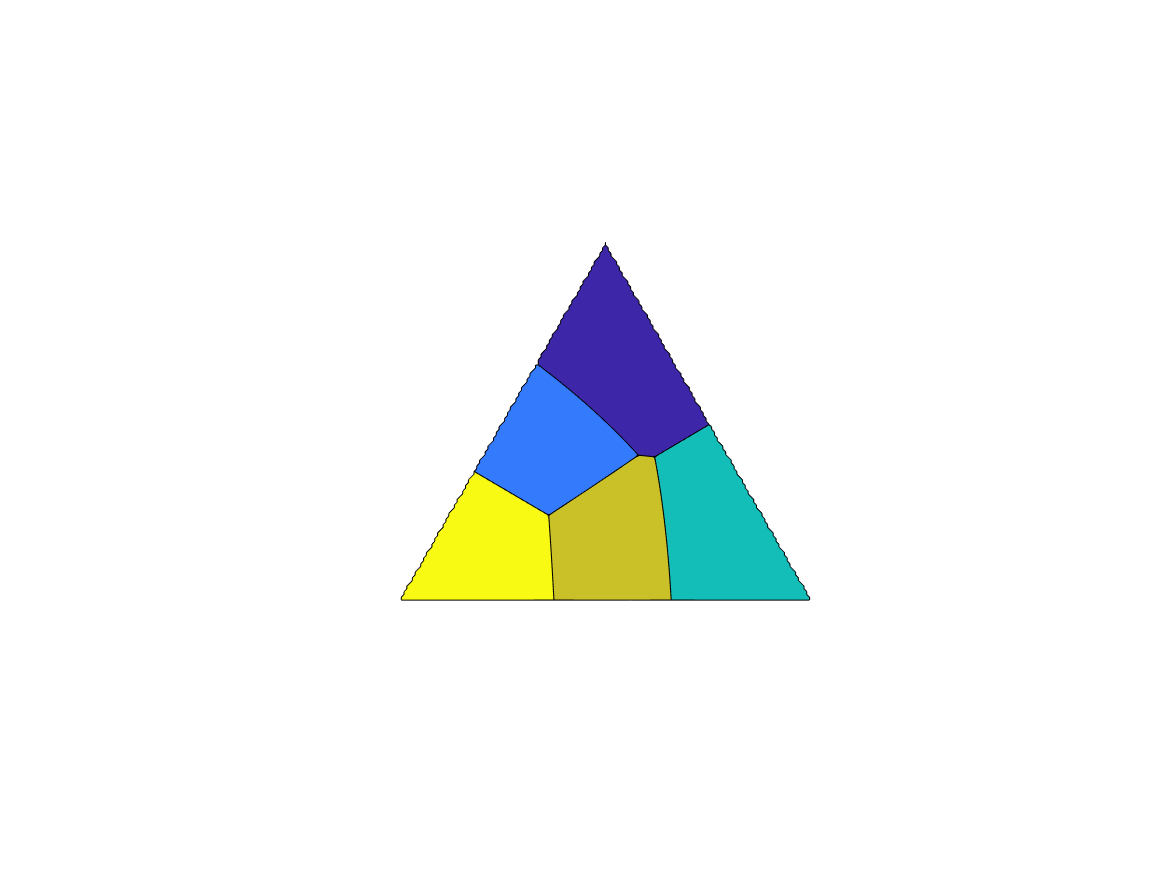}
\includegraphics[width = 0.15\textwidth, clip, trim = 6cm 4cm 5.5cm 3.5cm]{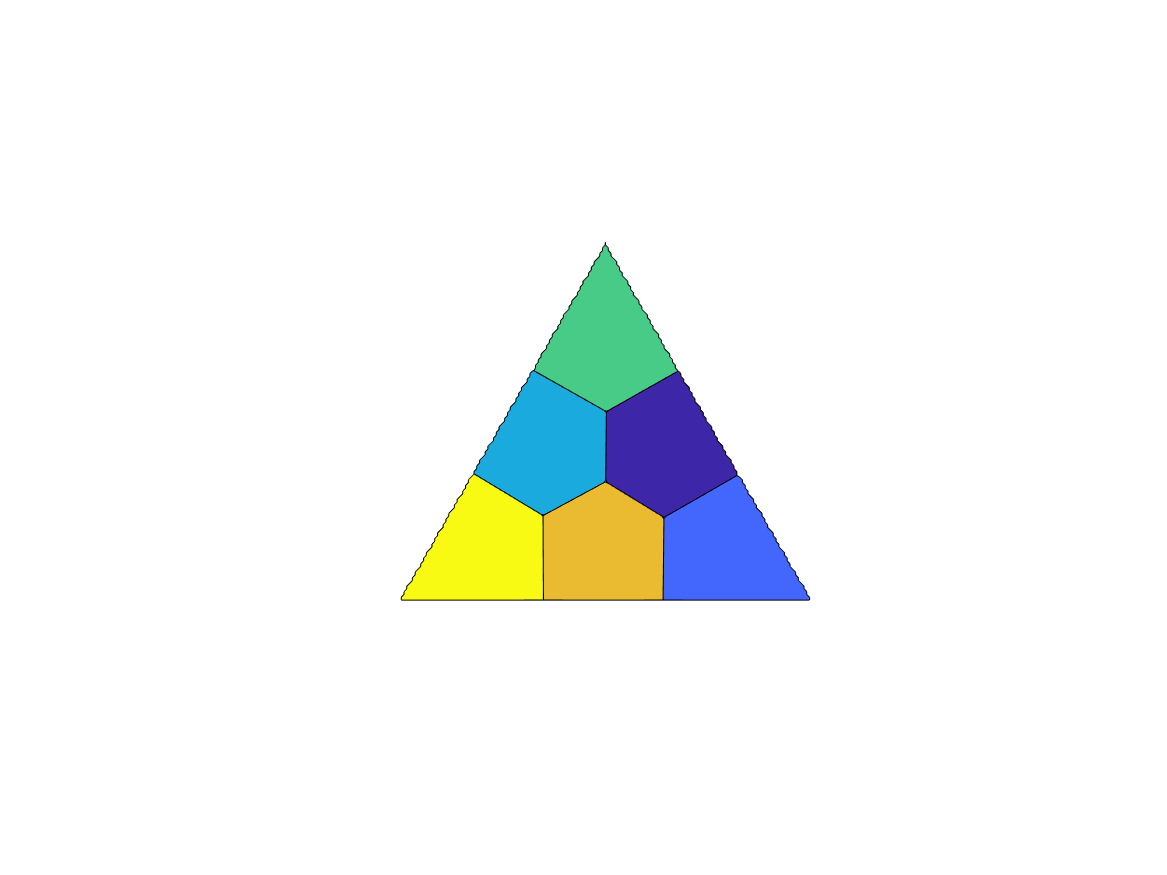}
\includegraphics[width = 0.15\textwidth, clip, trim = 6cm 4cm 5.5cm 3.5cm]{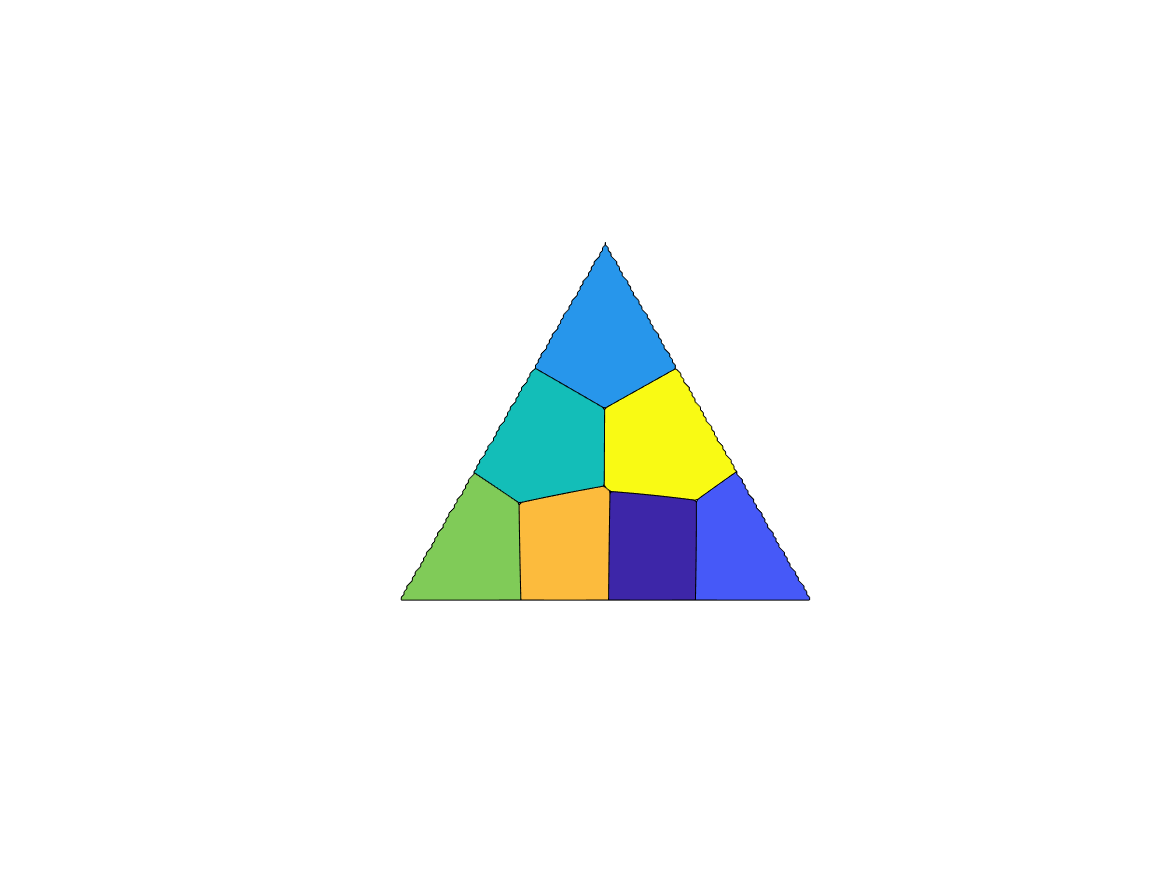}
\includegraphics[width = 0.15\textwidth, clip, trim = 6cm 4cm 5.5cm 3.5cm]{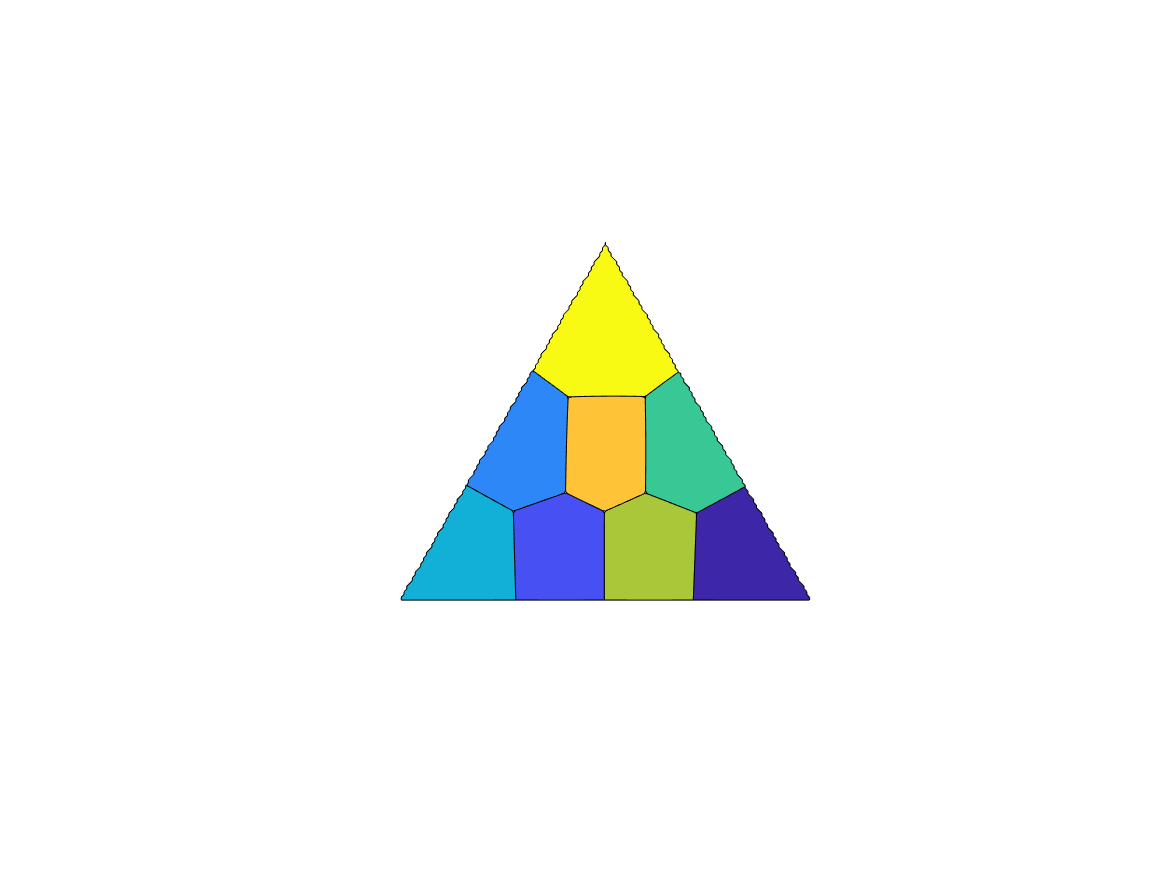}
\includegraphics[width = 0.15\textwidth, clip, trim = 6cm 4cm 5.5cm 3.5cm]{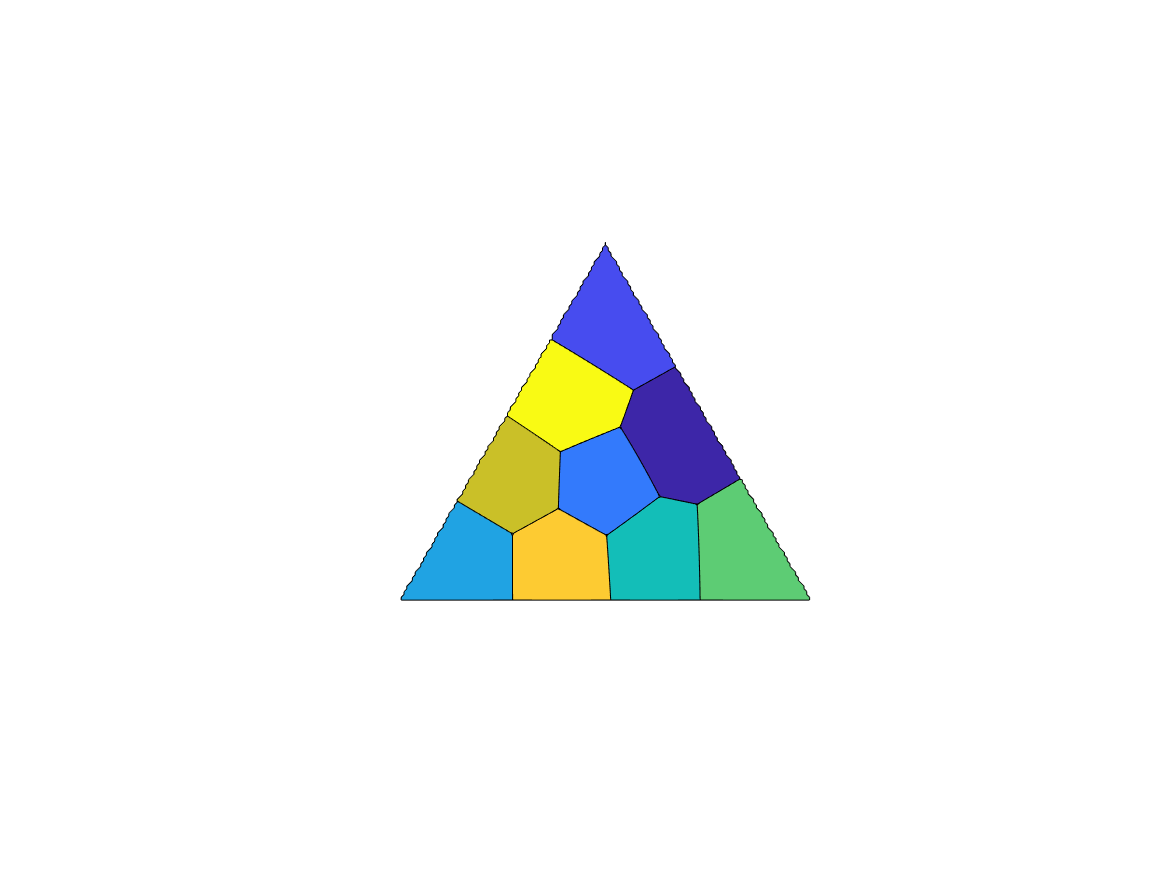}
\includegraphics[width = 0.15\textwidth, clip, trim = 6cm 4cm 5.5cm 3.5cm]{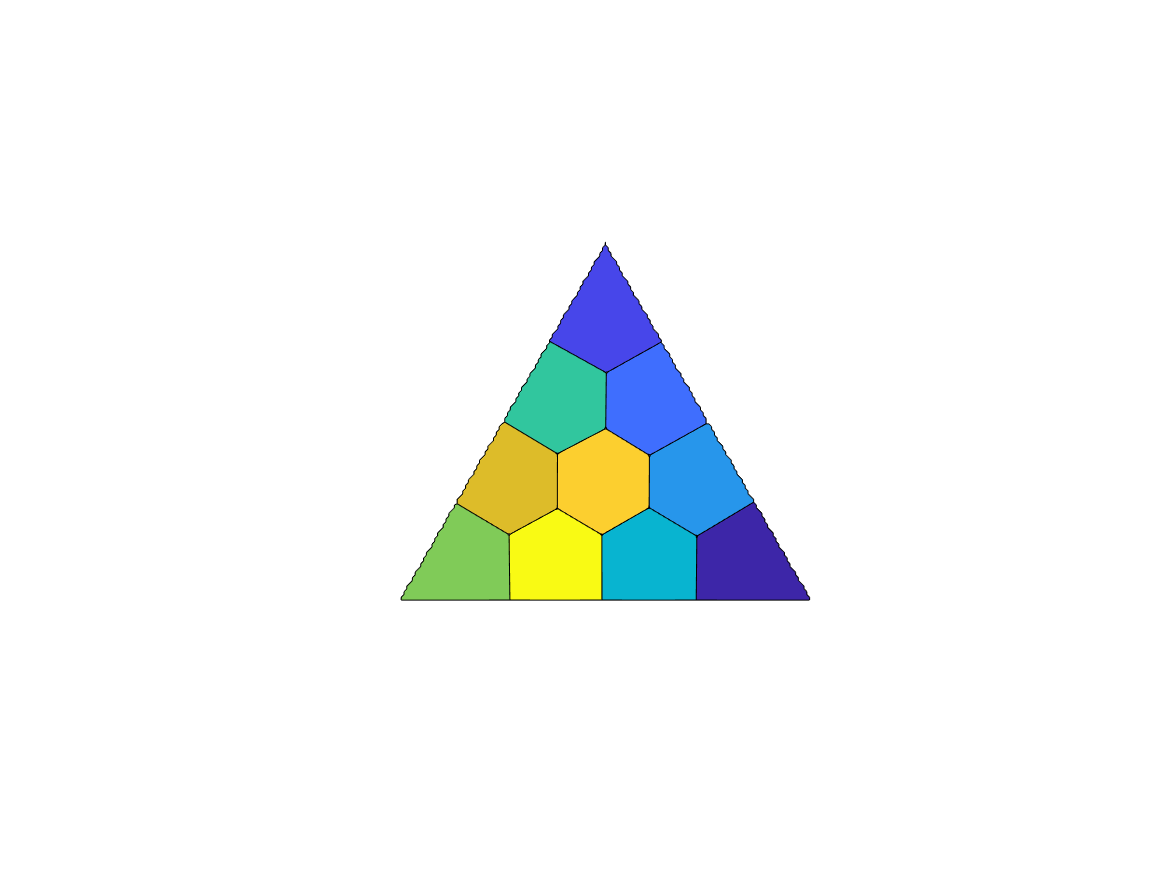}
\includegraphics[width = 0.15\textwidth, clip, trim = 6cm 4cm 5.5cm 3.5cm]{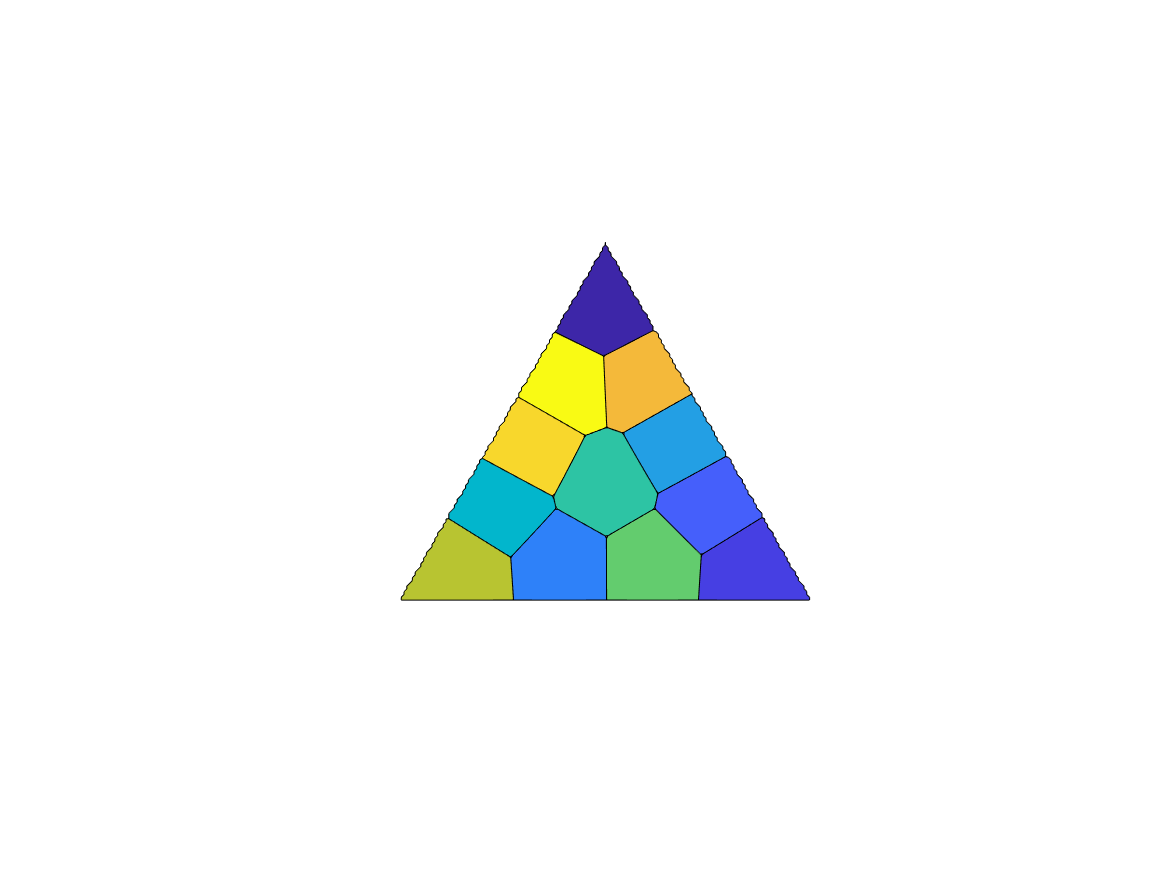}
\includegraphics[width = 0.15\textwidth, clip, trim = 6cm 4cm 5.5cm 3.5cm]{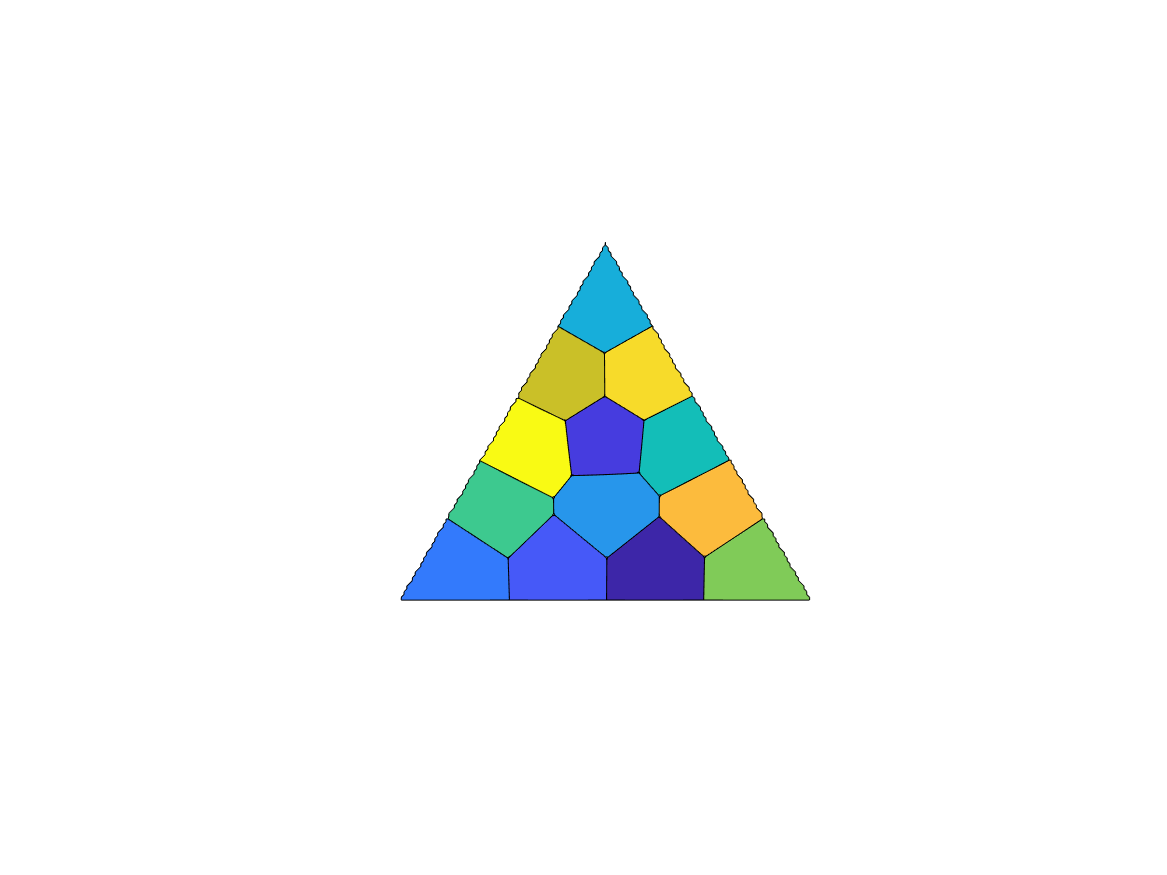}
\includegraphics[width = 0.15\textwidth, clip, trim = 6cm 4cm 5.5cm 3.5cm]{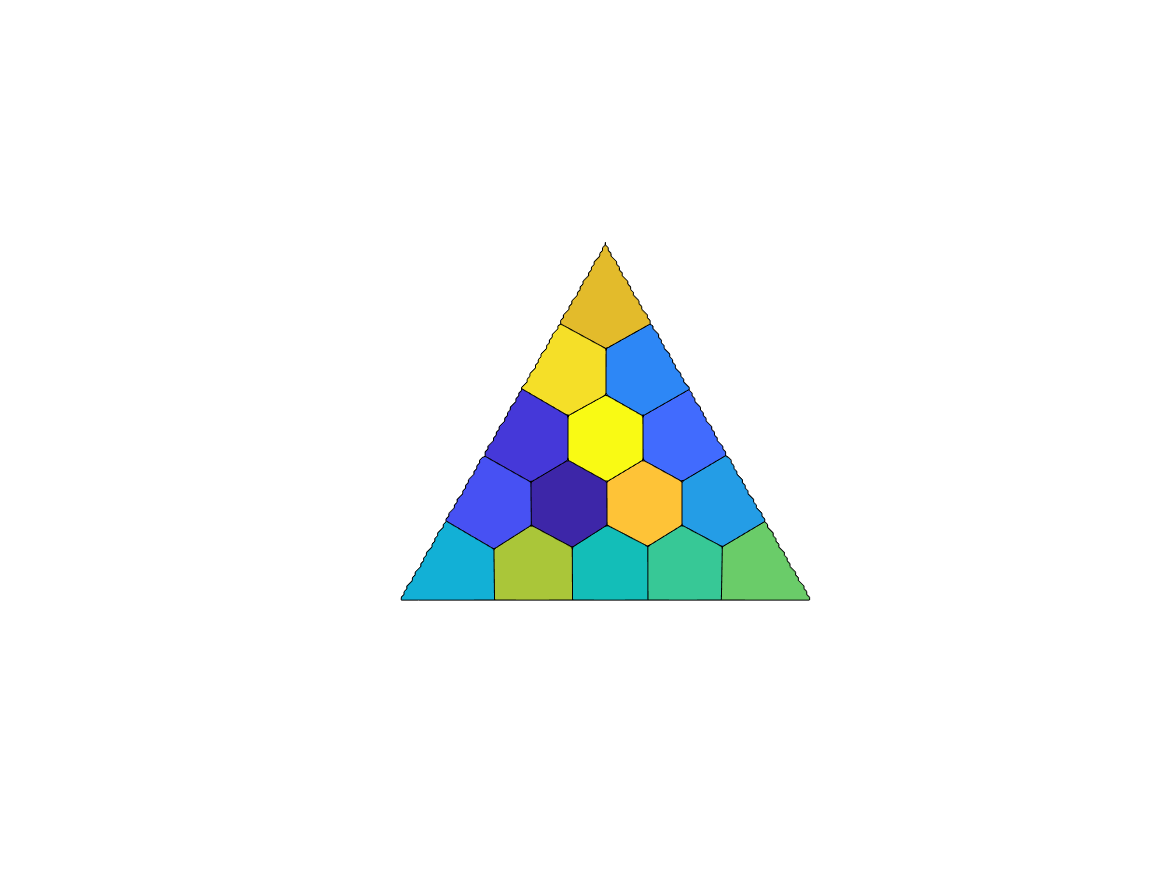}
\includegraphics[width = 0.23\textwidth, clip, trim = 6cm 4cm 5.5cm 3.5cm]{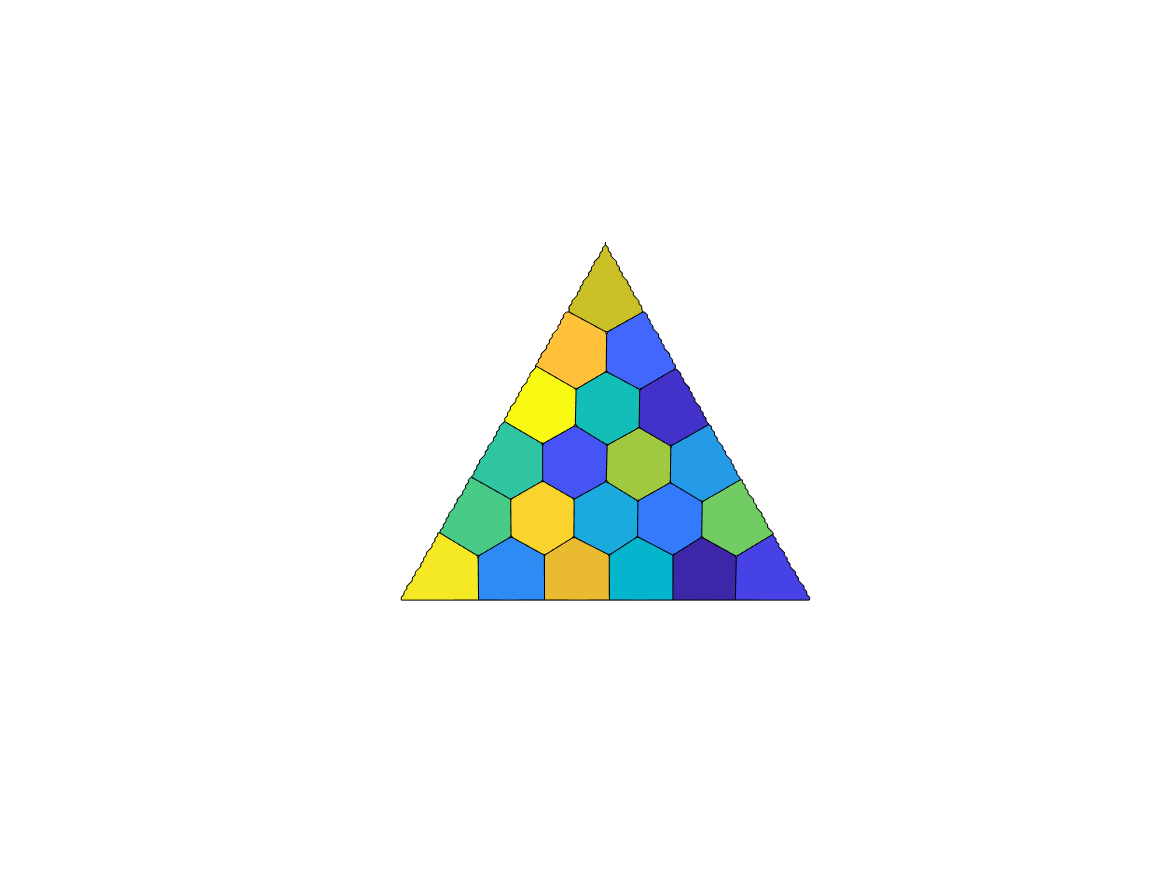}
\includegraphics[width = 0.23\textwidth, clip, trim = 6cm 4cm 5.5cm 3.5cm]{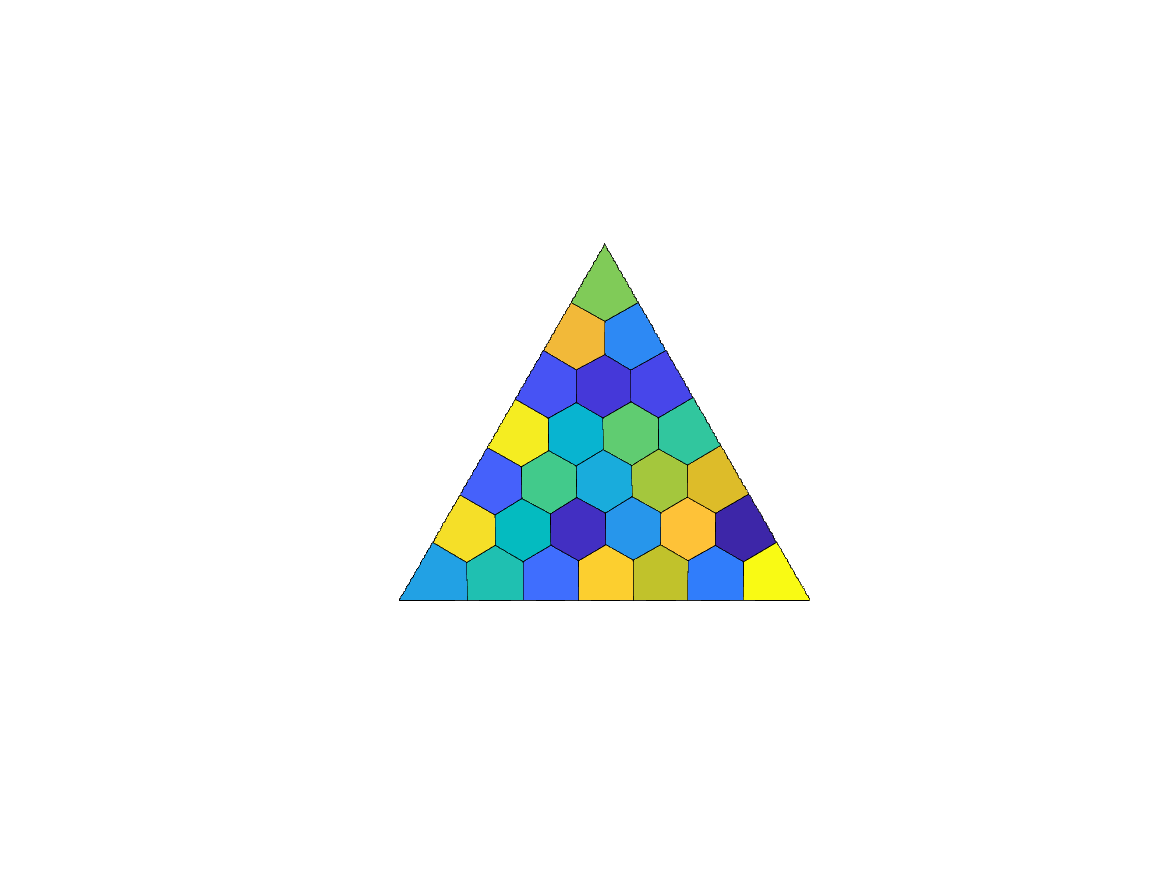}
\includegraphics[width = 0.23\textwidth, clip, trim = 6cm 4cm 5.5cm 3.5cm]{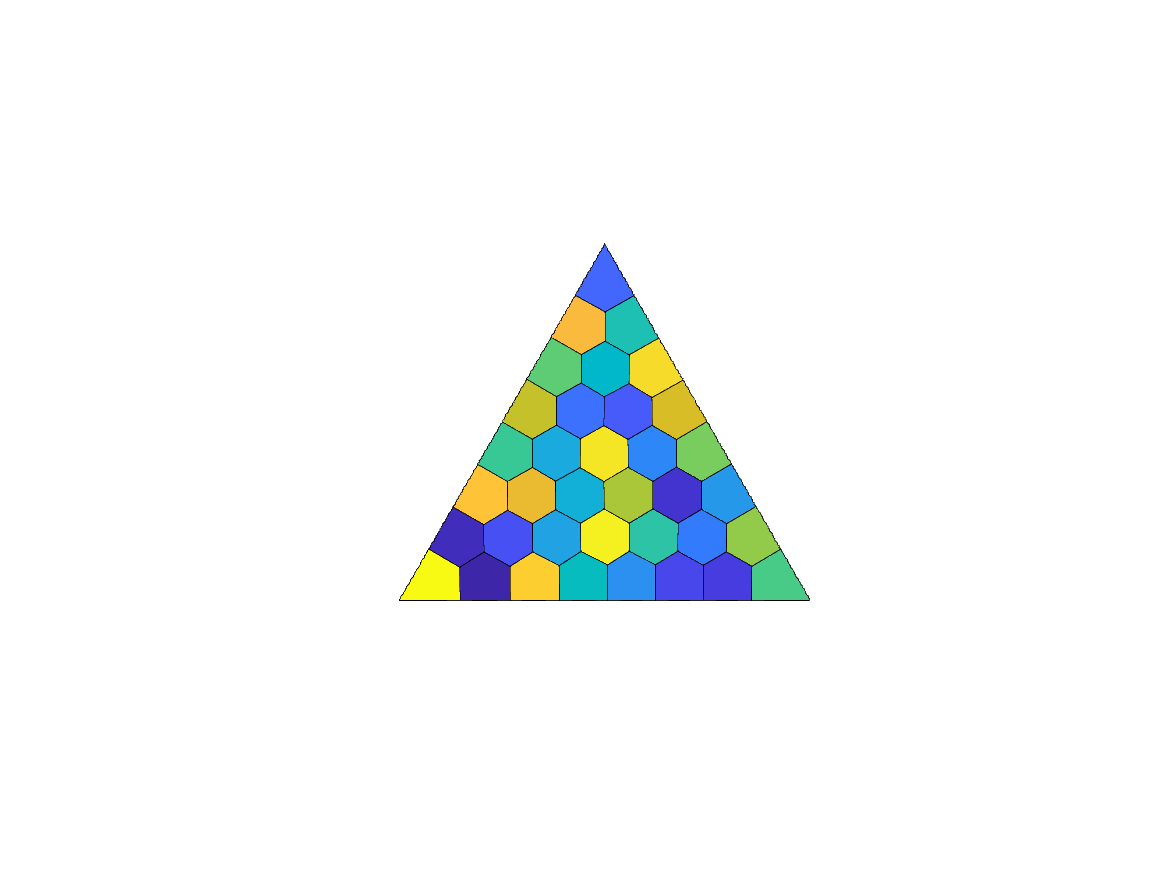}
\includegraphics[width = 0.23\textwidth, clip, trim = 6cm 4cm 5.5cm 3.5cm]{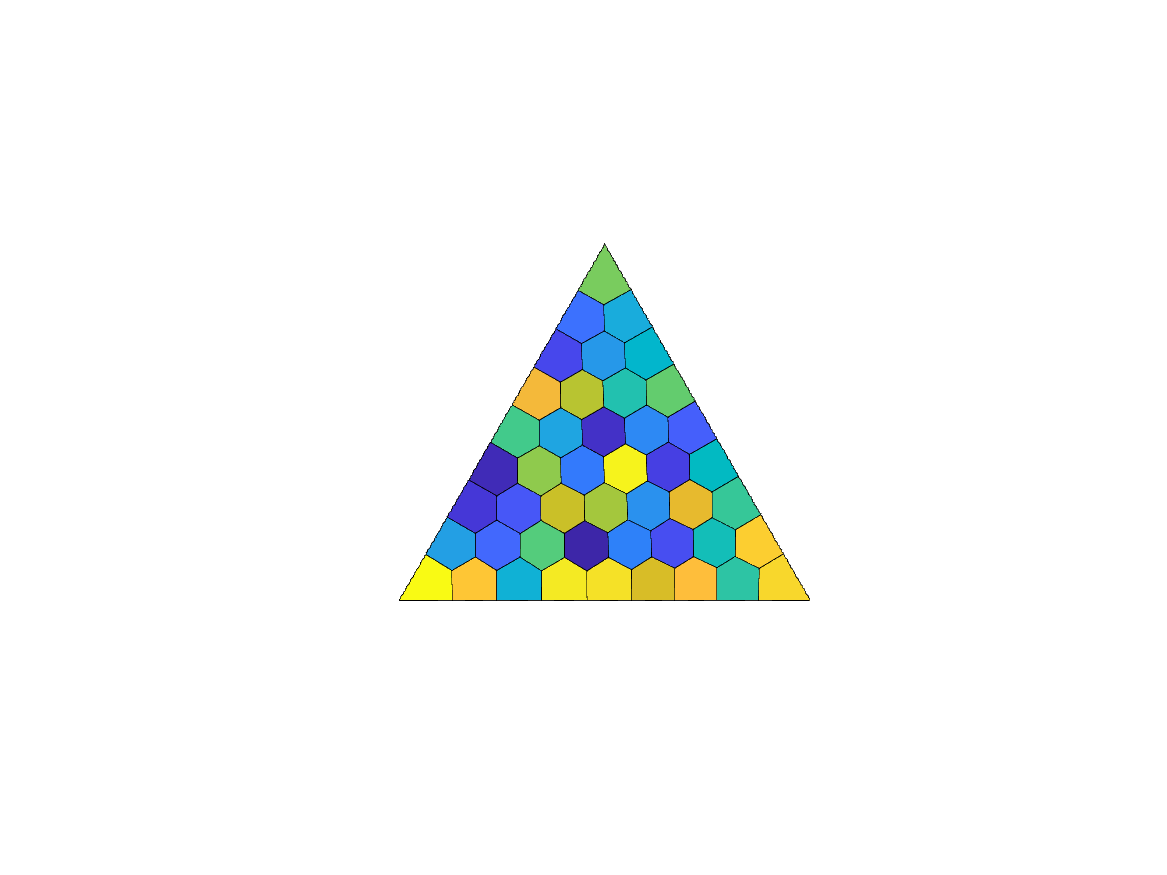}

\medskip 

{\footnotesize
\begin{tabular}{|c|c|c|c|c|c|c|c|}
\hline
k=2 & k=3  & k=4 & k=5& k=6 & k=7& k=8 & k=9  \\ 
\hline 
20.45 & 35.72 & 61.16 & 87.71 &113.76 & 148.74 & 184.59 & 220.78   \\
\hline
\hline
k=10 & k=12  & k=13 & k=15& k=21 & k=28& k=36 & k=45  \\ 
\hline 
256.33 & 342.54 & 385.97 & 473.18 & 768.24 & 1142.01 & 1592.94  & 3379.16   \\
\hline\end{tabular}
}

\caption{$k$-partitions in a triangle domain for $k = 2-10$, $12$, $13$, $15$, $21$, $28$, $36$, and $45$. The table lists all approximate eigenvalues. See Section~\ref{sec:2darbitrary}.} \label{fig:2darbitrary1}
\end{figure}
Figure~\ref{fig:2darbitrary4} lists the Dirichlet $k$-partitions in a square domain, a pentagon domain, a regular hexagon domain, a disk domain, a three-fold domain, and a five-fold domain for $k=2-10$. All results agree with the computational results in \cite{Chu_2021} and \cite{Bogosel_2016} for the reported cases. However, all average computational time is less than 30 {\it seconds} starting with random initial guesses. In particular, the computational time of the level set based method proposed in \cite{Chu_2021} for the five-fold star cases for $k=3-10$ are $23, 27, 31, 35, 40, 44, 49$ and $54$ {\it minutes} respectively with a $100^2$ discretization of the domain. However, we find the same results with random initial guesses (with the computational domain $[-\pi,\pi]^2$ discretized by $512^2$ grid points) only in $5.8, 9.2, 9.5, 5.4,18.4,15.8, 20.8, 19.1$ {\it seconds}, respectively. It achieves more than $100$ times acceleration. The corresponding approximate values are listed in the table in Figure~\ref{fig:2darbitrary4}.

\begin{figure}[ht]
\centering
\includegraphics[width = 0.1\textwidth, clip, trim = 7cm 4.5cm 6cm 4cm]{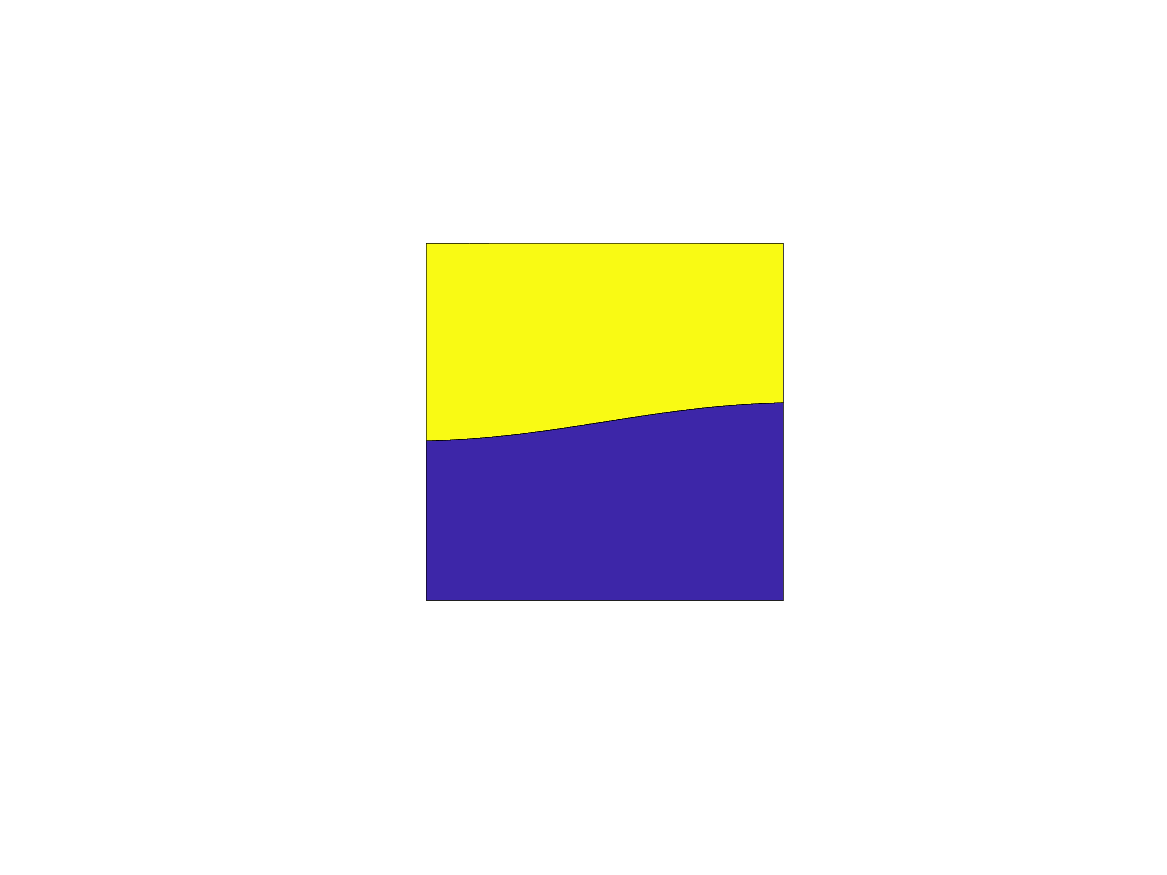}
\includegraphics[width = 0.1\textwidth, clip, trim = 7cm 4.5cm 6cm 4cm]{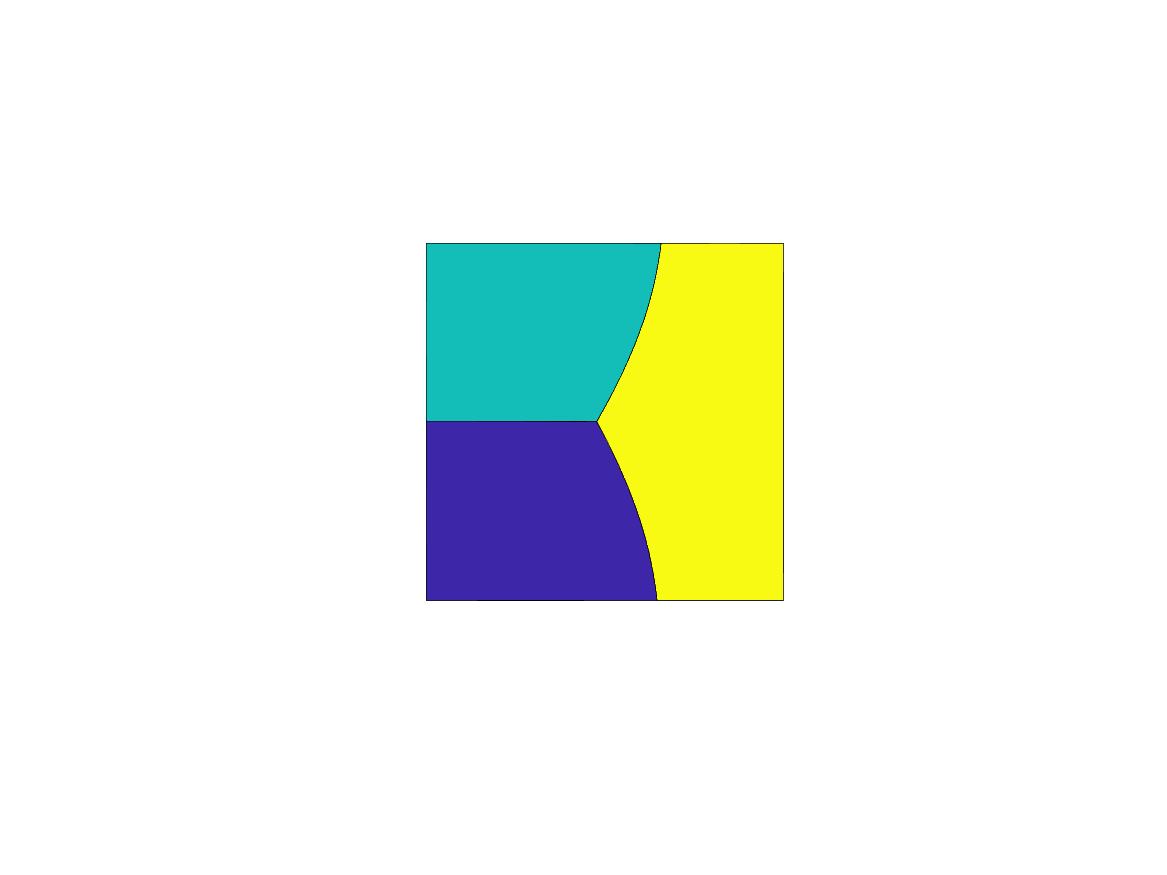}
\includegraphics[width = 0.1\textwidth, clip, trim = 7cm 4.5cm 6cm 4cm]{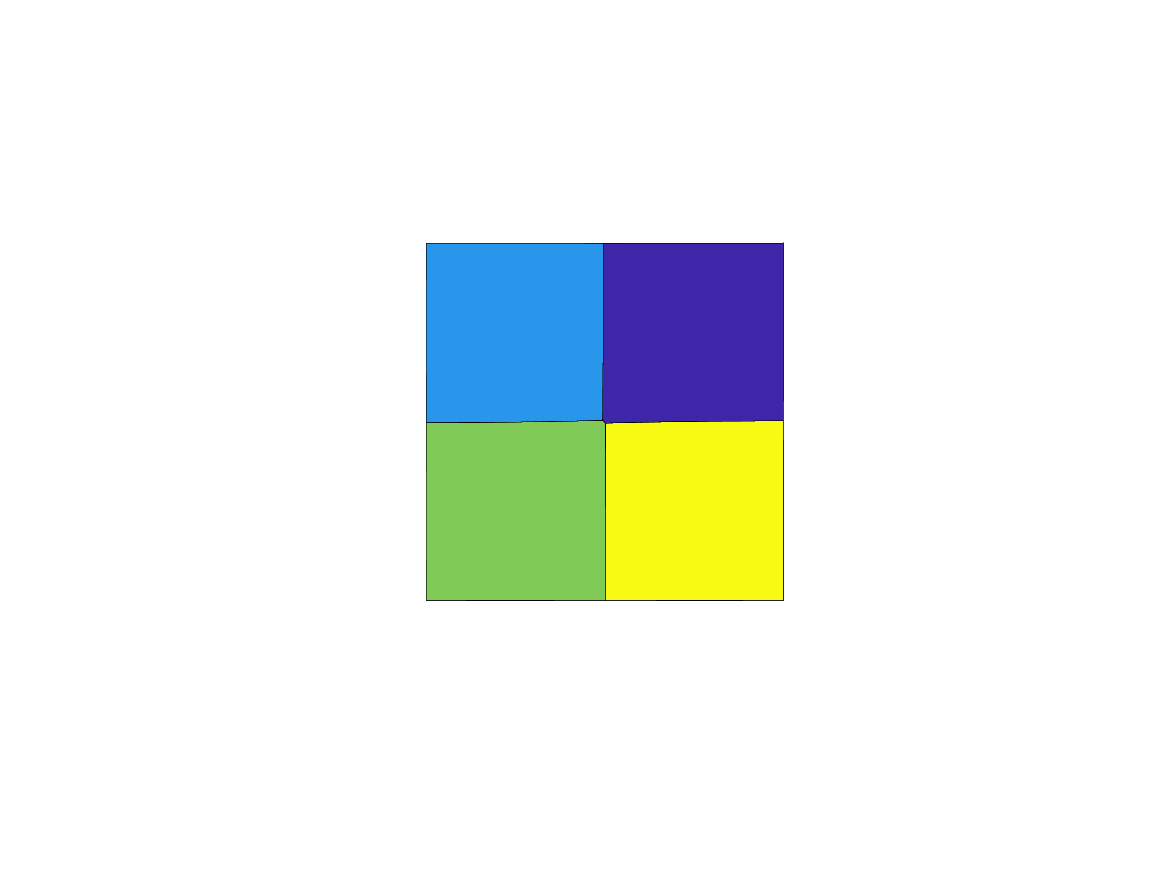}
\includegraphics[width = 0.1\textwidth, clip, trim = 7cm 4.5cm 6cm 4cm]{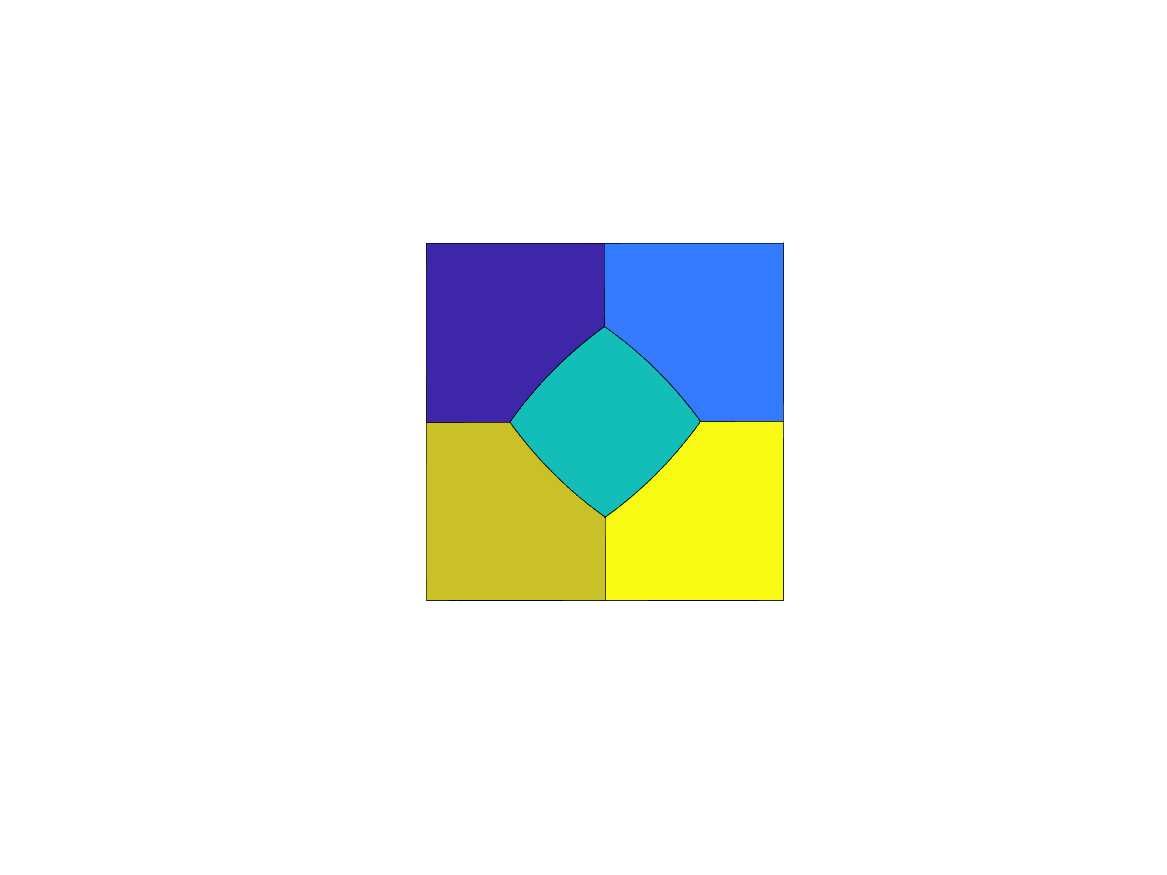}
\includegraphics[width = 0.1\textwidth, clip, trim = 7cm 4.5cm 6cm 4cm]{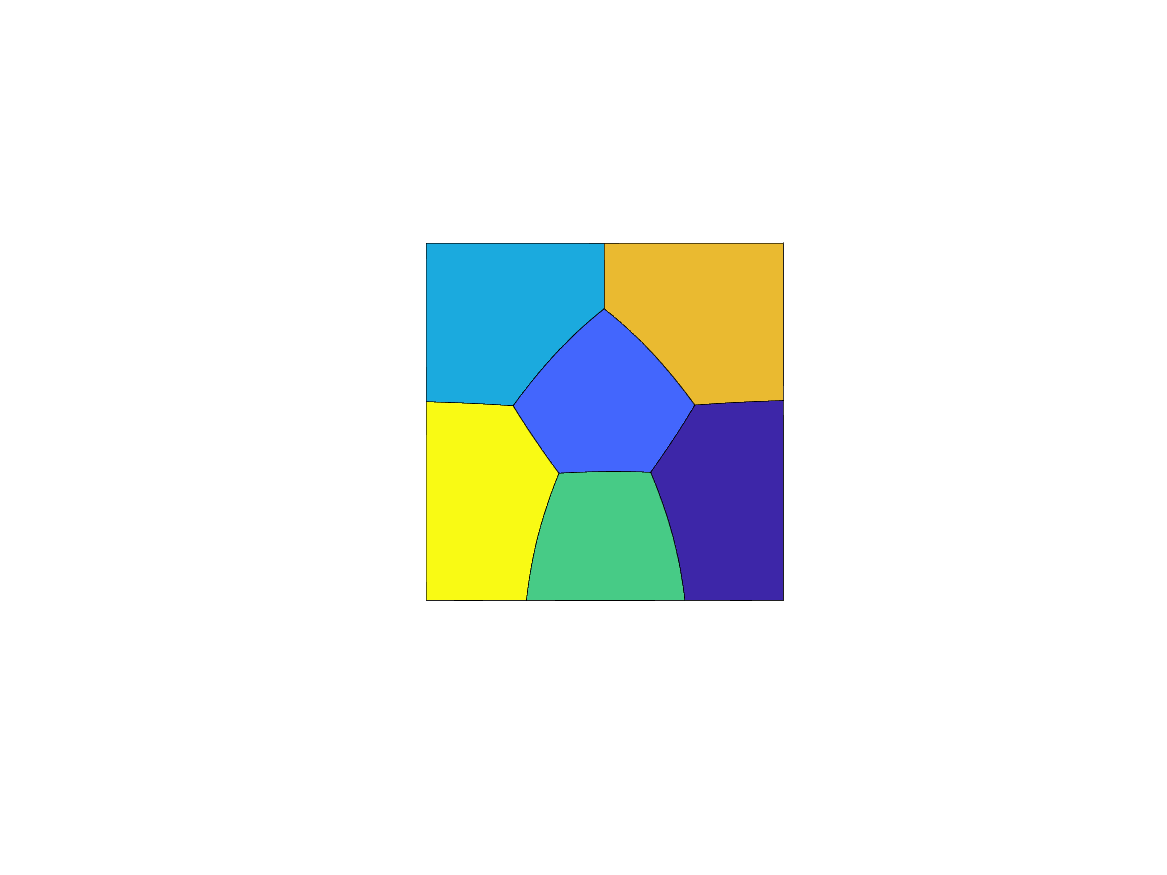}
\includegraphics[width = 0.1\textwidth, clip, trim = 7cm 4.5cm 6cm 4cm]{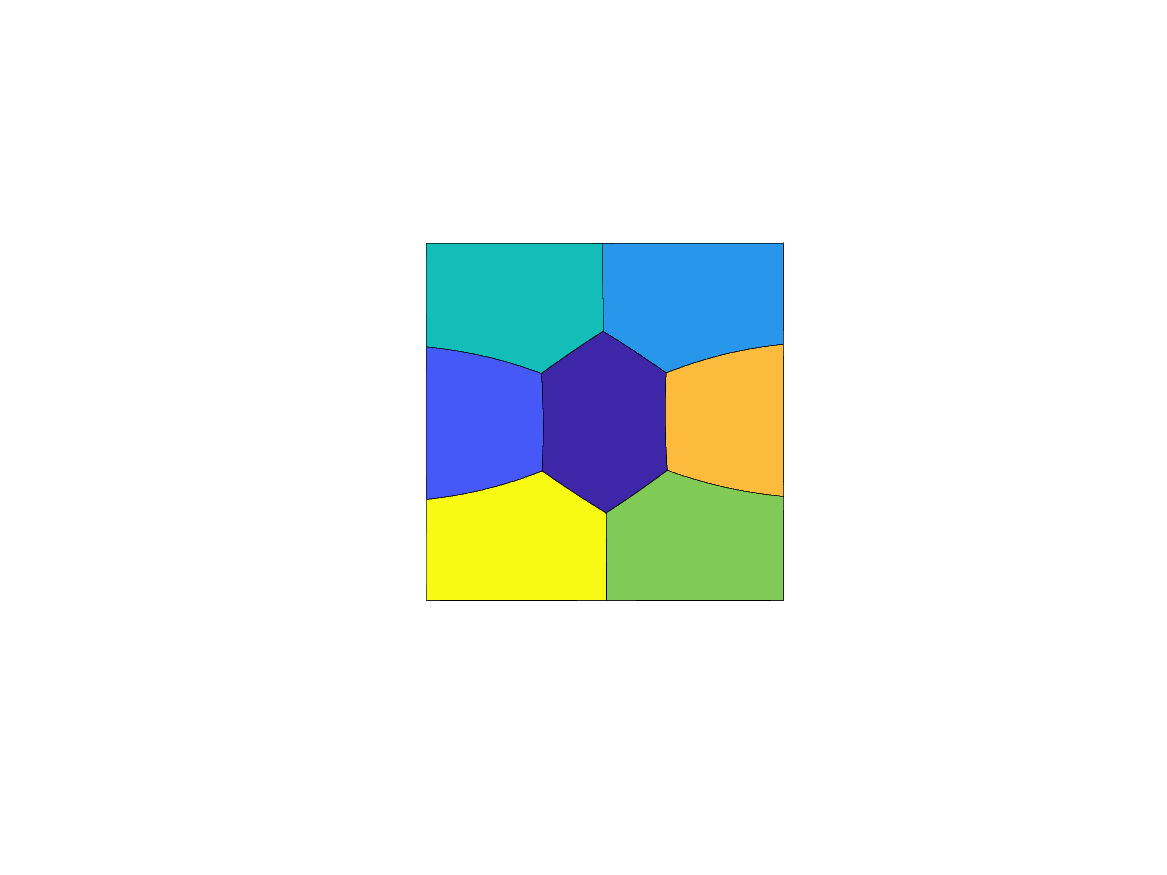}
\includegraphics[width = 0.1\textwidth, clip, trim = 7cm 4.5cm 6cm 4cm]{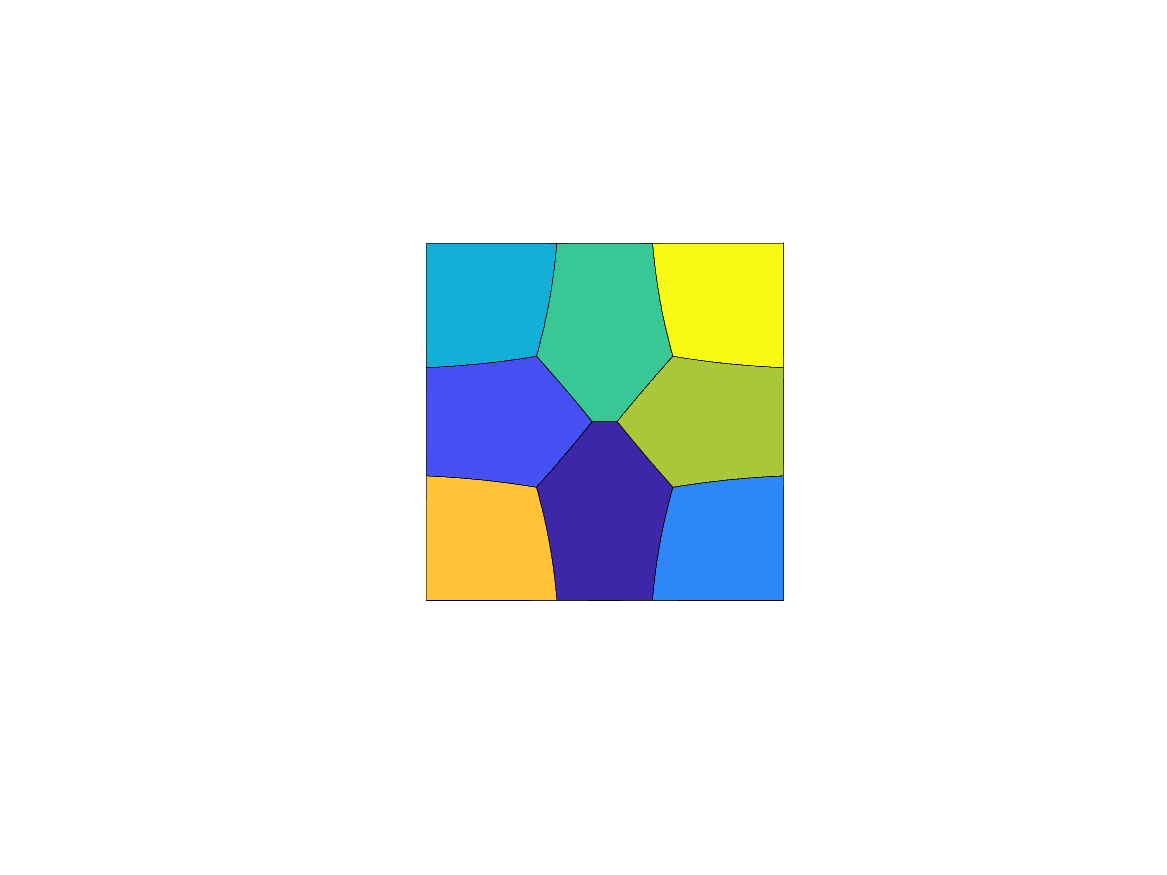}
\includegraphics[width = 0.1\textwidth, clip, trim = 7cm 4.5cm 6cm 4cm]{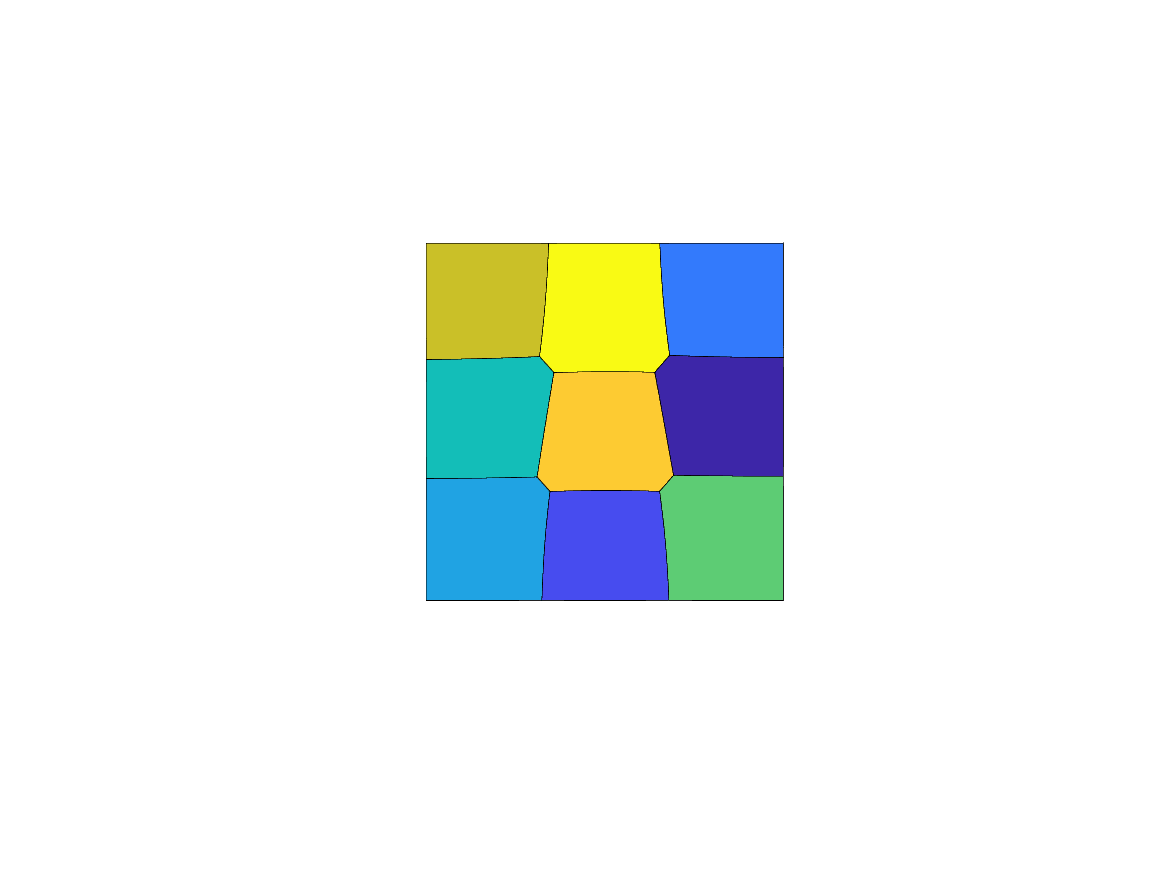}
\includegraphics[width = 0.1\textwidth, clip, trim = 7cm 4.5cm 6cm 4cm]{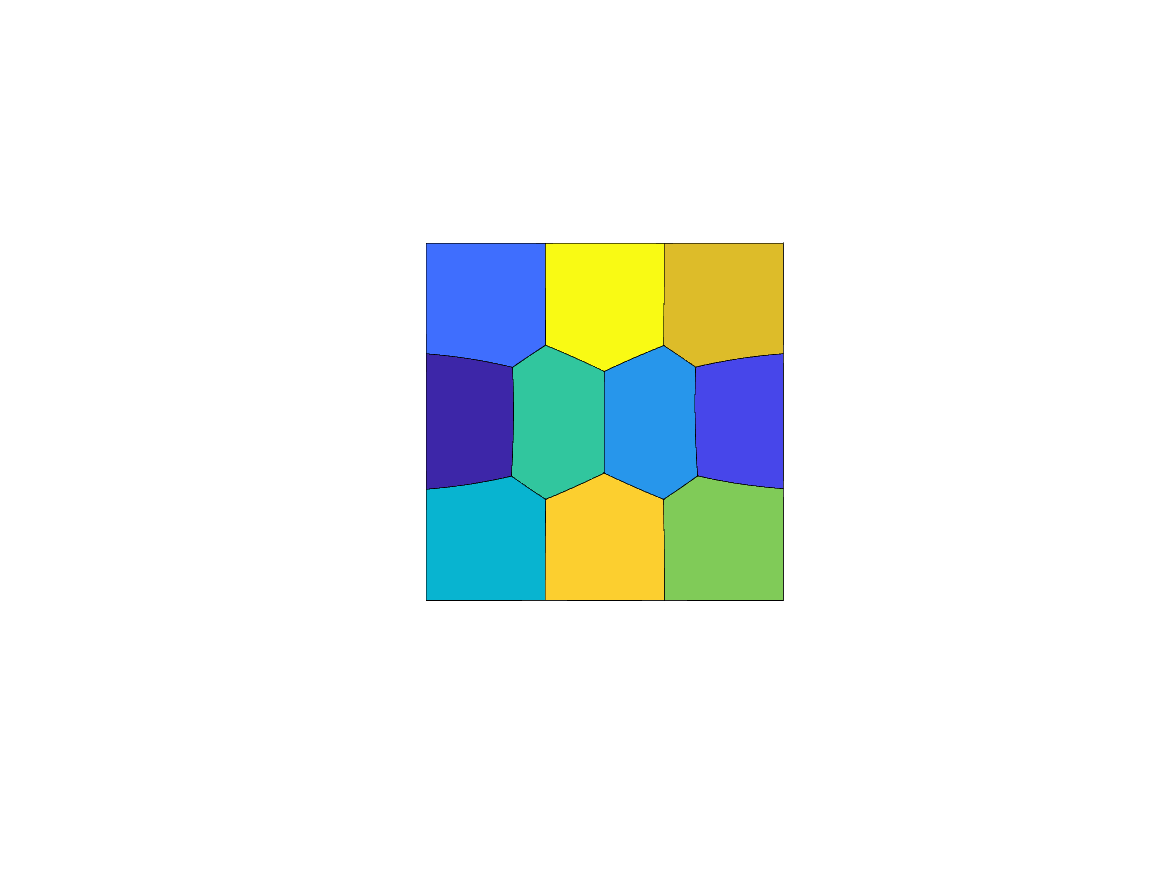}
\smallskip
\includegraphics[width = 0.1\textwidth, clip, trim = 7cm 5cm 6.5cm 4cm]{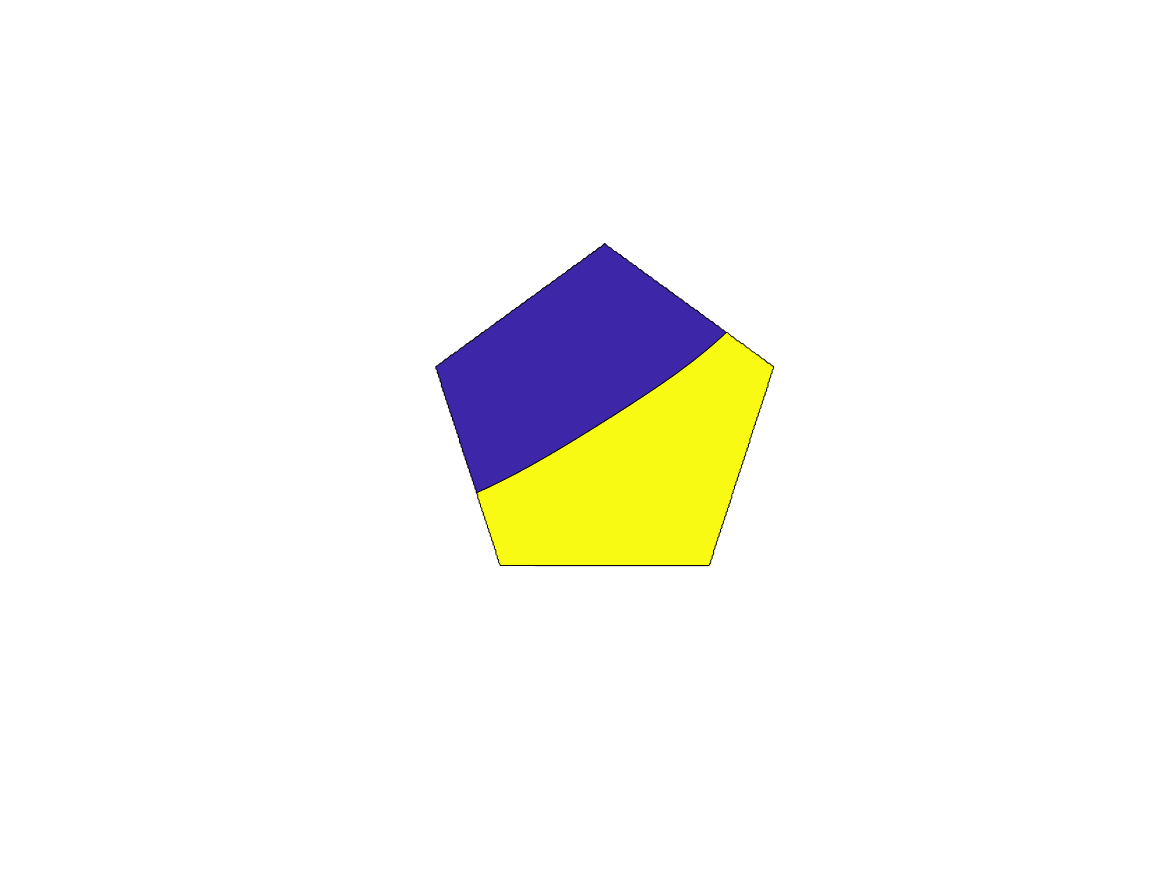}
\includegraphics[width = 0.1\textwidth, clip, trim = 7cm 5cm 6.5cm 4cm]{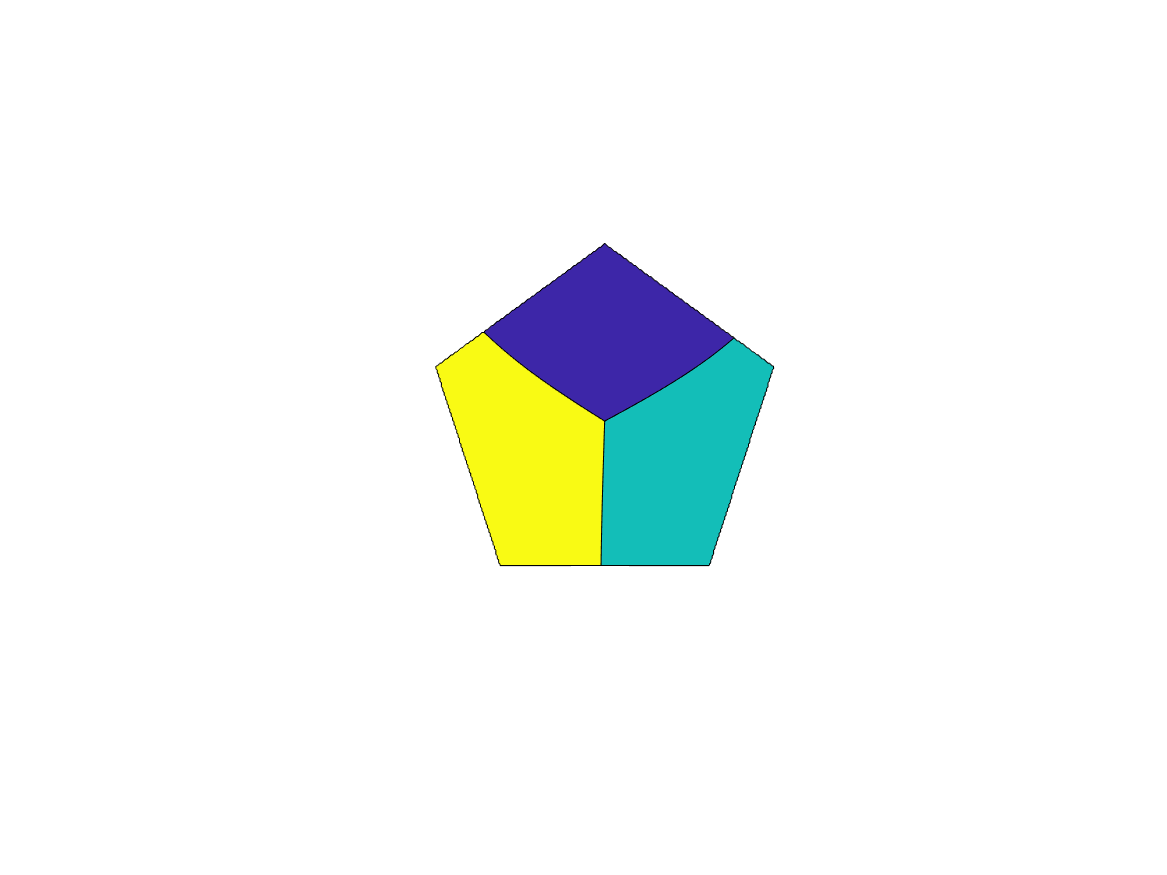}
\includegraphics[width = 0.1\textwidth, clip, trim = 7cm 5cm 6.5cm 4cm]{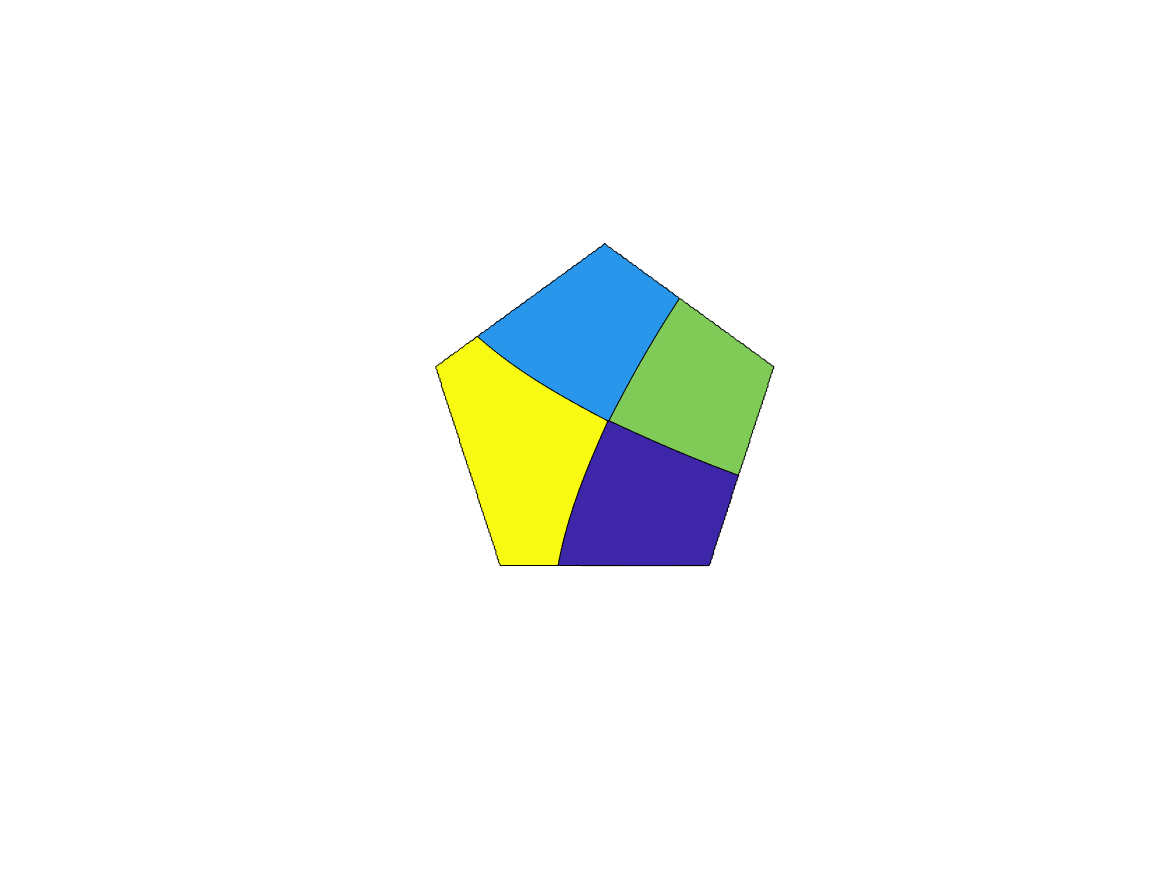}
\includegraphics[width = 0.1\textwidth, clip, trim = 7cm 5cm 6.5cm 4cm]{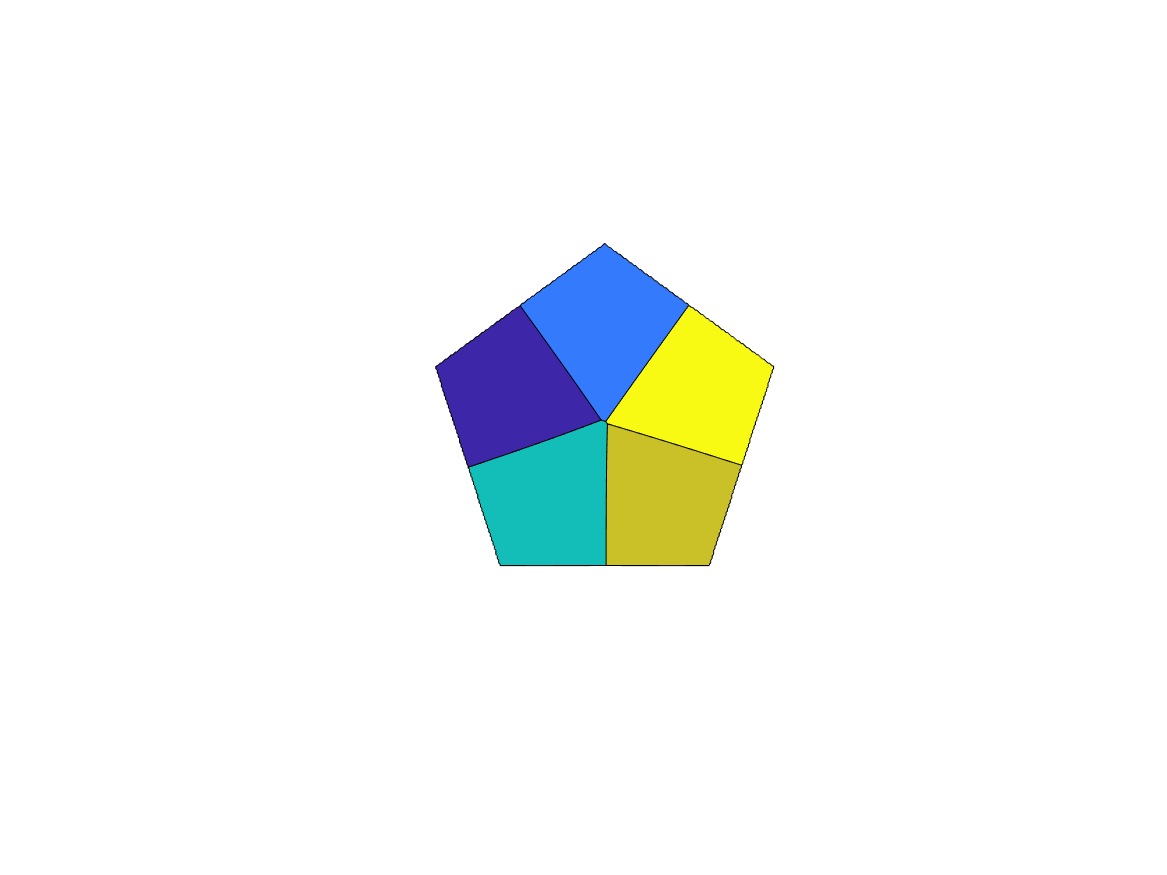}
\includegraphics[width = 0.1\textwidth, clip, trim = 7cm 5cm 6.5cm 4cm]{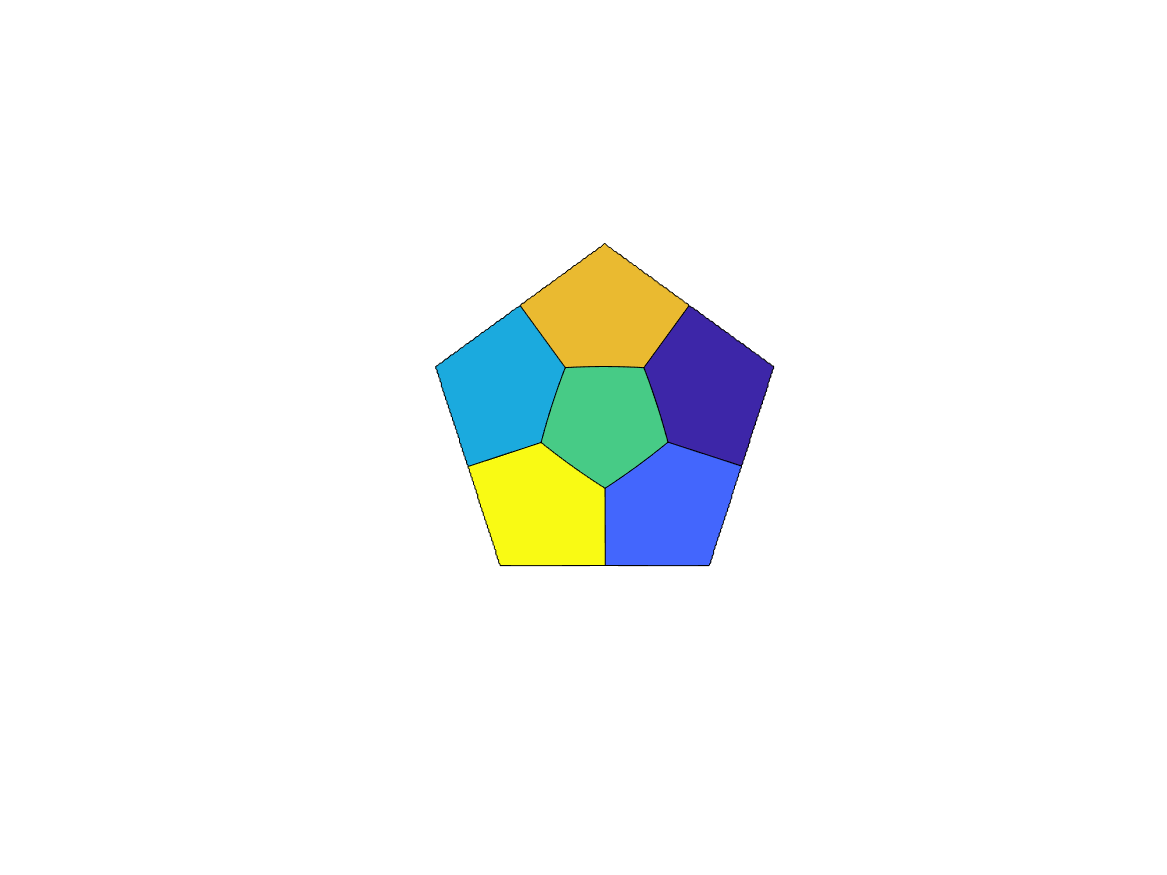}
\includegraphics[width = 0.1\textwidth, clip, trim = 7cm 5cm 6.5cm 4cm]{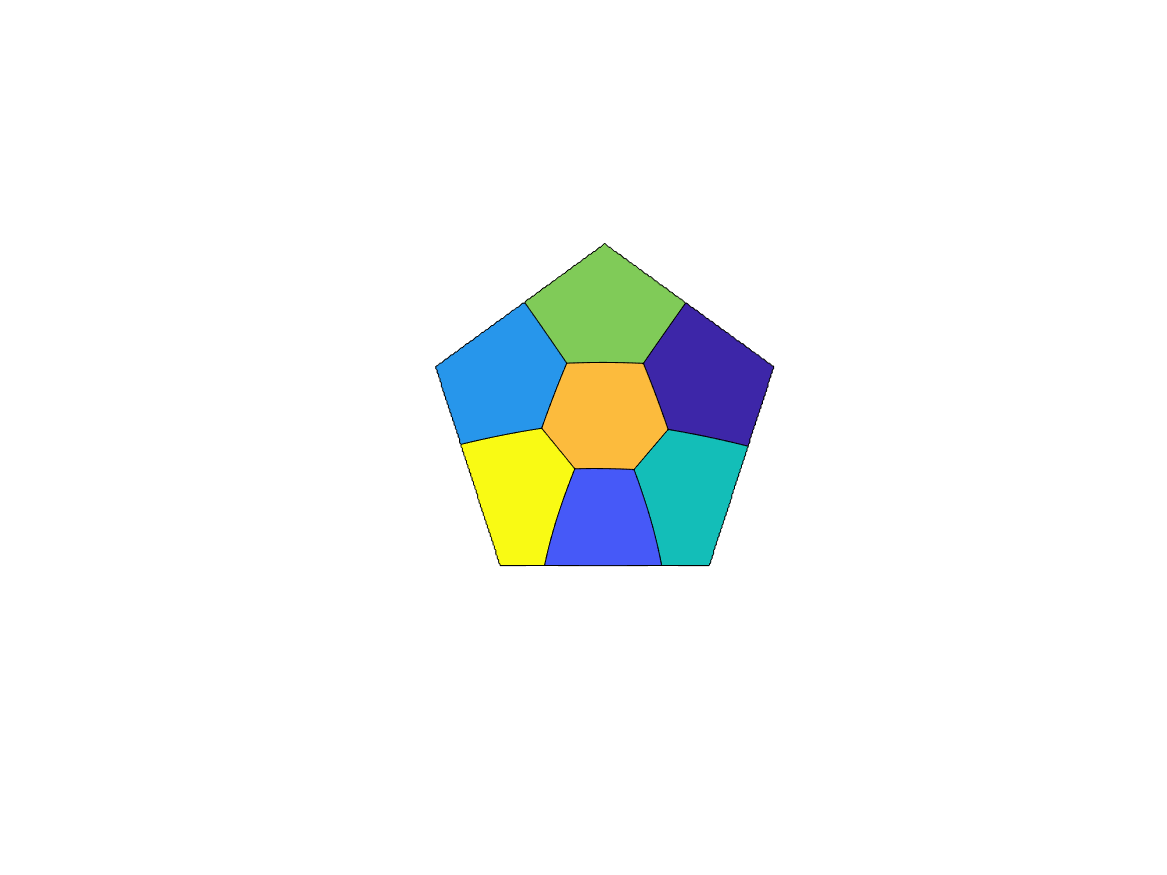}
\includegraphics[width = 0.1\textwidth, clip, trim = 7cm 5cm 6.5cm 4cm]{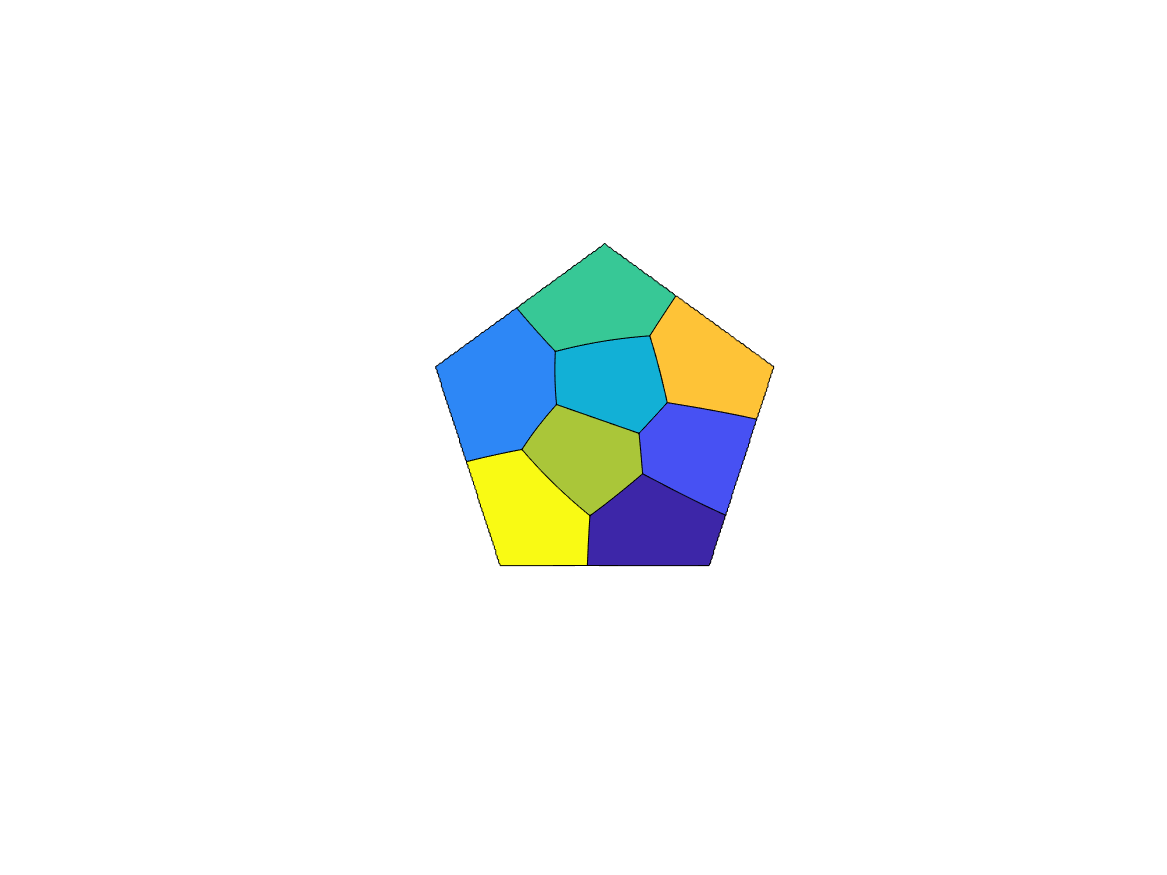}
\includegraphics[width = 0.1\textwidth, clip, trim = 7cm 5cm 6.5cm 4cm]{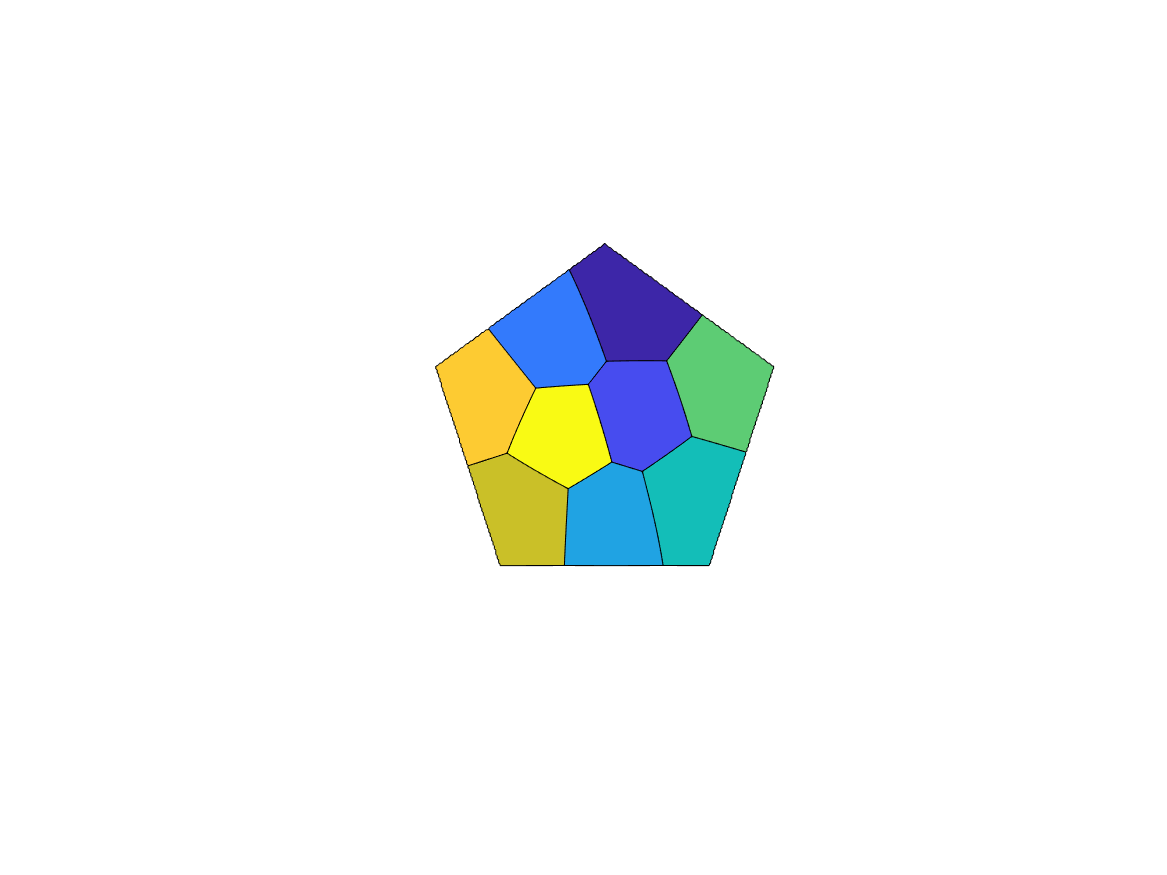}
\includegraphics[width = 0.1\textwidth, clip, trim = 7cm 5cm 6.5cm 4cm]{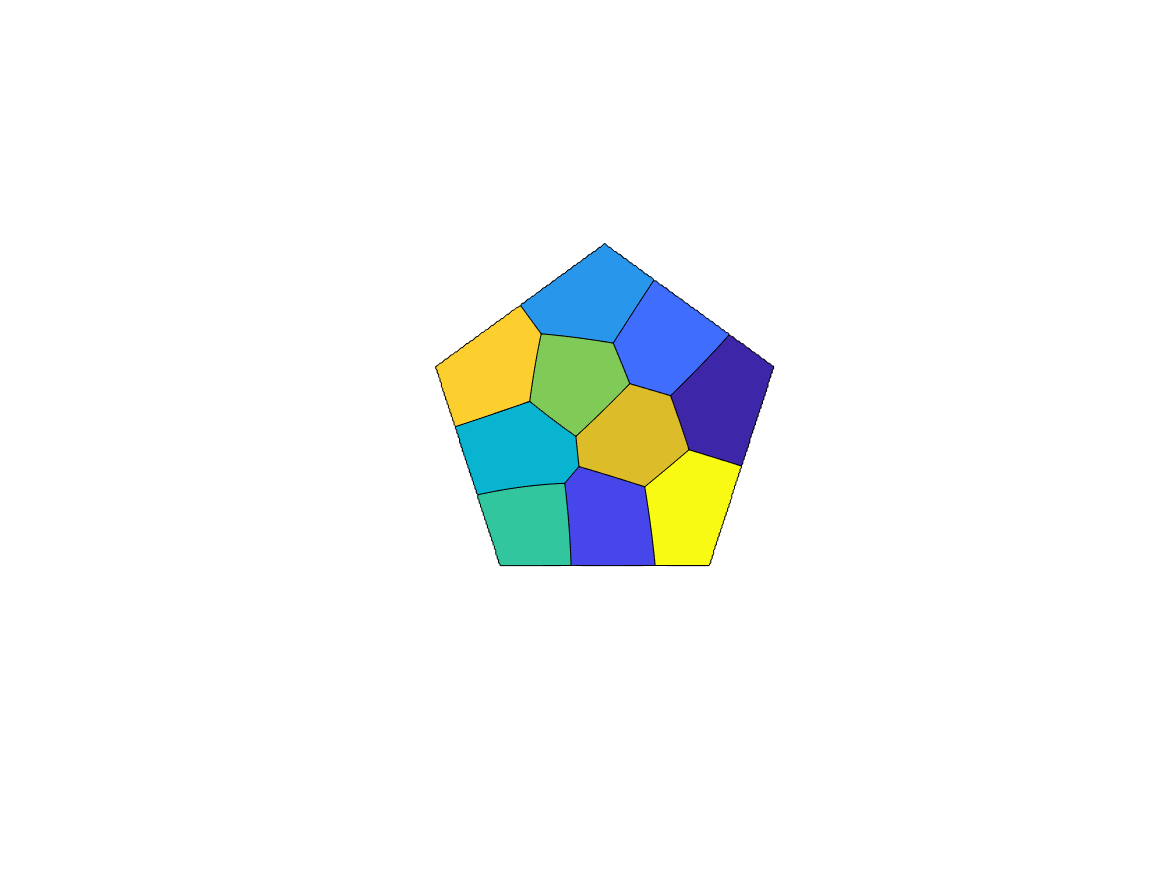}
\smallskip
\includegraphics[width = 0.1\textwidth, clip, trim = 7cm 5cm 6.5cm 4cm]{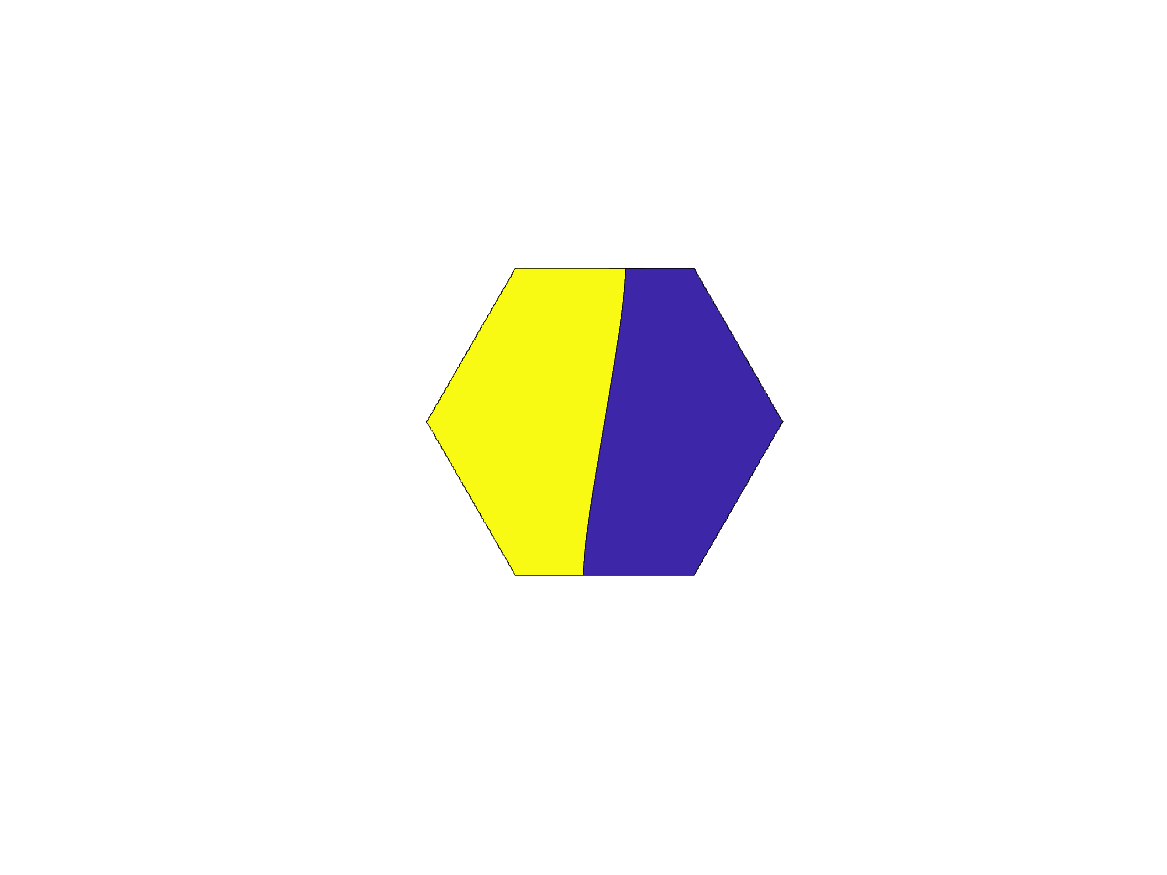}
\includegraphics[width = 0.1\textwidth, clip, trim = 7cm 5cm 6.5cm 4cm]{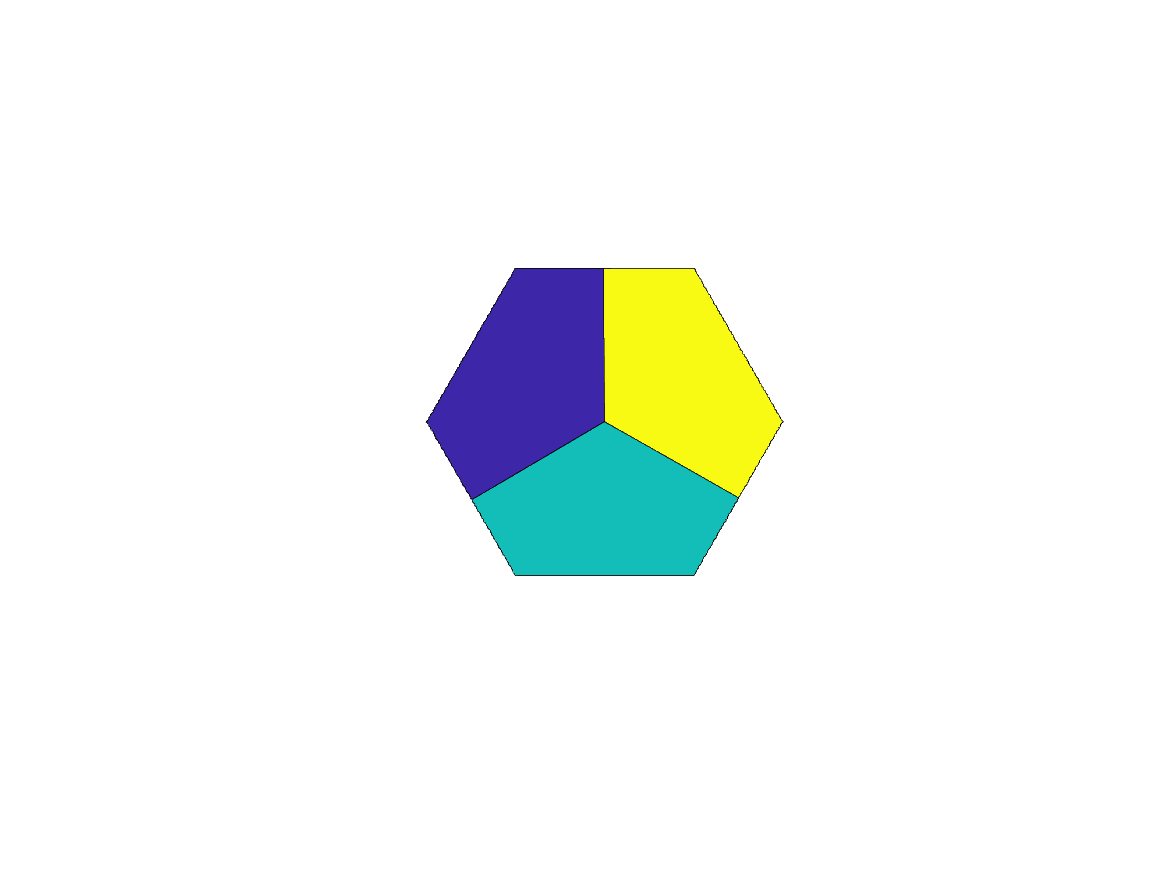}
\includegraphics[width = 0.1\textwidth, clip, trim = 7cm 5cm 6.5cm 4cm]{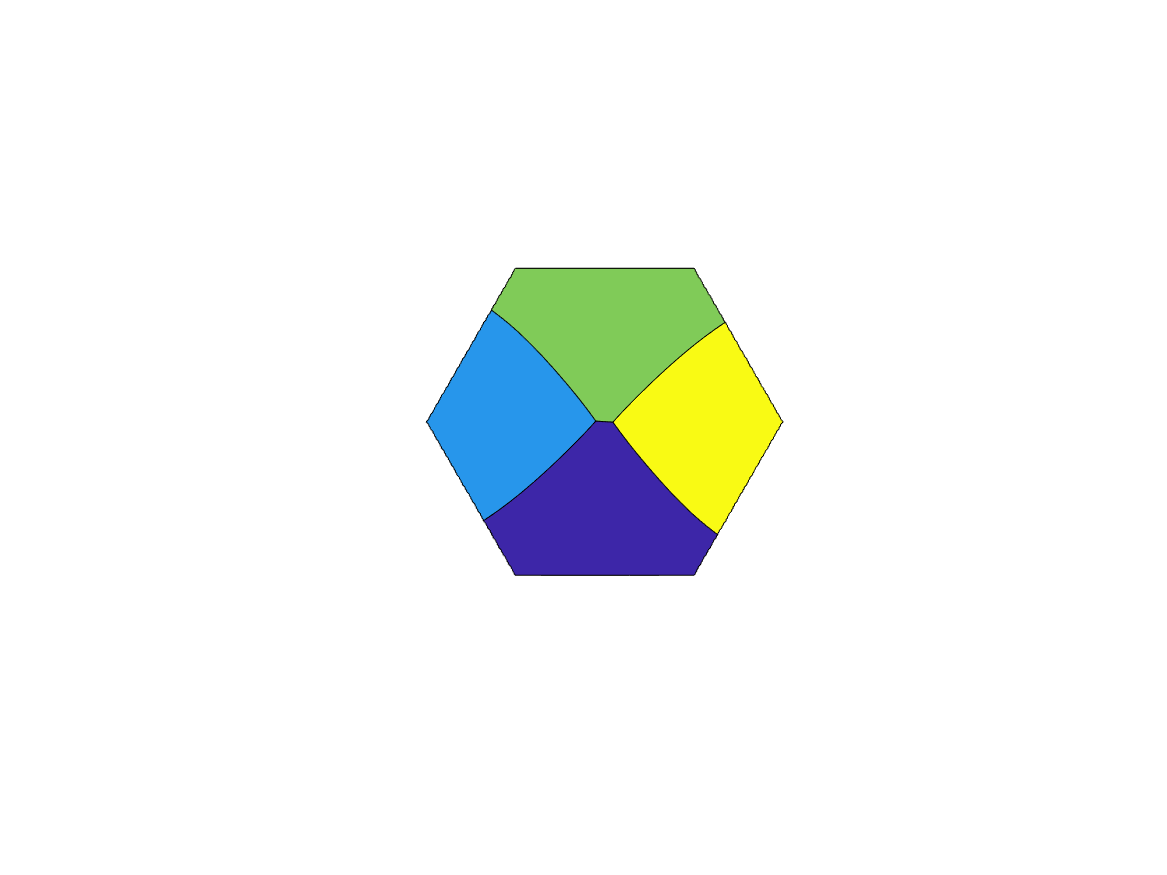}
\includegraphics[width = 0.1\textwidth, clip, trim = 7cm 5cm 6.5cm 4cm]{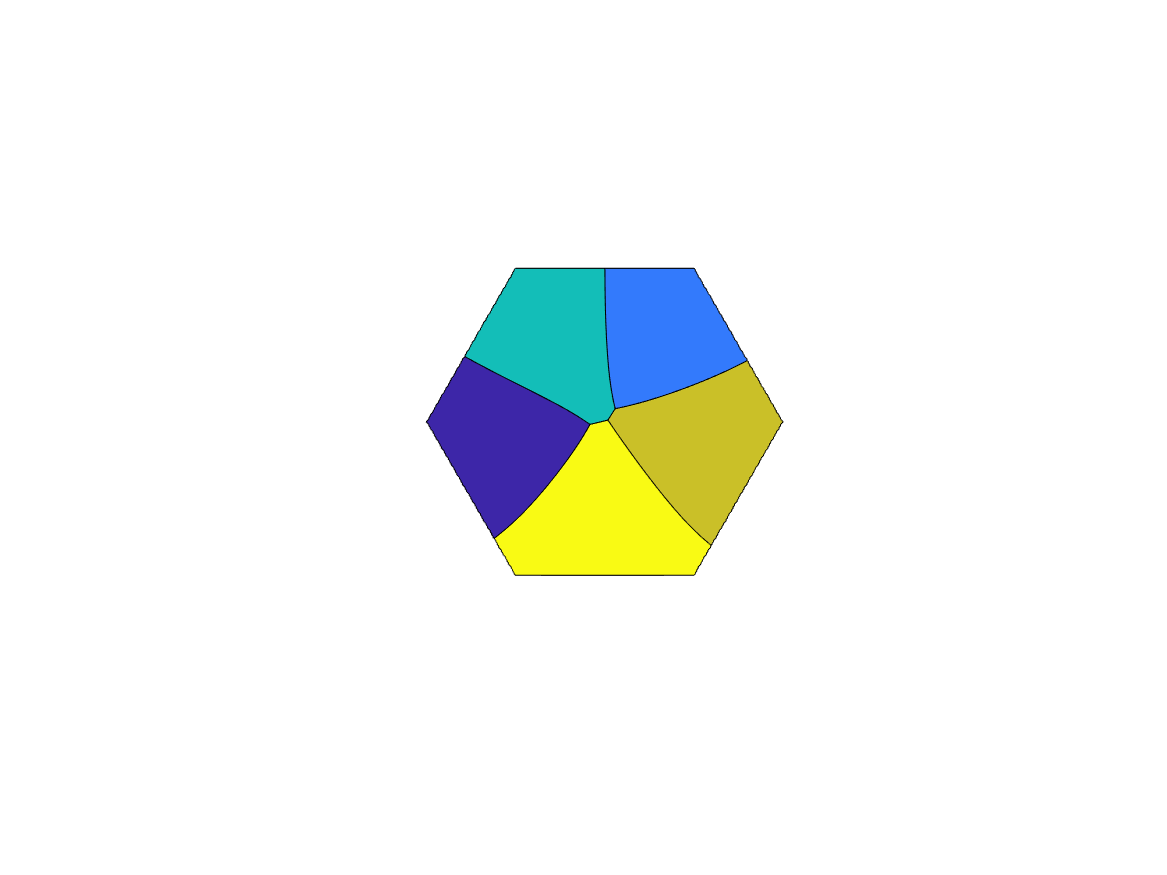}
\includegraphics[width = 0.1\textwidth, clip, trim = 7cm 5cm 6.5cm 4cm]{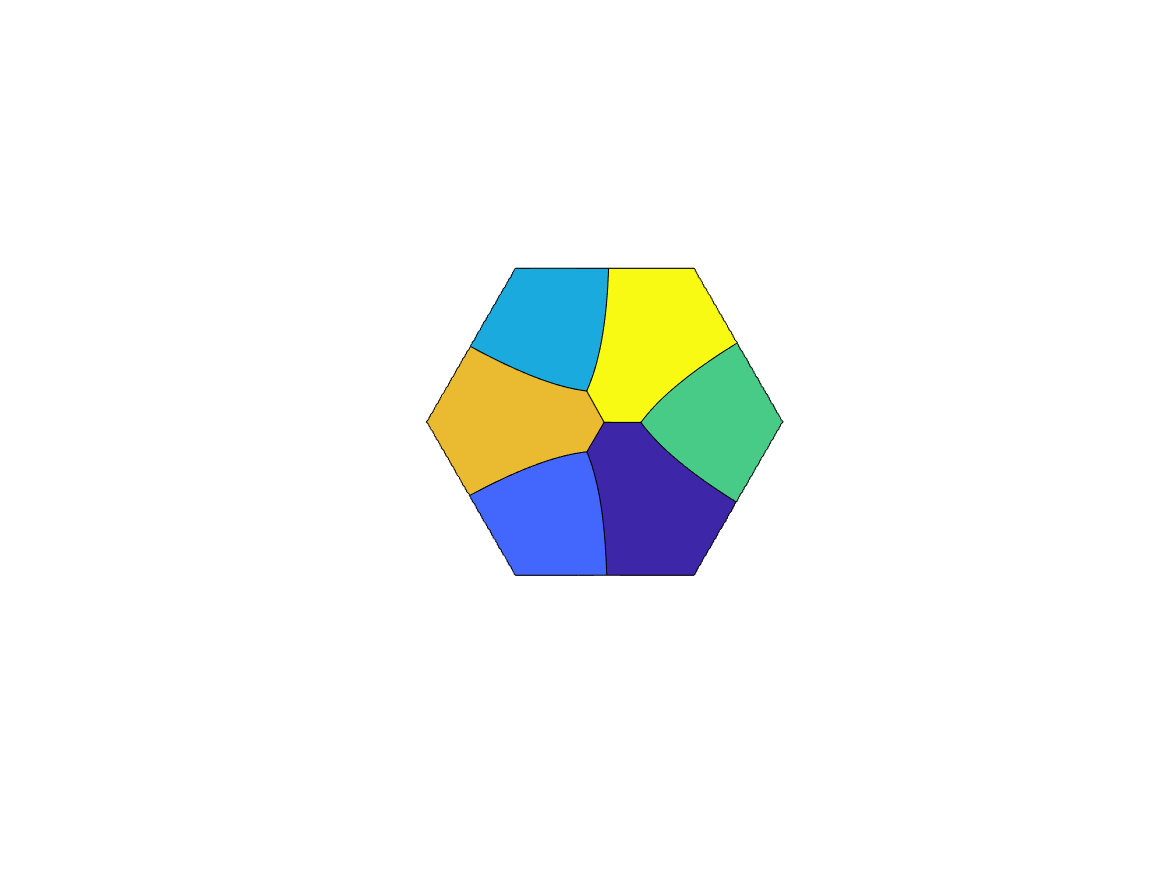}
\includegraphics[width = 0.1\textwidth, clip, trim = 7cm 5cm 6.5cm 4cm]{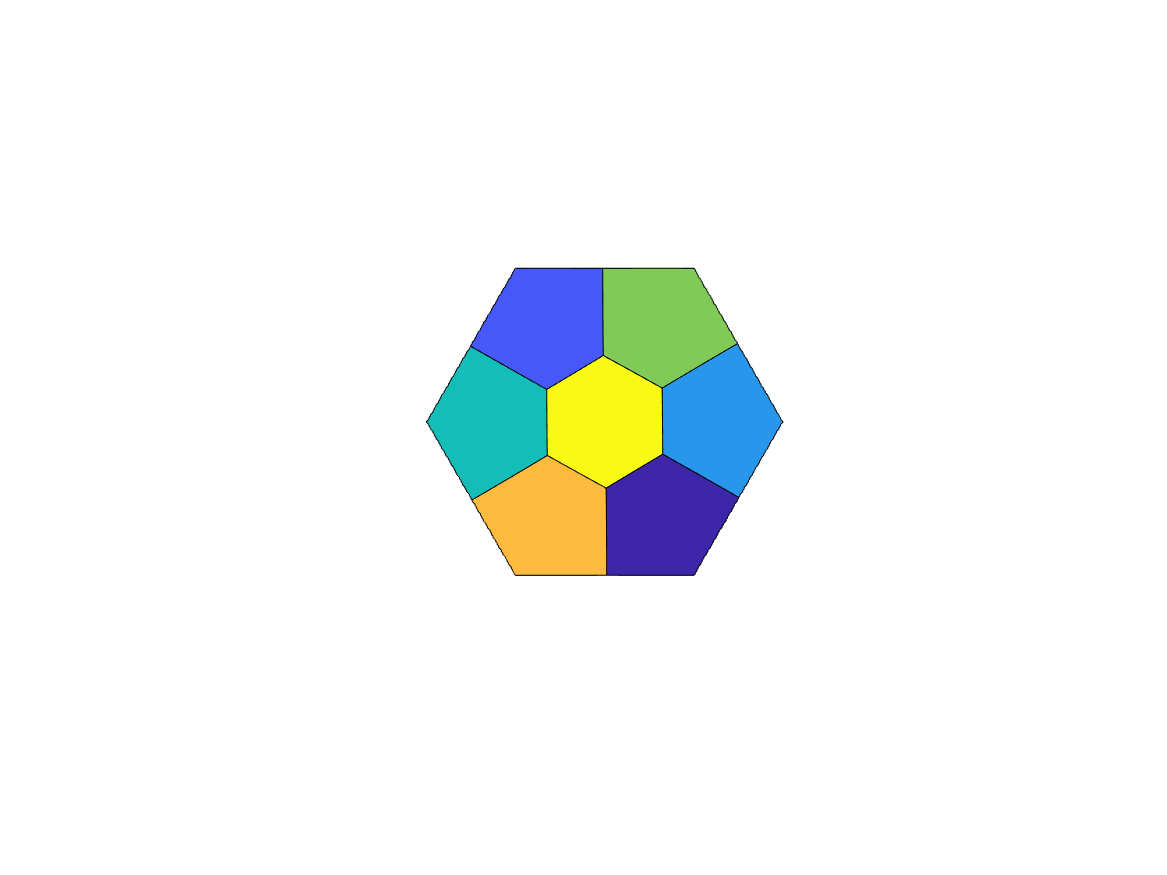}
\includegraphics[width = 0.1\textwidth, clip, trim = 7cm 5cm 6.5cm 4cm]{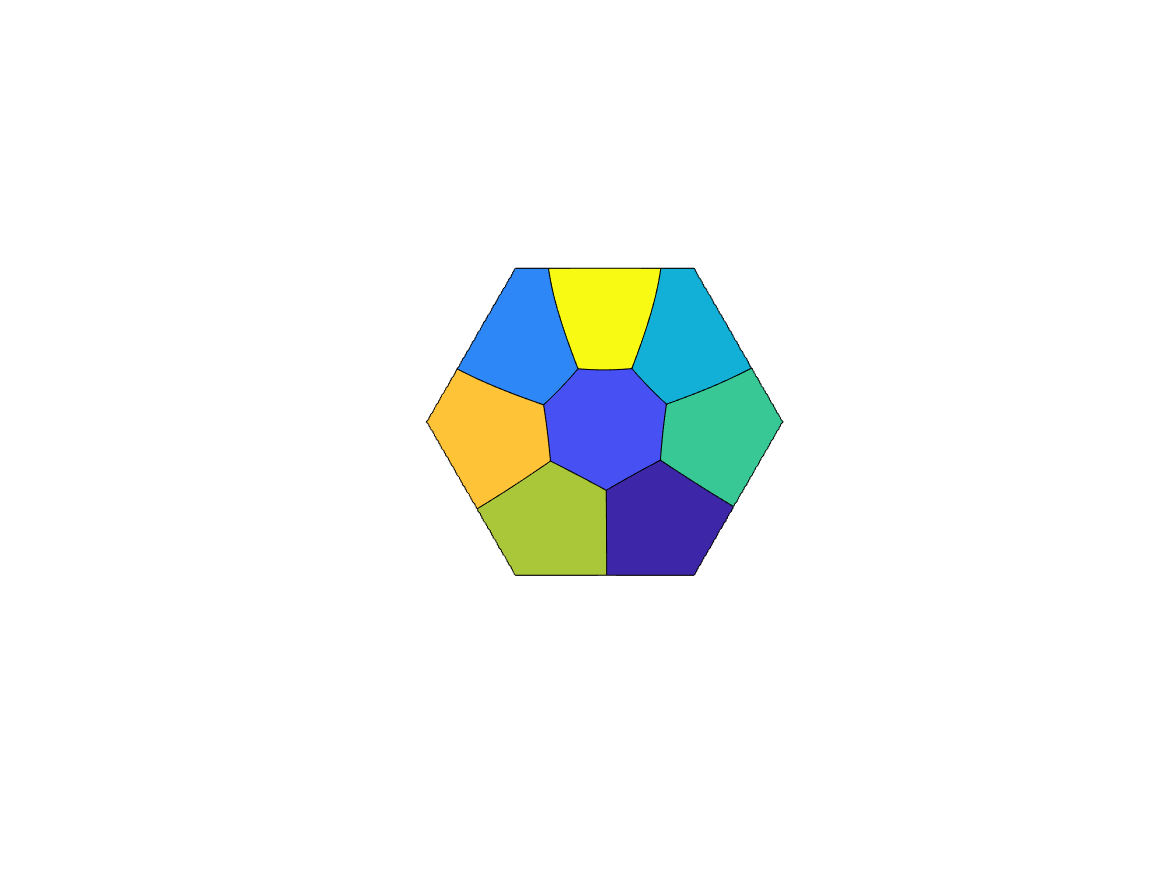}
\includegraphics[width = 0.1\textwidth, clip, trim = 7cm 5cm 6.5cm 4cm]{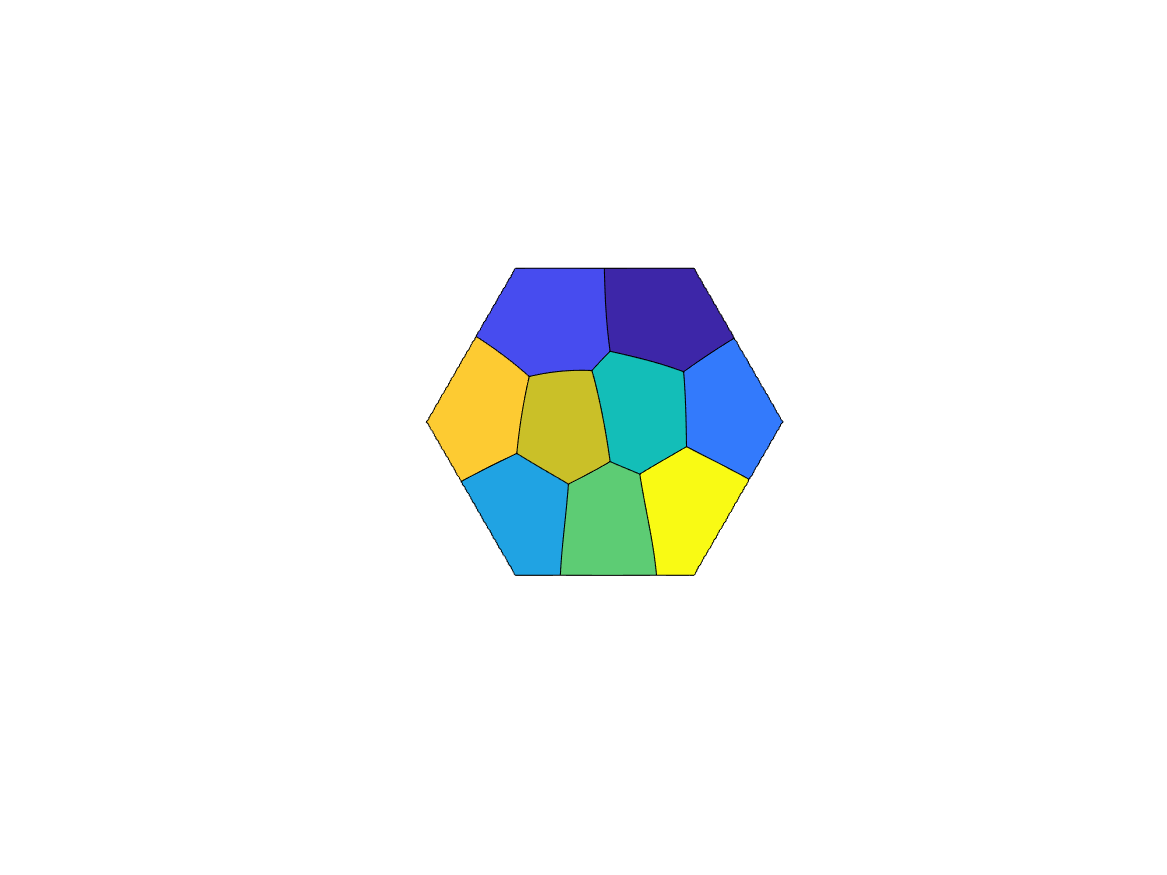}
\includegraphics[width = 0.1\textwidth, clip, trim = 7cm 5cm 6.5cm 4cm]{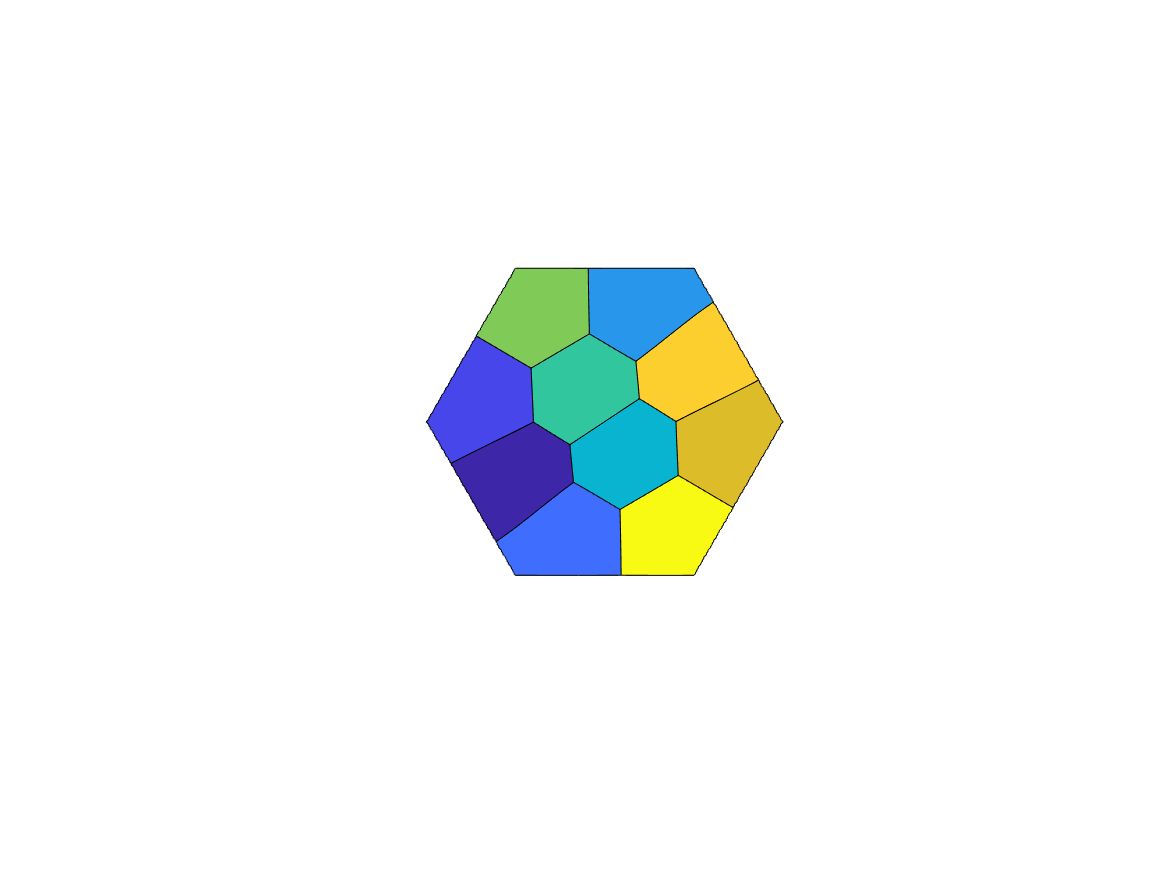}\\
\smallskip
\includegraphics[width = 0.1\textwidth, clip, trim = 7cm 4.5cm 6cm 4cm]{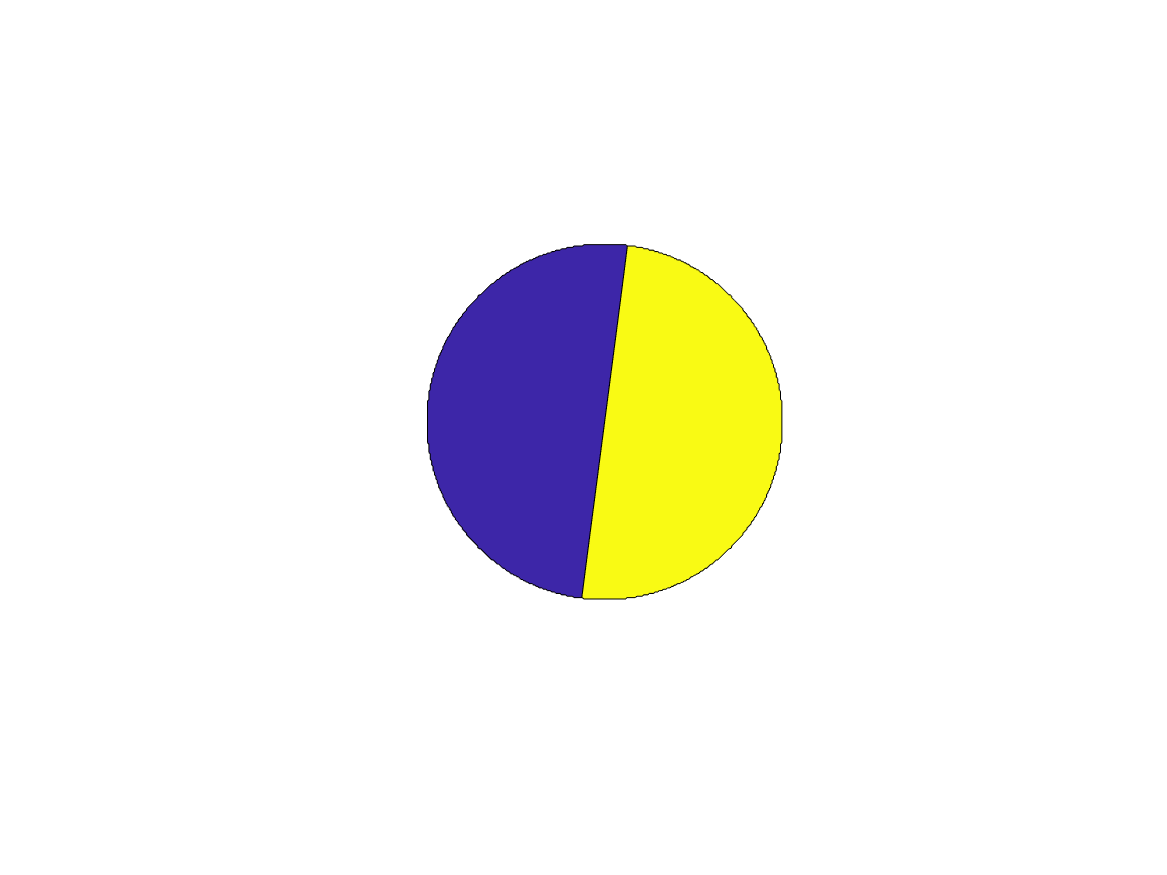}
\includegraphics[width = 0.1\textwidth, clip, trim = 7cm 4.5cm 6cm 4cm]{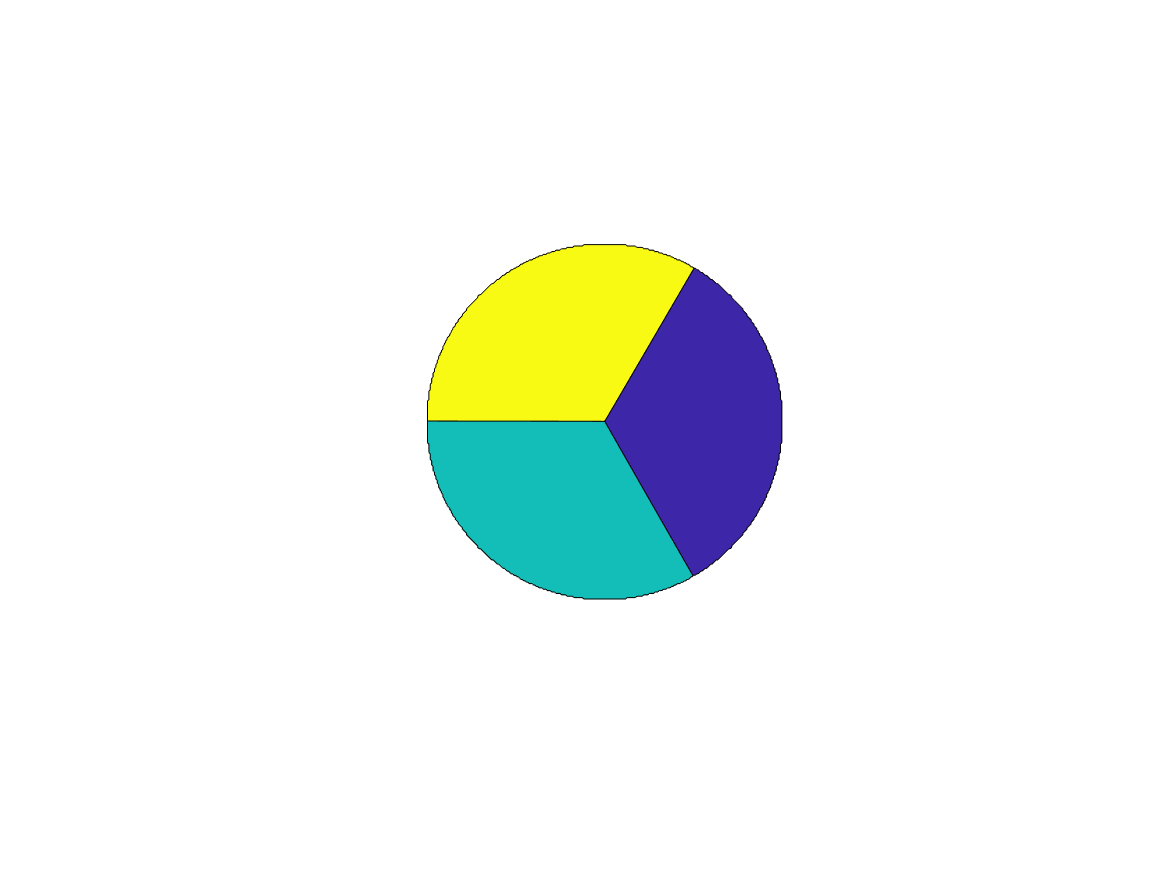}
\includegraphics[width = 0.1\textwidth, clip, trim = 7cm 4.5cm 6cm 4cm]{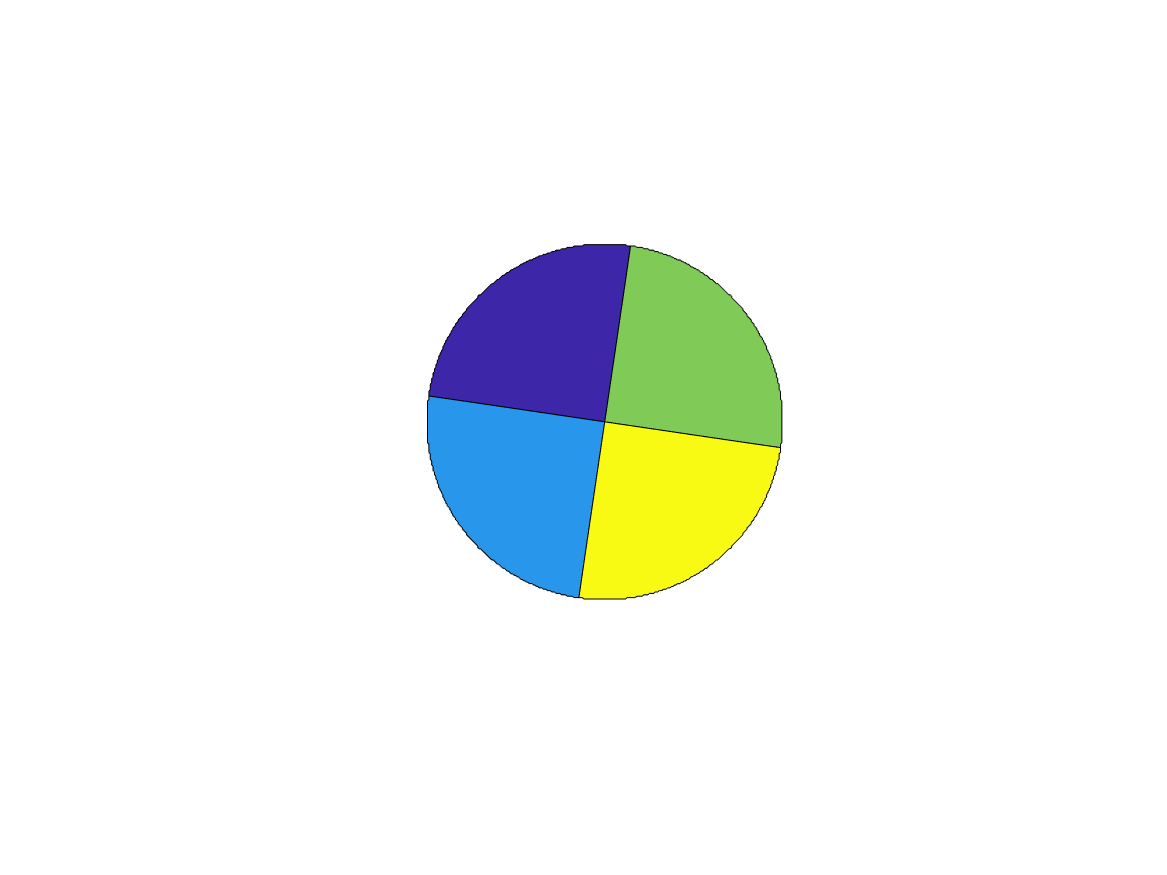}
\includegraphics[width = 0.1\textwidth, clip, trim = 7cm 4.5cm 6cm 4cm]{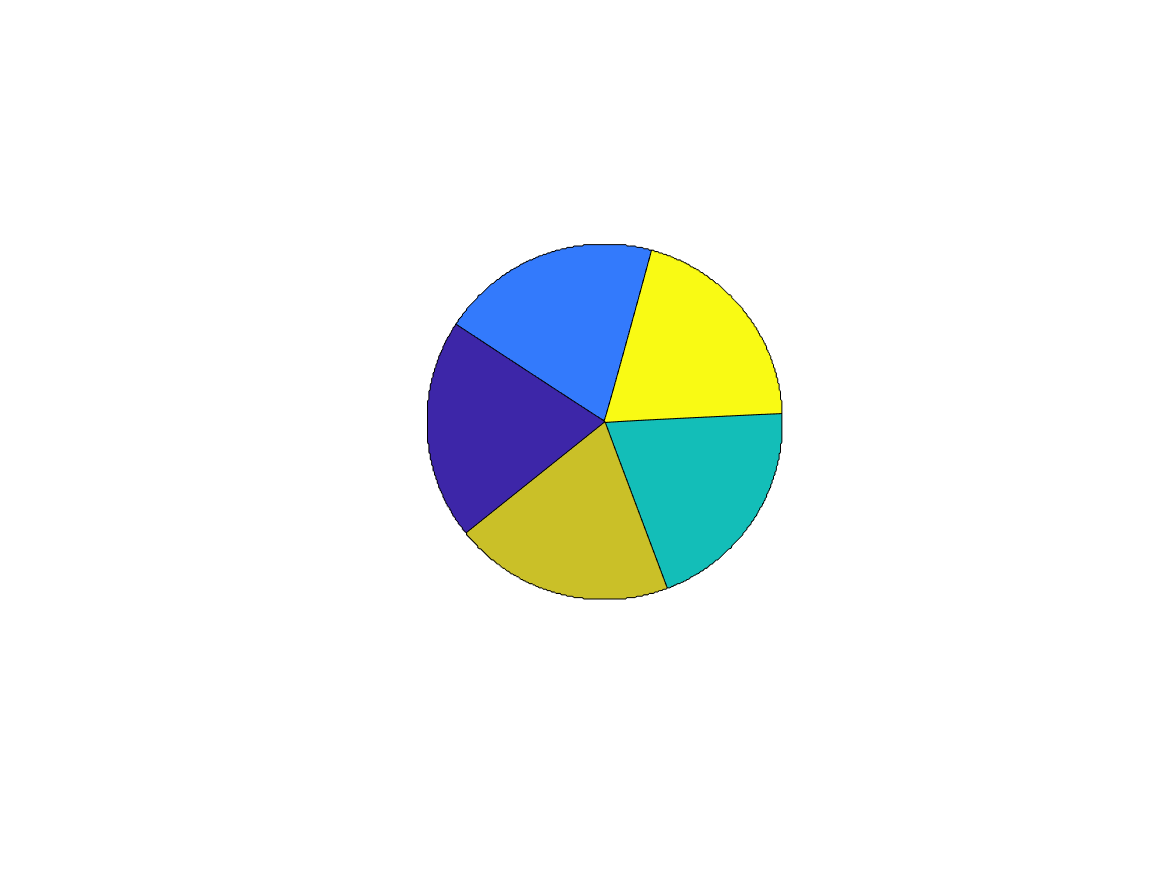}
\includegraphics[width = 0.1\textwidth, clip, trim = 7cm 4.5cm 6cm 4cm]{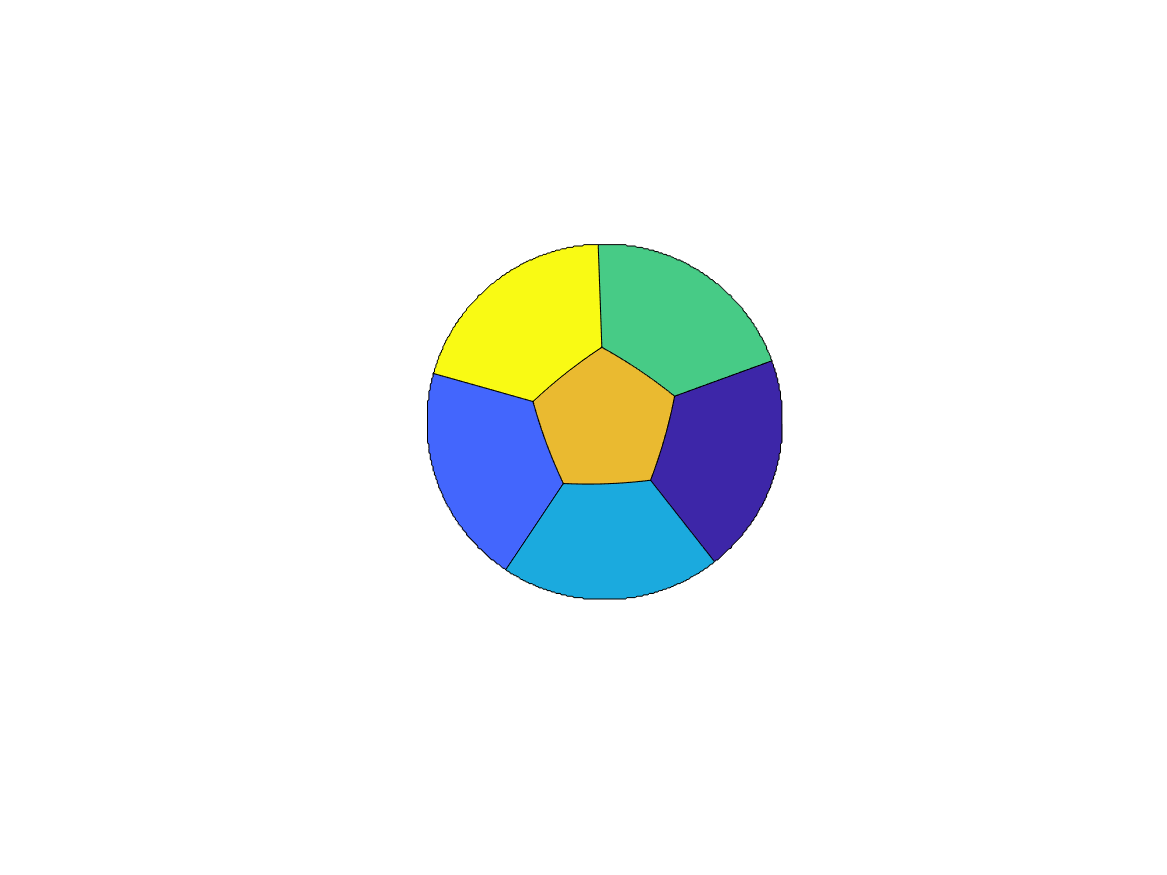}
\includegraphics[width = 0.1\textwidth, clip, trim = 7cm 4.5cm 6cm 4cm]{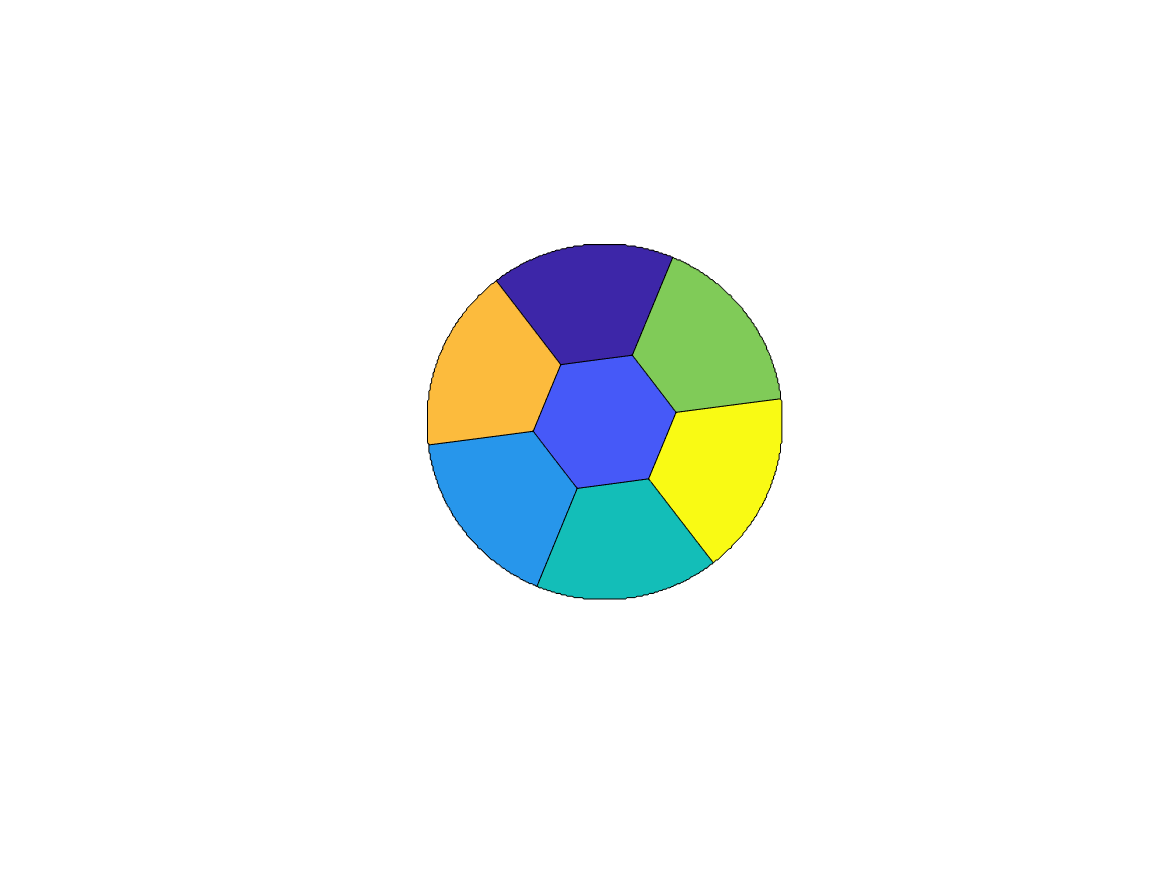}
\includegraphics[width = 0.1\textwidth, clip, trim = 7cm 4.5cm 6cm 4cm]{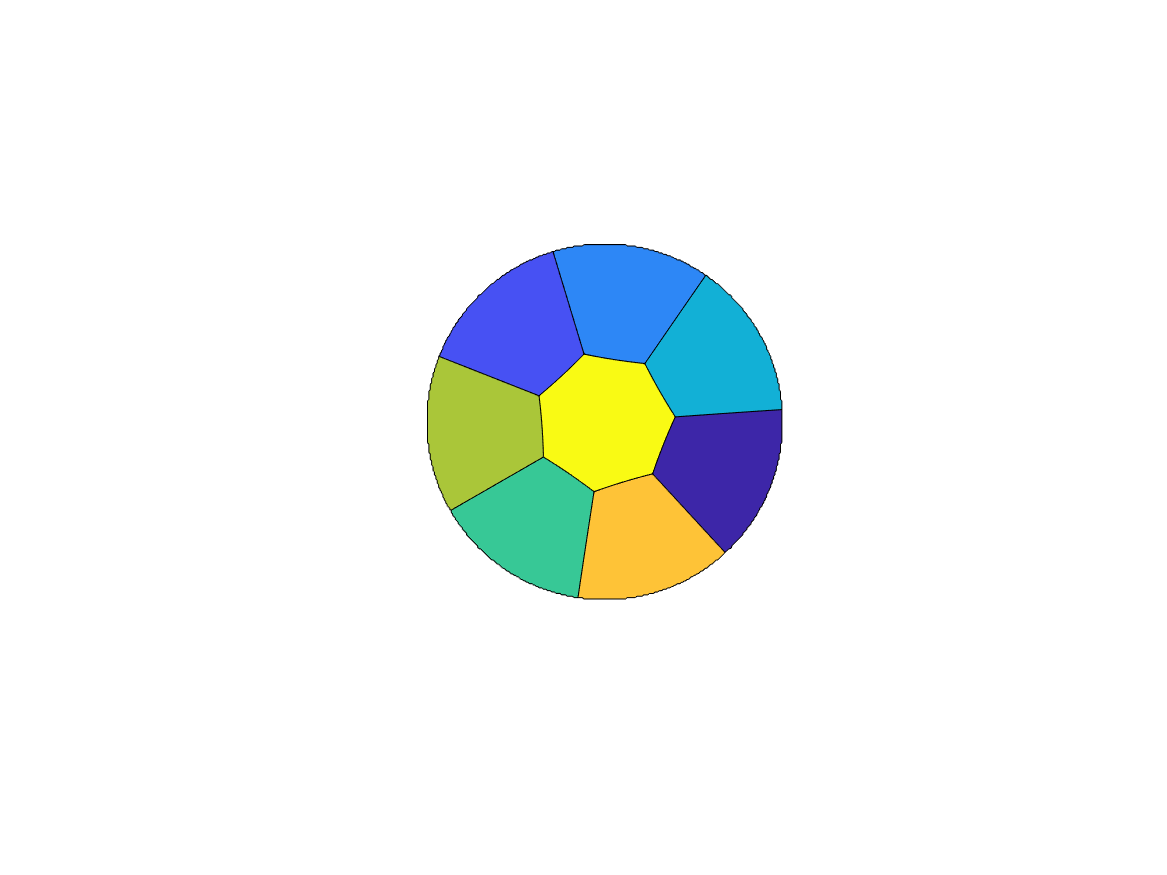}
\includegraphics[width = 0.1\textwidth, clip, trim = 7cm 4.5cm 6cm 4cm]{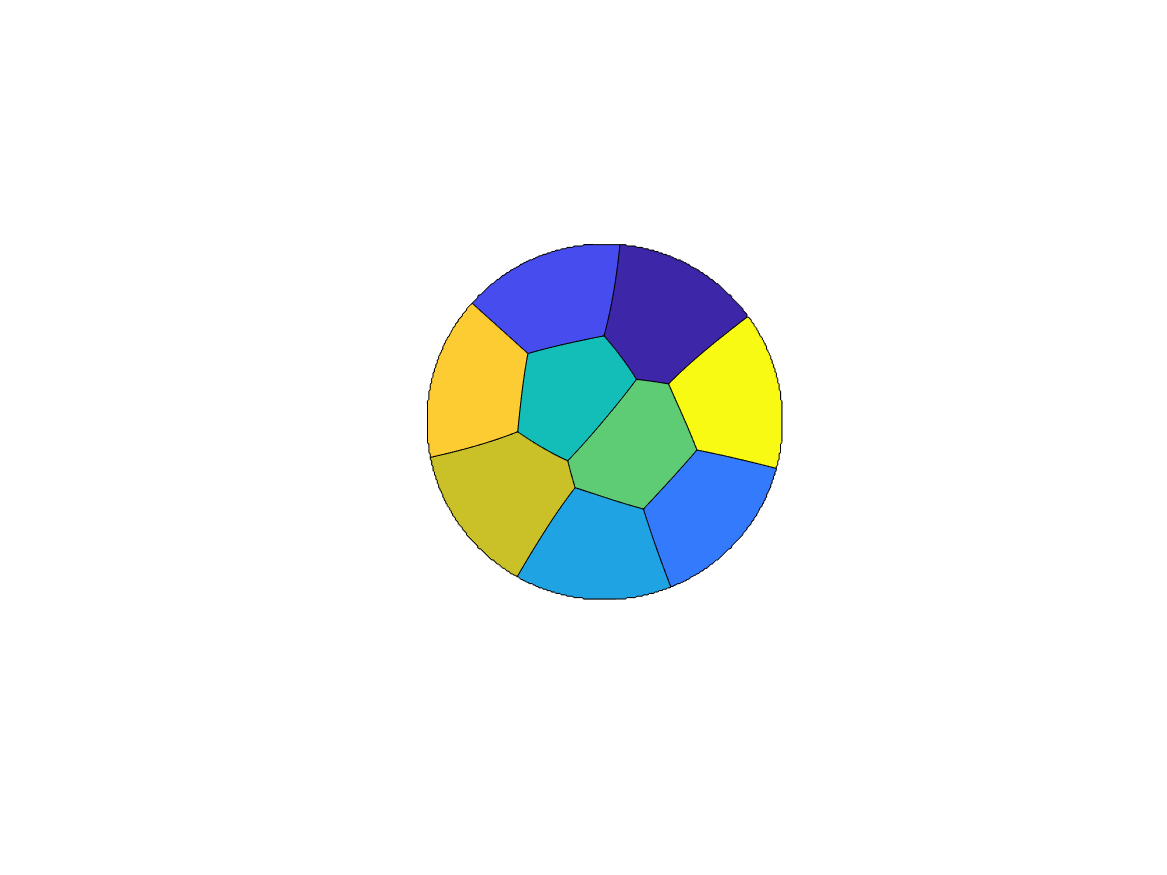}
\includegraphics[width = 0.1\textwidth, clip, trim = 7cm 4.5cm 6cm 4cm]{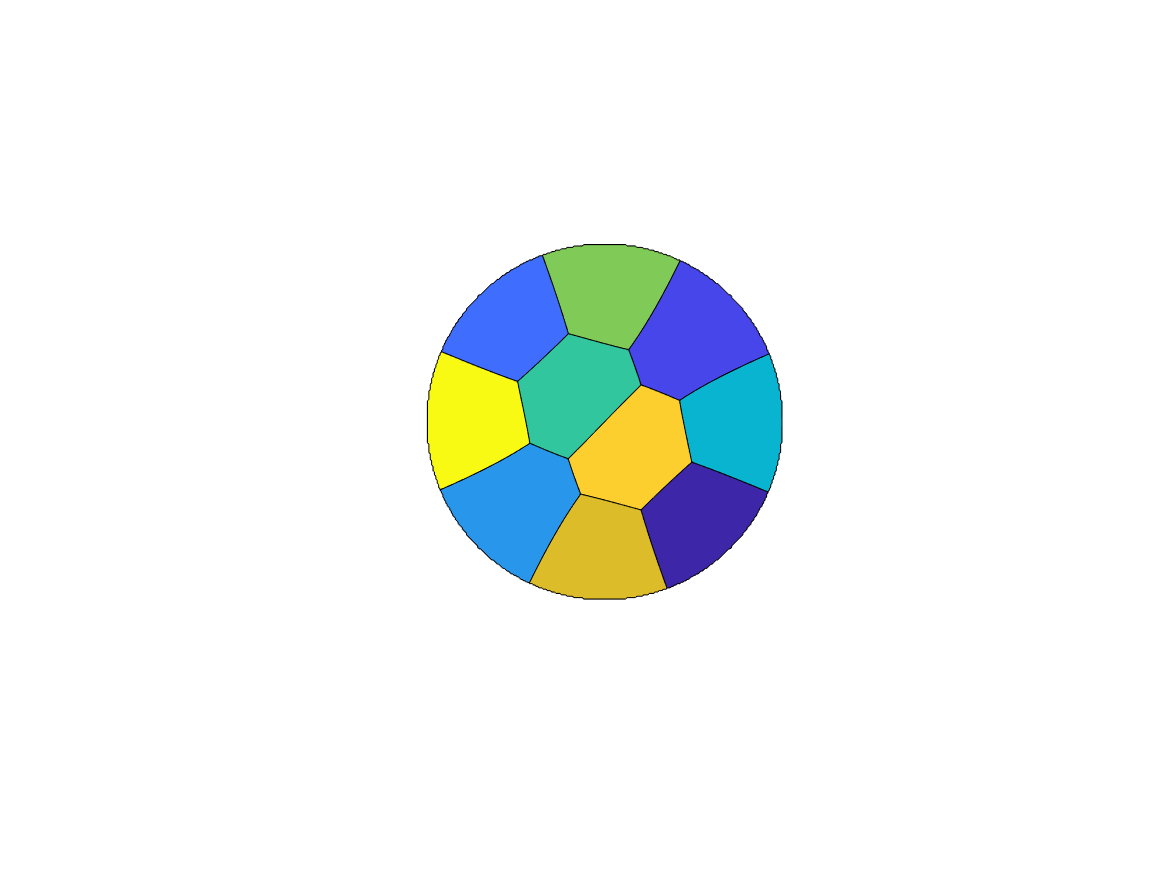}\\
\smallskip
\includegraphics[width = 0.1\textwidth, clip, trim = 6.5cm 3.5cm 6cm 4cm]{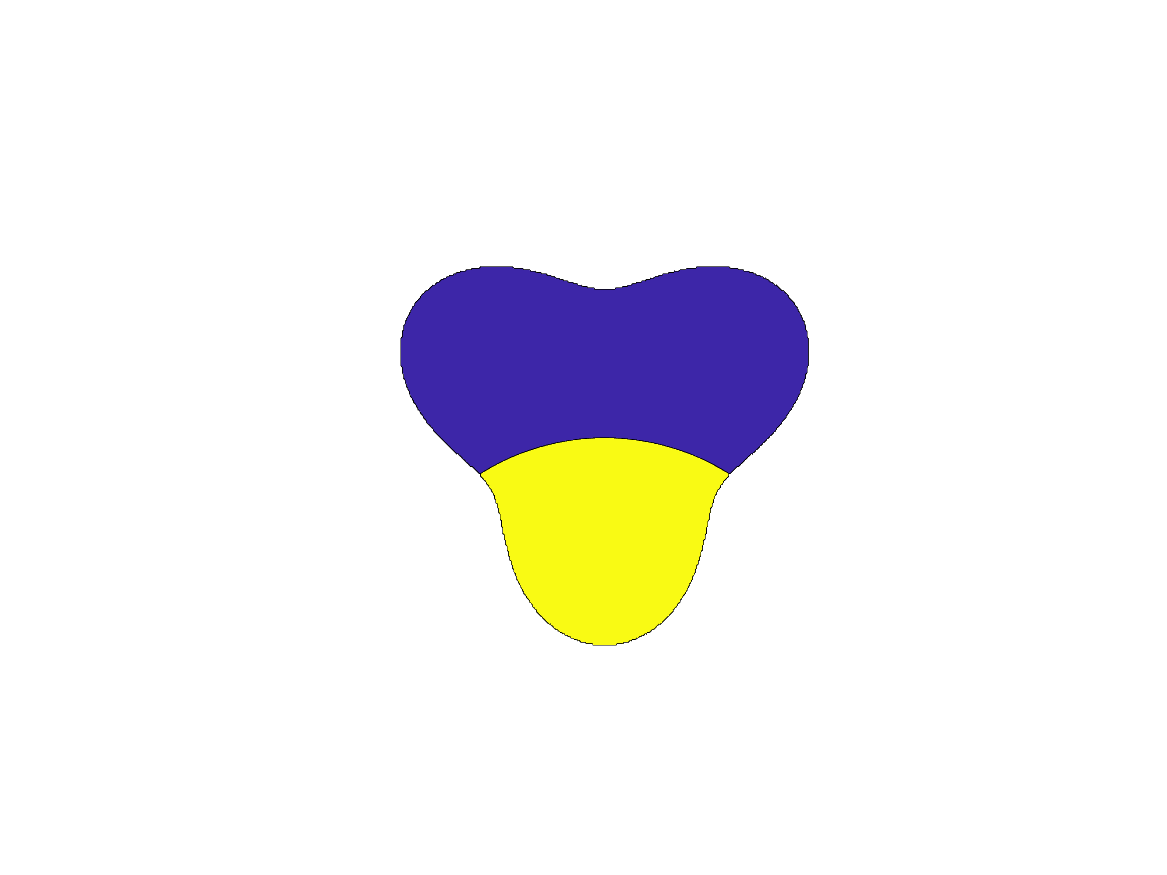}
\includegraphics[width = 0.1\textwidth, clip, trim = 6.5cm 3.5cm 6cm 4cm]{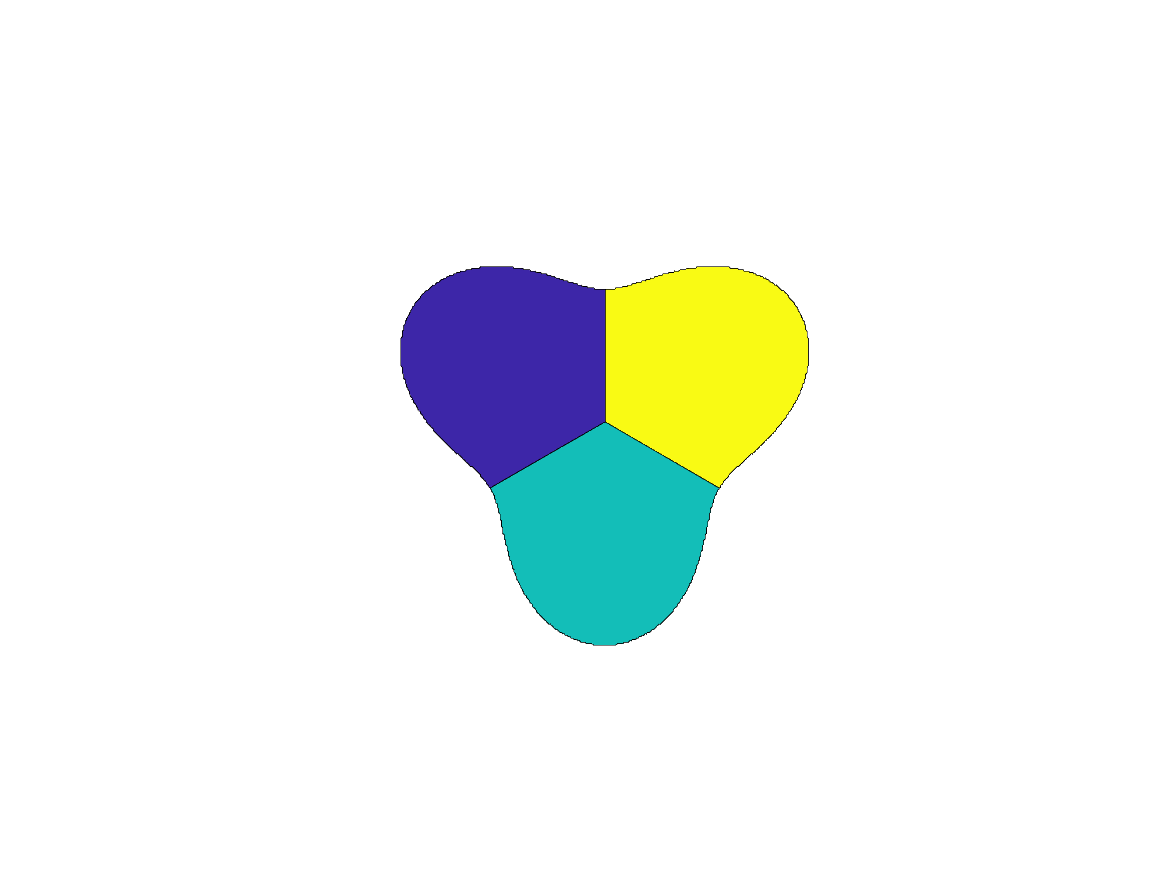}
\includegraphics[width = 0.1\textwidth, clip, trim = 6.5cm 3.5cm 6cm 4cm]{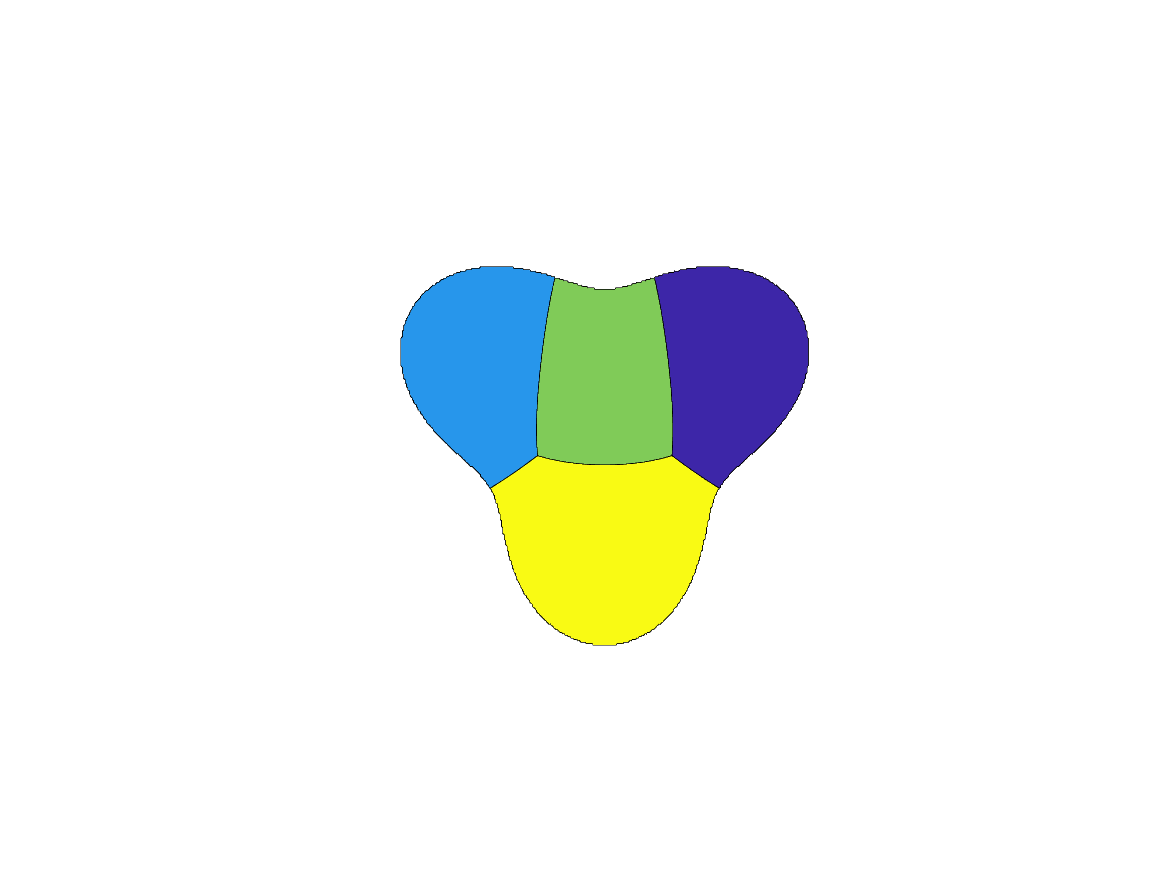}
\includegraphics[width = 0.1\textwidth, clip, trim = 6.5cm 3.5cm 6cm 4cm]{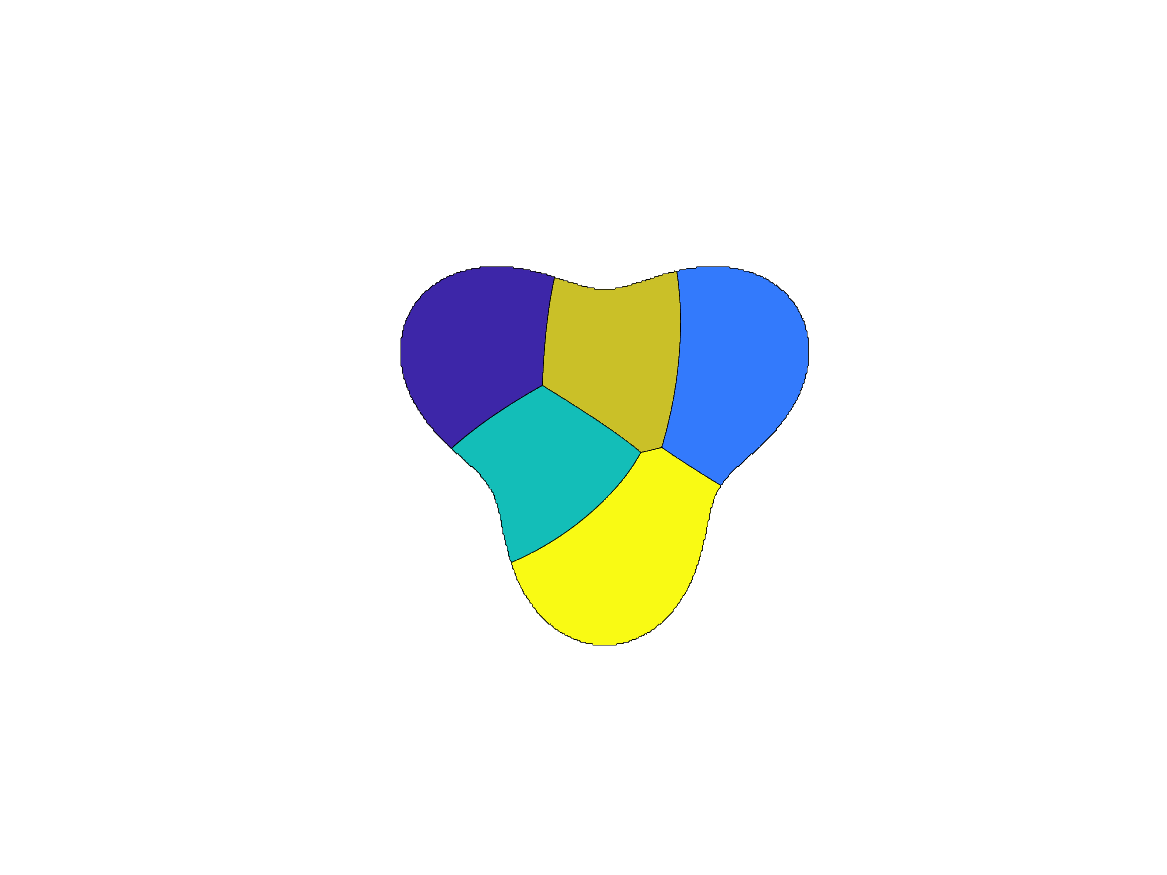}
\includegraphics[width = 0.1\textwidth, clip, trim = 6.5cm 3.5cm 6cm 4cm]{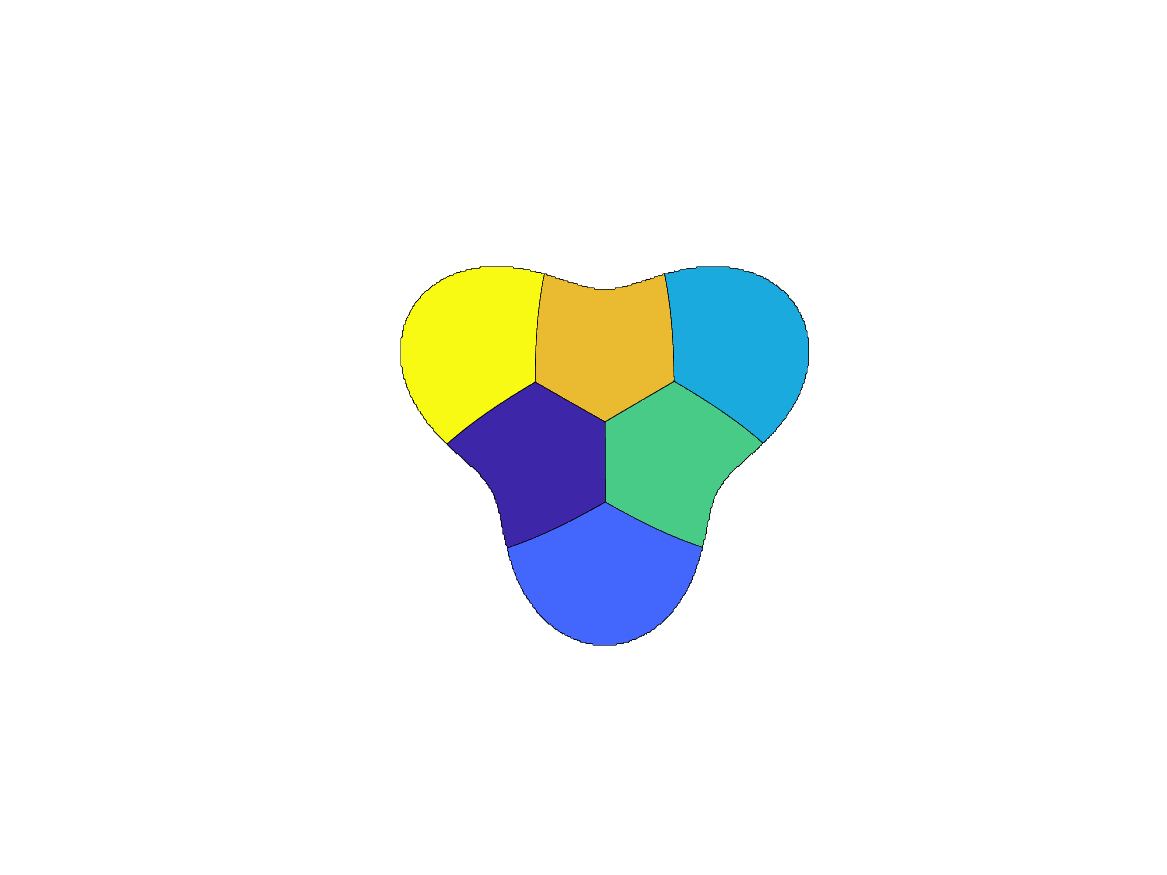}
\includegraphics[width = 0.1\textwidth, clip, trim = 6.5cm 3.5cm 6cm 4cm]{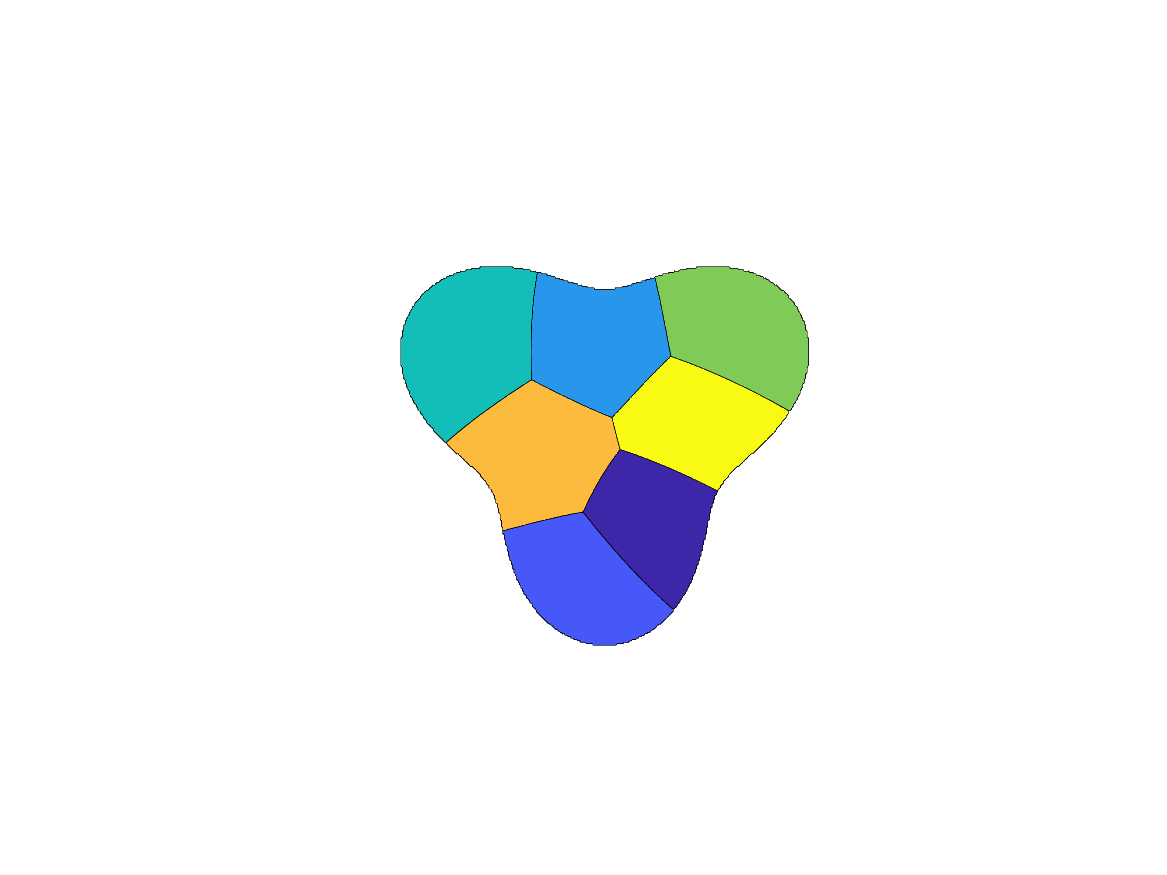}
\includegraphics[width = 0.1\textwidth, clip, trim = 6.5cm 3.5cm 6cm 4cm]{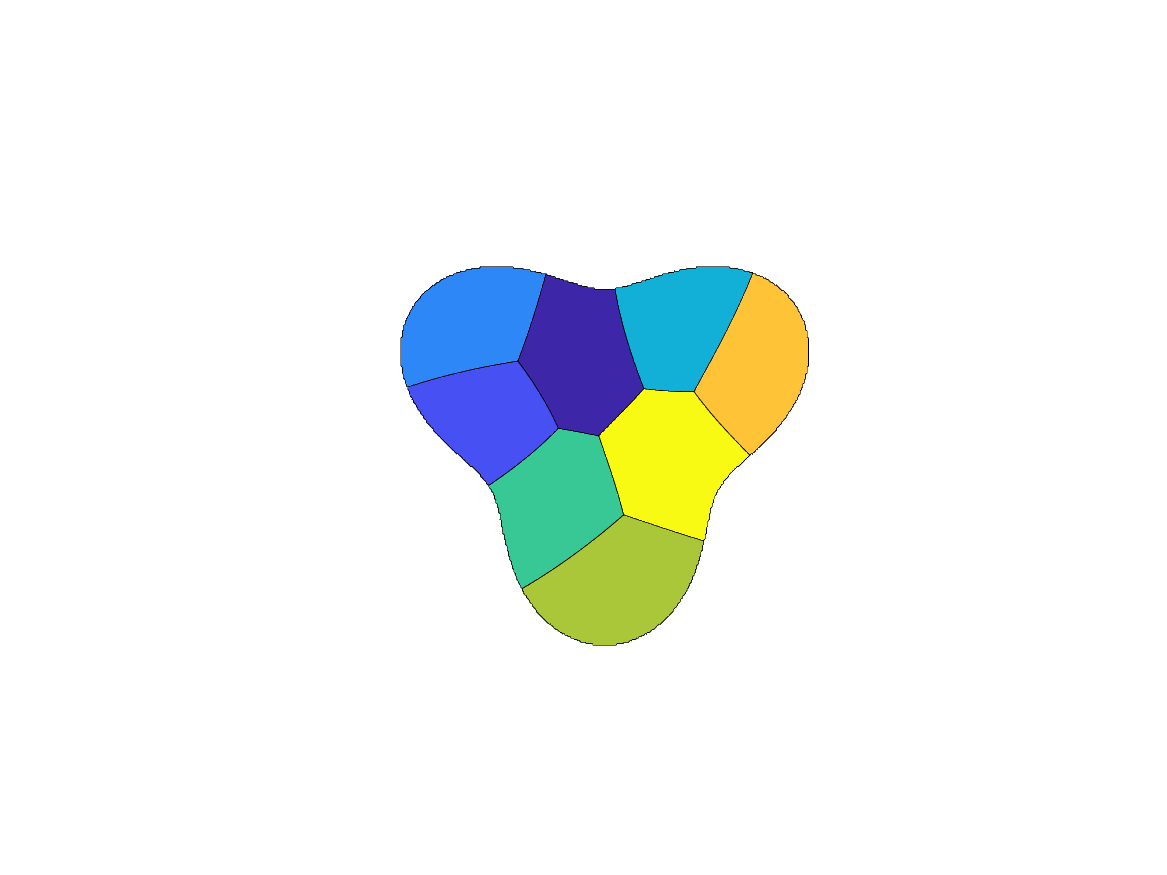}
\includegraphics[width = 0.1\textwidth, clip, trim = 6.5cm 3.5cm 6cm 4cm]{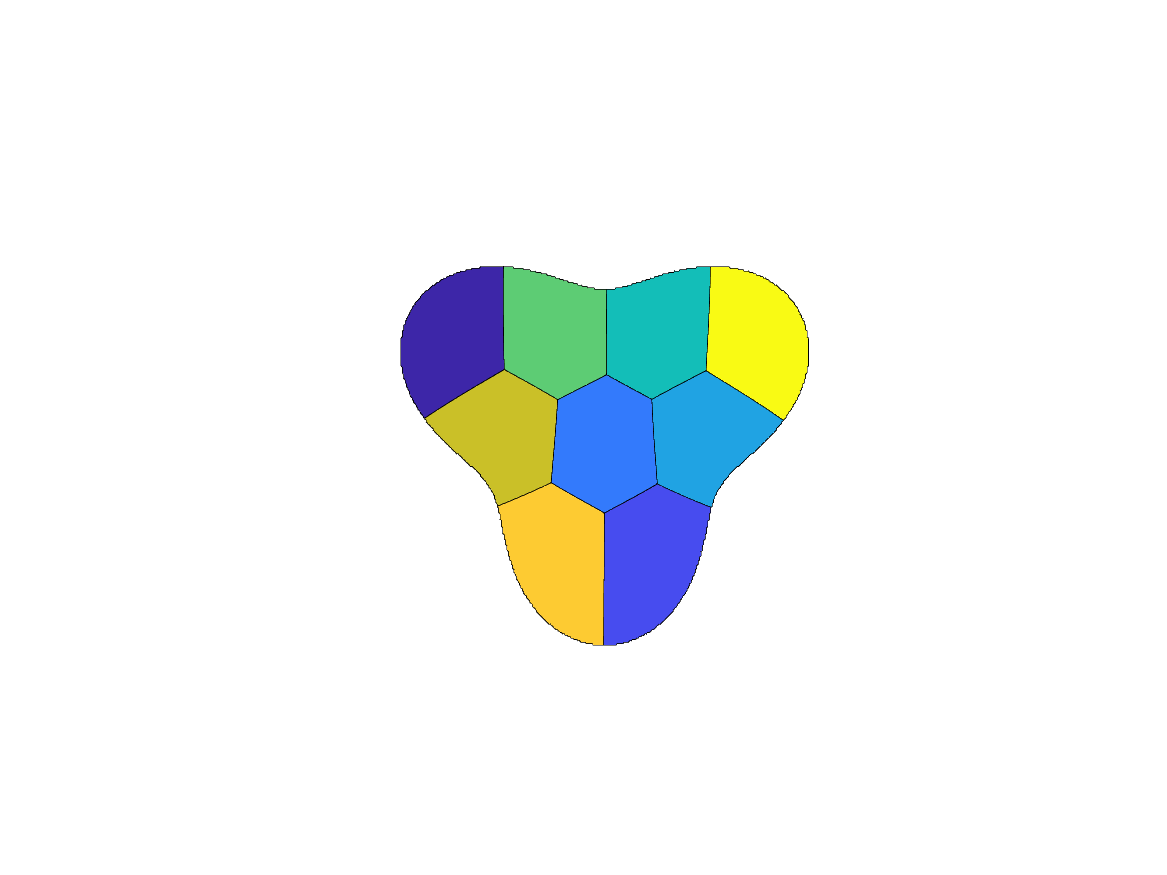}
\includegraphics[width = 0.1\textwidth, clip, trim = 6.5cm 3.5cm 6cm 4cm]{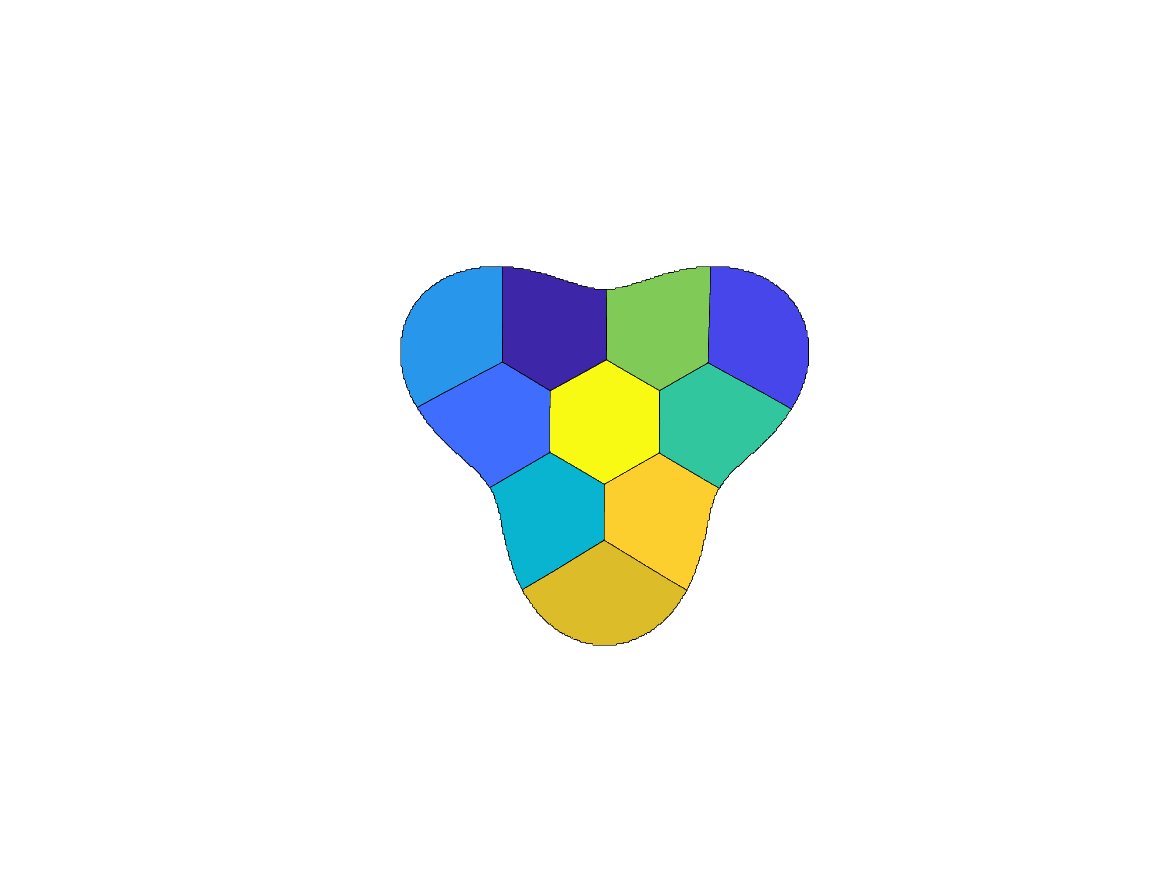}\\
\smallskip
\includegraphics[width = 0.1\textwidth, clip, trim = 6.5cm 4.5cm 5.5cm 3cm]{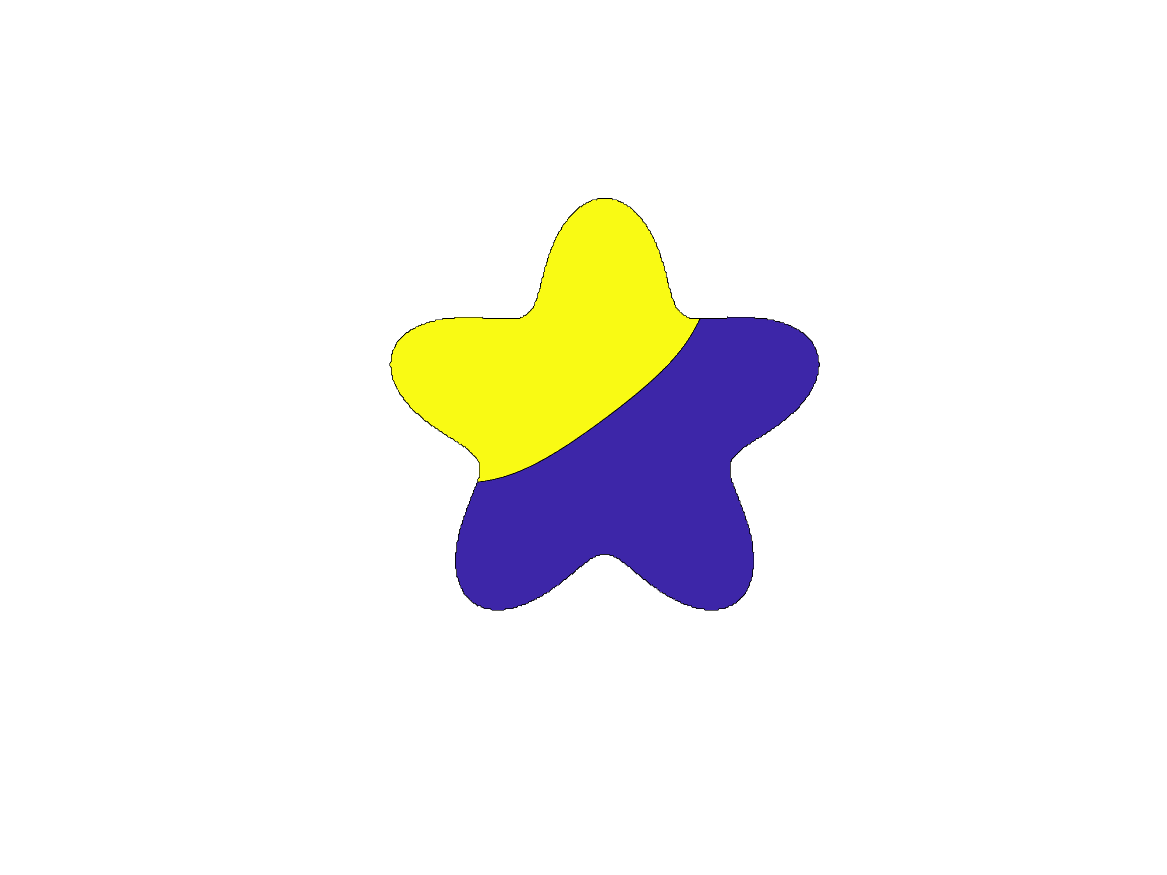}
\includegraphics[width = 0.1\textwidth, clip, trim = 6.5cm 4.5cm 5.5cm 3cm]{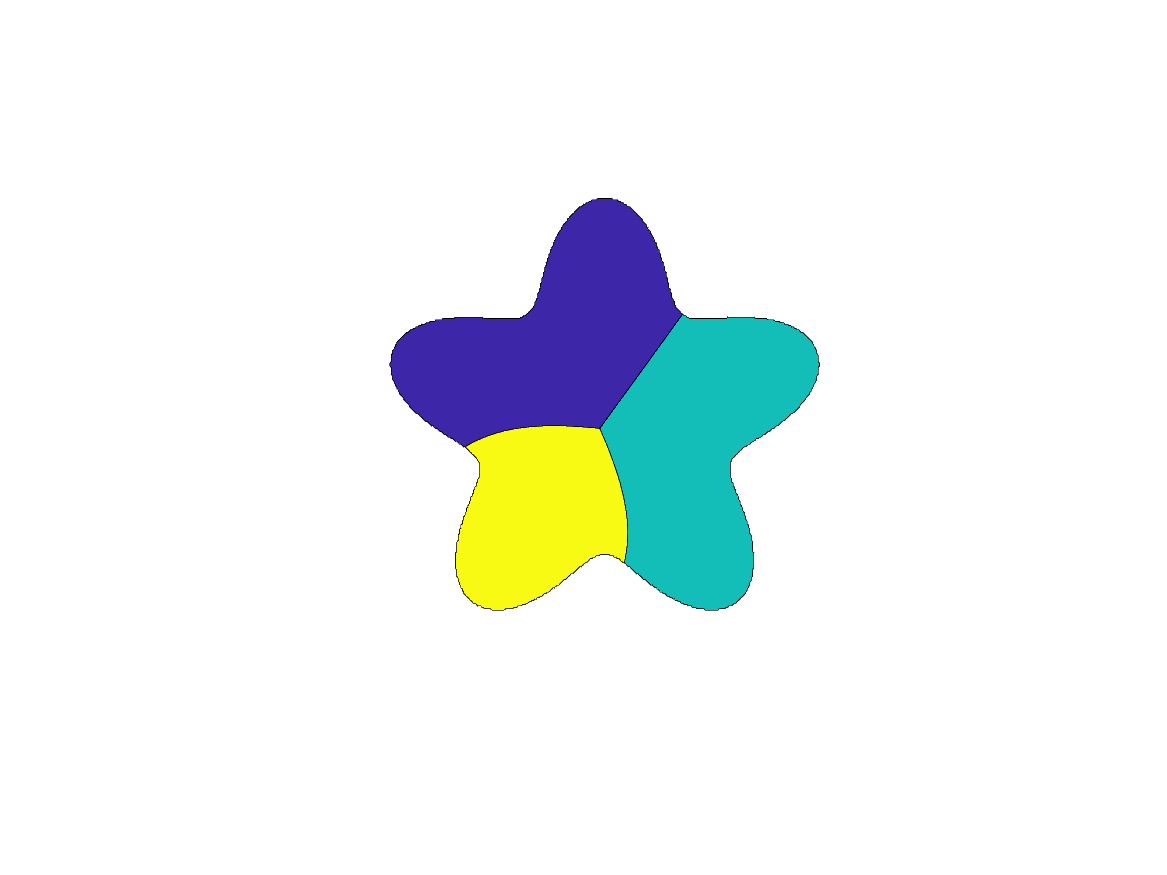}
\includegraphics[width = 0.1\textwidth, clip, trim = 6.5cm 4.5cm 5.5cm 3cm]{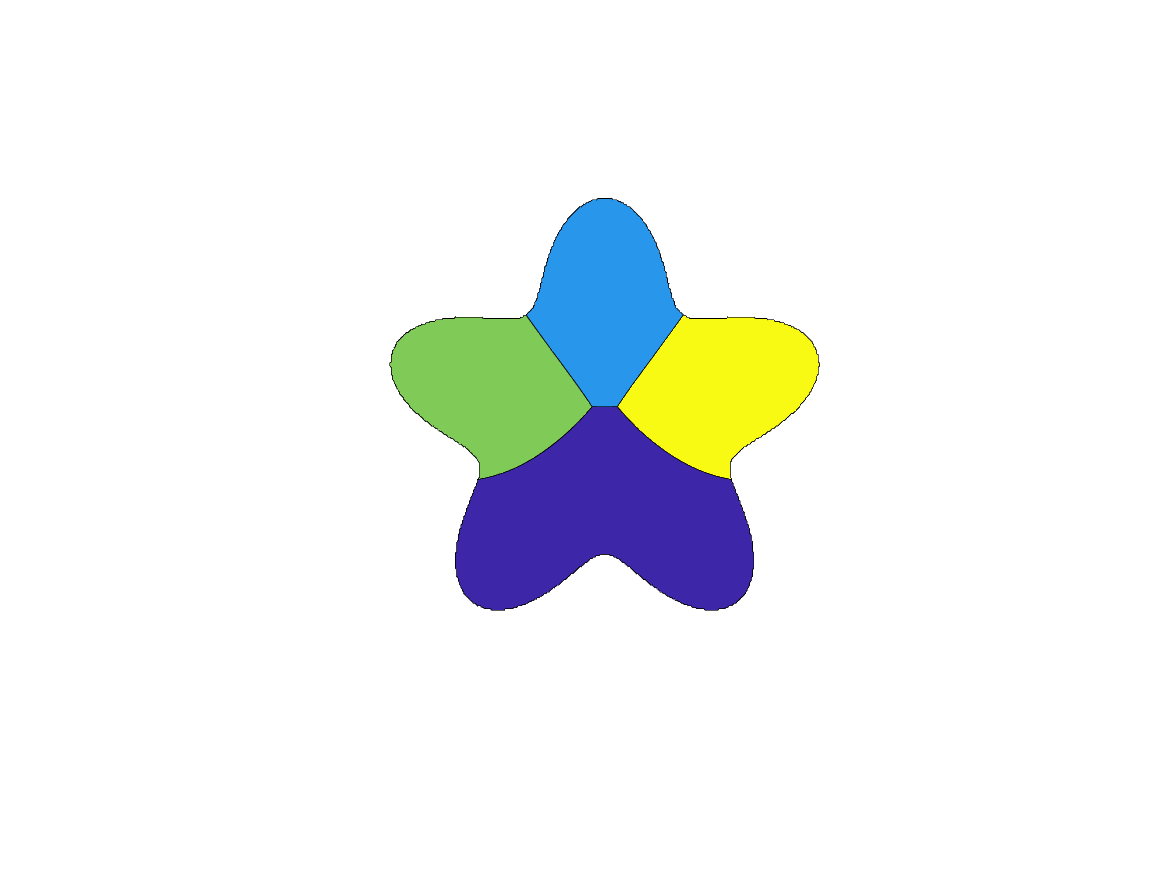}
\includegraphics[width = 0.1\textwidth, clip, trim = 6.5cm 4.5cm 5.5cm 3cm]{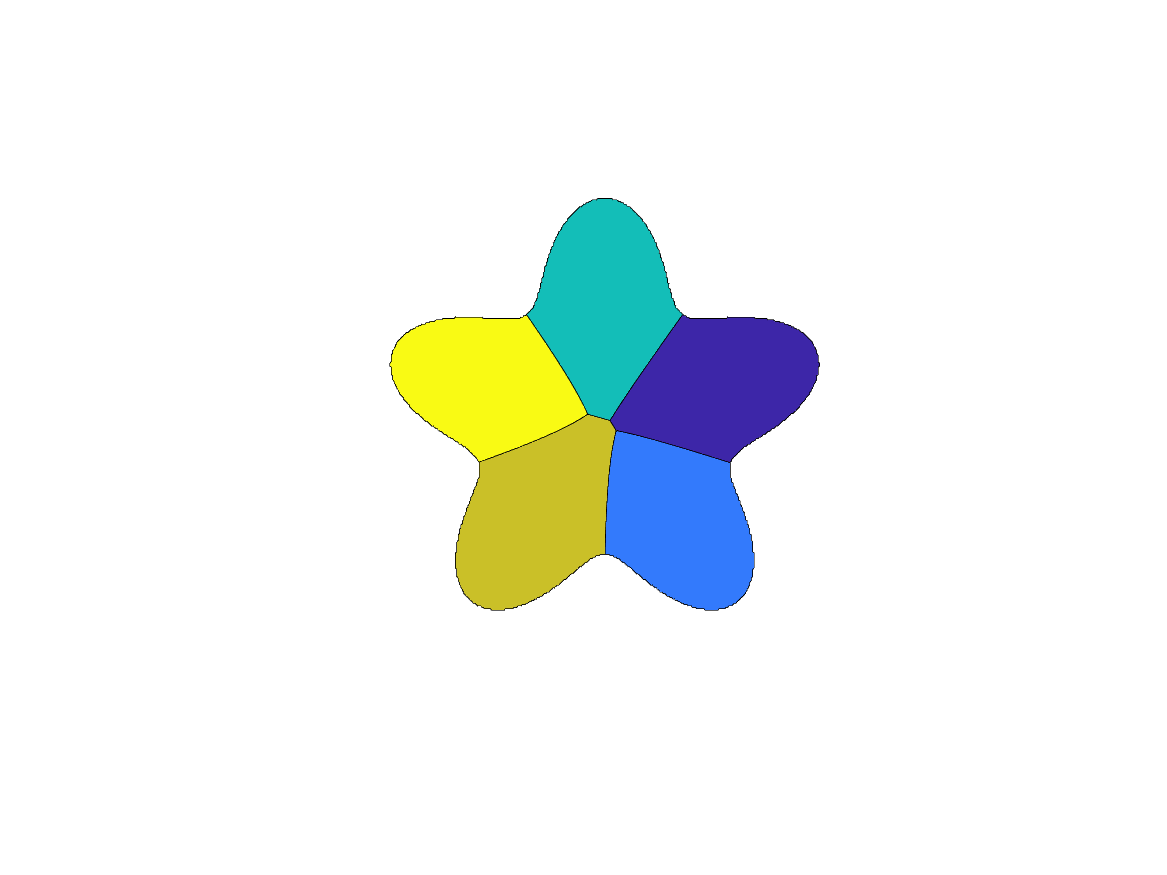}
\includegraphics[width = 0.1\textwidth, clip, trim = 6.5cm 4.5cm 5.5cm 3cm]{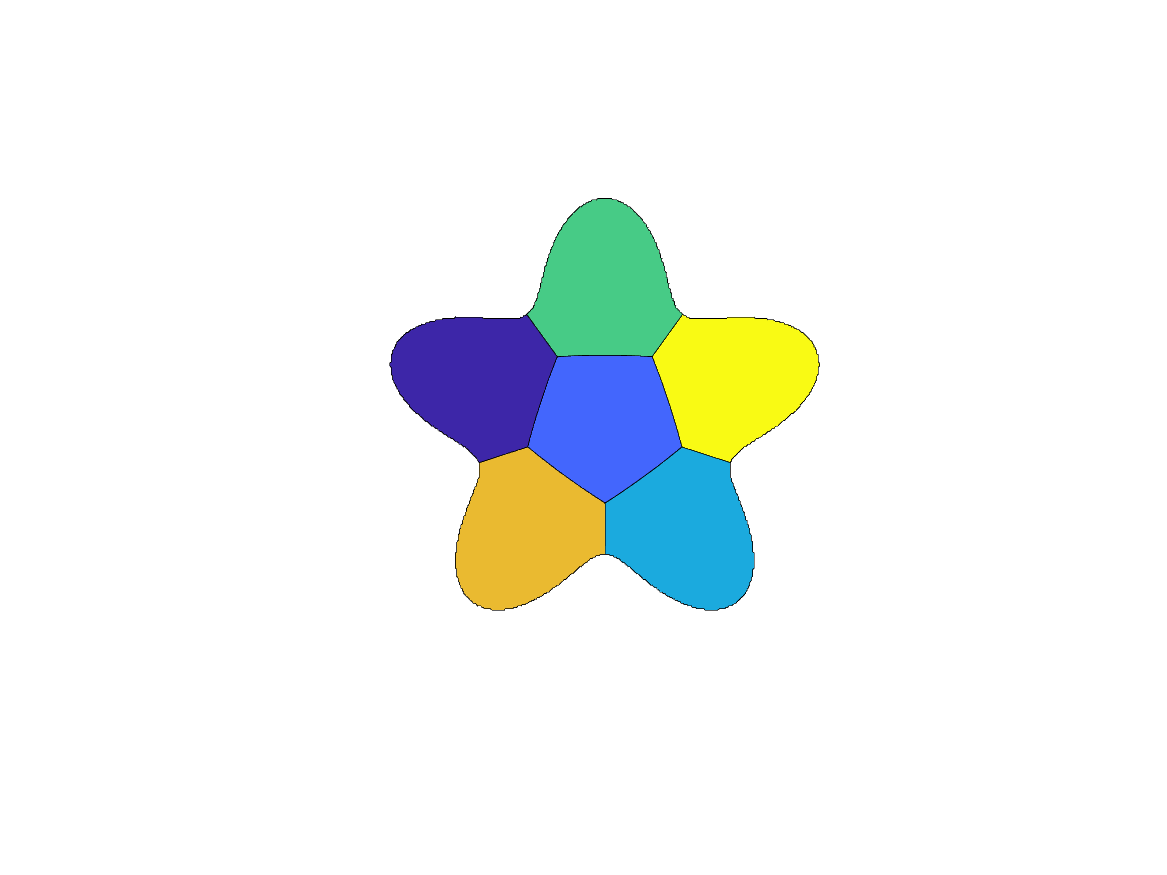}
\includegraphics[width = 0.1\textwidth, clip, trim = 6.5cm 4.5cm 5.5cm 3cm]{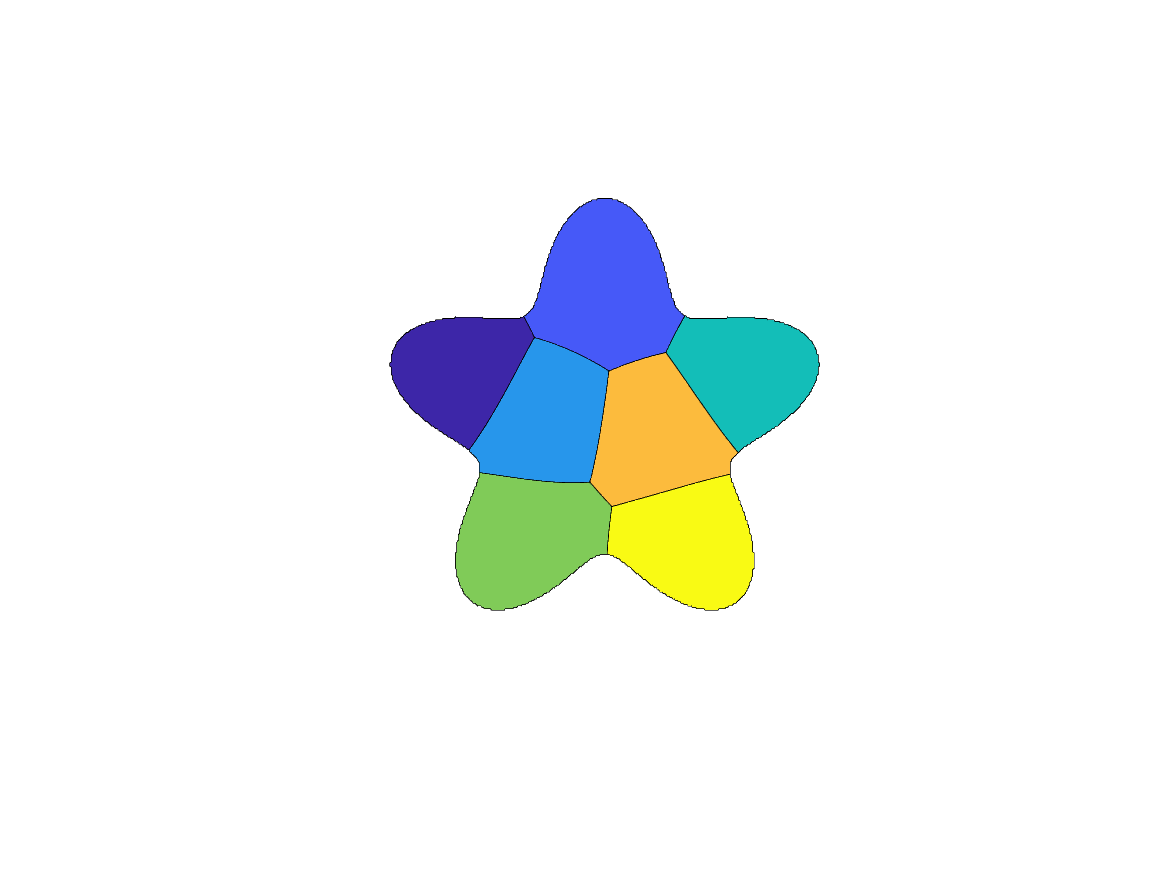}
\includegraphics[width = 0.1\textwidth, clip, trim = 6.5cm 4.5cm 5.5cm 3cm]{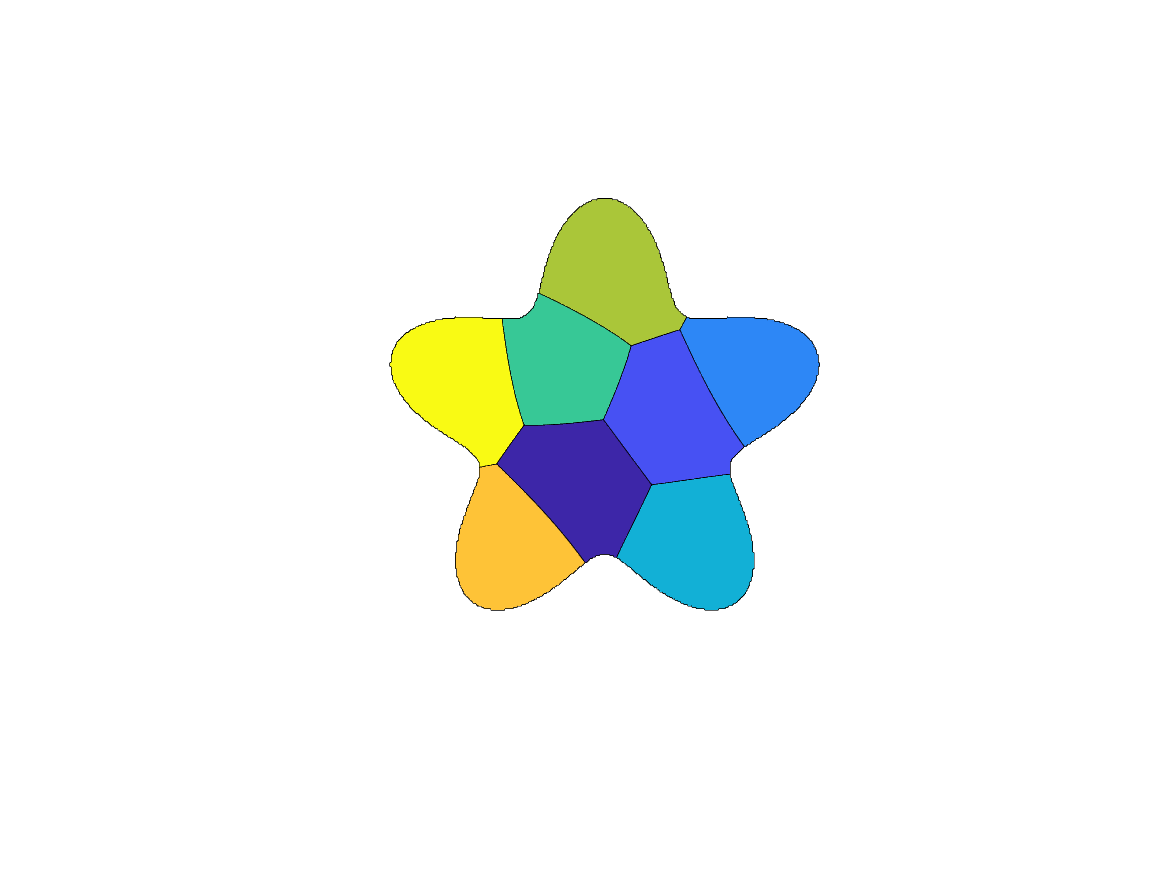}
\includegraphics[width = 0.1\textwidth, clip, trim = 6.5cm 4.5cm 5.5cm 3cm]{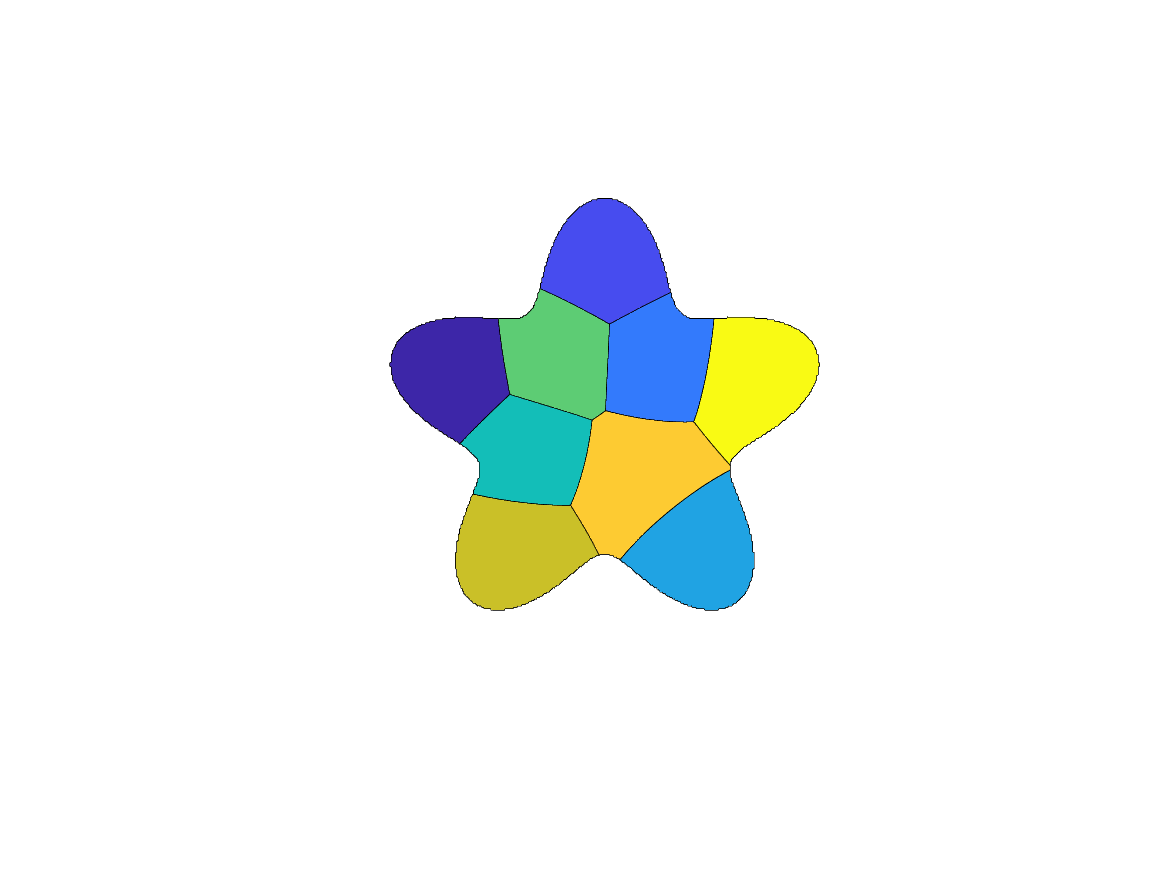}
\includegraphics[width = 0.1\textwidth, clip, trim = 6.5cm 4.5cm 5.5cm 3cm]{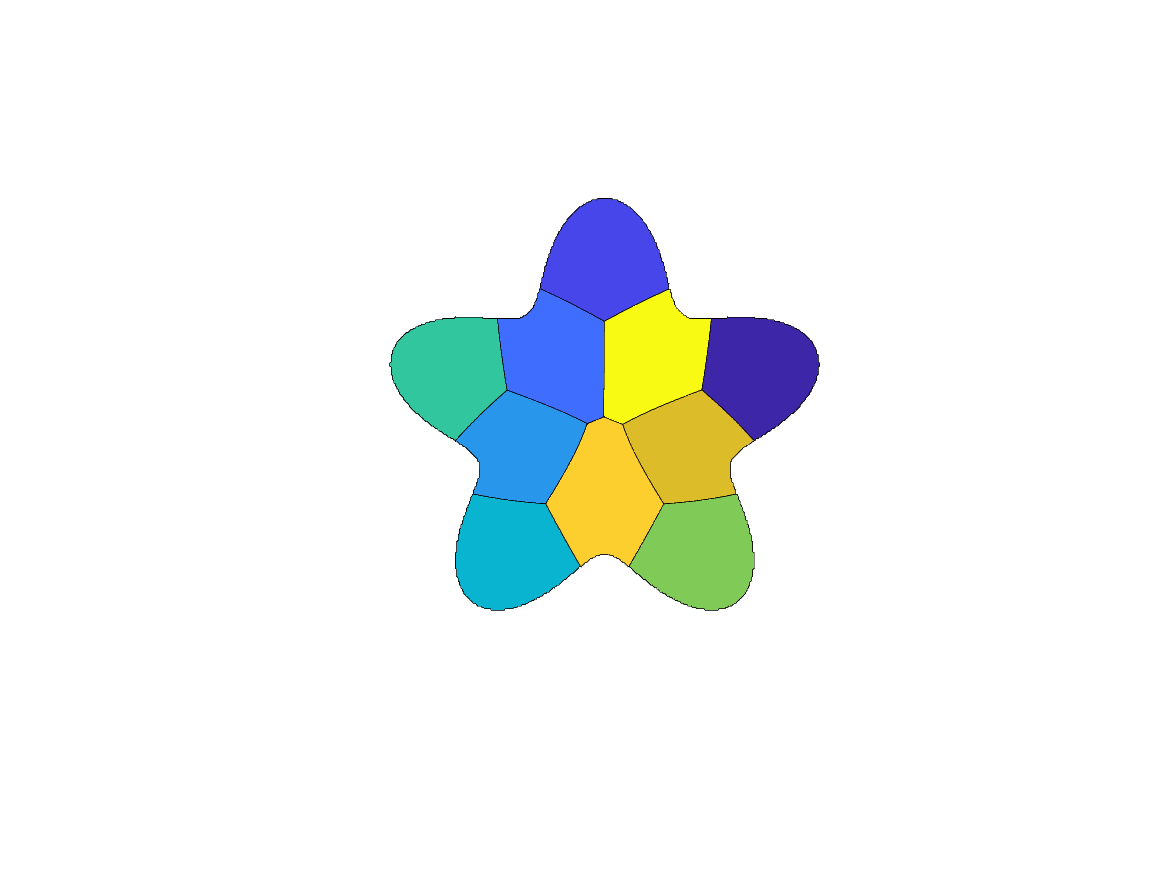}\\

\medskip

{\footnotesize
\begin{tabular}{c|c|c|c|c|c|c|c|c|c}
\hline
Approximate &   & & &  &  &  &  & & \\ 
eigenvalues & k=2  & k=3 & k=4& k=5 & k=6& k=7 & k=8 & k=9 & k =10 \\ 
\hline 
Square & 13.91& 26.90 & 41.69 & 64.35 & 87.31 & 112.78 & 139.03 & 172.64 & 200.69 \\
\hline 
Petagon & 11.84 & 23.25 & 38.08 & 54.67 & 74.08 & 98.30 & 123.78 & 151.43 & 179.79\\
\hline 
Hexgon & 9.29 & 18.33 & 35.52 & 52.78 & 71.70 & 89.89 & 115.59 & 142.47 & 169.15 \\
\hline
Disk & 9.17 & 18.33 & 20.48& 45.27 & 61.69 & 79.72& 100.26 & 124.67 & 148.40 \\
\hline
Three-fold star& 9.29 & 16.28 & 29.40 & 44.12 & 58.81 & 79.05 & 100.08 & 121.51 & 143.37 \\
\hline
Five-fold star& 10.69 &20.39  &31.62 & 43.17 & 58.09 & 78.46 &98.62 & 120.13 & 142.30 \\
\hline
\end{tabular}
}
\caption{Computed $k$-partition in a square domain, a pentagon domain, a regular hexagon domain, a disk domain, a three-fold star domain, and a five-fold star domain for $k = 2-10$ together with corresponding approximate eigenvalues. See Section~\ref{sec:2darbitrary}.} \label{fig:2darbitrary4}
\end{figure}

\subsection{$k$-partition for a 3-dimensional periodic domain}\label{sec:3dperiodic}
In this section, we show the efficiency of the proposed algorithm for a 3-dimensional periodic domain. In all experiments, we discretize the periodic computational domain $[-\pi,\pi]^3$ by $128^3$ uniform grid points and simply fix $\tau = \pi/16$.

For $k = 4$ and initialization using a random tessellation, we obtain a partition
of a 3D flat torus by four identical rhombic dodecahedron structures as displayed in
Figure~\ref{fig:rhombic} with the approximate eigenvalue $6.96$. The result agrees with those reported in \cite{Wang_2019, Chu_2021}.  The CPU time for this experiment without parallel computing is only 60 {\it seconds} and the CPU time reported in \cite{Chu_2021} for the same case with a $100^3$ uniform discretization is 588 {\it minutes}.

\begin{figure}[ht!]
\centering
\includegraphics[width = 0.3\textwidth]{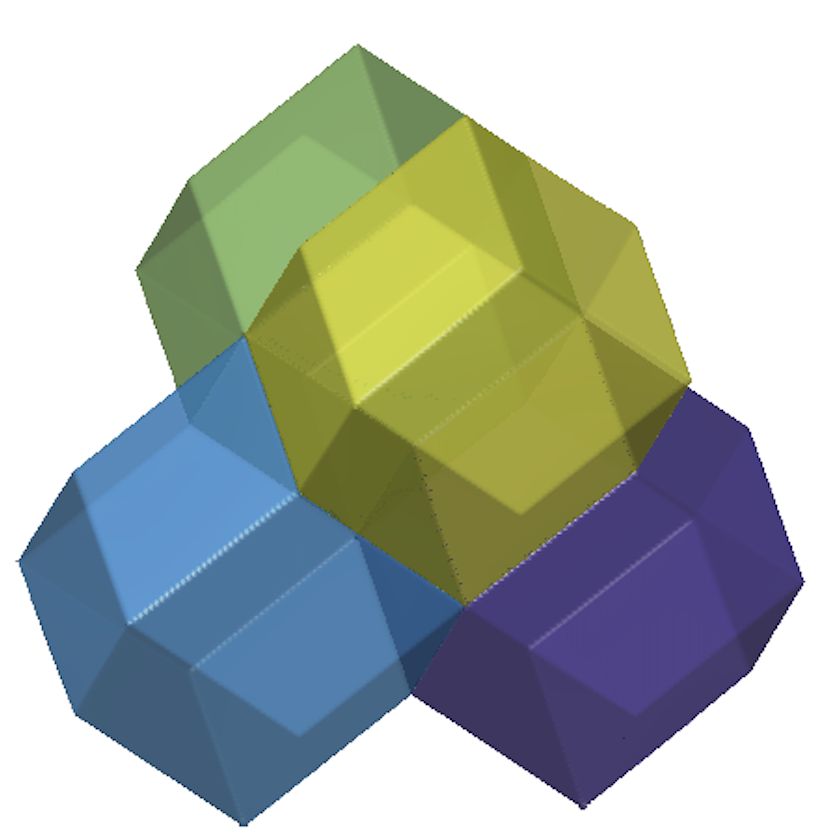}
\includegraphics[width = 0.3\textwidth]{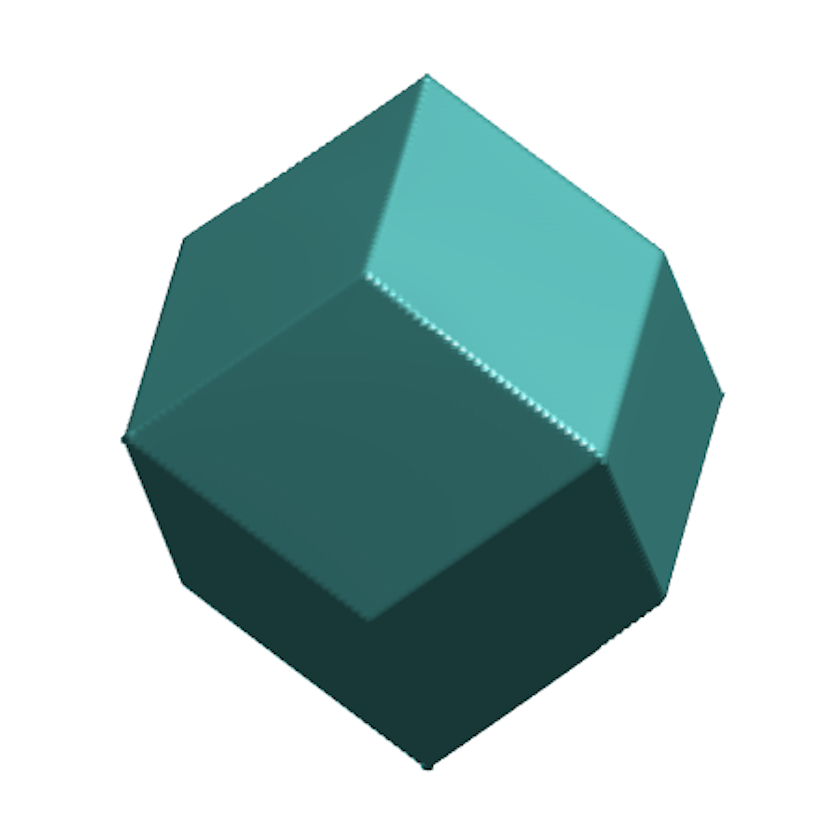}
\includegraphics[width = 0.3\textwidth]{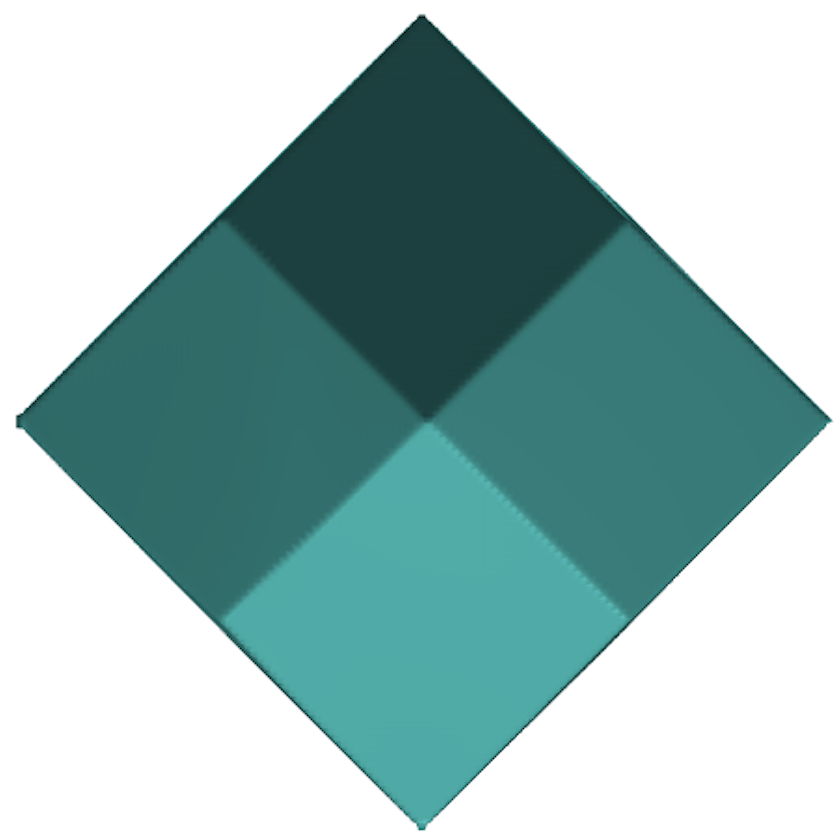} \\
\caption{Left: A periodic extension on the $4$-partition in a 3-dimensional flat torus. Middle: The rhombic dodecahedron structure. Right: The xy-view of the rhombic dodecahedron structure. The approximate eigenvalue is $6.96$. The CPU time is 60 {\it seconds}. See Section~\ref{sec:3dperiodic}.} \label{fig:rhombic}
\end{figure}

For $k=8$, we obtain the well-known Weaire–Phelan structure which is a structure representing a foam of equal-sized shapes as shown in the second and third row of Figure~\ref{fig:8-periodic}. Among eight partitions, two regions have the first type of shape which consists of 12 pentagonal faces while the other six regions are the second type of shape which has 2 hexagonal faces surrounded by 12 pentagonal faces. The overall packing is shown in the first row of Figure~\ref{fig:8-periodic}. The approximate eigenvalue is $21.97$. The CPU time for this computation from a random initialization with a $128^3$ uniform discretization is only 374 {\it seconds} while the time reported in \cite{Chu_2021} is 1246 {\it minutes} for a $100^3$ discretization.
\begin{figure}[ht!]
\centering
\includegraphics[width = 0.2 \textwidth,clip, trim = 12cm 10cm 12cm 6cm]{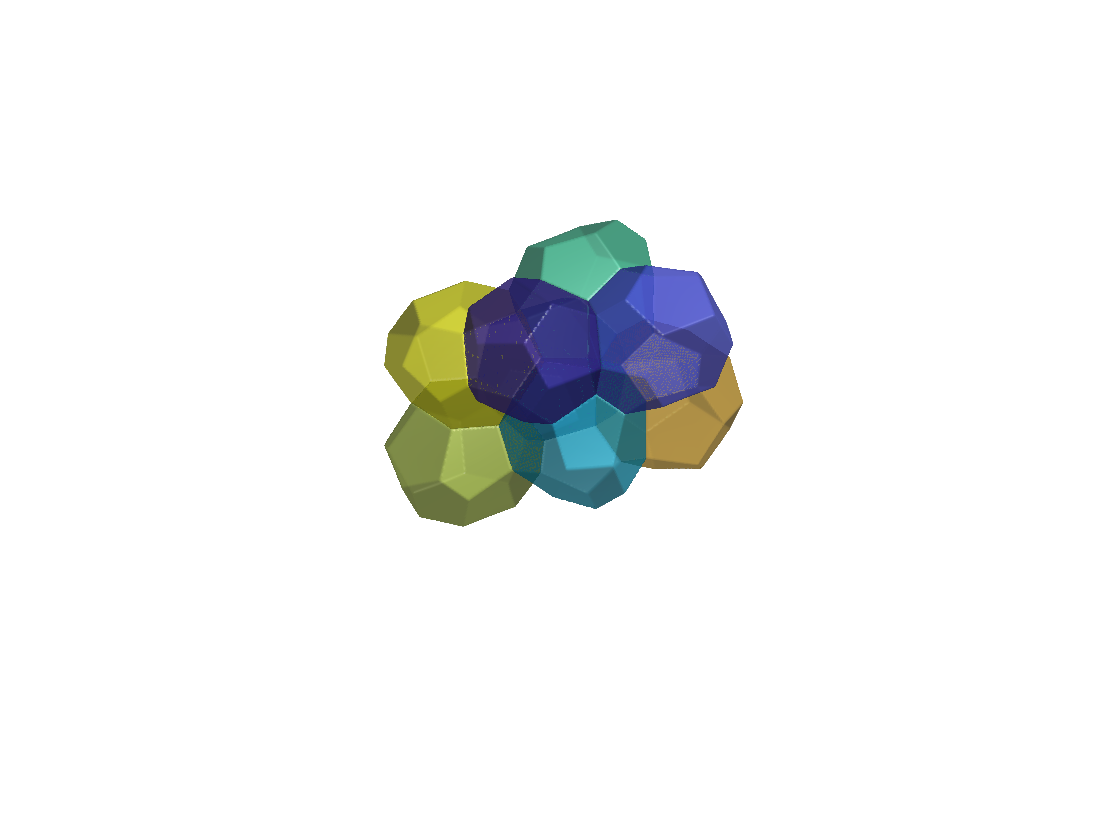}
\includegraphics[width = 0.2 \textwidth,clip, trim = 10cm 6cm 10cm 4cm]{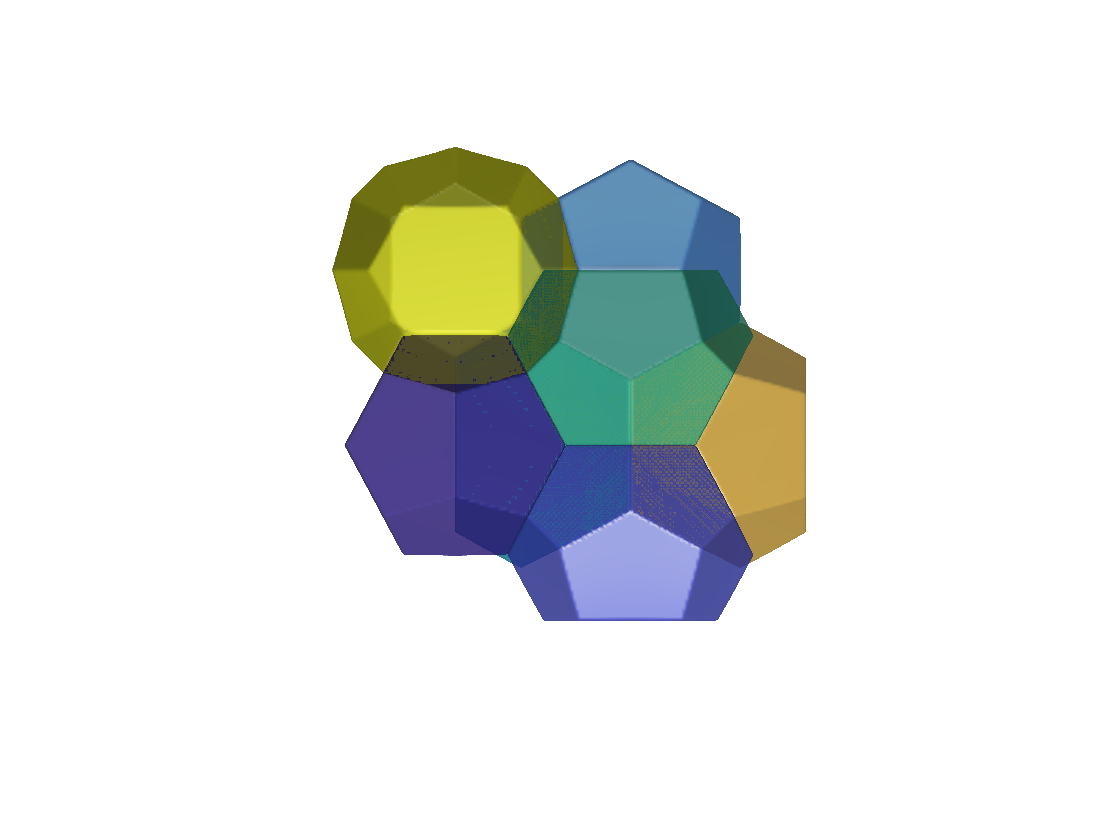}
\includegraphics[width = 0.2 \textwidth,clip, trim =10cm 6cm 10cm 4cm]{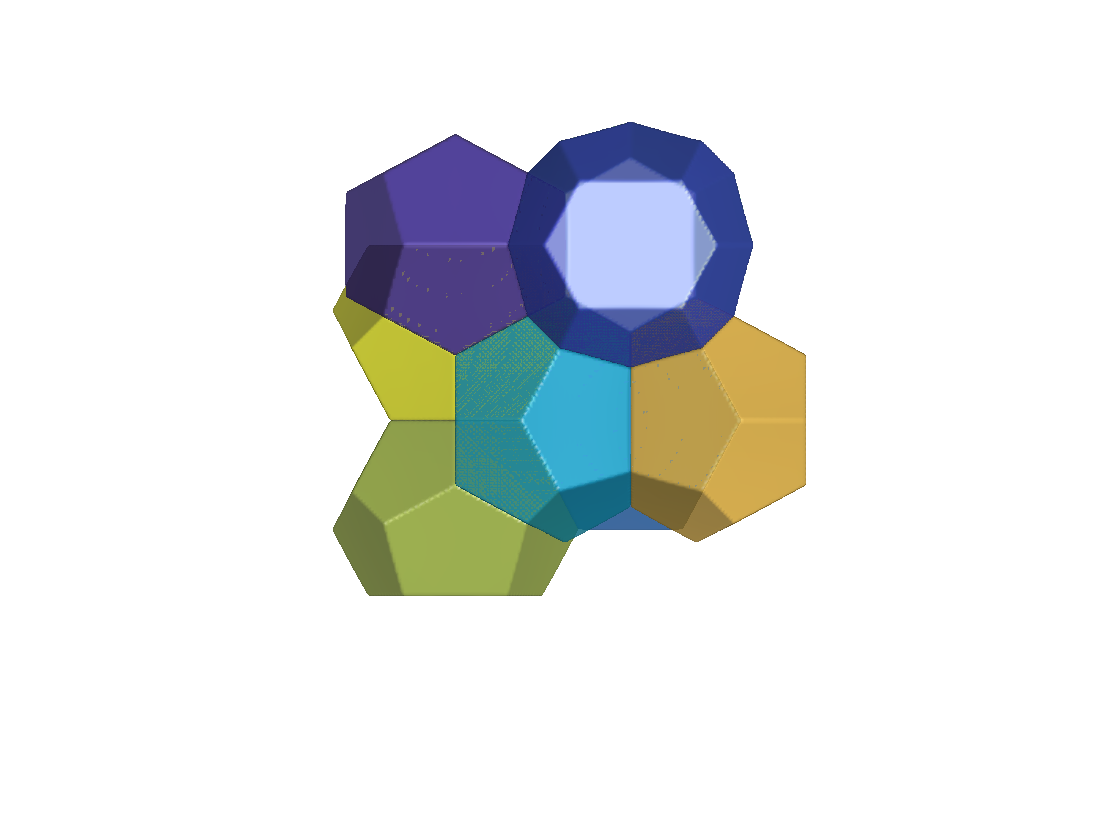}
\includegraphics[width = 0.2 \textwidth,clip, trim = 11cm 6cm 9cm 4cm]{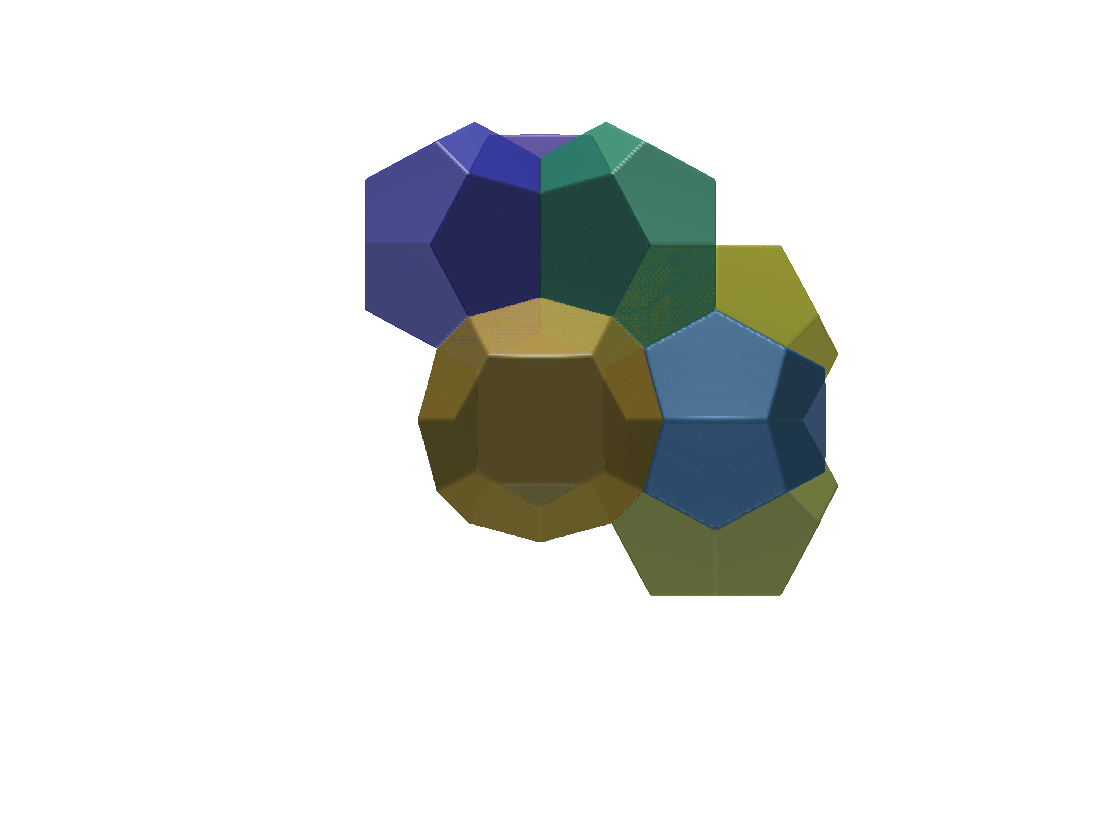}
\includegraphics[width = 0.2 \textwidth,clip, trim = 12cm 6cm 10cm 6cm]{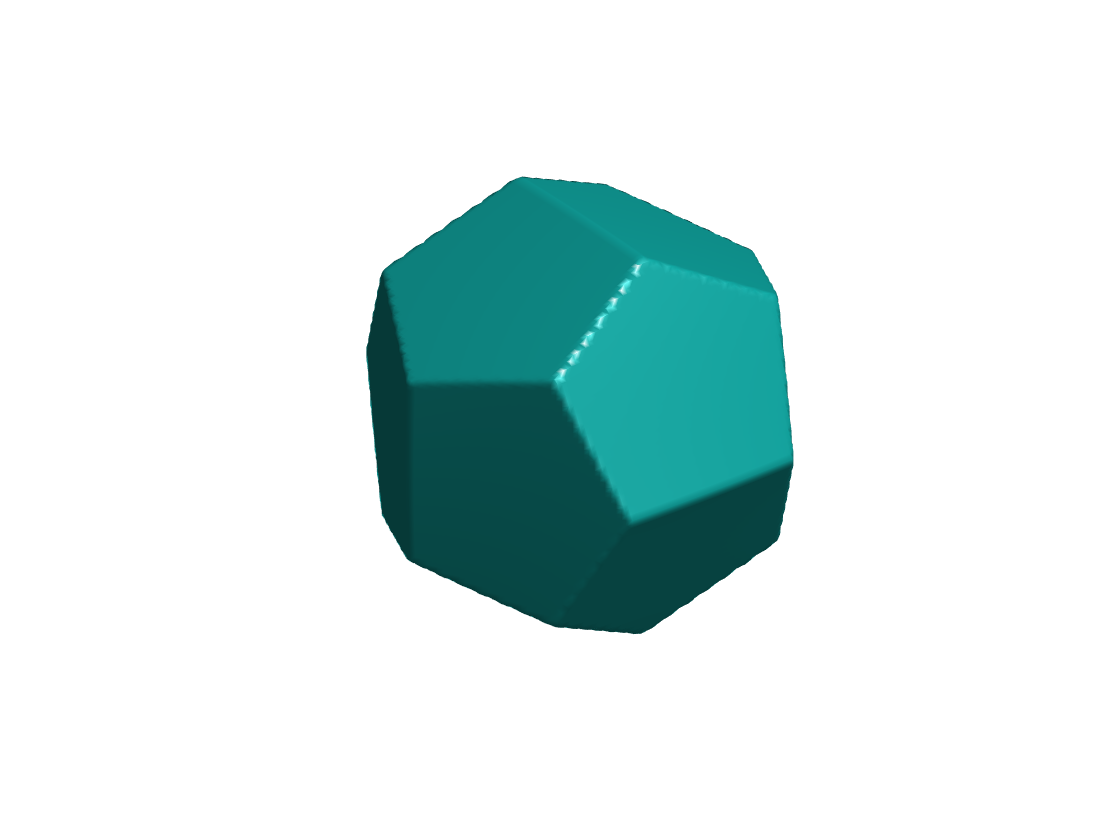}
\includegraphics[width = 0.2 \textwidth,clip, trim = 8cm 0cm 6cm 0cm]{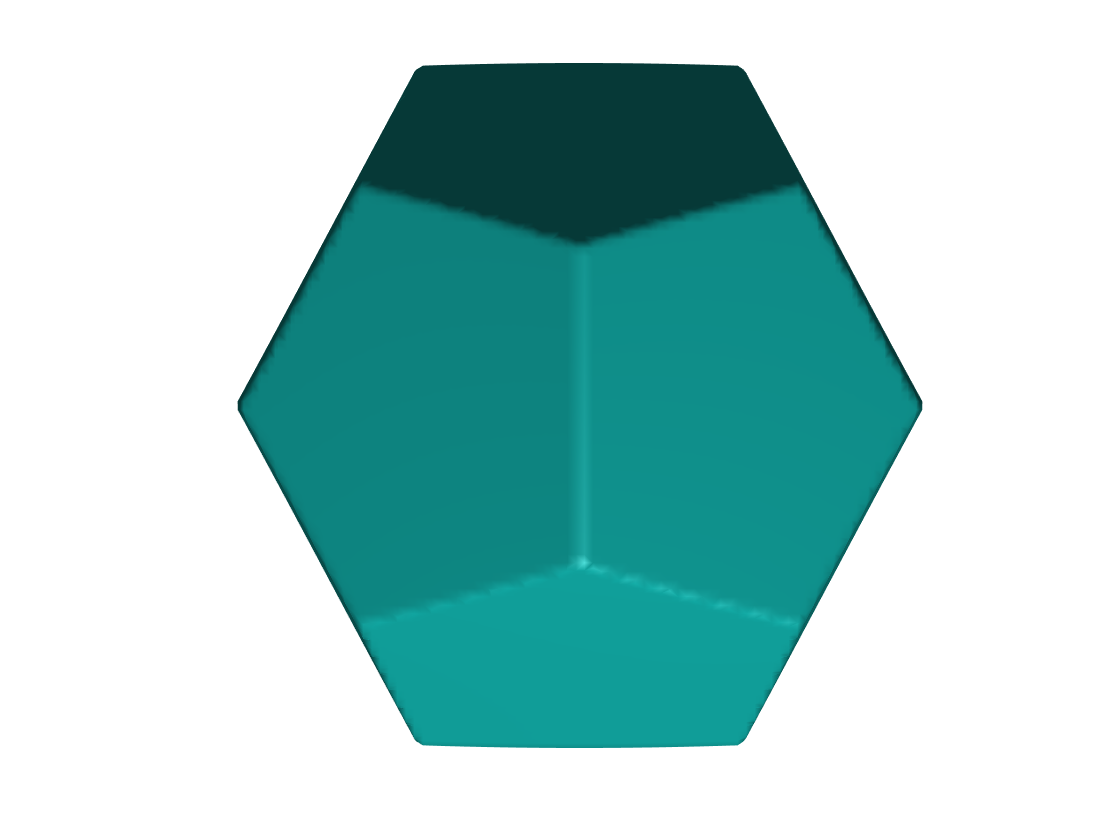}
\includegraphics[width = 0.2 \textwidth,clip, trim = 8cm 0cm 6cm 0cm]{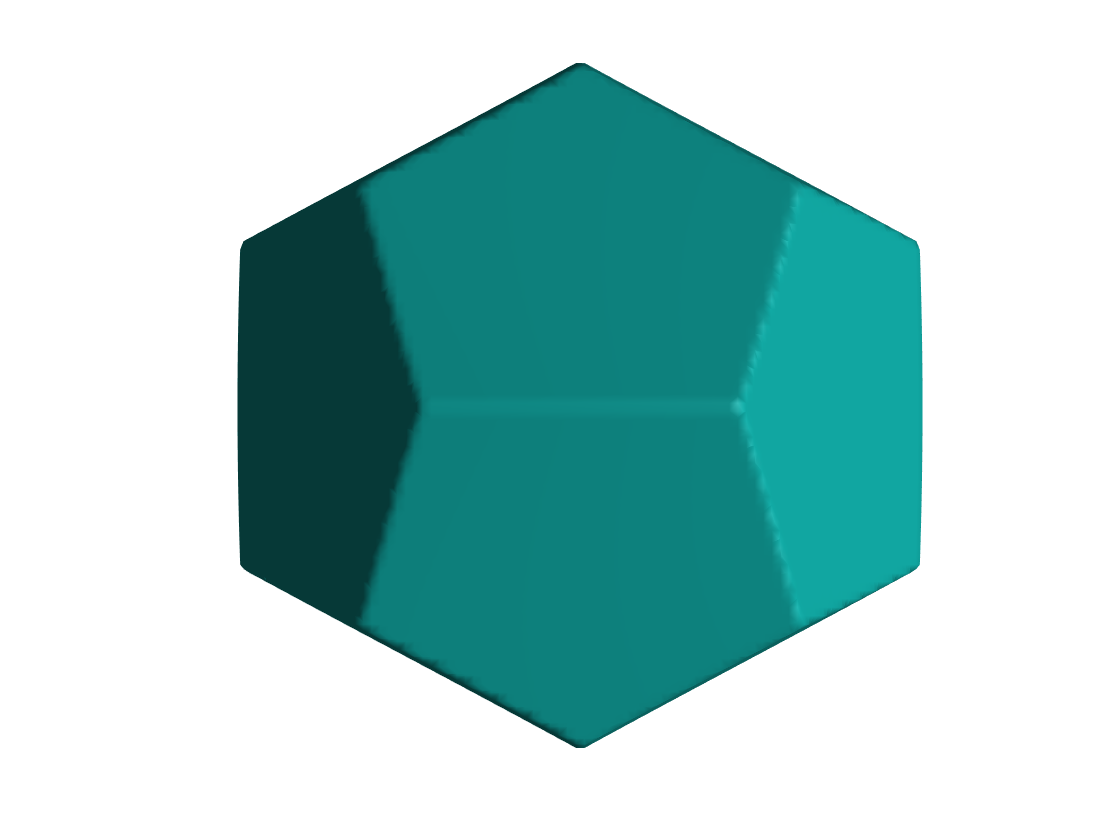}
\includegraphics[width = 0.2 \textwidth,clip, trim = 8cm 0cm 6cm 0cm]{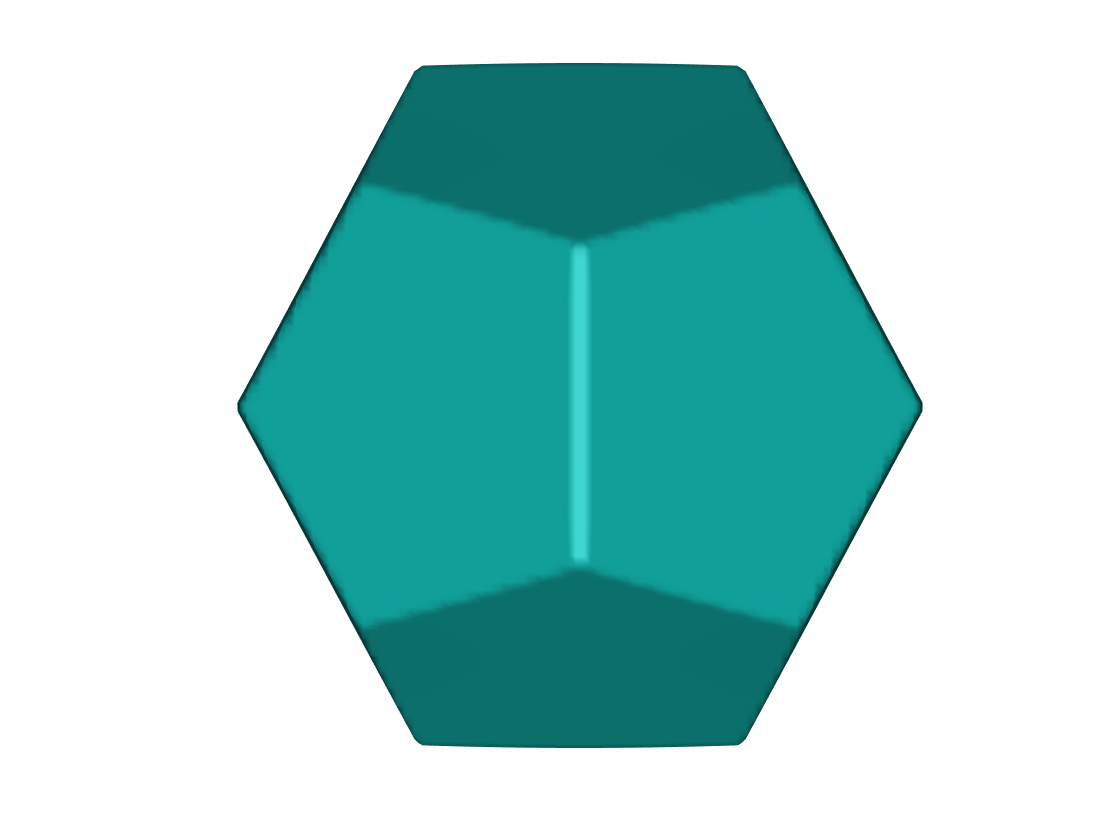}
\includegraphics[width = 0.2 \textwidth,clip, trim = 12cm 6cm 10cm 6cm]{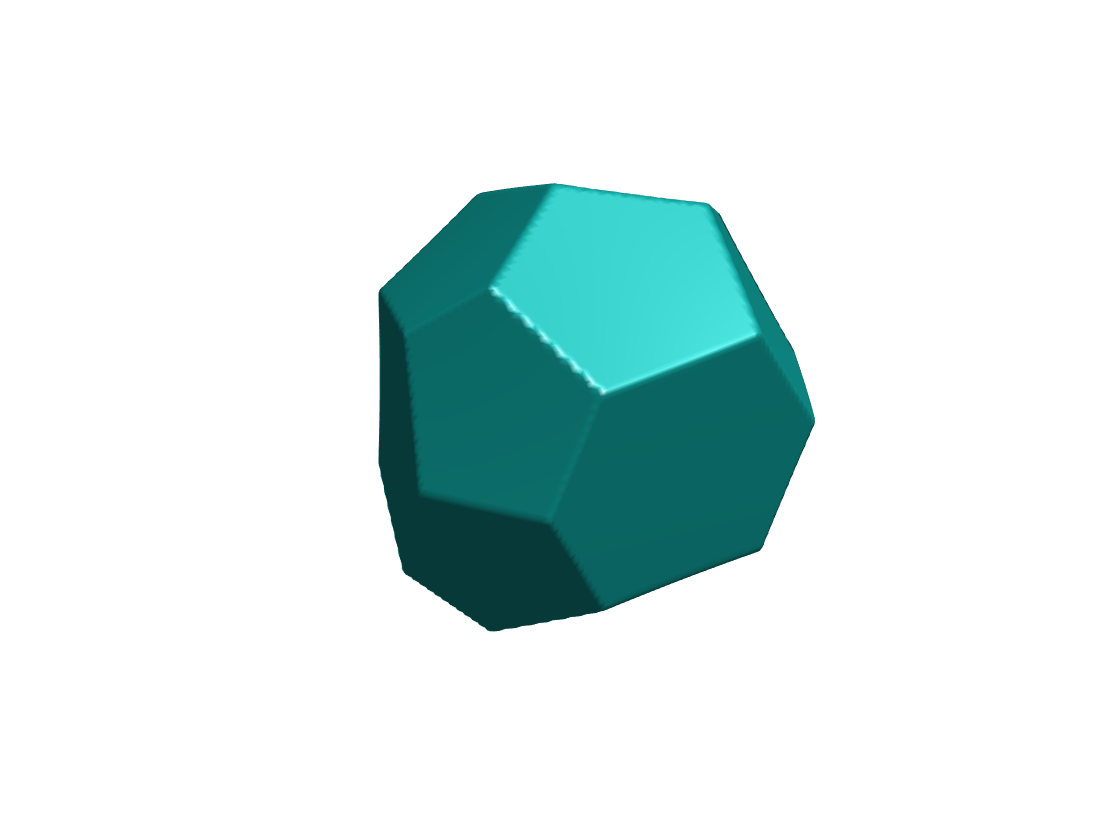}
\includegraphics[width = 0.2 \textwidth,clip, trim = 3cm 0cm 3cm 0cm]{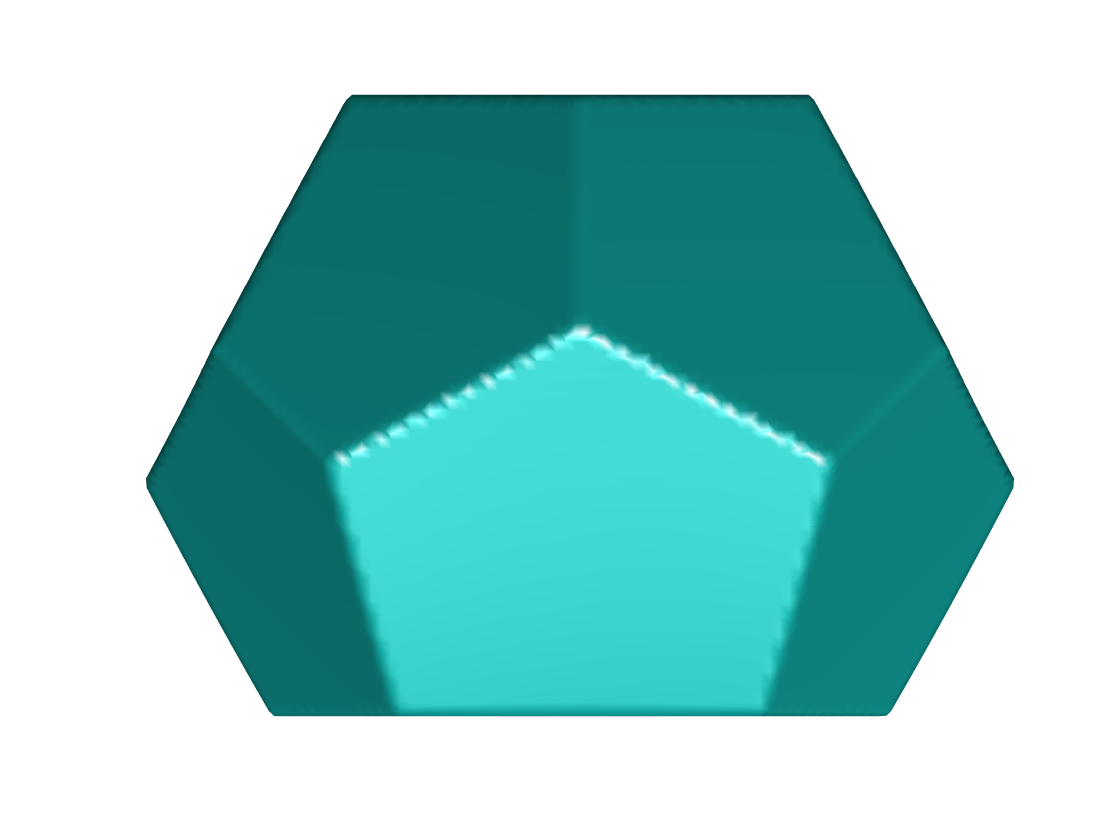}
\includegraphics[width = 0.2 \textwidth,clip, trim = 6cm 0cm 6cm 0cm]{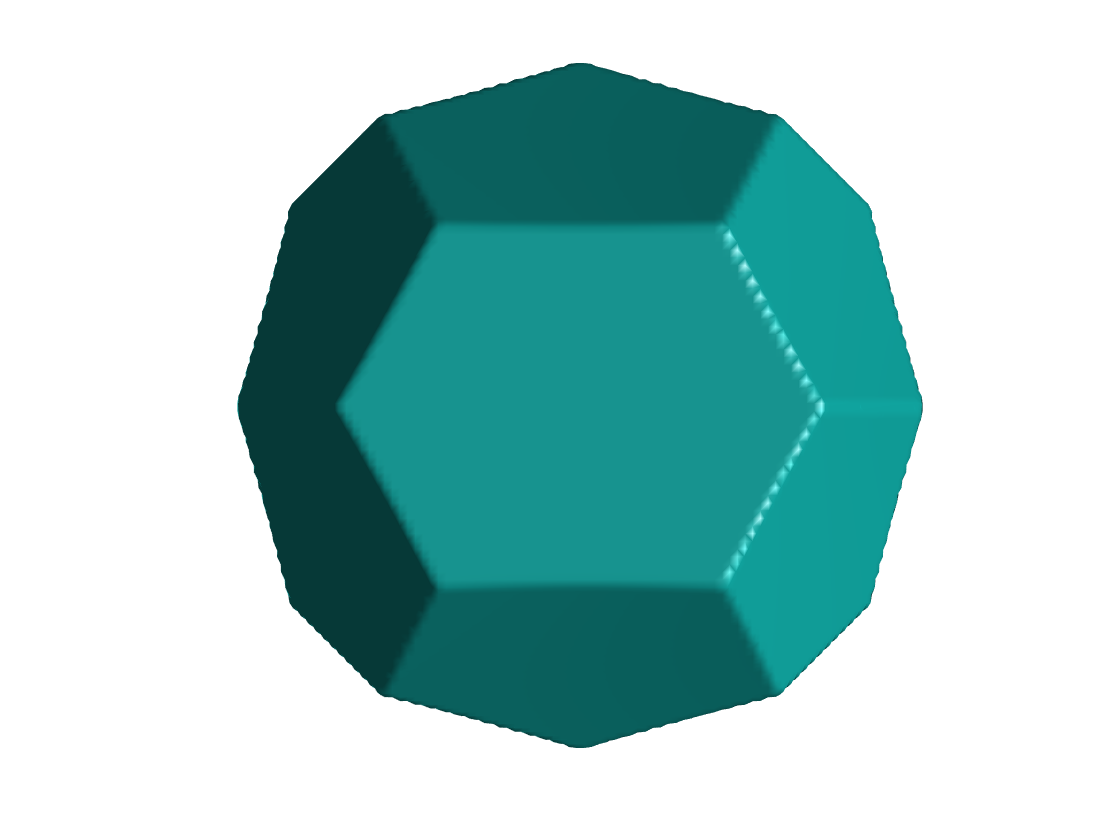}
\includegraphics[width = 0.2 \textwidth,clip, trim = 8cm 0cm 6cm 0cm]{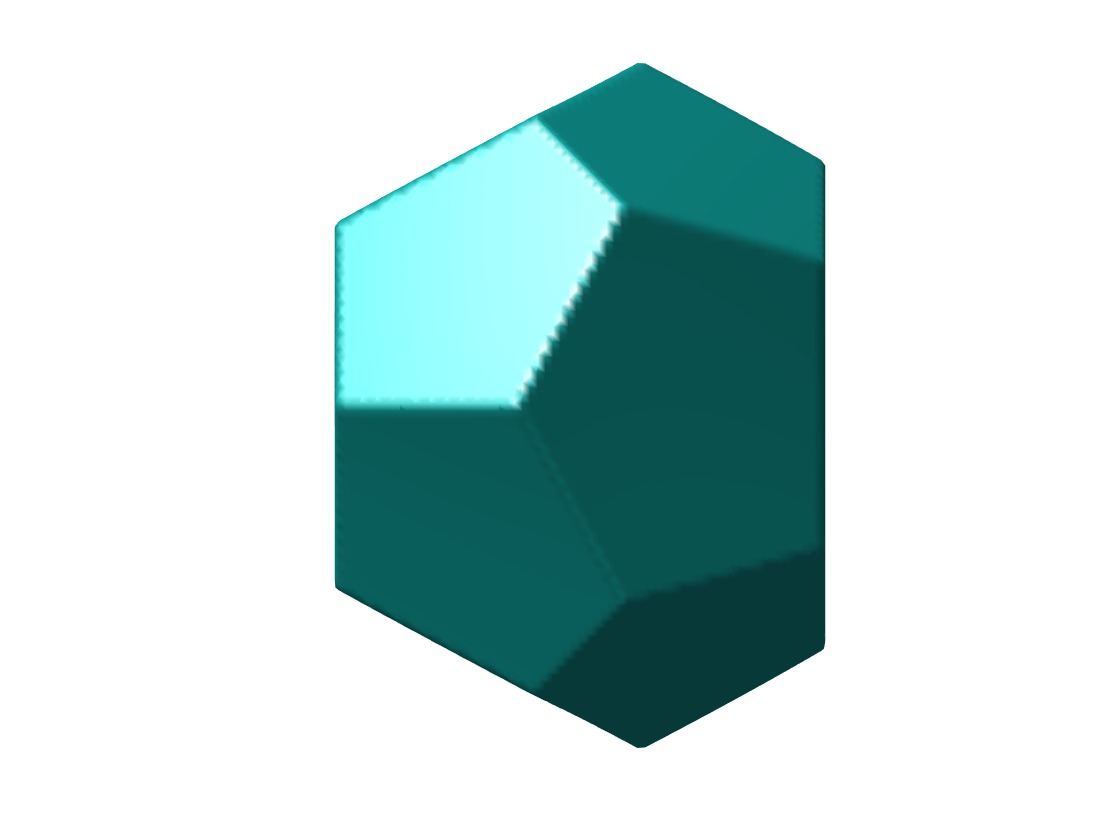}

\caption{First row: The 8-partition for a periodic cube with three different views. Second and third row: Two types of shapes. The shape in the second row consists of 12 pentagonal faces and the shape in the third row has 2 hexagonal faces surrounded by 12 pentagonal faces. The approximate eigenvalue is $21.97$. The CPU time is 374 {\it seconds}. See Section~\ref{sec:3dperiodic}.}\label{fig:8-periodic}
\end{figure}

For $k=16$, we obtain the well-known Kelvin Structure which consists of $16$ exactly same shapes as shown in Figure~\ref{fig:16-periodic} with the approximate eigenvalue $69.65$. This shape is usually called truncated octahedron which is a space-filling convex polyhedron with 6 square faces and 8 hexagonal faces. The CPU time for this computation from a random initialization with $128^3$ uniform discretization is only 656 {\it seconds}.

\begin{figure}[ht!]
\centering
\includegraphics[width = 0.17 \textwidth,clip, trim = 14cm 6cm 12cm 5cm]{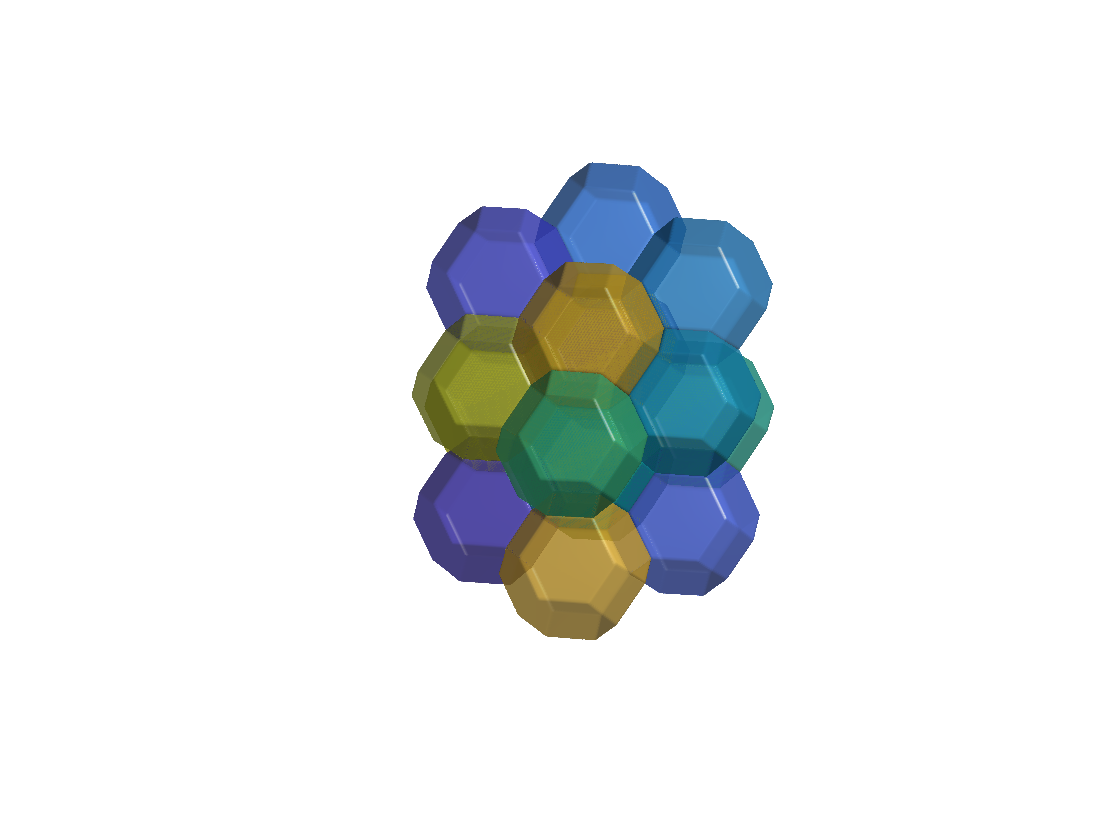}
\includegraphics[width = 0.17 \textwidth,clip, trim = 10cm 4cm 7cm 3cm]{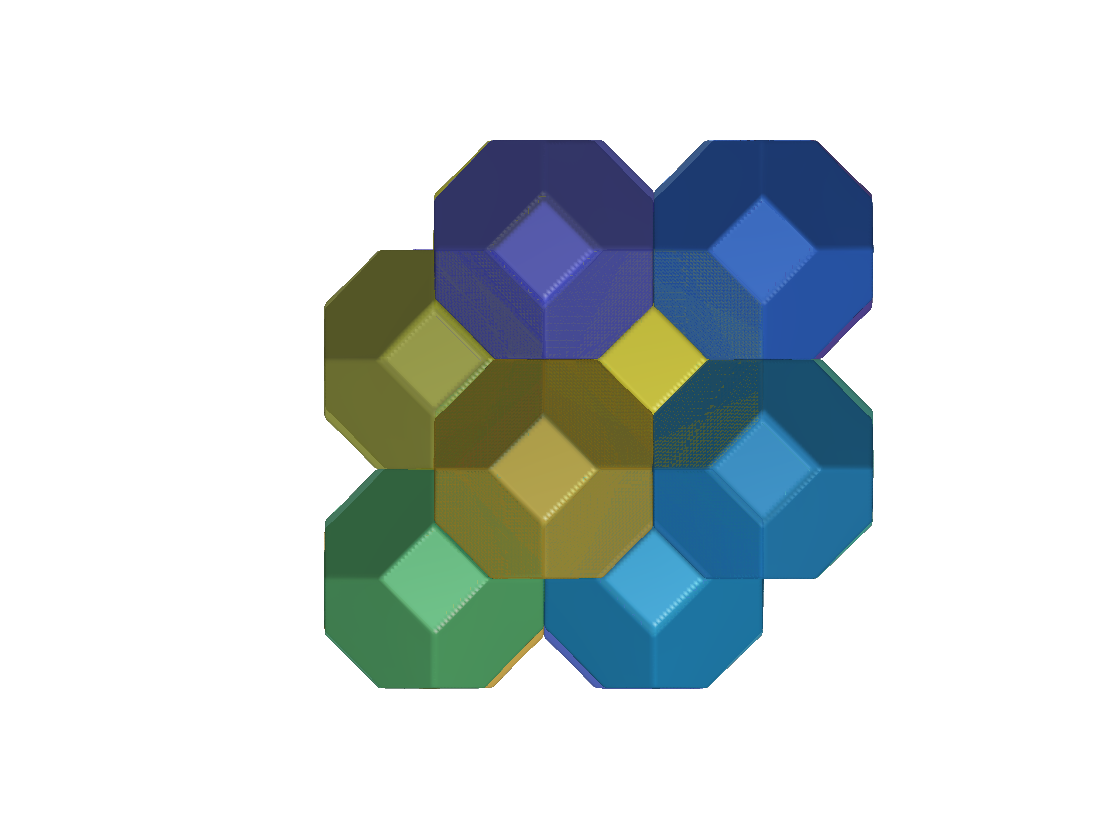}
\includegraphics[width = 0.17 \textwidth,clip, trim = 10cm 4cm 7cm 3cm]{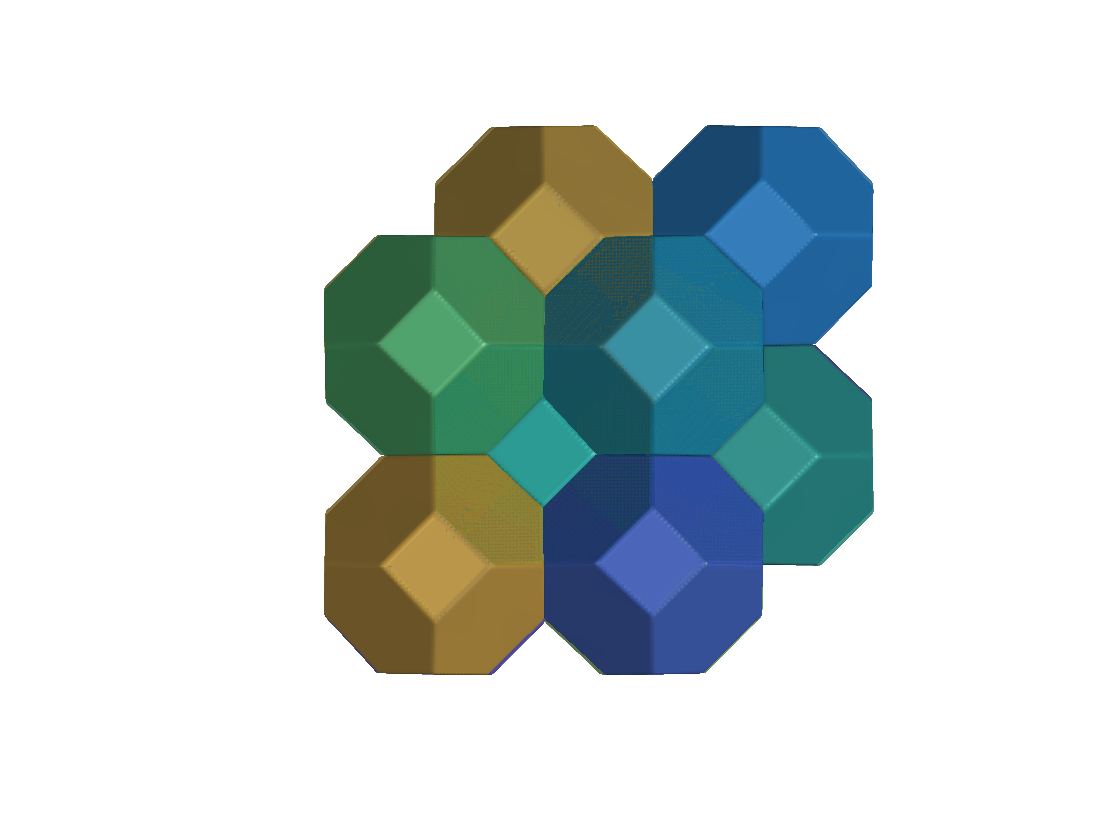}
\includegraphics[width = 0.17 \textwidth,clip, trim = 13cm 10cm 12cm 6cm]{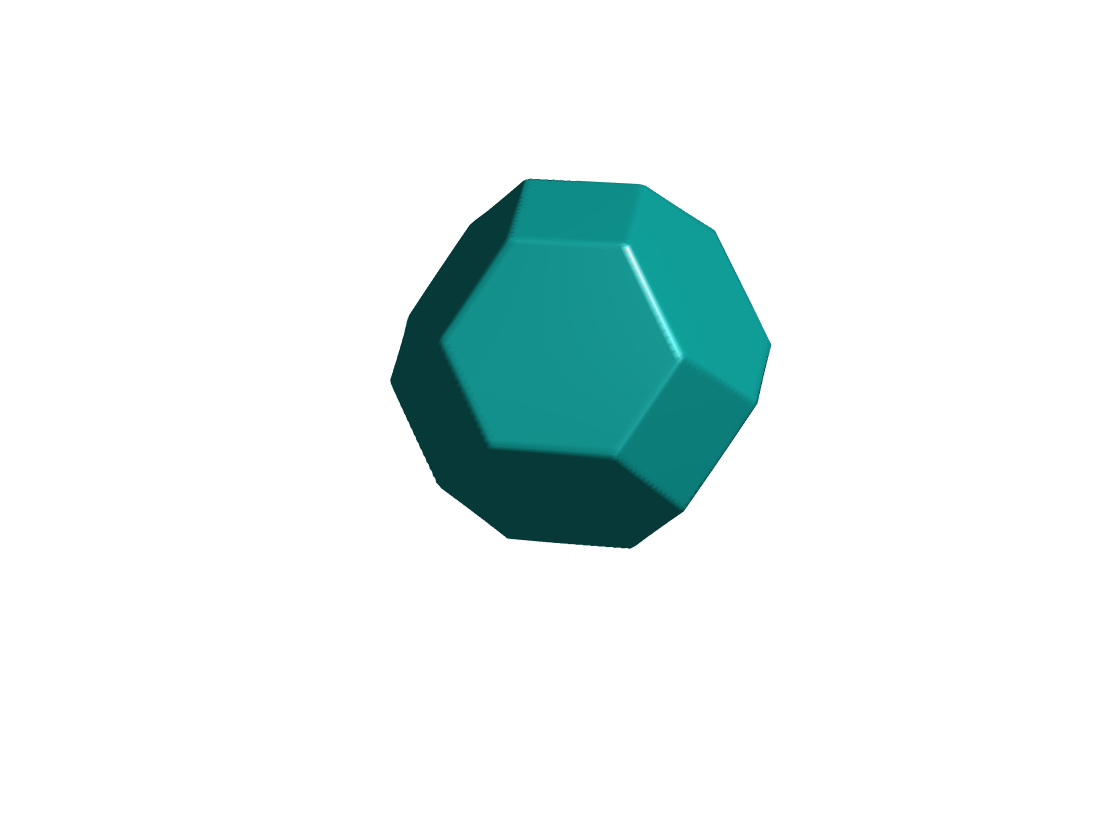}
\includegraphics[width = 0.17 \textwidth,clip, trim = 10cm 6cm 7cm 2cm]{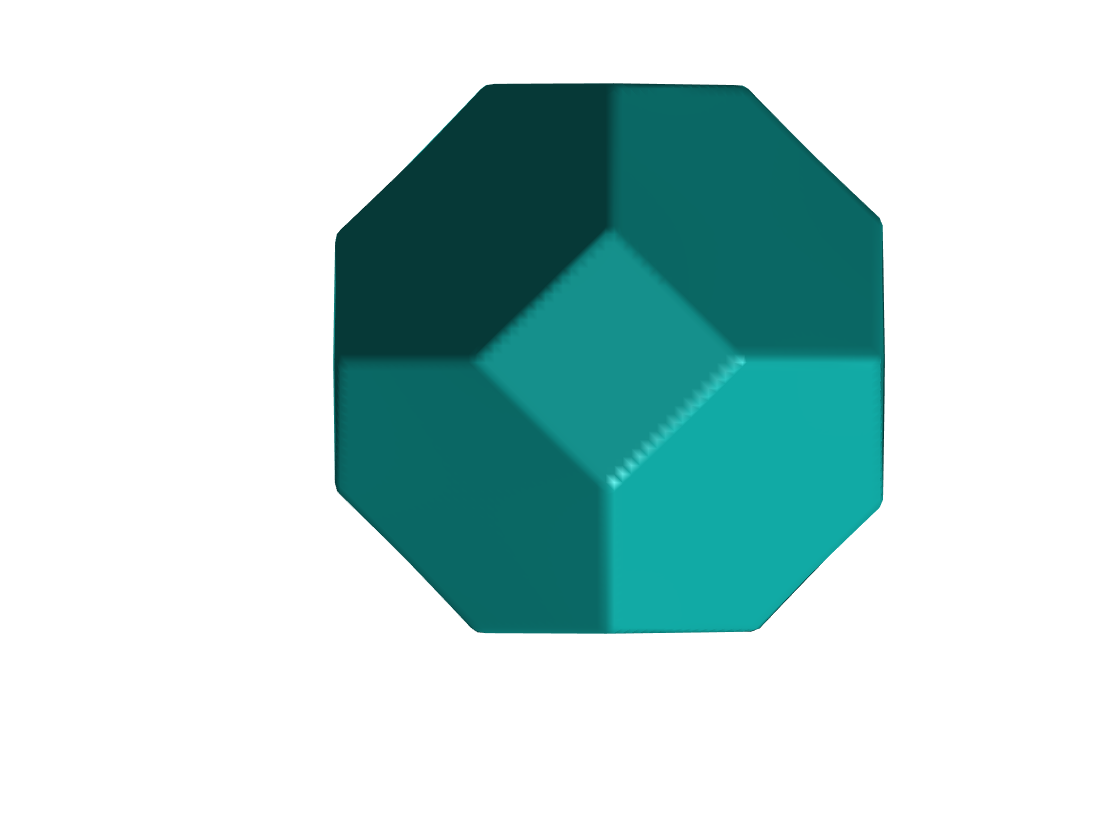}
\caption{The 16-partition for a periodic cube with different views and the shape of the truncated octahedron. Left to right: the 16-partition for a periodic cube, the xy-view of the structure (same as the yz-view), the xz-view of the structure, the truncated octahedron, and the xy-view of the truncated octahedron (same as the xz- and yz- views). The approximate eigenvalue is $69.65$. The CPU time for this computation from a random initialization with $128^3$ uniform discretization is only 656 {\it seconds}. See Section~\ref{sec:3dperiodic}.} \label{fig:16-periodic}
\end{figure}

\subsection{$k$-partition in arbitrary 3-dimensional domains}\label{sec:3darbitrary}
In this section, we consider the $k$-partition in several 3-dimensional domains to show the performance of the proposed method in 3-dimensional arbitrary domains with Dirichlet boundary conditions. In all following experiments, we set the computational domain as $[-\pi,\pi]^3$ discretized by $128^3$ uniform grid points and $\tau = \pi/16$. We consider the following three domains: $[-\pi/2,\pi/2]^3 \subset [-\pi,\pi]^3$, a ball centered at the origin with radius $\pi/2$, and a regular tetrahedron centered at the origin with radius of circumsphere $3\pi/4$. We mainly compare the results with the results computed from the method in \cite{bourdin2010optimal} and reported by Bogosel\footnote{\url{http://www.cmap.polytechnique.fr/~beniamin.bogosel/eig_part3D.html}}.

In Figure~\ref{fig:square}, we list the $k$-partitions in a cube for $k = 3-6$, $8$, $9$ and $14$ with some dissections to expose the interior shapes. All results agree with those reported by Bogosel. The approximate eigenvalues are  $ 23.23, 33.62, 48.02, 62.20, 95.41, 117.07$, and $244.02 $. The CPU time to obtain these results from random initial guesses are $64$, $41$, $157$, $108$, $57$, $233$, $368$ {\it seconds}, respectively.

\begin{figure}[ht!]
\centering
\includegraphics[width = 0.22 \textwidth, clip, trim = 4cm 1cm 3cm 1cm]{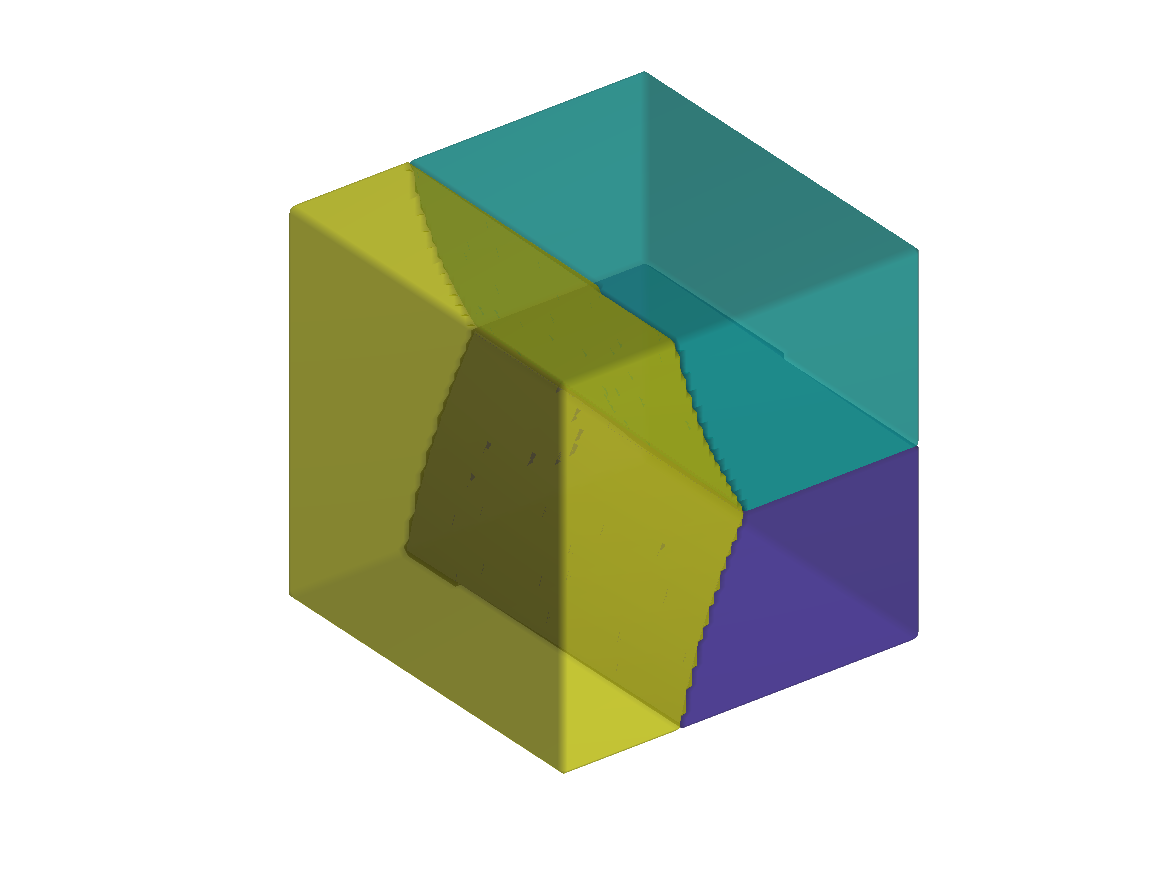}
\includegraphics[width = 0.22 \textwidth, clip, trim = 4cm 1cm 3cm 1cm]{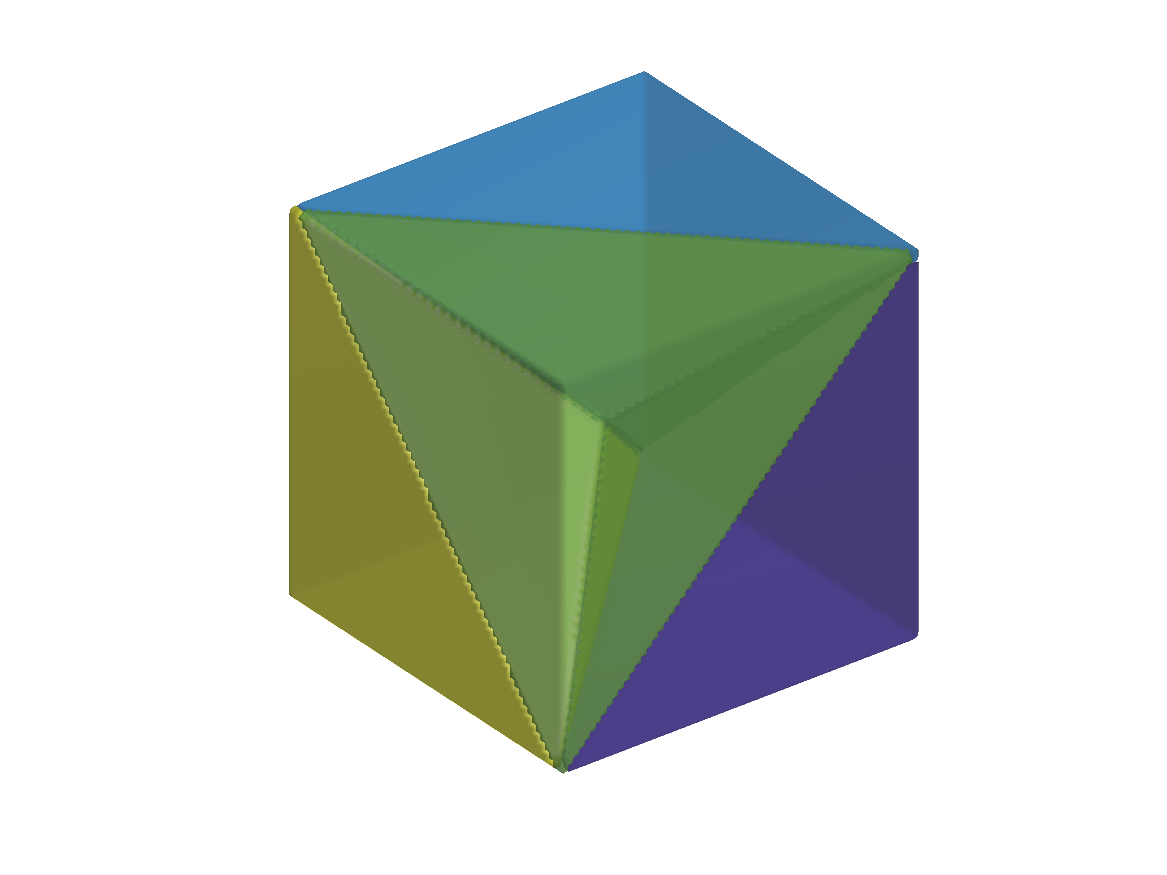}
\includegraphics[width = 0.22 \textwidth, clip, trim = 4cm 1cm 3cm 1cm]{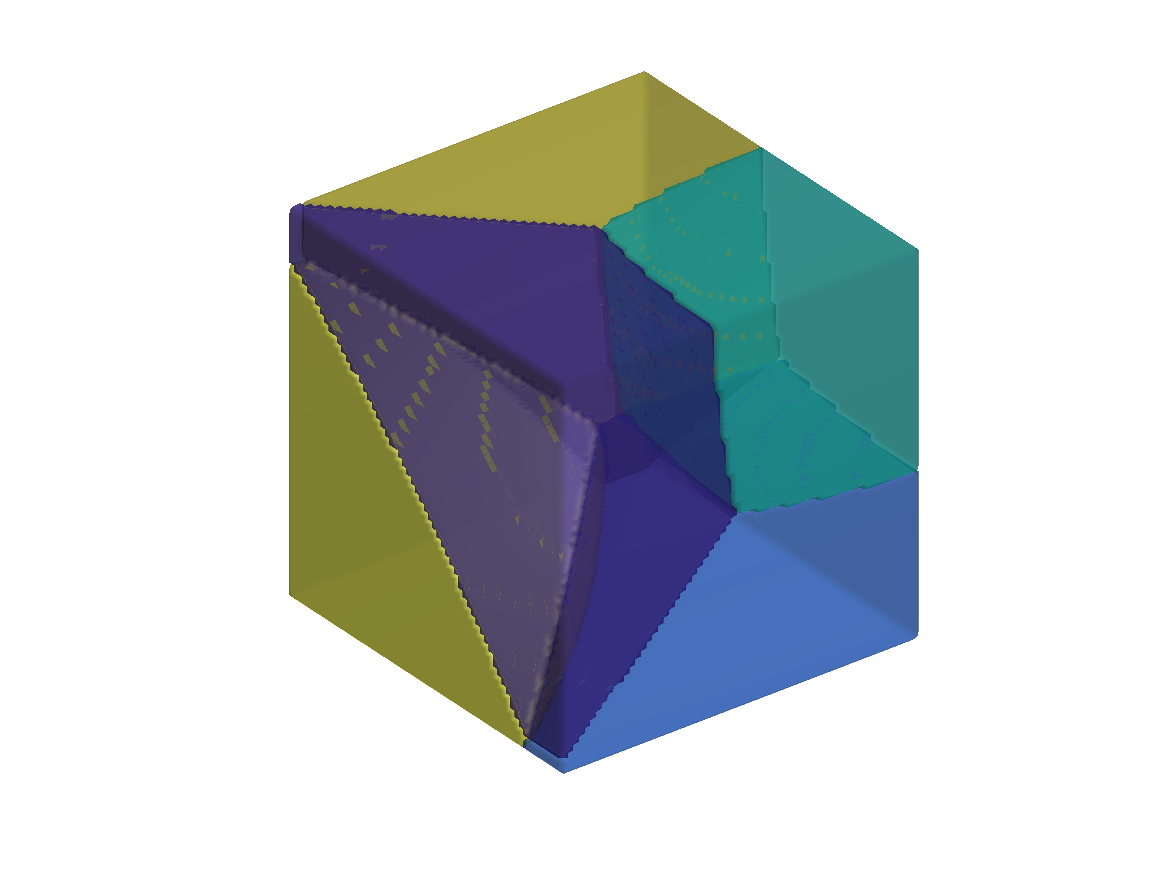}
\includegraphics[width = 0.22 \textwidth, clip, trim = 4cm 1cm 3cm 1cm]{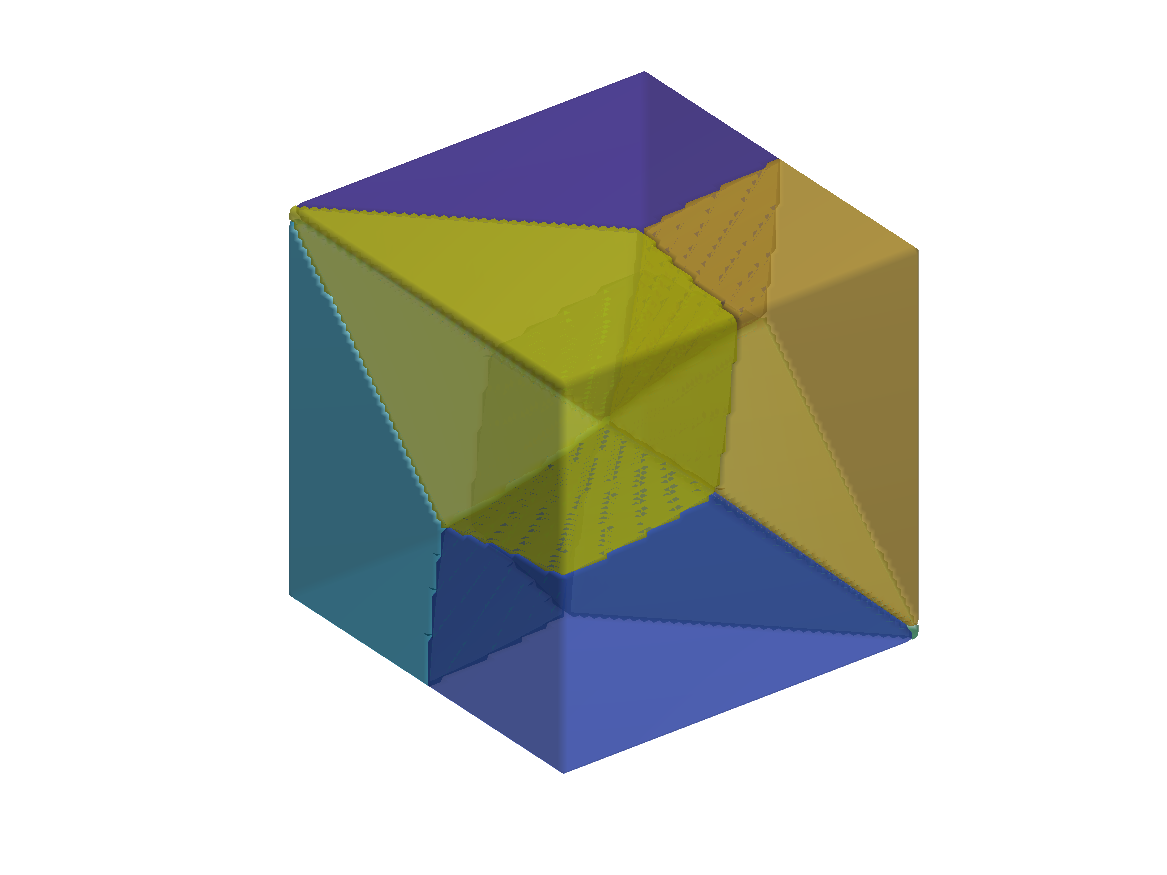}
\includegraphics[width = 0.22 \textwidth, clip, trim = 4cm 1cm 3cm 1cm]{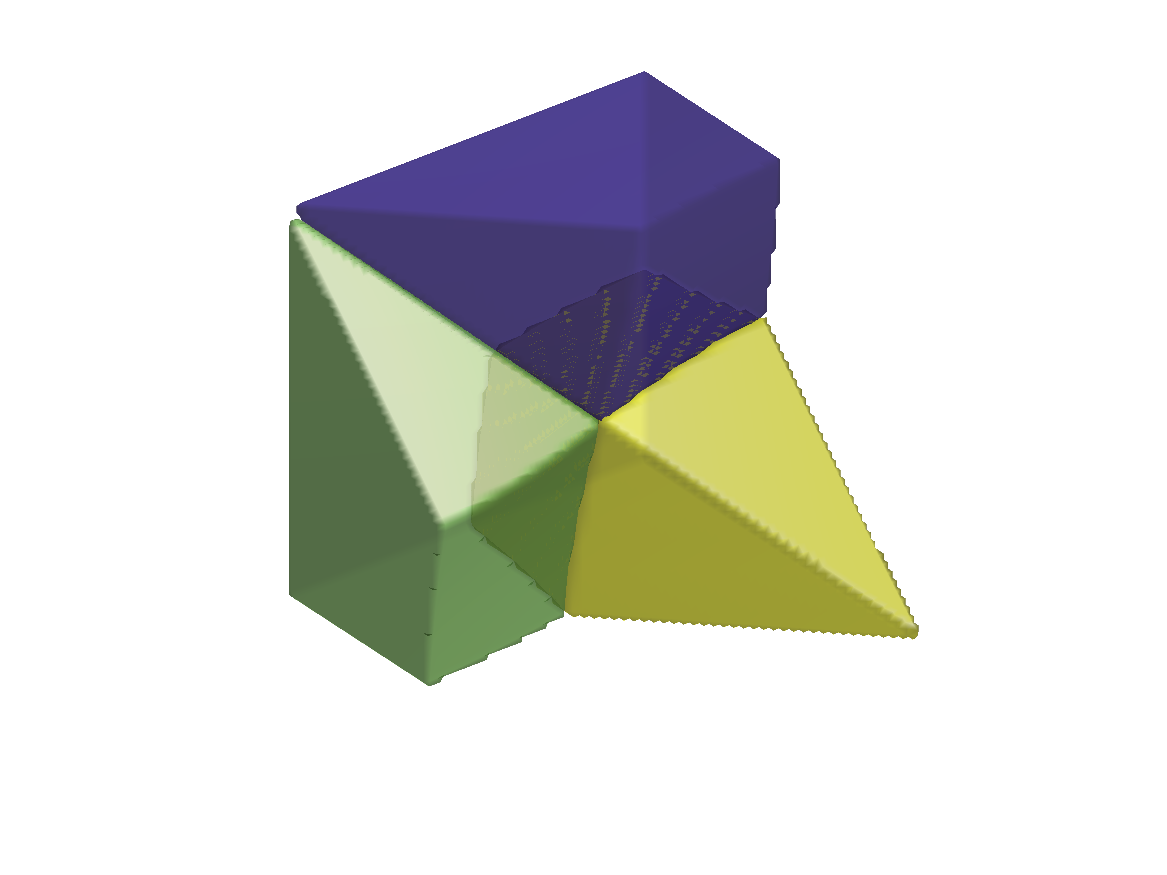}
\includegraphics[width = 0.22 \textwidth, clip, trim = 4cm 1cm 3cm 1cm]{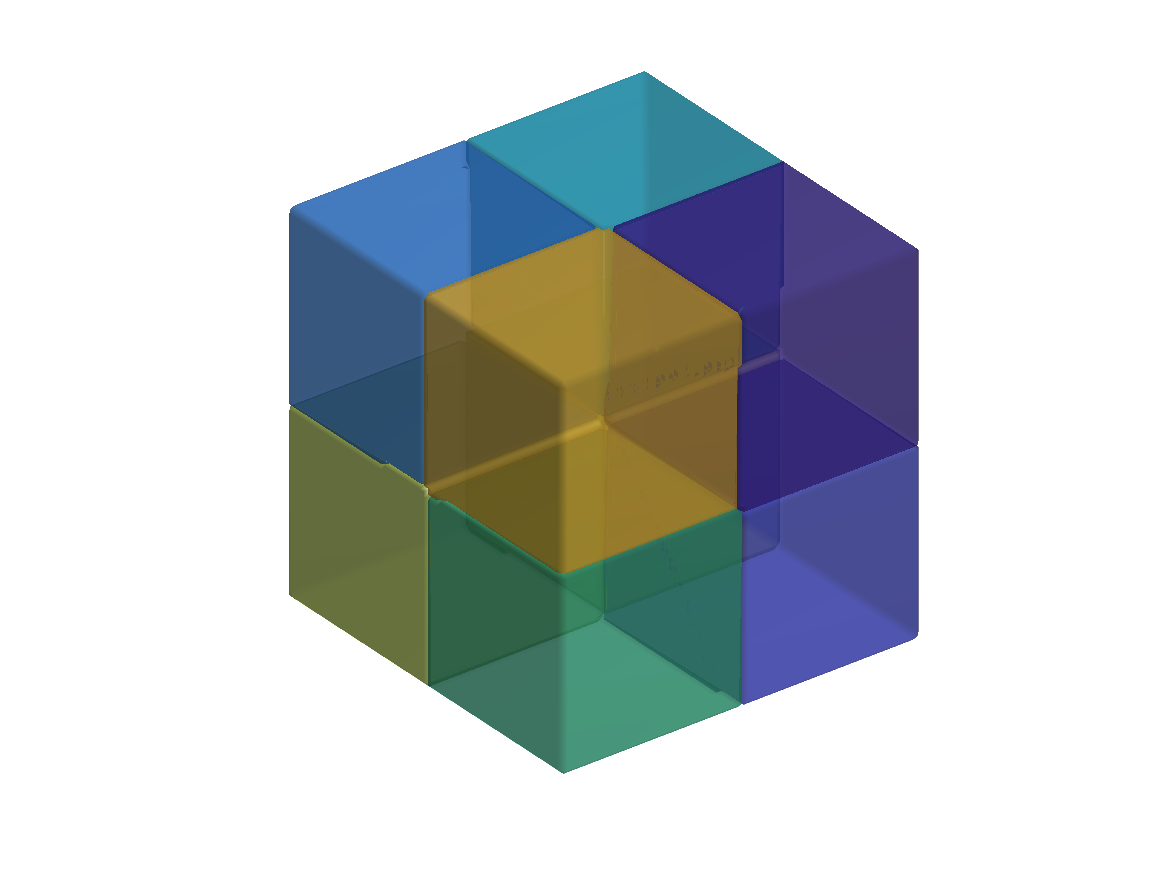}
\includegraphics[width = 0.22 \textwidth, clip, trim = 4cm 1cm 3cm 1cm]{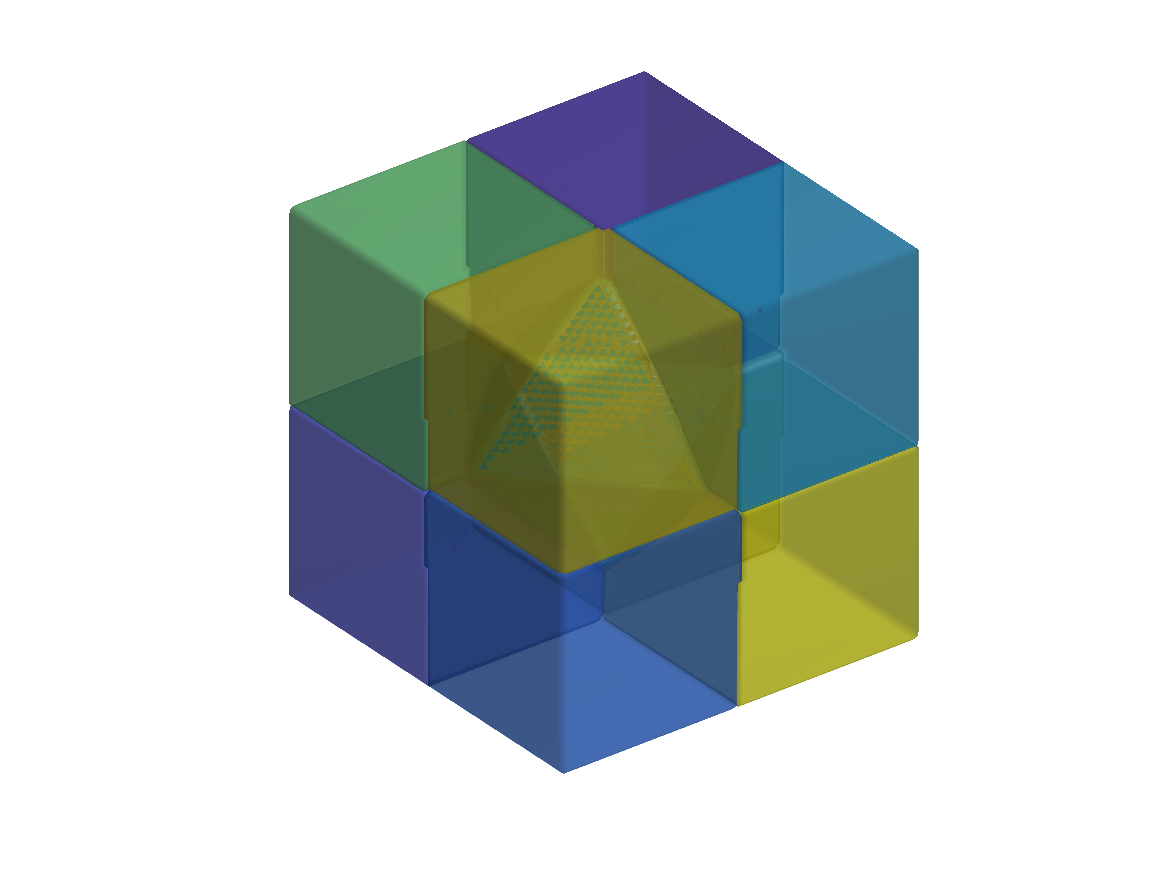}
\includegraphics[width = 0.22 \textwidth, clip, trim = 4cm 1cm 3cm 1cm]{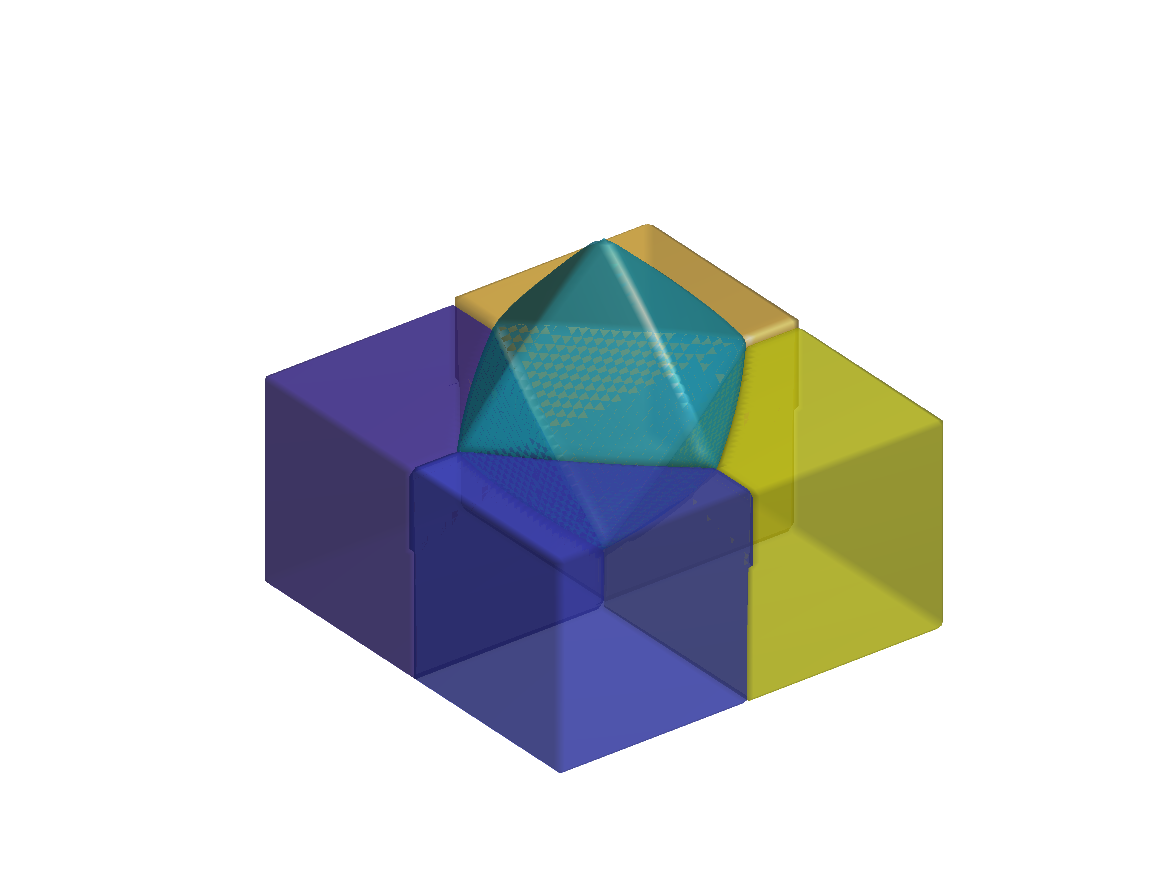}
\includegraphics[width = 0.22 \textwidth, clip, trim = 4cm 1cm 3cm 1cm]{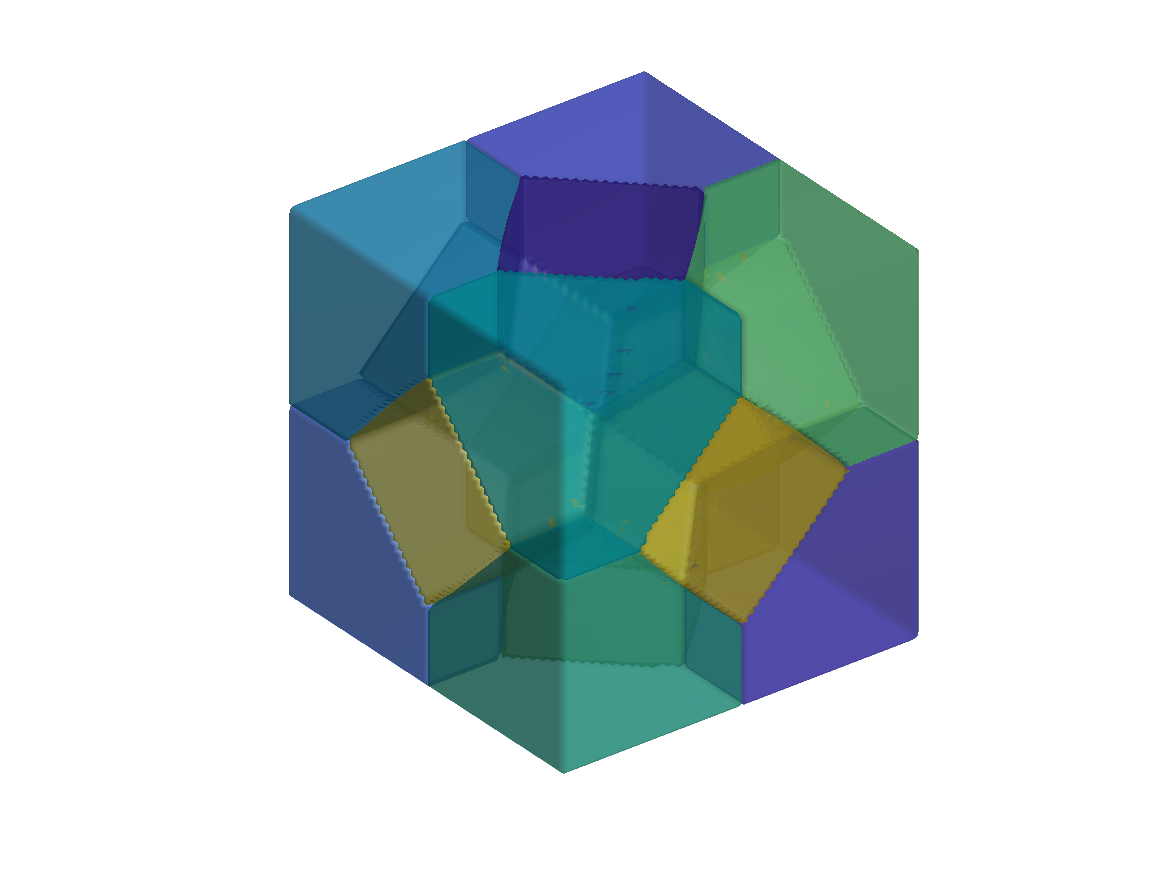}
\includegraphics[width = 0.22 \textwidth, clip, trim = 4cm 1cm 3cm 1cm]{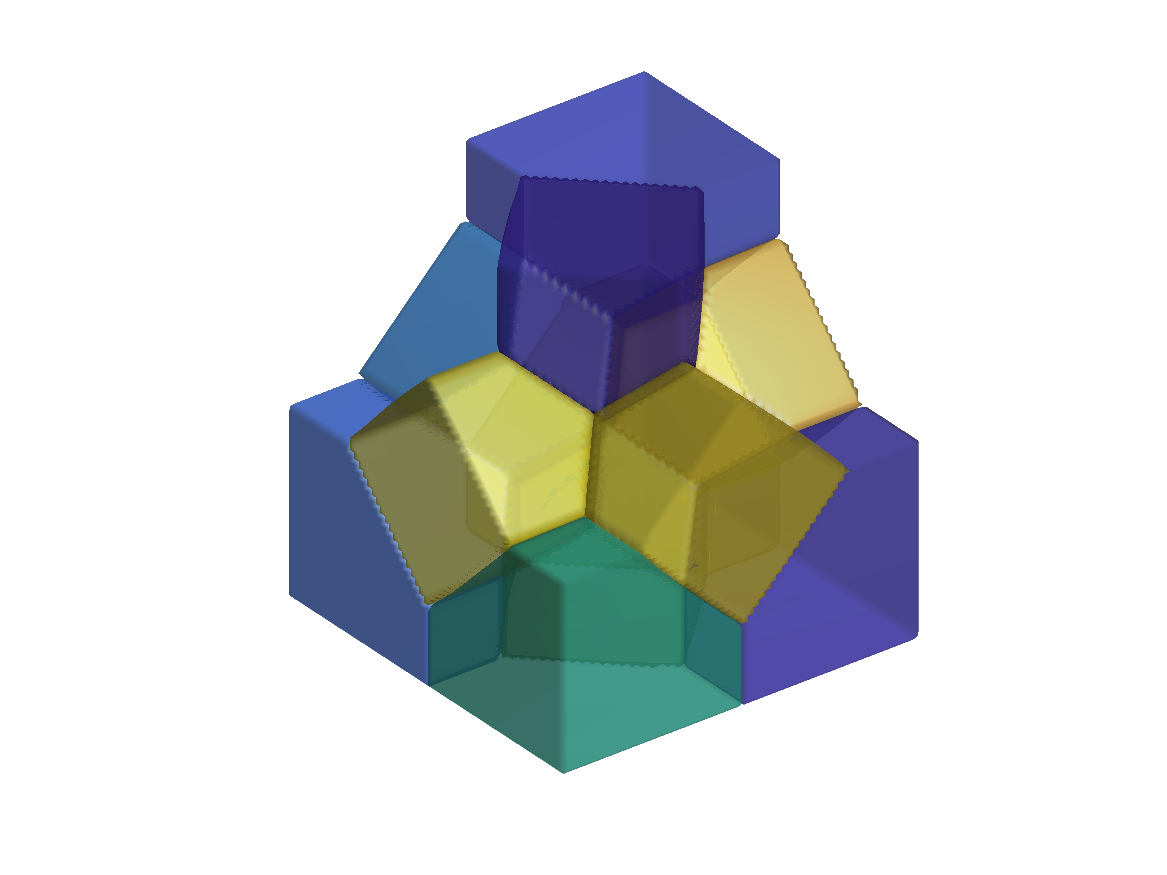}
\includegraphics[width = 0.22 \textwidth, clip, trim = 4cm 1cm 3cm 1cm]{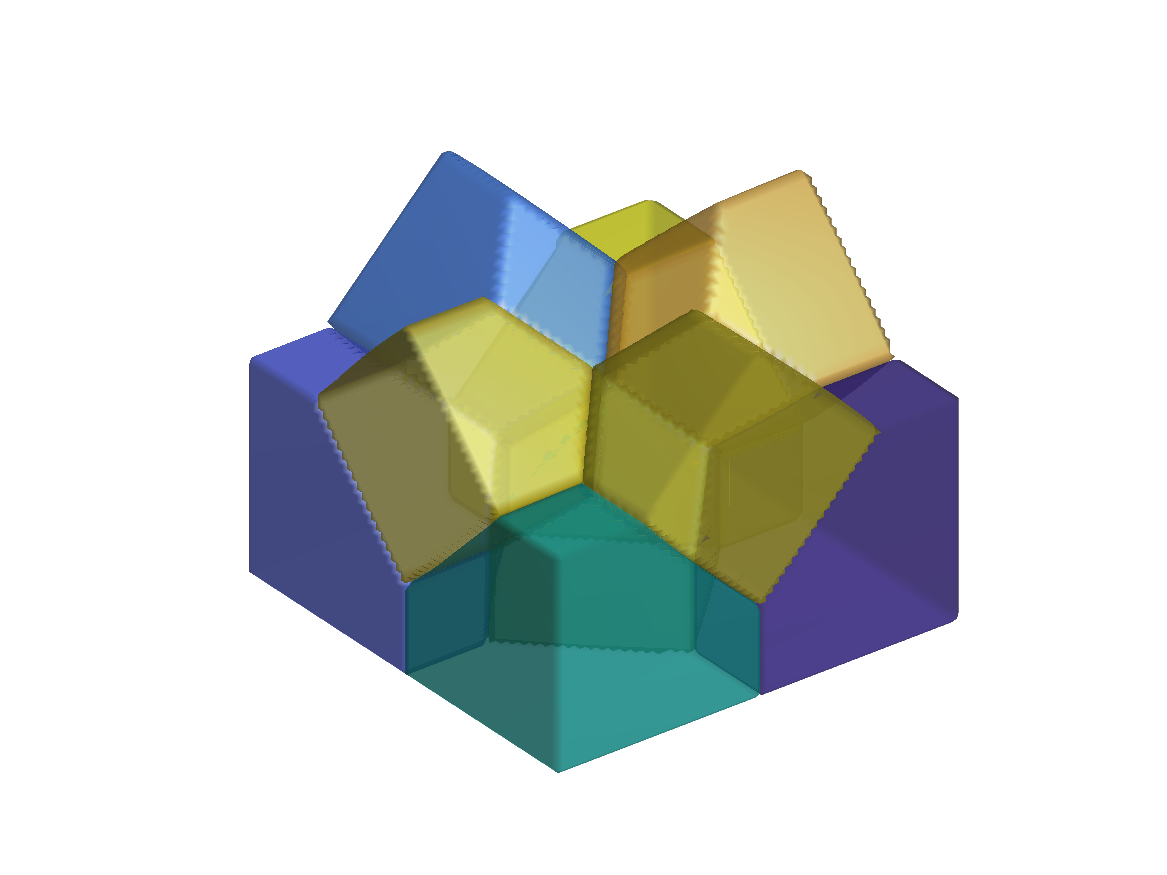}
\includegraphics[width = 0.22 \textwidth, clip, trim = 2.6cm 1cm 2cm 0cm]{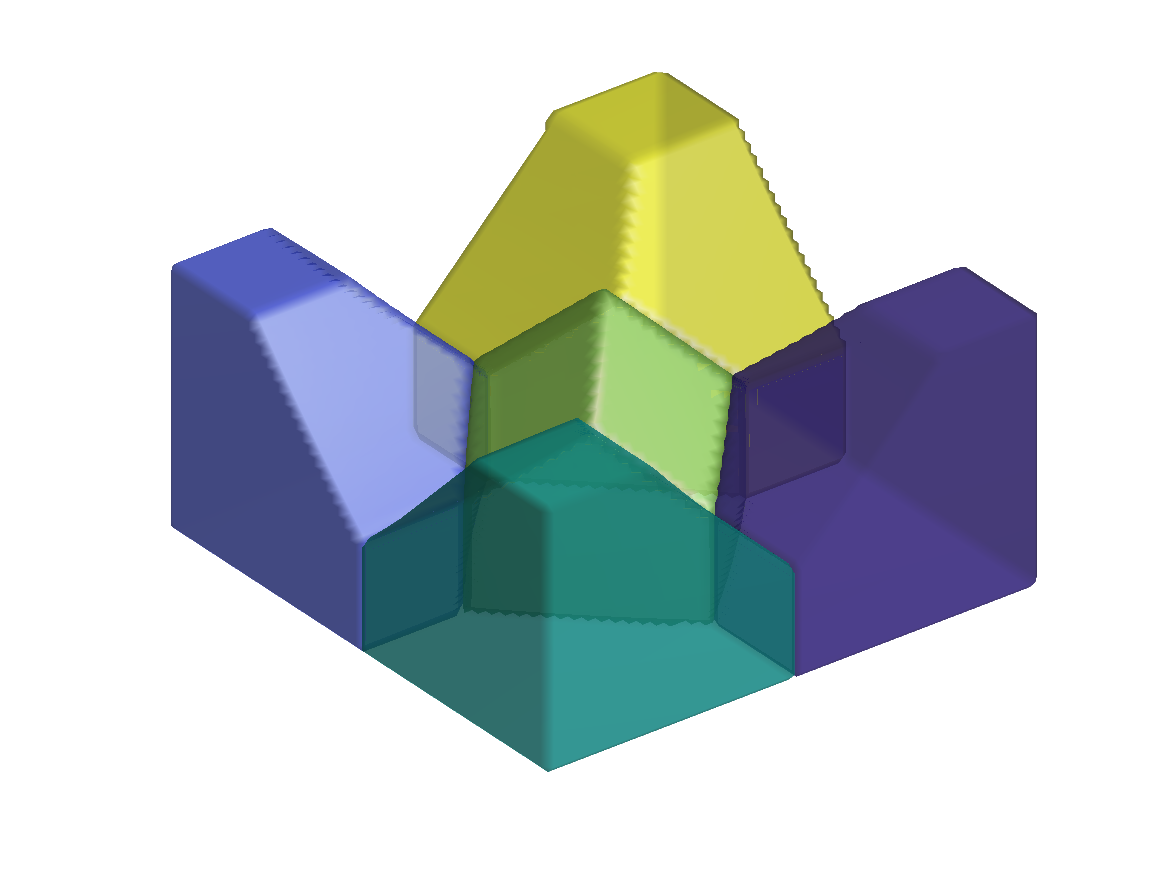}

\medskip

{
\begin{tabular}{c|c|c|c|c|c|c}
\hline
  k=3  & k=4 & k=5& k=6 & k=8& k=9 & k=14  \\ 
\hline 
  23.23 & 33.62 & 48.02 &62.20  & 95.41 & 117.07 & 244.02 \\
\hline 
\end{tabular}
}

\caption{First row: $3-6$-partitions, second row: a dissection of $6$-partition,  $8$-partition,  $9$-partition, and a dissection of  $9$-partition, third row: $14$-partition and dissections of the $14$-partition. The table lists the approximate eigenvalues for different $k$. The CPU time to obtain these results from random initial guesses are $64$, $41$, $157$, $108$, $57$, $233$, $368$ {\it seconds}, respectively. See Section~\ref{sec:3darbitrary}.} \label{fig:square}
\end{figure}

\begin{figure}[ht!]
\centering
\includegraphics[width = 0.22 \textwidth, clip, trim = 12cm 7cm 11cm 6cm]{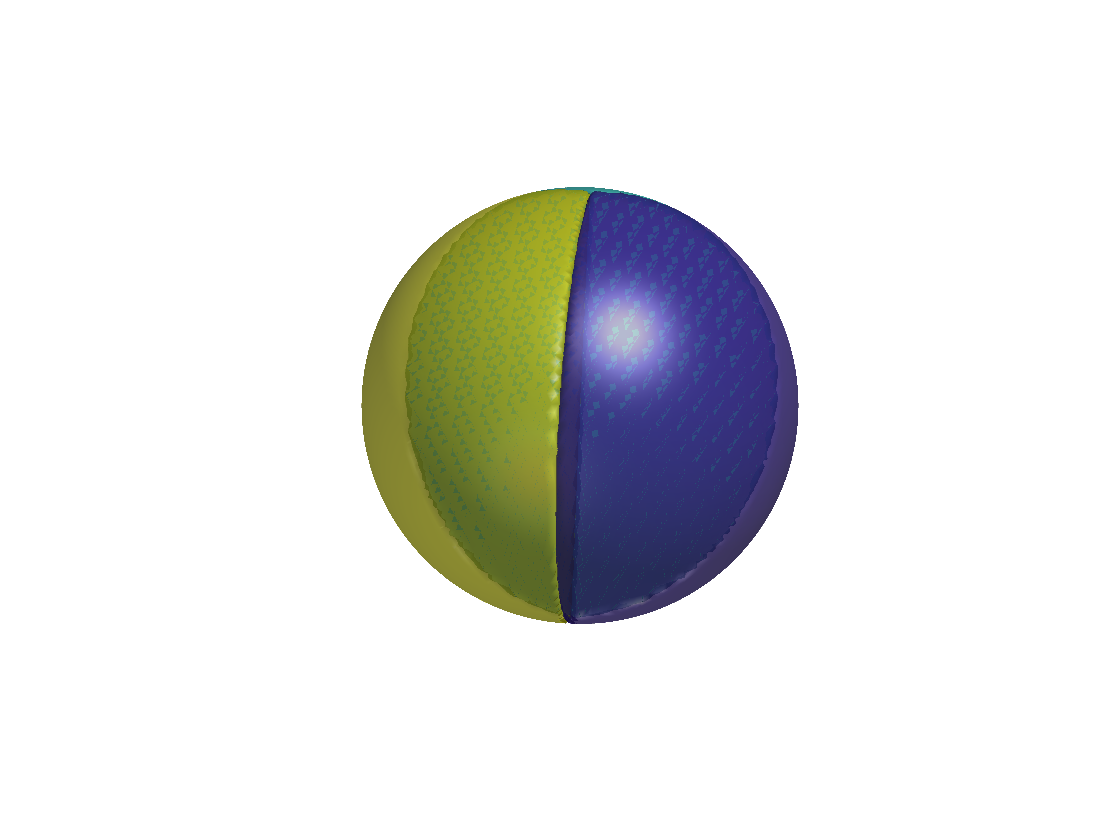}
\includegraphics[width = 0.22 \textwidth, clip, trim = 12cm 7cm 11cm 6cm]{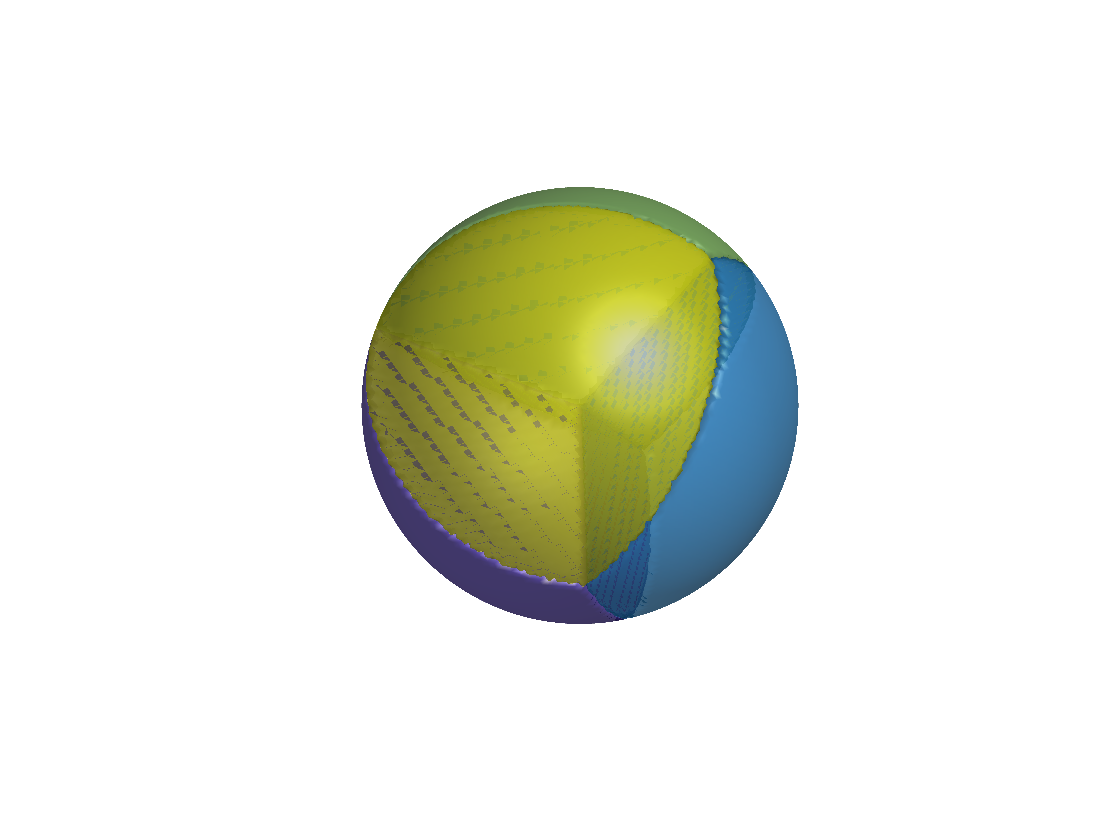}
\includegraphics[width = 0.22 \textwidth, clip, trim = 12cm 7cm 11cm 6cm]{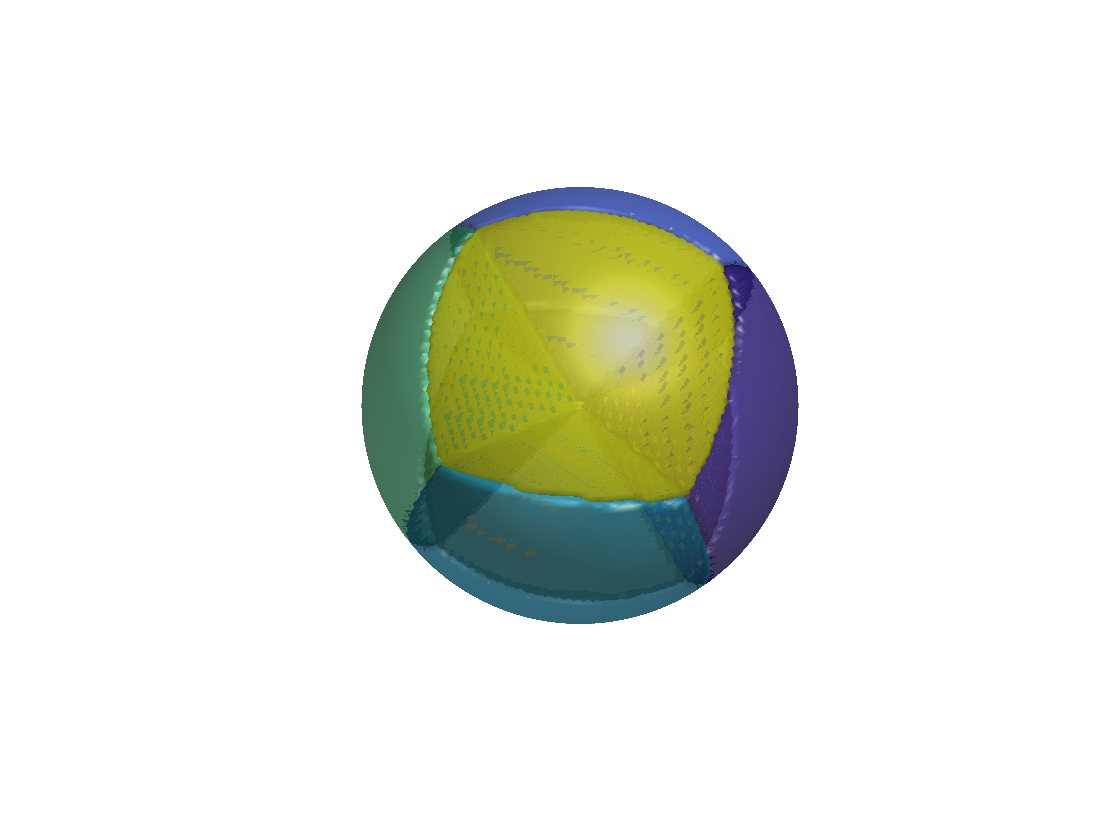}
\includegraphics[width = 0.22 \textwidth, clip, trim = 12cm 7cm 11cm 6cm]{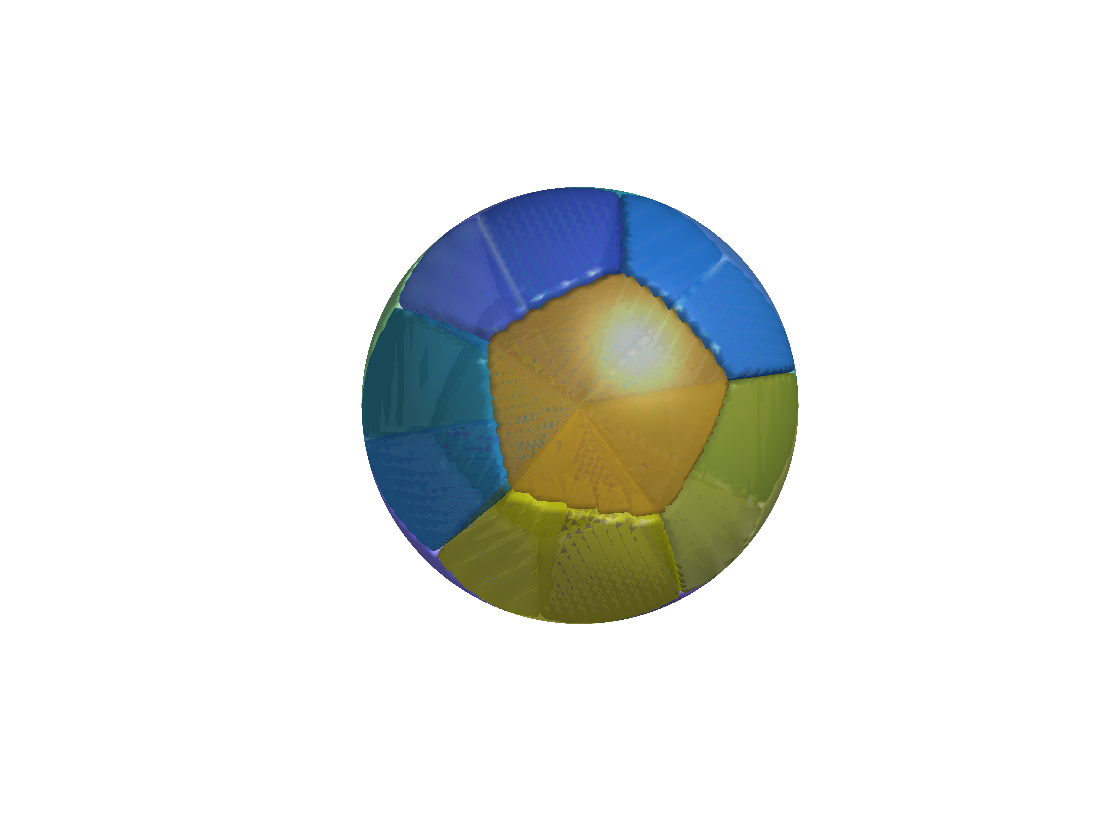}
\includegraphics[width = 0.22 \textwidth, clip, trim = 12cm 7cm 11cm 6cm]{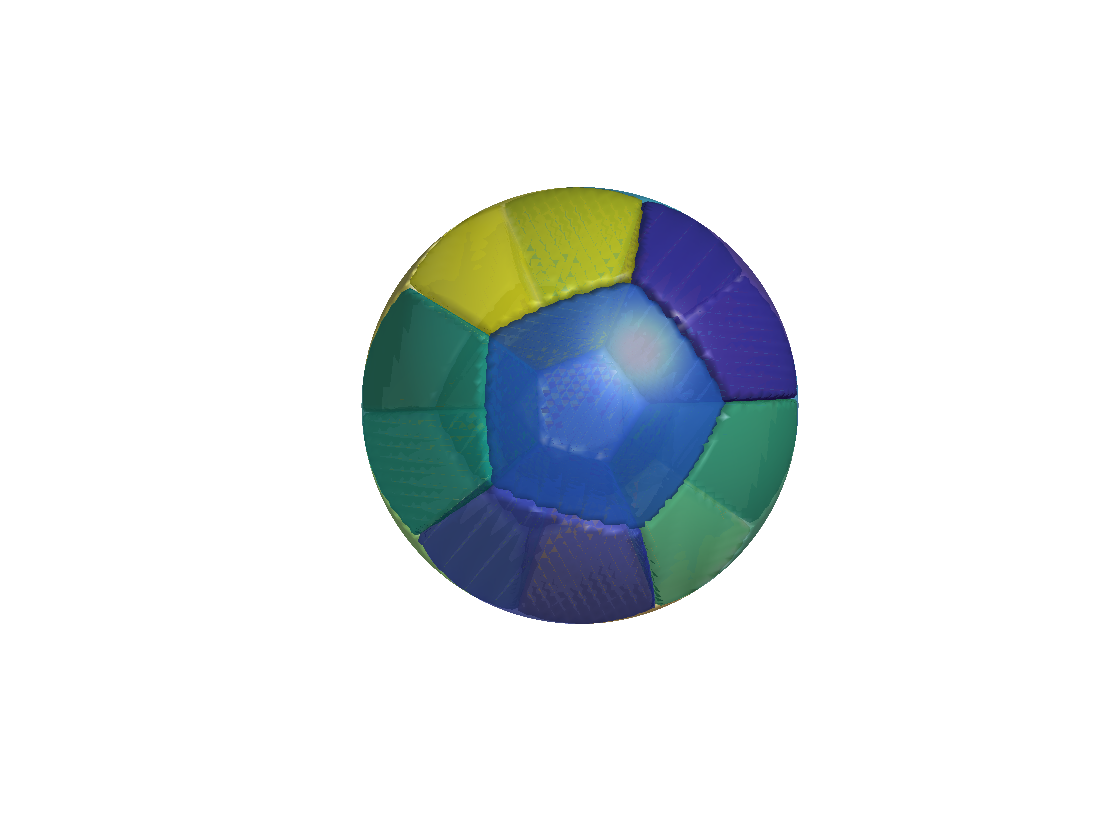}
\includegraphics[width = 0.22 \textwidth, clip, trim = 9cm 4cm 9cm 4cm]{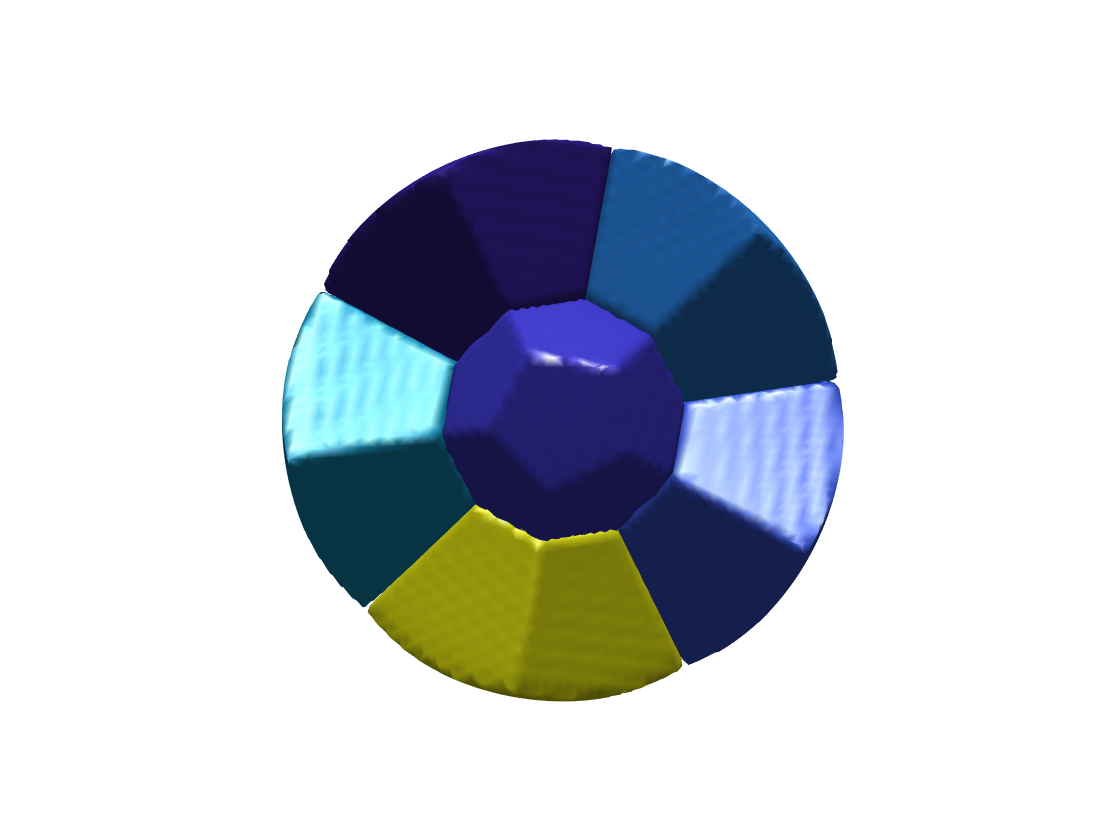}
\includegraphics[width = 0.22 \textwidth, clip, trim = 12cm 7cm 11cm 6cm]{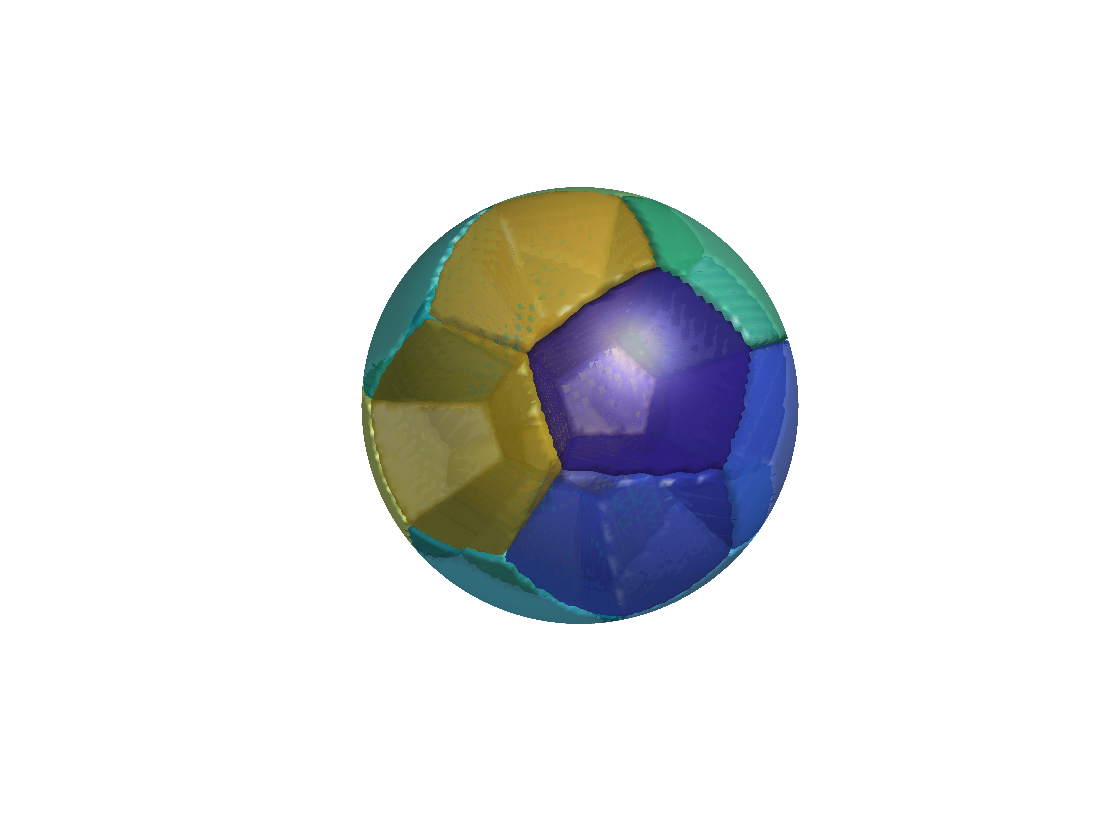}
\includegraphics[width = 0.22 \textwidth, clip, trim = 9cm 4cm 9cm 4cm]{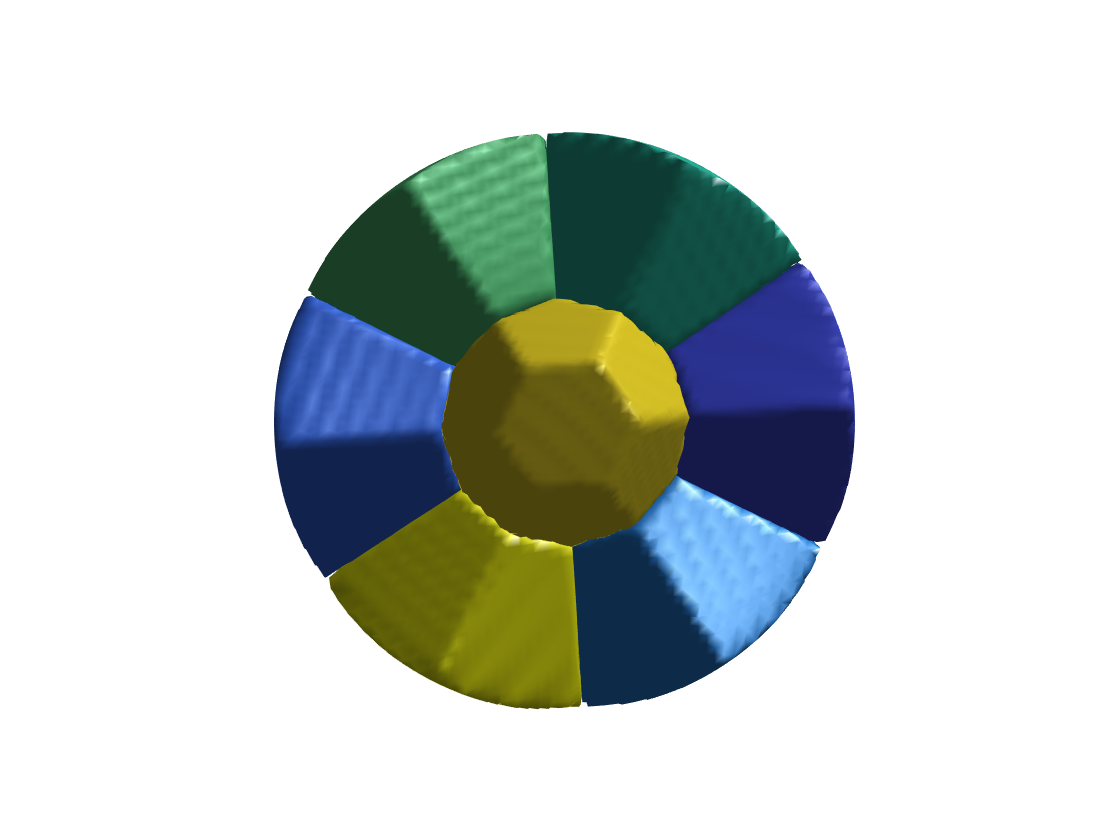}

\medskip

{
\begin{tabular}{c|c|c|c|c|c}
\hline
  k=3  & k=4 & k=6& k=12 & k=13& k=15  \\ 
\hline 
 31.68 &  49.07 & 92.95 & 285.36  & 320.59 &  405.72    \\
\hline 
\end{tabular}
}
\caption{The optimal $k$-partition in a ball with $k = 3,4,6$, $12$, $13$, and $15$ and dissections of $13$-partition and $15$-partition to expose the interior shapes. The table lists the approximate eigenvalues. The CPU time for $k = 3,4,6,12,13,$ and $15$ are $20, 41,193, 296,143,$ and $388$ {\it seconds}, respectively. See Section~\ref{sec:3darbitrary}.}\label{fig:ball}
\end{figure}

Figure~\ref{fig:ball} lists the optimal $k$-partition in a ball with $k = 3,4,6$, $12$, $13$, and $15$. The approximate eigenvalues are $31.68, 49.07, 92.95, 285.36,  320.59 $, and $405.72$, respectively. For $k=3, 4, 6, 12 $, and $13$, all results agree with the results reported by Bogosel. The $13$-partition is very regular and composed of one interior region and 12 regions that are on the boundary. Interestingly, the interior bubble is very similar to a regular dodecahedron as shown in Figure~\ref{fig:ball}.
Furthermore, we observe that the $15$-partition is also very regular and is composed of one interior region and 14 regions on the boundary.  The interior shape is very similar to the truncated hexagonal trapezohedron that appears in the Weaire–Phelan structure similar to the second shape showed in Figure~\ref{fig:8-periodic}. The shapes on the boundary consist of twelve rounded truncated pentagonal trapezohedron and two rounded truncated hexagonal trapezohedron as shown in Figure~\ref{fig:ball}. These results are also similar to the optimal structure of foam bubbles in the sense of minimizing the total surface area reported in \cite{Wang_2019b}. The CPU time for $k = 3,4,6,12,13,$ and $15$ are $20, 41,193, 296,143,$ and $388$ {\it seconds}, respectively.

When the domain is a tetrahedron, we list the results for $k = 2,4,10$, and $20$ in Figure~\ref{fig:tetrahedron}.  For $k = 2, 4, 10$, and $20$, we recover the results in 14, 14, 97, and 244 {\it seconds} with approximate eigenvalues $24.02, 58.71, 254.21$, and $776.74$, respectively.

\begin{figure}[ht!]
\centering
\includegraphics[width = 0.22 \textwidth, clip, trim = 8cm 3cm 8cm 6cm]{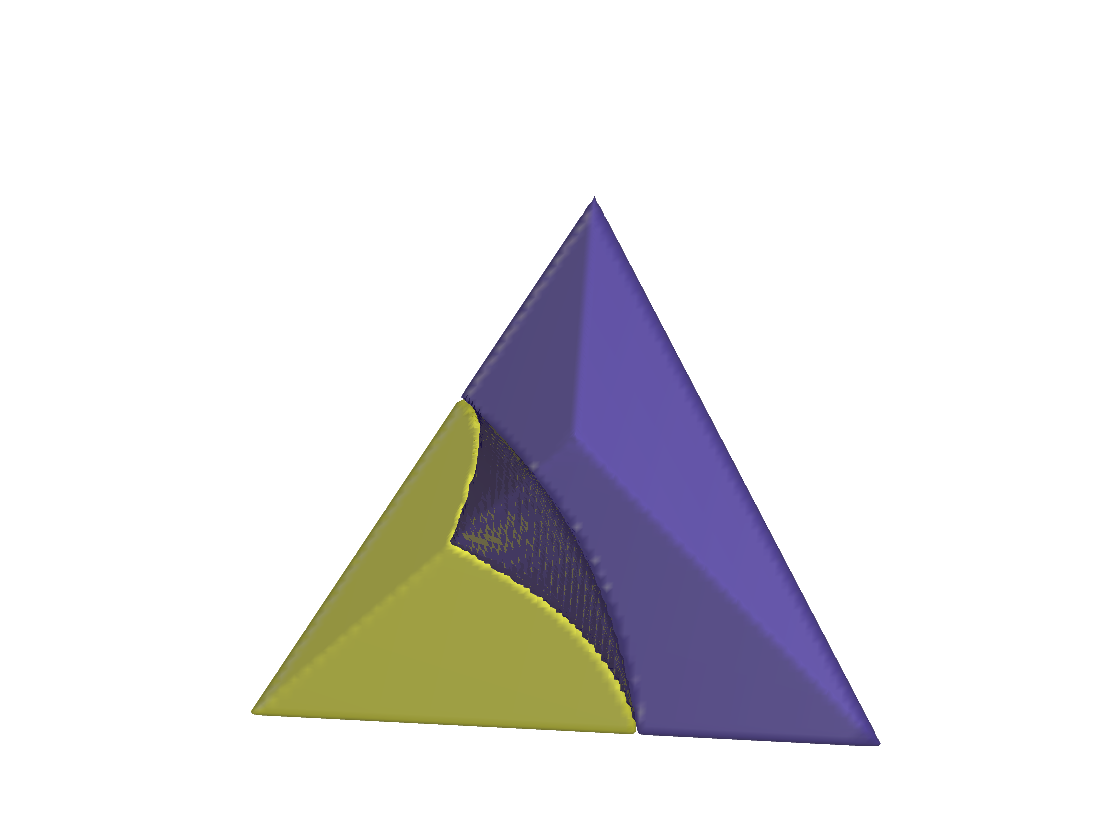}
\includegraphics[width = 0.22 \textwidth, clip, trim = 8cm 3cm 8cm 6cm]{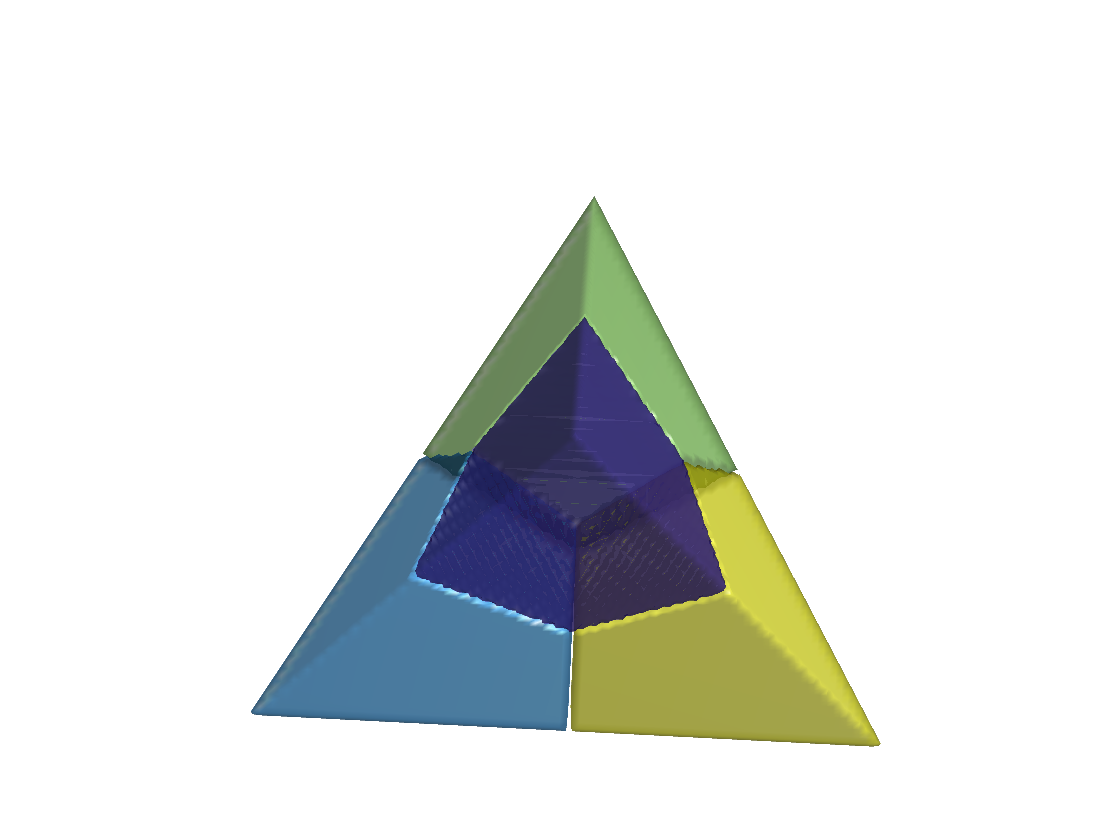}
\includegraphics[width = 0.22 \textwidth, clip, trim = 8cm 3cm 8cm 6cm]{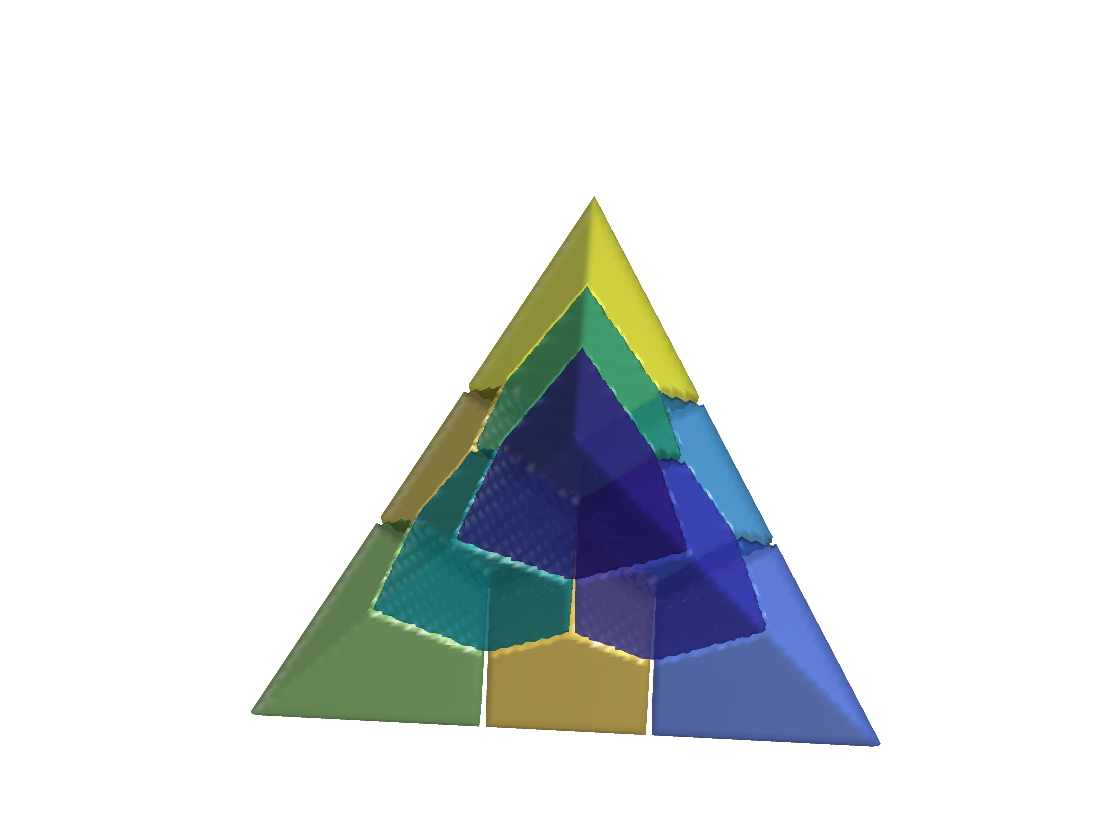}
\includegraphics[width = 0.22 \textwidth, clip, trim = 7cm 3cm 7cm 4cm]{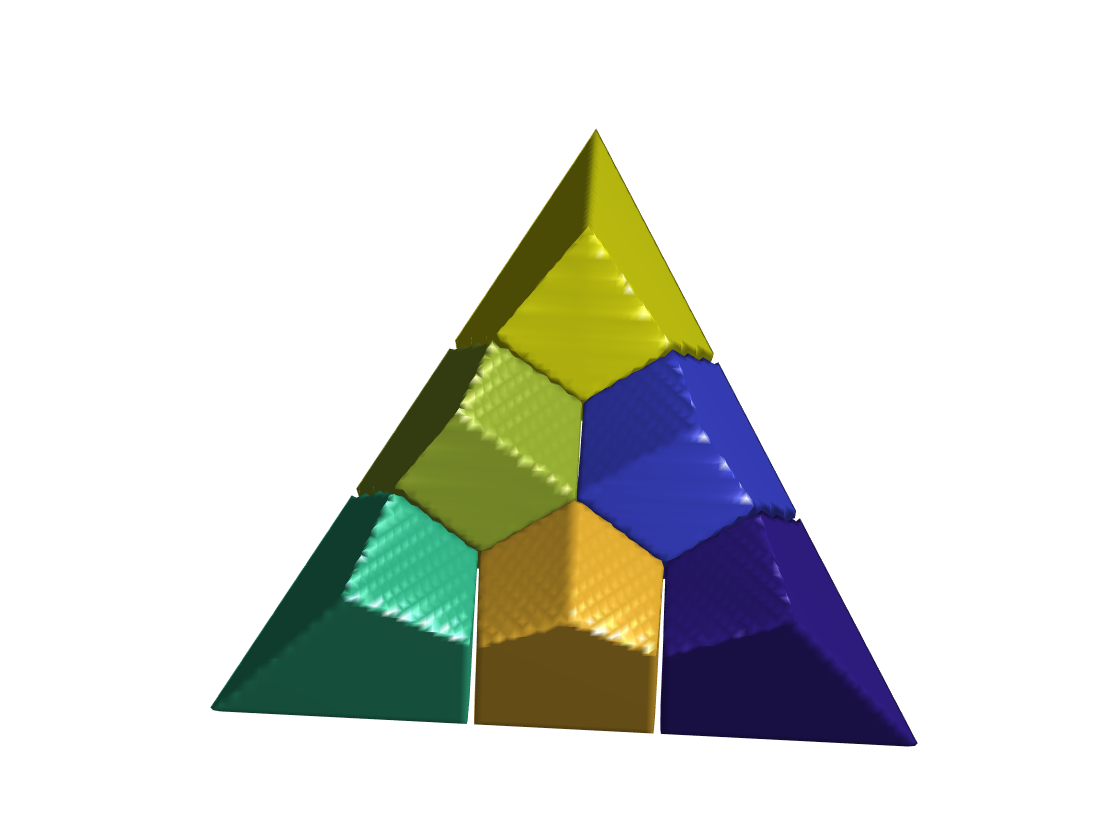} 
\includegraphics[width = 0.22 \textwidth, clip, trim = 8cm 3cm 8cm 6cm]{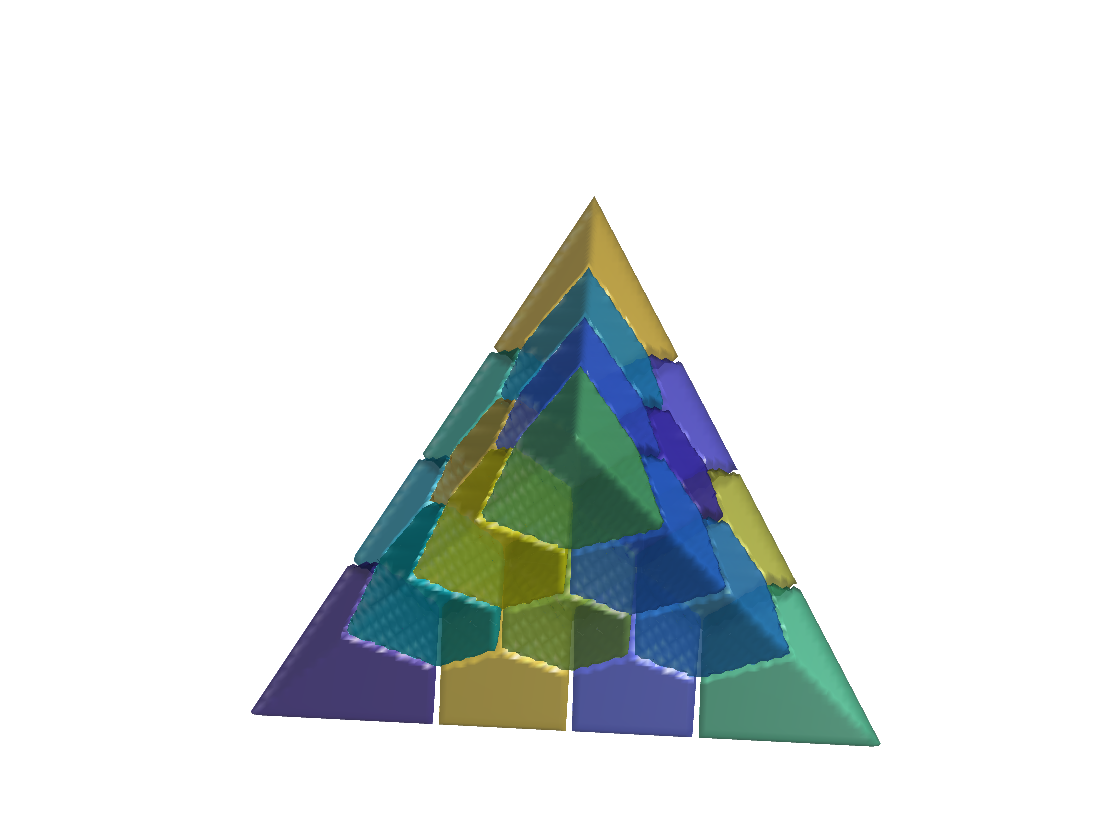}
\includegraphics[width = 0.22 \textwidth, clip, trim = 7cm 3cm 7cm 4cm]{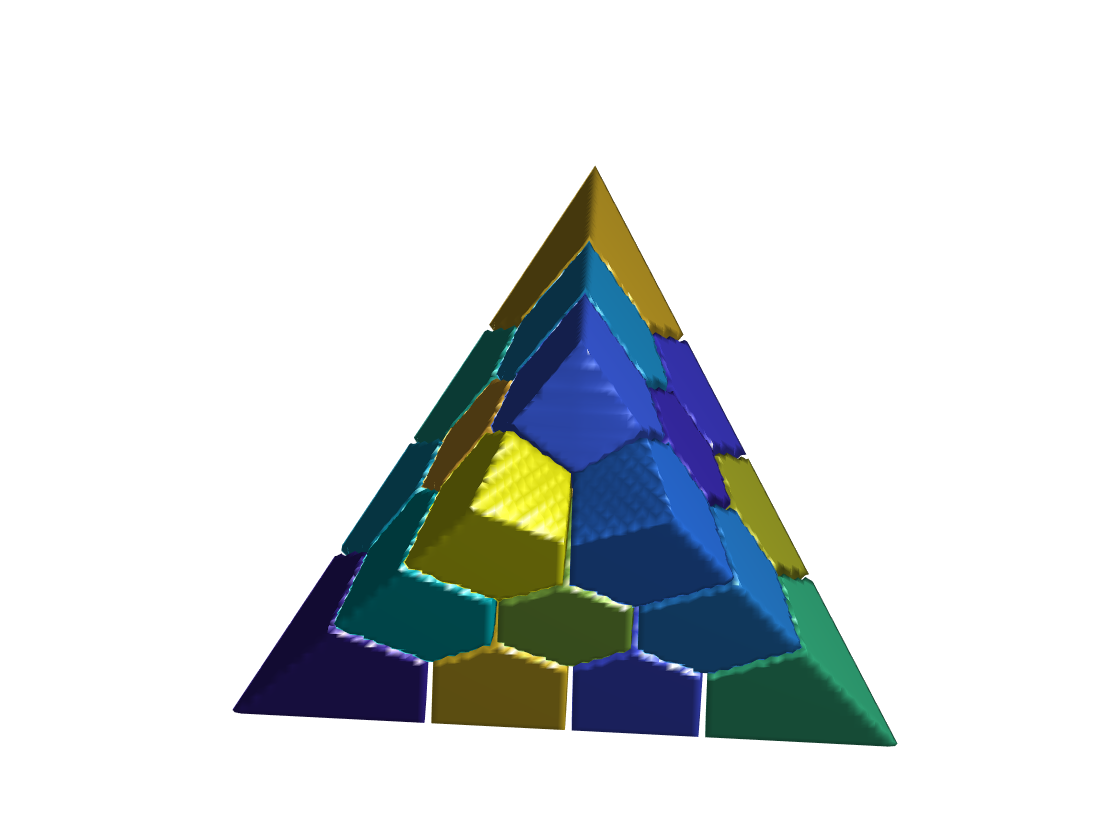}
\includegraphics[width = 0.22 \textwidth, clip, trim = 7cm 3cm 7cm 4cm]{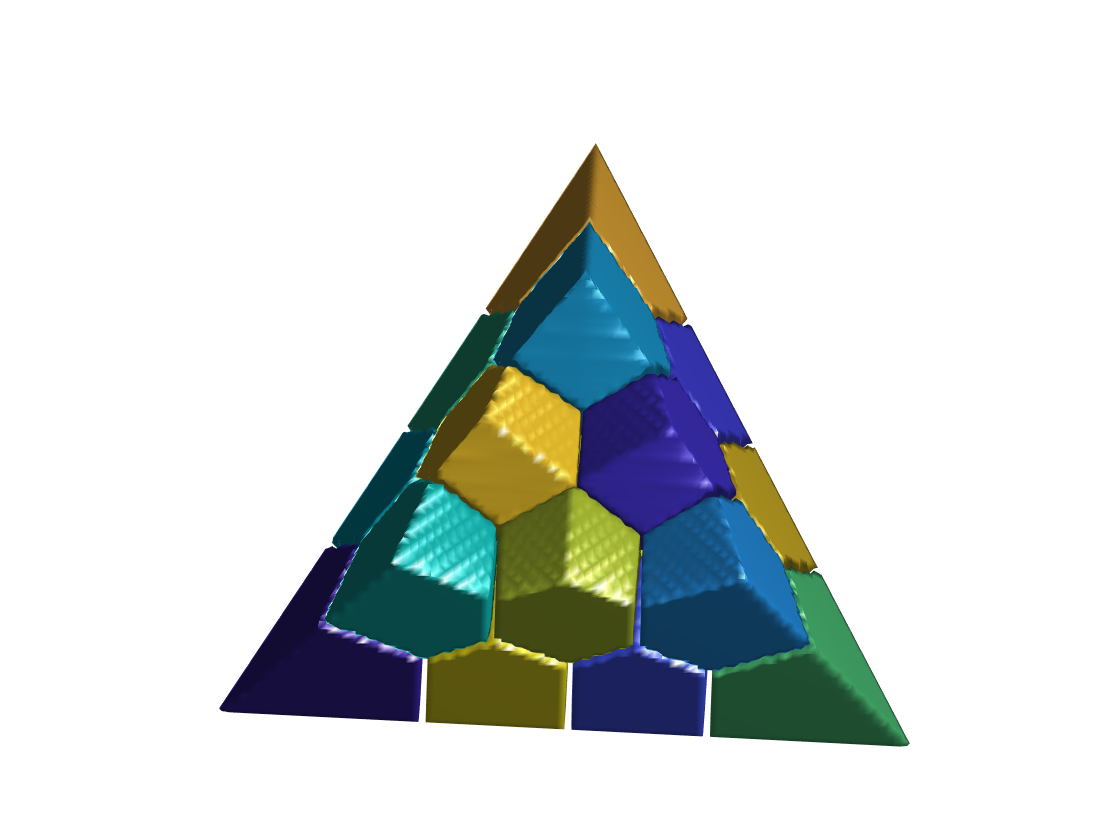}
\includegraphics[width = 0.22 \textwidth, clip, trim = 7cm 3cm 7cm 4cm]{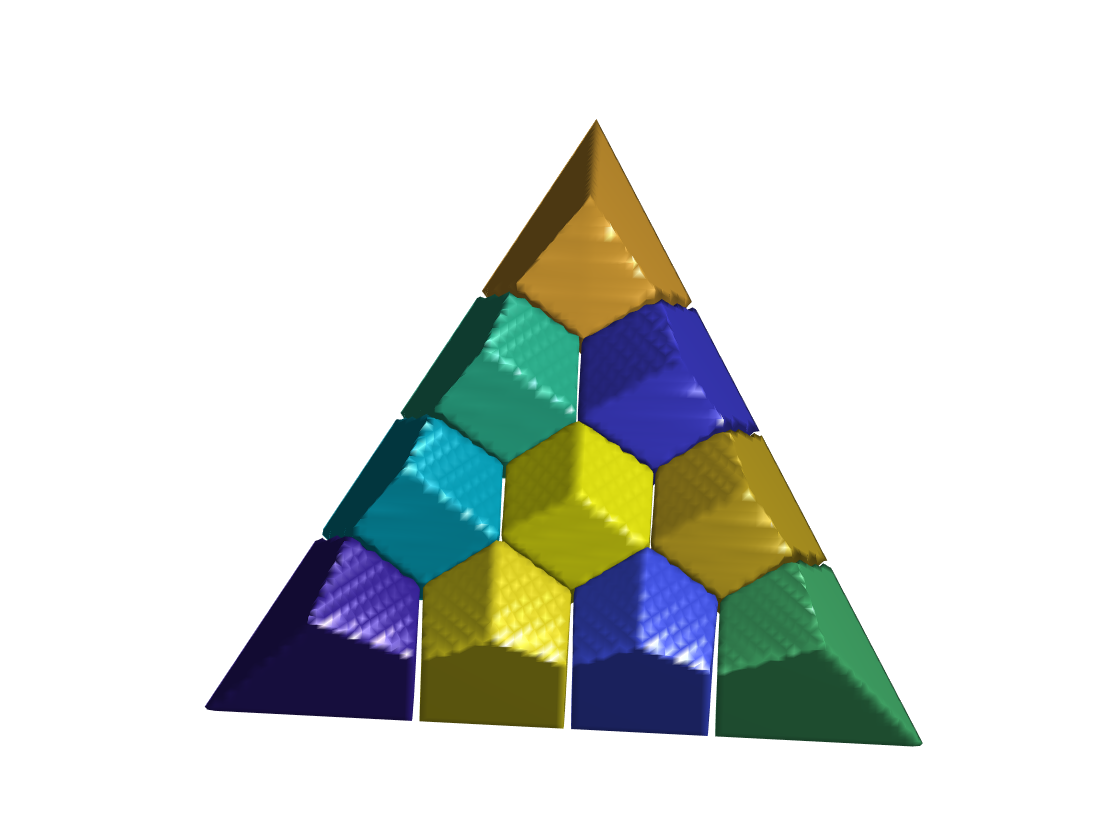}

\caption{Left to right, top to bottom: The optimal $k$-partition (obtained by the proposed method) in a tetrahedron with $k = 2,4,10$, the dissection of $10$-partition, the $20$-partition, three dissections of the $20$-partition. The CPU time for $k = 2,4,10,$ and $20$ are $14, 14, 97$, and $244$ {\it seconds}  and the approximate eigenvalues are $24.02, 58.71, 254.21$, and $776.74$, respectively. See Section~\ref{sec:3darbitrary}.}\label{fig:tetrahedron}
\end{figure}

\section{Conclusion and discussions}\label{sec:diss}

In this paper, we proposed a new relaxation of Dirichlet $k$-partition problems in arbitrary domains and derived a novel algorithm for computing Dirichlet $k-$partitions. The algorithm is very efficient and insensitive to domains. We theoretically proved the monotonically decaying property of the approximate energy. Numerical results show that the proposed method can achieve more than hundreds of times acceleration. 

To our knowledge, this is the first paper on relaxing the Dirichlet $k$-partition via using concave functionals and auxiliary indicator functions. A rigorous proof of the convergence of the new approximation as $\tau \rightarrow 0$ is needed for the theoretical guarantee of the new approximation. Besides, in this work, we compute the convolution using FFT by an extension of the domain of interest. Because the values out of the domain are all 0, one can also implement the algorithm by fast Multipole methods or Non-uniform fast Fourier transform based approaches \cite{jiang_2017,WJW19} to further accelerate the algorithm. These are out of the scope of this work and will be reported elsewhere. 

\subsection*{Acknowledgement}

D. Wang would like to thank Shihua Gong, Shingyu Leung, Yutian Li, Braxton Osting and Xiao-Ping Wang for helpful suggestions and discussions.

\bibliographystyle{siamplain}
\bibliography{refs}
\end{document}